\documentclass[10pt]{amsart}
\usepackage{amssymb,amsmath,mathtools}
\usepackage{enumerate}
\usepackage{xcolor}
\usepackage{mathrsfs}
\usepackage{enumitem}
\usepackage{graphicx}
\usepackage{hyperref}
\usepackage{float}

\usepackage{setspace}

\usepackage[left=1.25in,
            right=1.25in,
            top=1.52in,
            bottom=1.52in,centering]{geometry}
%



\newtheorem{theorem}{Theorem}[section]
\newtheorem{proposition}[theorem]{Proposition}
\newtheorem{lemma}[theorem]{Lemma}
\newtheorem{corollary}[theorem]{Corollary}
\theoremstyle{definition}
\newtheorem{definition}[theorem]{Definition}
\newtheorem{example}[theorem]{Example}
\newtheorem{remark}[theorem]{Remark}

\setlength\marginparwidth{7em}

\newcommand{\F}{\mathcal F}
\newcommand{\AW}{\mathcal A\mathcal W}
\newcommand{\Law}{\mathscr L}

\newcommand{\CFP}{\mathrm{CFP}}

\newcommand{\ip}{\mathrm{ip}}
\newcommand{\id}{\mathrm{id}}
\newcommand{\pp}{\mathrm{pp}}

\newcommand{\fp}[1]{{\mathbb #1}}
\newcommand{\cfp}[1]{{\overline{\mathbb #1}}}

\newcommand{\W}{\mathcal W}
\newcommand{\N}{\mathbb N}
\newcommand{\R}{\mathbb R}
\newcommand{\X}{\mathcal X}
\newcommand{\Y}{\mathcal Y}
\newcommand{\Z}{\mathcal Z}
\newcommand{\A}{\mathcal A}
\newcommand{\B}{\mathcal B}
\newcommand{\Pc}{\mathcal P}
\newcommand{\proj}{\mathrm{pj}}

\newcommand{\cpl}{\mathrm{Cpl}}
\newcommand{\cpla}{\cpl_{\mathrm{c}}}
\newcommand{\cplba}{\cpl_{\mathrm{bc}}}
\newcommand{\cplbc}{\mathrm{Cpl}_{\mathrm{bc}}}

\newcommand{\unfold}{\mathrm{uf}}
\newcommand{\AF}{\mathrm{AF}}
\newcommand{\depth}{\mathrm{depth}}

\def\P{{\mathbb P}}
\def\E{{\mathbb E}}

\def\PP{{\mathcal P}}
\def\Q{{\mathbb Q}}
\def\R{{\mathbb R}}

\newcommand{\FP}{\mathcal{FP}}

\newcommand{\FFP}{\mathrm{FP}}
\newcommand{\WA}{\mathcal W}

\newcommand{\AWA}{\mathcal {AW}}


\usepackage{relsize}
\def\fcmp{\mathbin{\raise 0.6ex\hbox{\oalign{\hfil$\scriptscriptstyle \mathrm{o}$\hfil\cr\hfil$\scriptscriptstyle\mathrm{9}$\hfil}}}}

\numberwithin{equation}{section}

\author{Daniel Bartl \and
Mathias Beiglb\"ock \and 
Gudmund Pammer}
\thanks{The authors are grateful to Julio Backhoff-Veraguas, Manu Eder, Daniel Lacker, Marcel Nutz, and Stefan Schrott for many helpful comments. We are also indebted to Stephan Eckstein for generous support through numerical implementations and experiments.}

\keywords{Aldous' extended weak topology, Hoover and Keisler's adapted distribution, Hellwig's information topology, Pflug-Pichler's nested distance, Vershik's iterated Kantorovich distance, stability of stochastic optimization,  geodesic space, barycenter of stochastic processes}
\date{\today}


\begin{document}

\title{The Wasserstein space of stochastic processes}

\begin{abstract}

Wasserstein distance induces a natural Riemannian structure for the probabilities on the Euclidean space. This insight of classical transport theory is fundamental for tremendous applications in  various fields of pure and applied mathematics.

We believe that 
an appropriate probabilistic variant, the \emph{adapted Wasserstein distance $\AW$},   can play a similar role for the class $\FFP$ of \emph{filtered processes}, i.e.\ stochastic processes together with a filtration. 
In contrast to other topologies for stochastic processes, probabilistic operations such as the Doob-decomposition, optimal stopping and stochastic control are continuous w.r.t.\ $\AW$.  We also show that  $(\FFP, \AW)$  is a geodesic space, isometric to a classical Wasserstein space, and that  martingales form a closed geodesically convex subspace.
\end{abstract}

\maketitle

\section{Overview}

It is often useful to change the view from considering objects in isolation to  studying the space of these objects and specifically their mutual relationship w.r.t.\ the \emph{ambient space}.
A classic instance  is to
 switch from studying functions to considering the  Lebesgue- or Sobolev-spaces
of functions in functional analysis; a more contemporary example is the 
passing from 
measures 
to the manifold of measures based on  optimal transport theory.
The aim of this article is to investigate what should be the appropriate  ambient space for the class of stochastic processes.

\subsection{The space of laws on $\R^N$ and its limitations}

A natural starting point for this is to represent stochastic processes on the  canonical probability space. Specifically, the class of real valued processes in finite discrete time $\{1, \ldots, N\}$  is naturally represented as the set $\PP(\R^N)$ of probability measures on $\R^N$. 
Importantly,  the usual weak topology on $\PP(\R^N)$  fails  to capture the temporal structure of stochastic processes and is not strong enough to guarantee continuity of stochastic optimization problems  or basic operations like the Doob-decomposition.

As a remedy, a number of researchers from different scientific communities have introduced topological structures on the set of stochastic processes
with the common goal to adequately capture the temporal structure.
We list Aldous extended weak topology \cite{Al81} (stochastic analysis), Hellwig's information topology \cite{He96, HeSc02} (economics), Bion-Nadal and Talay's version of the Wasserstein distance \cite{BiTa19} (stochastic analysis, optimal control), Pflug and Pichler's nested distance \cite{PfPi12,PfPi14} (stochastic optimization), R\"uschendorf's Markov constructions \cite{Ru85} (optimal transport), Lassalle's causal transport problem \cite{La18} (optimal transport), and Nielsen and Sun's chain rule transport \cite{NiSu20} (machine learning). Remarkably,  in finite discrete time, these seemingly independent approaches define the same topology on $\PP (\R^N)$, the \emph{weak adapted topology}, see \cite{BaBaBeEd19b}. 

A natural compatible metric\footnote{Precisely, convergence in $\AWA_p$ is equivalent to convergence in the weak adapted topology plus convergence of the $p$-th moment, see also \cite{BaBaBeEd19b}.} for the weak adapted topology is the \emph{adapted Wasserstein distance} 
$$\AWA_p^p(\mu, \nu)=\inf_{\pi\in \cplbc(\mu, \nu)} \E_\pi[\|X-Y\|_p^p], \quad p\in [1,\infty).$$ 
The difference to the classical Wasserstein distance comes from the fact that one considers only \emph{bicausal} couplings $\cplbc(\mu, \nu)$, see Definition \ref{def:causal} below. 
These couplings are non-anticipative and can be viewed as a Kantorovich analogue of non-anticipative transport maps, see \cite{BePaSc21c}. 

While the weak adapted topology / $\AWA_p$ appear canonical  and have recently seen a burst of applications (see \cite{Do14, PfPi15, PfPi16, GlPfPi17, BaBaBeEd19a, AcBePa20, Wi20, PiSc20, KiPfPi20, Mo20, PiWe21, PiSh21, Wi21,  BaBaBeWi20, JoWi21} among others) we also highlight two limitations: 
\begin{enumerate}
\item The metric space $(\PP_p(\R^N), \AWA_p)$ is not complete.
In fact, this shortcoming also arises for other natural  distances that respect the information structure of stochastic processes.  
\item Following the classical theory of stochastic analysis one would like to consider processes together with a general filtration, not just the filtration generated by the process itself. 

\end{enumerate}

\subsection{Filtered processes as the completion of $\mathcal{P}_p(\mathbb{R}^N)$}
Rather conveniently, these supposed shortcomings already represent their mutual resolution: 
a possible interpretation of the incompleteness of 
$(\PP_p(\R^N), \AWA_p)$ is that the space $\PP_p(\R^N)$ is not `large'  enough to represent all processes one would like to consider. In our first main result we show that  the extra information that can be  stored in an ambient filtration is precisely what is needed to arrive at the completion of $(\PP_p(\R^N), \AWA_p)$.
To make this precise we need the following definition:

\begin{definition}\label{FPDef}
A five-tuple
\begin{align}
\label{eq:def.fp}
 \fp{X}:= \big(\Omega, \F, \P, (\F_t)_{t=1}^N, (X_t)_{t=1}^N\big),
 \end{align}
where $(X_t)_{t=1}^N$ is adapted to $(\F_t)_{t=1}^N$, is called a \emph{filtered (stochastic) process}.
We write $\FP$ for the class of all filtered processes and $\FP_p$ for the subclass of processes with $\E[\|X\|_p^p]<\infty$.
\end{definition}

Clearly,  $\mathcal{P}_p(\R^N)$ is embedded in $\FP_p$: for  $\mu\in\mathcal{P}(\R^N)$ set $\fp X:= ( \R^N, \mathcal{B}(\R^N), \mu, (\F_t)_{t=1}^N,(X_t)_{t=1}^N)$, where $(X_t)_{t=1}^N$ is the canonical process, i.e.\ $X_t(\omega)=\omega(t)$, and $\F_t=\sigma(X_s:s\leq t)$ for $t\leq N$.

It is relatively straightforward  to extend  the concept of bicausal couplings to processes with filtrations (see Definition \ref{def:causal} for details) and accordingly the notion of adapted Wasserstein distance extends to filtered processes via
\[\AWA_p^p(\fp{X},\fp{Y}):=\inf_{\pi\in \cplbc(\fp{X}, \fp{Y})} \E_\pi[ \|X-Y\|_p^p].\]
As in the case of  ${\mathscr{L}}^p$ / $ L^p$ spaces (or similar situations), we identify two filtered processes $\fp{X},\fp{Y}$ if $\AWA_p(\fp{X},\fp{Y})=0$ and denote the corresponding set of equivalence classes by $\FFP_p$.
Our first main result is:

\begin{theorem}\label{MainTheorem}
$\AWA_p$ is a metric on $\FFP_p$ and  $(\FFP_p, \AWA_p)$ is the completion of $(\mathcal P_p(\R^N), \AWA_p)$. 
\end{theorem}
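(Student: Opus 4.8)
The plan is to verify the two assertions separately: first that $\AWA_p$ is a genuine metric on $\FFP_p$ (i.e.\ that the only nontrivial axiom, the triangle inequality, holds, since positivity modulo the identification is built into the definition of $\FFP_p$), and then that $(\FFP_p,\AWA_p)$ is the completion of $(\PP_p(\R^N),\AWA_p)$. For the metric property, the key step is a gluing lemma for bicausal couplings: given $\pi\in\cplbc(\fp X,\fp Y)$ and $\rho\in\cplbc(\fp Y,\fp Z)$, one glues along the common $\fp Y$-marginal on a product space carrying $(X,Y,Z)$ together with the appropriate filtration, and checks that the $(X,Z)$-marginal of the glued measure is again bicausal between $\fp X$ and $\fp Z$. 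The triangle inequality then follows from the triangle inequality for $\|\cdot\|_p$ on $\R^N$ and Minkowski's inequality in $L^p$ of the glued measure, exactly as in the classical Wasserstein case. Symmetry of $\AWA_p$ is immediate from symmetry of bicausal couplings, and $\AWA_p(\fp X,\fp X)=0$ via the diagonal coupling.

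For the completion statement, the plan is to establish the three standard ingredients: (i) $\PP_p(\R^N)$ embeds isometrically into $\FFP_p$ — a law $\mu\in\PP_p(\R^N)$ is identified with its canonical filtered process $(\R^N,\mathcal B,\mu,(\F_t^X)_t,(X_t)_t)$ where $\F^X$ is the process-generated filtration, and for such canonical processes the set of bicausal couplings reduces to the classical $\cplbc(\mu,\nu)$, so the two definitions of $\AWA_p$ agree; (ii) $\PP_p(\R^N)$ is dense in $(\FFP_p,\AWA_p)$; and (iii) $(\FFP_p,\AWA_p)$ is complete. For density (ii), given $\fp X=(\Omega,\F,\P,(\F_t)_t,(X_t)_t)$, one approximates the ambient filtration $(\F_t)$ from inside by finitely generated $\sigma$-algebras: discretize the relevant conditional structure, e.g.\ replace each $\F_t$ by $\sigma$ of finitely many sets so that the conditional laws of the future given the coarsened past are close, pass to a canonical representation on a finite tree, and push forward to a measure $\mu_n\in\PP_p(\R^N)$ by recording the generating data as extra coordinates; then exhibit an explicit bicausal coupling (built stepwise from the conditional kernels) showing $\AWA_p(\fp X,\mu_n)\to 0$. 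This step is likely the main obstacle: one must carefully choose the coarsening of $(\F_t)$ so that it is both rich enough to capture the adapted structure in the limit and coarse enough to be encodable in $\R^N$, and one must control the $L^p$-cost of the coupling uniformly.

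For completeness (iii), take a Cauchy sequence $(\fp X^n)$ in $\FFP_p$; passing to a fast subsequence with $\sum_n\AWA_p(\fp X^n,\fp X^{n+1})<\infty$, pick bicausal couplings $\pi^n\in\cplbc(\fp X^n,\fp X^{n+1})$ nearly attaining the distance and glue them all on a single probability space carrying a sequence $(X^n)_n$ of processes with the concatenated filtrations, so that $\sum_n\E[\|X^n-X^{n+1}\|_p^p]^{1/p}<\infty$. Then $X^n\to X^\infty$ in $L^p$ (componentwise in time), define $\fp X^\infty$ on this common space with filtration $\F_t^\infty:=\bigvee_n \F_t^{(n)}$ (or the relevant limiting filtration generated by all the $X^m_s$, $s\le t$), and verify (a) $\fp X^\infty\in\FFP_p$ and (b) $\AWA_p(\fp X^n,\fp X^\infty)\to 0$ using the coupling of $\fp X^n$ with $\fp X^\infty$ read off from the glued space — bicausality is preserved because each $\pi^n$ was bicausal and the filtrations are nested along the construction. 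The Cauchy property then upgrades convergence along the subsequence to the whole sequence. I expect the delicate points here to be the precise construction of the limiting filtration $\F_t^\infty$ and the argument that the canonically associated coupling between $\fp X^n$ and $\fp X^\infty$ is bicausal; this may be cleaner if one first proves the isometry with a classical Wasserstein space announced in the abstract and transports completeness from there, but I would present the direct gluing argument as the primary route.
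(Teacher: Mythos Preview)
Your proposal takes the direct route (bicausal gluing for the triangle inequality, infinite gluing for completeness, filtration discretization for density), whereas the paper follows the alternative you mention only in passing: it first constructs the \emph{information process} $\ip(\fp X)$ and proves the isometry $(\FFP_p,\AW_p)\cong(\PP_p(\Z_1),\W_p)$ with a classical Wasserstein space (Theorem~\ref{thm:isometry}); the pseudometric property and completeness then come for free from classical optimal transport, and density of $\PP_p(\R^N)$ is obtained by discretizing $\ip(\fp X)$ (Theorem~\ref{thm:dense}, Proposition~\ref{prop:markov.dense}). The isometry buys more than convenience---it is also the engine behind Corollary~\ref{WREP}, the geodesic structure, and the Prohorov-type compactness result.

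Your direct approach has two concrete gaps. First, the gluing lemma for bicausal couplings requires disintegrating along the $\fp Y$-marginal, but filtered processes in Definition~\ref{FPDef} live on arbitrary (not necessarily standard Borel) probability spaces, so regular conditional probabilities may fail to exist; the paper addresses precisely this obstacle through an adapted block-approximation (Appendix, Theorem~\ref{thm:adapted block}), and without an analogous device your triangle inequality only covers the Polish case. Second, your completeness argument is underspecified at its crux: after gluing on $\prod_n\Omega^{(n)}$ with $\F_t^\infty=\bigvee_n\F_t^{(n)}$, bicausality of the coupling between $\fp X^n$ and $\fp X^\infty$ requires that $\F_N^{(n)}$ and $\F_t^\infty$ be conditionally independent given $\F_t^{(n)}$, and this does not follow directly from the pairwise bicausality of the $\pi^m$ together with the Markov structure of the gluing---you would need to propagate causality through the entire chain and across the countable join, which you have not supplied. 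Your density sketch also relies on an unspecified encoding of filtration atoms into $\R^N$; in the paper this is exactly where the information process does the work, since $\ip(\fp X)$ already isolates the filtration-relevant data in a Polish space that can then be discretized.
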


We also show in Theorem \ref{thm:dense} below that certain simpler classes of processes are dense in $\FFP_p$, e.g.\ filtered processes that can be represented on a finite state space $\Omega$ or finite state Markov chains.   
This seems important in view of numerical applications. 

\subsection{$\FFP_p$ as geodesic space}

In fact, our proof of Theorem \ref{MainTheorem}  reveals more, namely the following result on the metric structure of $(\FFP_p, \AWA_p)$:

\begin{theorem}\label{WREP} 
There exists a Polish space $(\mathcal{V}, d)$ such that $(\FFP_p, \AWA_p)$ is isometric to the  classical Wasserstein space $(\mathcal P_p (\mathcal{V}), \WA_p^{(d)})$.
\end{theorem}
Explicitly, $\mathcal{V}$ is constructed by considering Wasserstein spaces of Wasserstein spaces in an iterated fashion as already considered by Vershik \cite{Ve70}. 
Theorem \ref{WREP} allows to transfer concepts from optimal transport to the theory of stochastic processes, e.g.\ it allows to consider displacement interpolation and Wasserstein barycenters of filtered processes and to view $(\FFP_p, \AWA_p)$ as a (formal) Riemannian manifold.
Specifically, using  the work of Lisini \cite{Li07} we obtain:

\begin{theorem} \label{geodesicspace}
	Assume $p>1$. Then $(\FFP_p, \AWA_p)$ is a geodesic space and the set of martingales forms a closed, geodesically convex subspace.
\end{theorem}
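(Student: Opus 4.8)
The plan is to reduce both assertions to the classical Wasserstein side via the isometry $(\FFP_p,\AWA_p)\equiv(\mathcal P_p(\mathcal V),\WA_p^{(d)})$ of Corollary~\ref{WREP}, and then to invoke Lisini's description \cite{Li07} of geodesics in $p$-Wasserstein spaces. I recall that $\mathcal V$ arises from the iterated construction behind Corollary~\ref{WREP}: there are Polish spaces $\mathcal V_N,\dots,\mathcal V_1$ with $\mathcal V_N=\R$ and (essentially) $\mathcal V_t=\R\times\mathcal P_p(\mathcal V_{t+1})$, the metric on $\mathcal V_t$ being the $\ell^p$-combination of $|\cdot|$ and $\WA_p^{(d_{\mathcal V_{t+1}})}$, with $\mathcal V=\mathcal V_1$. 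Each of these steps preserves being a Polish geodesic space: $\R$ is geodesic; for $p>1$ the $p$-Wasserstein space over a Polish geodesic space is geodesic (Lisini \cite{Li07}); and an $\ell^p$-product of Polish geodesic spaces is geodesic, its constant-speed geodesics being exactly the coordinatewise constant-speed geodesics (here one uses strict convexity of $t\mapsto t^p$, $p>1$). Hence $\mathcal V$ is Polish and geodesic, so $(\mathcal P_p(\mathcal V),\WA_p^{(d)})$ is geodesic, and therefore so is $(\FFP_p,\AWA_p)$.

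For the martingale subspace I first translate the martingale property through the isometry. Unwinding the construction, a point of $\mathcal V_t$ carries an $\R$-coordinate recording $X_t$ and a $\mathcal P_p(\mathcal V_{t+1})$-coordinate recording the conditional law of the future, so that $\E[X_{t+1}\mid\F_t]=X_t$ becomes a barycenter identity. Concretely, set $\mathcal M_N=\mathcal V_N$ and recursively $\mathcal M_t=\{(x,m)\in\mathcal V_t:\ m(\mathcal M_{t+1})=1,\ \int\proj_\R\,dm=x\}$, where $\proj_\R\colon\mathcal V_{t+1}\to\R$ is the $\R$-projection; then $\fp X\in\FFP_p$ is a martingale precisely when its image in $\mathcal P_p(\mathcal V)$ is concentrated on $\mathcal M:=\mathcal M_1$. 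Since $\proj_\R$ is $1$-Lipschitz, $m\mapsto\int\proj_\R\,dm$ is continuous on $\mathcal P_p(\mathcal V_{t+1})$, and $\{m:m(K)=1\}$ is closed whenever $K$ is; a downward induction on $t$ then shows each $\mathcal M_t$, hence $\mathcal M$, is closed. Because $\{\mu\in\mathcal P_p(\mathcal V):\mu(\mathcal M)=1\}$ is closed in $(\mathcal P_p(\mathcal V),\WA_p^{(d)})$ by the portmanteau theorem, the martingales form a closed subset. (Alternatively, closedness is immediate from the continuity of the Doob decomposition.)

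Geodesic convexity is the crux. By the isometry it suffices to show that every constant-speed geodesic $(\mu^s)_{s\in[0,1]}$ in $(\mathcal P_p(\mathcal V),\WA_p^{(d)})$ with $\mu^0,\mu^1$ concentrated on $\mathcal M$ is itself concentrated on $\mathcal M$. Here $p>1$ is essential: by Lisini \cite{Li07}, such a geodesic is a displacement interpolation, $\mu^s=(e_s)_*\Pi$ for some $\Pi\in\mathcal P(\Geod(\mathcal V))$, and $\Pi$-a.e.\ geodesic then runs between points of $\mathcal M$. So it is enough to prove, by downward induction on $t$, that every constant-speed geodesic of $\mathcal V_t$ with endpoints in $\mathcal M_t$ stays in $\mathcal M_t$; the case $t=N$ is trivial. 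For the step, write such a geodesic of $\mathcal V_t=\R\times\mathcal P_p(\mathcal V_{t+1})$ as $s\mapsto(\gamma(s),\nu^s)$; by the product structure $\gamma$ is affine and $(\nu^s)$ is a constant-speed geodesic of $\mathcal P_p(\mathcal V_{t+1})$, hence again a displacement interpolation $\nu^s=(e_s)_*\Lambda$ with $\Lambda\in\mathcal P(\Geod(\mathcal V_{t+1}))$. If the endpoints lie in $\mathcal M_t$, then $\nu^0,\nu^1$ are concentrated on $\mathcal M_{t+1}$, so $\Lambda$-a.e.\ geodesic has both endpoints in $\mathcal M_{t+1}$ and, by the induction hypothesis, stays in $\mathcal M_{t+1}$; thus $\nu^s(\mathcal M_{t+1})=1$. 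Moreover, since a constant-speed geodesic of $\mathcal V_{t+1}$ has affine $\R$-component, $s\mapsto\int\proj_\R\,d\nu^s=\int\proj_\R(\sigma(s))\,d\Lambda(\sigma)$ is affine with endpoints $\int\proj_\R\,d\nu^0$ and $\int\proj_\R\,d\nu^1$, i.e.\ $\gamma(0)$ and $\gamma(1)$, and hence equals $\gamma$. Therefore $(\gamma(s),\nu^s)\in\mathcal M_t$ for every $s$, closing the induction.

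The main obstacle is making this third step fully rigorous. First, one must identify, from the explicit isometry of Corollary~\ref{WREP}, the martingale property with the condition $\mu(\mathcal M)=1$ for the recursively defined $\mathcal M$; this is a careful but routine unwinding of the iterated construction. Second, one must handle the measure-theoretic bookkeeping of displacement interpolations: representing an arbitrary Wasserstein geodesic as $(e_s)_*\Pi$ with $\Pi$ carried by parametrized constant-speed geodesics, checking the relevant measurability on $\Geod(\mathcal V_{t+1})$, and propagating these representations down the hierarchy. The point where $p>1$ genuinely enters is Lisini's characterization of Wasserstein geodesics, together with the resulting decomposition of constant-speed geodesics of the $\ell^p$-products $\mathcal V_t$; both fail for $p=1$, so the hypothesis $p>1$ cannot be dropped by this method.
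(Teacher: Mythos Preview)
Your argument is correct, and for the geodesic-space part it matches the paper's proof (Theorem~\ref{thm:geodesic}): induct down the ladder $\Z_t=\X_t\times\mathcal P_p(\Z_{t+1})$ using Lisini and the stability of geodesic spaces under $\ell^p$-products, then transfer via the isometry of Theorem~\ref{thm:isometry}. For closedness of martingales you use the characterisation of martingales as those $\fp X$ with $\Law(\ip_1(\fp X))$ supported on a closed subset $\mathcal M\subset\Z_1$; the paper instead gives a direct coupling estimate (Proposition~\ref{cor:martingales}) but lists your approach explicitly as an alternative.

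The genuine difference is in geodesic convexity. The paper proves it only for the specific geodesics given by interpolation processes $\fp Z^{\pi,u}$ with $\pi$ an optimal bicausal coupling (Definition~\ref{def:interpolation process}): bicausality and Lemma~\ref{lem:causal CI} reduce $\E[Z^u_t\mid\F^{\fp Z}_s]$ to a convex combination of conditional expectations under the two filtrations, and the martingale property is immediate. This is short and concrete, but it covers all geodesics only modulo the statement---deferred to a forthcoming paper---that every $\AW_p$-geodesic arises this way up to external randomisation. Your argument avoids that deferral: you invoke Lisini's displacement representation directly at each level of the hierarchy and run a downward induction showing geodesics of $\Z_t$ with endpoints in $\mathcal M_t$ stay in $\mathcal M_t$, using that constant-speed geodesics in an $\ell^p$-product (for $p>1$) split coordinatewise and that the $\R$-component is therefore affine. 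This is more abstract and needs the measurable-selection and Lisini bookkeeping you flag, but it yields the full statement for \emph{all} geodesics without appealing to an external classification result. Both routes lean on $p>1$ in the same essential place: Lisini's representation and the strict convexity needed to split product geodesics.
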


Famously, McCann introduced the concept of displacement interpolation in his thesis \cite{Mc94}, giving a new meaning to the transformation of one probability into another. In analogy, Theorem \ref{geodesicspace} suggests an interpolation between stochastic processes.

 \vspace{-40mm}
 \begin{figure}[H]
    \centering
    \includegraphics[page=1,width=0.7\textwidth]
        {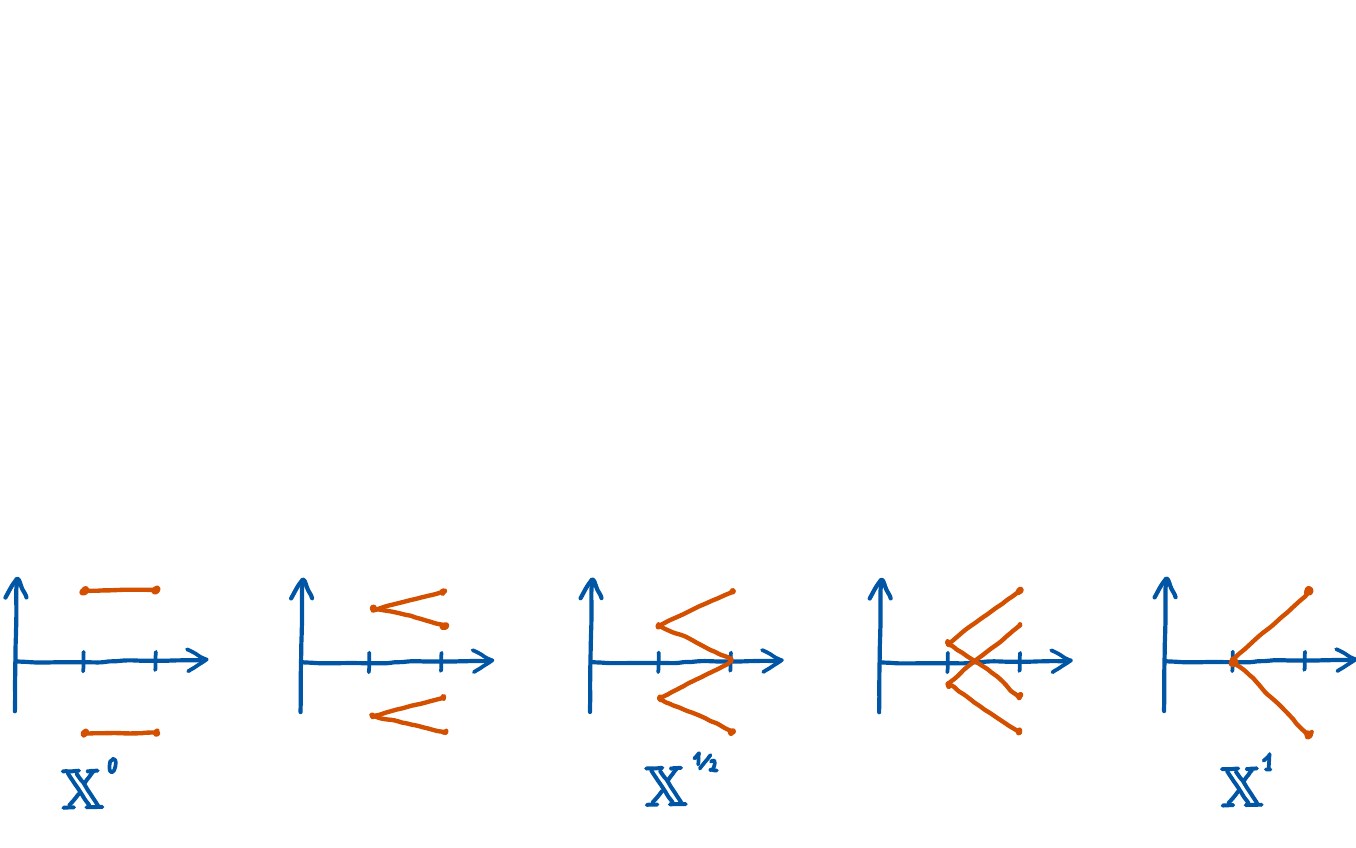} 
  \caption{Interpolation between two simple martingales $\fp{X}^0$ and $\fp{X}^1$.}
 \label{fig:AW.interpolation}
 \end{figure}
 \vspace{-02mm}
 
We emphasize that the usual Wasserstein interpolation on $\mathcal P_p(\R^N)$ is not compatible with concepts one would like to consider for stochastic processes, e.g.\ stochastic optimization problems are not continuous along geodesics, the set of martingales is not displacement convex, etc.

We also note that the set $\mathcal P_p(\R^N)$ is not $\AWA_p$-displacement convex: even if $\P, \Q\in \mathcal P_p(\R^N)$ are  laws of relatively regular processes, the respective geodesic does in general not lie in $\mathcal P_p(\R^N)$, see  Example \ref{ex:plain.not.geodesic}.
This further underlines the importance of  considering processes together with their filtration. 

\subsection{Equivalence of filtered processes}
We briefly discuss the equivalence relation induced by
\begin{align}\label{AWAequi}
\AWA_p(\fp{X}, \fp{Y})=0.
\end{align}
Intuitively, one would hope that processes with zero distance are equivalent in the sense that they have identical properties from a probabilistic perspective. 
In fact, based on formalizing  what assertions belong to the `language of probability', Hoover--Keisler \cite{HoKe84}  have made  precise what it should  mean that two processes $\fp X, \fp Y$ have the \emph{same probabilistic properties}. 
$\fp X$ and$ \fp Y$ are  then called   \emph{equivalent in  adapted distribution}, in signs $\fp{X} \sim_\infty \fp{Y}$. 
We will establish below that equivalence in adapted distribution can be expressed in terms of adapted Wasserstein distance:

\begin{theorem}\label{HKequiv}
Let $\fp{X}, \fp{Y} \in \FP_p$. 
Then $\fp{X} \sim_\infty \fp{Y} $  if and only if $ \AWA_p(\fp{X}, \fp{Y})=0$.
\end{theorem}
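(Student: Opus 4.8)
## Proof Proposal for Theorem \ref{HKequiv}

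The plan is to establish the two implications separately. The forward direction, ``$\AWA_p(\fp X,\fp Y)=0\Rightarrow \fp X\sim_\infty\fp Y$'', is the softer one and follows from the defining (bi)causality of optimal couplings together with the recursive build-up of Hoover--Keisler adapted functions; the converse, ``$\fp X\sim_\infty\fp Y\Rightarrow \AWA_p(\fp X,\fp Y)=0$'', requires manufacturing a zero-cost bicausal coupling out of nothing but the assumption of equal adapted distribution, and this is where the main work lies.

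For the forward direction, we use that the infimum defining $\AWA_p$ is attained, by some $\pi\in\cplbc(\fp X,\fp Y)$, so that $\AWA_p(\fp X,\fp Y)=0$ forces $X=Y$ $\pi$-a.s. On the coupling space we carry the lifted filtrations $(\F_t^X)_t$ and $(\F_t^Y)_t$ of $\fp X$ and $\fp Y$, and we prove by induction along the recursive construction of adapted functions $f$ that $f(\fp X)=f(\fp Y)$ holds $\pi$-a.s.; applying $\E_\pi$ then gives $\E[f(\fp X)]=\E[f(\fp Y)]$ for every adapted function, which is exactly $\fp X\sim_\infty\fp Y$. The base case (bounded continuous functions of the coordinates) and the step for composition with bounded continuous maps are immediate from $X=Y$ $\pi$-a.s. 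For the conditional-expectation step, assume $f(\fp X)=f(\fp Y)$ $\pi$-a.s.; causality of $\pi$ from $\fp X$ to $\fp Y$ (conditional independence of $\F_N^X$ and $\F_t^Y$ given $\F_t^X$) gives $\E_\pi[f(\fp X)\mid\F_t^X]=\E_\pi[f(\fp X)\mid\F_t^X\vee\F_t^Y]$, while causality in the opposite direction gives $\E_\pi[f(\fp Y)\mid\F_t^Y]=\E_\pi[f(\fp Y)\mid\F_t^X\vee\F_t^Y]$; since the random variables being conditioned agree $\pi$-a.s., so do the two conditional expectations given $\F_t^X\vee\F_t^Y$, whence $\E[f(\fp X)\mid\F_t^X]=\E[f(\fp Y)\mid\F_t^Y]$ $\pi$-a.s., which is the induction claim for the next adapted function.

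For the converse we appeal to the classical result of Hoover--Keisler that in finite discrete time $\sim_\infty$ is already determined by finitely many levels and is equivalent to the coincidence of the \emph{iterated conditional law}. Concretely, writing $\FFP_p^{(k)}$ for the (Polish, by Corollary \ref{WREP}) space of filtered processes on $k$ time steps, associate to $\fp X$ the push-forward $\Lambda_{\fp X}$ of $\P$ under $\omega\mapsto\bigl(X_1(\omega),\,\kappa^{\fp X}(\omega)\bigr)\in\R\times\FFP_p^{(N-1)}$, where $\kappa^{\fp X}(\omega)$ is the regular conditional distribution of $\fp X$ on the time steps $2,\dots,N$ given $\F_1$, viewed as an element of $\FFP_p^{(N-1)}$; then $\fp X\sim_\infty\fp Y$ iff $\Lambda_{\fp X}=\Lambda_{\fp Y}$, equality on the $\FFP_p^{(N-1)}$-coordinate being understood recursively in the same sense. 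Granting this, we argue by induction on $N$. For $N=1$ the hypothesis says that $X_1$ and $Y_1$ have the same law, and the coupling under which $X_1=Y_1$ has zero cost. For the inductive step, $\Lambda_{\fp X}=\Lambda_{\fp Y}$ yields a coupling under which $X_1=Y_1$ and $\kappa^{\fp X}=\kappa^{\fp Y}$ hold simultaneously (as elements of $\FFP_p^{(N-1)}$); on each atom of the resulting disintegration the two conditional processes represent one and the same point of $\FFP_p^{(N-1)}$, so by the inductive hypothesis they admit a zero-cost bicausal coupling, and a measurable-selection argument makes this choice measurable along the disintegration. Concatenating the time-$1$ coupling with this family of conditional couplings and gluing along the disintegration yields $\pi\in\cplbc(\fp X,\fp Y)$ with $\E_\pi[\|X-Y\|_p^p]=0$, hence $\AWA_p(\fp X,\fp Y)=0$.

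We expect the converse implication to be the main obstacle, for two reasons. First, one must nail down the exact Hoover--Keisler input --- that $\sim_\infty$ in discrete time $\{1,\dots,N\}$ is captured by the finite tower of iterated conditional laws, with the correct bookkeeping of the hierarchy of levels. Second, the recursive coupling construction has to be made rigorous: the measurable selection of conditional bicausal couplings, the verification that the glued kernel is bicausal in \emph{both} directions with respect to the lifted filtrations --- for which it is cleanest to work with canonical representatives in which each time-$t$ $\sigma$-algebra is generated exactly by the conditional-law tower --- and that vanishing conditional cost integrates to vanishing cost. A convenient way to package the recursive half is via the isometry $(\FFP_p,\AWA_p)\equiv(\mathcal P_p(\mathcal V),\WA_p^{(d)})$ of Corollary \ref{WREP}: under it $\AWA_p(\fp X,\fp Y)=0$ is literally equality of the associated measures on the iterated space $\mathcal V$, i.e.\ equality of the iterated conditional laws of $\fp X$ and $\fp Y$, so that the entire theorem reduces to identifying this Vershik-type invariant with the Hoover--Keisler adapted distribution in finite discrete time.
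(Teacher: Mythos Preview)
Your forward direction is correct and is a genuinely more direct route than the paper takes. The paper does not argue by induction on the structure of adapted functions along a zero-cost bicausal coupling; instead it factors everything through two auxiliary objects, the information process $\ip(\fp X)$ and the prediction process $\pp^n(\fp X)$, and proves the chain
\[
\AW_p(\fp X,\fp Y)=0 \;\Longleftrightarrow\; \Law(\ip(\fp X))=\Law(\ip(\fp Y)) \;\Longleftrightarrow\; \Law(\pp^{N-1}(\fp X))=\Law(\pp^{N-1}(\fp Y)) \;\Longleftrightarrow\; \fp X\sim_\infty\fp Y,
\]
the middle equivalence coming from exhibiting $\ip$ and $\pp^{N-1}$ as continuous functions of one another, and the last from showing that $\pp^n$ encodes exactly $\sim_n$. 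Your argument bypasses $\pp$ entirely and is more elementary for this implication. One caveat: you assume the bicausal infimum is attained, which is not automatic when $\Omega^{\fp X},\Omega^{\fp Y}$ are not Polish (indeed the paper needs the block-approximation machinery of the appendix precisely for this reason). Two easy fixes: either first apply your induction to the explicit zero-cost bicausal coupling $(\id,\ip(\fp X))_\ast\P^{\fp X}\in\cplbc(\fp X,\cfp X)$ to reduce to the Polish canonical representatives, where attainment is standard; or run your induction quantitatively with near-optimal $\pi_n$ and Lipschitz adapted functions (which suffice by Remark~\ref{rem:AF.lipschitz.Borel}), noting that the conditioning step is an $L^1$-contraction by your bicausality computation.

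Your converse direction is, at the level of strategy, the paper's own. The object you call $\Lambda_{\fp X}$ is (a one-step unrolling of) the law of the information process, and the reduction you propose via Corollary~\ref{WREP} is exactly the isometry of Theorem~\ref{thm:isometry}: $\AW_p(\fp X,\fp Y)=0$ is literally $\Law(\ip_1(\fp X))=\Law(\ip_1(\fp Y))$. What remains, as you say, is to identify this Vershik-type invariant with the Hoover--Keisler adapted distribution. The paper does this not by your recursive coupling construction but by interposing the prediction process: it proves $\sim_n$ is equivalent to equality of $\Law(\pp^n)$ (an algebra/separation-of-points argument, Lemma~\ref{lem:rank.adapted.functions.prediction.process.same}), and then shows $\ip$ and $\pp^{N-1}$ determine each other via explicit continuous maps (Lemmas~\ref{lem:pp is continuous function of ip} and~\ref{lem:ip as continuous function of pp}). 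This avoids the measurable-selection and bicausality-verification steps you flag as obstacles, at the price of introducing $\pp$. Your recursive construction would also work and is arguably more transparent, but it is essentially a reproof of the isometry theorem in disguise.
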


Informally, Theorem \ref{HKequiv} asserts that the equivalence classes in $\FFP_p$ collect precisely all representatives of a  process that should be considered identical from a probabilist's  point of view. 

Other (more familiar) notions of equivalence on $\FP$ are \emph{equivalence in law}, in symbols $\sim_0$, and Aldous' notion of \emph{synonymity}, in symbols $\sim_1$. 
Both of these are strictly coarser than $\sim_\infty$ and may identify processes that have different probabilistic properties. 
For example, there are filtered processes $\fp{X},\fp{Y}$ with $ \fp{X}\sim_0 \fp{Y}$ where $\fp{X}$ is a martingale while $\fp{Y}$ is not. Similarly, we will construct examples of processes $\fp{X},\fp{Y}$ where $\fp{X}\sim_1\fp{Y}$ but  optimal stopping problems written on $\fp{X},\fp{Y}$ lead to different results (see Theorem \ref{thm:stopping.rank}).

\subsection{Continuity of Doob-decomposition, optimal stopping, Snell-envelope}

In line with  Theorem \ref{HKequiv},  $\AWA_p(\fp{X},\fp{Y})=0$ implies  that $\fp{X}$ and $ \fp{Y}$ have the same Doob-decomposition,  that optimal stopping problems of the form 
\begin{align}
\label{OptStop}
\sup_\tau\E[ G_\tau (X_1, \ldots, X_\tau)],
\end{align} 
where $\tau$ runs through all $(\F_t)_{t=1}^N$-stopping times,  yield the same optimal value and have the same Snell-envelop. 
Moreover, we show that the above operations are continuous w.r.t.\  the weak adapted topology, indeed  we establish:

\begin{theorem} \label{thm:OptStopIntro}
The mapping that assigns to a filtered process its Doob-decomposition is Lipschitz continuous. 
If $G_t\colon\R^t\to \R$ is bounded and continuous (resp.\  Lipschitz) for each $t$, then \eqref{OptStop} is continuous (resp.\  Lipschitz) in $\fp{X}$. 
\end{theorem}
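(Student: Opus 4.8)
The plan is to derive both assertions from a single structural feature of bicausal couplings. If $\pi\in\cplbc(\fp X,\fp Y)$ and the product space is equipped with the filtration $(\mathcal H_t)$ generated by the two coordinate filtrations, then for every $\F$-measurable integrable random variable $Z$ on the $\fp X$-factor and every $t$ one has $\E_\pi[Z\mid\mathcal H_t]=\E[Z\mid\F_t]$, and symmetrically on the $\fp Y$-factor. This is precisely what the notion of bicausality for filtered processes (Definition~\ref{def:causal}) provides, and it allows the conditional expectations that define the Doob decomposition and the Snell envelope of $\fp X$ and of $\fp Y$ to be computed \emph{simultaneously} on the coupling space against $(\mathcal H_t)$.

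\emph{Doob decomposition.} Write $\widehat{\fp X}$ for $\fp X$ enlarged by the coordinates of its martingale part $M_t=X_1+\sum_{s\le t}\bigl(X_s-\E[X_s\mid\F_{s-1}]\bigr)$ and its predictable part $A_t=\sum_{s\le t}\bigl(\E[X_s\mid\F_{s-1}]-X_{s-1}\bigr)$; conditional Jensen puts $M$ and $A$ in $L^p$, so $\widehat{\fp X}\in\FP_p$, and since the enlargement does not alter $(\Omega,\F,\P,(\F_t))$, every $\pi\in\cplbc(\fp X,\fp Y)$ lies in $\cplbc(\widehat{\fp X},\widehat{\fp Y})$. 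Using the structural feature to express the increments of $A$ and $B$ on the coupling space, then telescoping and applying conditional Jensen together with a power-mean inequality, one gets $\E_\pi[\|A-B\|_p^p]\le C(N,p)\,\E_\pi[\|X-Y\|_p^p]$; the same bound for $\|M-N\|_p^p$ follows from $M-N=(X-Y)-(A-B)$. Infimising over a near-optimal $\pi$ yields $\AWA_p(\widehat{\fp X},\widehat{\fp Y})\le C(N,p)^{1/p}\,\AWA_p(\fp X,\fp Y)$, which in particular shows the map descends to $\FFP_p$.

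\emph{Optimal stopping.} Let $v(\fp X)$ denote the quantity \eqref{OptStop} and recall the Snell-envelope recursion $S^X_N=G_N(X_{1:N})$, $S^X_t=G_t(X_{1:t})\vee\E[S^X_{t+1}\mid\F_t]$, with $v(\fp X)=\E[S^X_1]$ (finite since $X\in L^p$). For Lipschitz $G_t$, fix a near-optimal $\pi\in\cplbc(\fp X,\fp Y)$; since each $S^X_{t+1}$ is $\F$-measurable, the structural feature, the elementary inequality $|a\vee b-c\vee d|\le|a-c|+|b-d|$ and conditional Jensen give $\E_\pi|S^X_t-S^Y_t|\le\E_\pi|G_t(X_{1:t})-G_t(Y_{1:t})|+\E_\pi|S^X_{t+1}-S^Y_{t+1}|$; iterating down to $t=1$, estimating each $G_t$-difference by $L\|X-Y\|_p$ up to the norm-equivalence constant on $\R^N$, and using Jensen, one obtains $|v(\fp X)-v(\fp Y)|\le C(N,p,L)\,\AWA_p(\fp X,\fp Y)$. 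For merely bounded continuous $G_t$ I would reduce to the Lipschitz case: along any $\AWA_p$-convergent sequence $\fp X^n\to\fp X$ the $p$-th moments of the marginal laws on $\R^N$ remain bounded (bicausal couplings are couplings, so $\AWA_p$ bounds the classical $p$-Wasserstein distance of the marginals), hence $\sup_n\P(\|X^n\|>R)\to0$ and $\P(\|X\|>R)\to0$ as $R\to\infty$; replacing each $G_t$ by a bounded $L$-Lipschitz $G^\varepsilon_t$ with $\sup_{\|x\|\le R}|G_t-G^\varepsilon_t|<\varepsilon$ changes the value \eqref{OptStop} by at most $\varepsilon+2\|G\|_\infty\sup_n\P(\|X^n\|>R)$, uniformly in $n$, while the Lipschitz case handles $G^\varepsilon$, so a three-$\varepsilon$ argument closes it.

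The step I expect to be delicate — and the one I would write out in full — is the structural feature itself: verifying that under a bicausal coupling of \emph{filtered} (as opposed to canonically filtered) processes the product filtration leaves each factor's conditional expectations undisturbed for \emph{all} $\F$-measurable integrands, not only functions of finitely many coordinates, and checking the bookkeeping point that near-optimal bicausal couplings persist under the augmentation $\fp X\mapsto\widehat{\fp X}$. Everything downstream — the telescoping estimate for the Doob parts and the backward induction for the Snell envelopes — is then routine discrete bookkeeping, with constants depending only on $N$, $p$ (and the Lipschitz constant).
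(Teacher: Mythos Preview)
Your proposal is correct, and the Doob-decomposition half matches the paper's argument essentially verbatim: the paper also observes that $\cplbc(\fp X,\fp Y)=\cplbc(\fp D^{\fp X},\fp D^{\fp Y})$ because the filtration is unchanged, then uses bicausality (your ``structural feature'', which is exactly Lemma~\ref{lem:causal CI}) to write $A^{\fp X}_t-A^{\fp Y}_t=\sum_{s\le t}\E_\pi[\,\cdot\mid\F^{\fp X,\fp Y}_{s,s}]$ and concludes by Jensen.

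For optimal stopping the routes diverge. You run a direct Snell-envelope stability argument: under a bicausal coupling, both $\E[S^X_{t+1}\mid\F_t]$ and $\E[S^Y_{t+1}\mid\G_t]$ are computable against the joint filtration $\F^{\fp X,\fp Y}_{t,t}$, and the elementary bound $|a\vee b-c\vee d|\le|a-c|+|b-d|$ plus backward induction gives $|v(\fp X)-v(\fp Y)|\le\sum_t\E_\pi|G_t(X)-G_t(Y)|$. The paper instead takes a near-optimal $\tau^\ast$ for $\fp Y$ and, using only \emph{causality} from $\fp X$ to $\fp Y$, constructs a randomised family $\sigma_u\in\mathrm{ST}(\fp X)$ via $\sigma_u=\min\{t:\pi(\tau^\ast\le t\mid\F^{\fp X,\fp Y}_{N,0})\ge u\}$ so that $\int_0^1\E_\pi[c_{\sigma_u}(X)]\,du=\E_\pi[c_{\tau^\ast}(X)]$. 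This buys a sharper one-sided estimate $v(\fp X)-v(\fp Y)\le\inf_{\pi\in\cpla}\E_\pi[\max_t|c_t(X)-c_t(Y)|]$ (a $\max$ rather than your $\sum$, and causal rather than bicausal), at the cost of the randomisation trick. Your argument is more elementary and fully symmetric; theirs is tighter and shows the asymmetric causal structure already suffices. For the continuity statement under merely bounded continuous $G_t$, the paper bypasses your three-$\varepsilon$ Lipschitz-approximation entirely by observing (Example~\ref{ex:martingales.opt.stop}) that each $S_t$ is the value of an adapted function of rank $N-t$, whence $v$ is automatically continuous in the weak adapted topology; your approximation argument is more self-contained but less structural.
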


In Section \ref{sec:applications} we collect further statements of similar flavour as Theorem \ref{thm:OptStopIntro}. 
It is important to note that comparable results do not hold w.r.t.\ to other (coarser) topologies  for filtered processes. Specifically,  convergence in Aldous' extended weak topology is strictly weaker than convergence in $\AWA_p$ and is not strong enough to obtain continuity of optimal stopping problems (see Section \ref{sec:two.processes}). This seems remarkable, since Aldous \cite[page 105]{Al81} deliberates the question which framework is natural to study continuity of optimal stopping. 

\subsection{Canonical representatives  of filtered processes}
A slightly altered variant $\Z$  of the Polish space $\mathcal{V}$ appearing in Theorem \ref{WREP} also plays an important role in finding canonical representatives for the equivalence classes in $\FFP$: 
In Section \ref{sec:FP} below we will show that there exists   $(\Z,\F^{\Z},  (\F_t^{\Z})_{t=1}^N, (Z_t)_{t=1}^N) $ with $\Z$ Polish,
such that every filtered process $\fp{X}$ is represented in a canonical way via a probability on $\Z$, i.e.\ there exists $\Q^{\fp{X}}\in\PP(\Z)$  such that 
\[ (\Z,  \F^{\Z}, \Q^{\fp{X}}, (\F_t^{\Z})_{t=1}^N, (Z_t)_{t=1}^N ) \sim_\infty \fp{X}.\]
In particular, all information about the process $\fp{X}$ is stored in the corresponding measure $\Q^{\fp X}$, while  the underlying probability space $\Omega$, the representing stochastic process $Z$ and the respective filtration do not depend on $\fp{X}$. 
In this sense the situation is analogous to the canonical representation of stochastic processes via probabilities on the path space.
In view of Theorem \ref{HKequiv} this also implies that one can assume without loss of generality  that a given filtered process is defined on a Polish probability space.

\subsection{Prohorov-type result and barycenter of processes}

An extremely useful  property of the usual weak topology is the abundance of (pre-)compact sets based on Prohorov's theorem. Remarkably, this  carries over to `adapted' topologies. This was first established by Hoover \cite[Theorem 4.3]{Ho91}, see also \cite[Lemma 1.7]{BaBaBeEd19a}. In the present context this fact can be expressed as follows: 

\begin{theorem}\label{CompactPre}
\label{thm:compact}
A set $K\subseteq \FFP_p$ is $\AWA_p$-precompact if and only if the respective set of laws in  $\mathcal P_p(\mathbb{R}^N)$ is $\WA_p$-precompact.
\end{theorem}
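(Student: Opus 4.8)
The plan is to leverage Corollary \ref{WREP}, which gives an isometry $(\FFP_p, \AWA_p) \equiv (\mathcal P_p(\mathcal V), \WA_p^{(d)})$ for a suitable Polish space $(\mathcal V, d)$, together with the classical characterization of precompactness in Wasserstein space: a set $\mathcal K \subseteq \mathcal P_p(\mathcal V)$ is precompact for $\WA_p^{(d)}$ if and only if $\mathcal K$ is tight and the $p$-th moments $\int d(v, v_0)^p\, d\mu(v)$ are uniformly integrable (equivalently, uniformly bounded after a truncation argument). So the statement reduces to showing that a family of filtered processes satisfies this criterion on the $\mathcal V$-side precisely when the associated family of one-dimensional laws on $\R^N$ is $\WA_p$-precompact, i.e.\ tight on $\R^N$ with uniformly integrable $p$-th moments. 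One direction is essentially immediate: the map $\fp X \mapsto \Law(X)$ that sends a filtered process to the law of its path is $1$-Lipschitz from $(\FFP_p, \AWA_p)$ to $(\mathcal P_p(\R^N), \WA_p)$, since every bicausal coupling is in particular a coupling; hence precompactness of $K$ in $\FFP_p$ forces precompactness of the image laws.

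For the converse — the substantive direction — I would argue as follows. Suppose the set of laws $\{\Law(X): \fp X \in K\}$ is $\WA_p$-precompact in $\mathcal P_p(\R^N)$. First reduce to the bounded case: using uniform integrability of the $p$-th moments of the paths, a standard truncation/tightness argument lets one approximate each $\fp X \in K$ uniformly (in $\AWA_p$, uniformly over $K$) by filtered processes taking values in a fixed compact cube $[-R,R]^N$, so it suffices to prove total boundedness of $K$ when all processes are uniformly bounded. In the bounded case one uses the density results (Theorem \ref{thm:dense}) that filtered processes on finite state spaces, or finite-state Markov chains, are dense: the key point is that on a fixed compact cube, for each $\varepsilon>0$ one can find a \emph{finite} $\varepsilon$-net for the whole bounded class, because both the spatial values and the (finitely many) conditional-law structures can be discretized at a uniform scale. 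Concretely, I would build a finite grid on $[-R,R]^N$ and, iterating over the $N$ time steps, a finite net of "conditional kernels" at each node at scale $\varepsilon$, and show via the recursive/nested structure of $\AWA_p$ (the iterated Wasserstein description underlying $\mathcal V$) that every bounded filtered process is within $\varepsilon$ of one of finitely many "discrete" processes. This yields total boundedness of $K$, and since $(\FFP_p, \AWA_p)$ is complete by Theorem \ref{MainTheorem}, total boundedness gives precompactness.

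The main obstacle I anticipate is the converse direction's discretization step: making precise that a $\WA_p$-bound on the one-dimensional marginal laws on $\R^N$ actually controls the full nested/adapted structure uniformly, so that a genuinely \emph{finite} $\varepsilon$-net exists. The subtlety is that $\AWA_p$ sees the conditional laws at every time step, and a priori a tight family of path-laws could hide wildly varying conditional structures; one must show that boundedness/tightness propagates through the conditioning. The cleanest route is probably to work on the $\mathcal V$-side and verify tightness of $\{\Q^{\fp X}: \fp X \in K\} \subseteq \mathcal P(\mathcal V)$ directly: exhibit, for each $\delta>0$, a compact $C_\delta \subseteq \mathcal V$ carrying mass $\ge 1-\delta$ uniformly over $K$. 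Since $\mathcal V$ is an iterated Wasserstein space, its compact subsets are themselves described by tightness-plus-moment conditions at each level of the iteration, and one feeds in the $\R^N$-tightness of the marginals level by level, using that the disintegration maps are continuous. Packaging this induction over the $N$ levels cleanly — rather than by brute-force estimates — is where the real work lies; the moment/uniform-integrability bookkeeping and the final appeal to completeness are routine.
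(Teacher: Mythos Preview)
Your final paragraph --- working on the $\mathcal V$-side and propagating tightness level by level through the iterated Wasserstein structure --- is exactly the paper's approach. What you are missing is the clean lemma that makes the induction go through in one line: for Polish spaces $\mathcal A, \mathcal B$, define the \emph{intensity operator} $\widehat I \colon \Pc_p(\mathcal A \times \Pc_p(\mathcal B)) \to \Pc_p(\mathcal A \times \mathcal B)$ by $\int f\, d\widehat I(\pi) = \iint f(a,b)\, p(db)\,\pi(da,dp)$; then $\Pi \subseteq \Pc_p(\mathcal A \times \Pc_p(\mathcal B))$ is relatively compact if and only if $\widehat I(\Pi)$ is. The recursive definition of the information process gives $\widehat I\big(\Law(X_{1:t-1}, \ip_t(\fp X))\big) = \Law(X_{1:t}, \ip_{t+1}(\fp X))$, so iterating the lemma $N-1$ times shows directly that $\{\Law(\ip_1(\fp X)) : \fp X \in K\}\subseteq \Pc_p(\Z_1)$ is relatively compact if and only if $\{\Law(X) : \fp X \in K\}\subseteq \Pc_p(\X)$ is, and the isometry finishes. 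This is precisely the ``packaging'' you say is where the real work lies; once isolated, the proof is a few lines.

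Your route (a) via truncation and discretization is a detour with a genuine gap. The density result (Theorem~\ref{thm:dense}) gives only \emph{pointwise} approximation of each $\fp X$ by finite processes, not uniform approximation over $K$, so it does not by itself produce a finite $\varepsilon$-net; you would still need to prove that the set of processes with paths in $[-R,R]^N$ is totally bounded in $\AW_p$. But that statement --- that a bound on the path laws controls the full nested conditional structure --- is exactly the non-trivial direction of the intensity lemma above. So route (a) does not avoid the obstacle you flagged; it just postpones it.
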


Note that by Prohorov's theorem, $\WA_p$-precompactness in $\mathcal P_p(\R^N)$ is equivalent to tightness plus uniform $p$-integrability, see for instance \cite{Vi09}.

Theorem \ref{CompactPre} is relevant  in several proofs given below and has important consequences for the applications of our results presented in Section \ref{sec:applications}. For instance, it allows us to
 establish  the existence of barycenters of stochastic processes: 
Famously, Agueh and Carlier \cite{AgCa11} introduced the concept of barycenters w.r.t.\ Wasserstein distance which has striking consequences in machine learning (e.g.\ \cite{RaPeDeBe12, CuDo14}), statistics (e.g.\ \cite{PaZe19, BaFoRi18}) as well as in pure mathematics (e.g.\ \cite{LeLo17, KiPa17}). 
In Theorem \ref{cor:barycenter attainment} we show that for filtered processes $\fp X^1, \ldots, \fp X^k\in \FFP_p$ and convex weights $\lambda_1, \ldots, \lambda_k$ there exists a \emph{barycenter process} 
i.e.\ a 
filtered process $\fp X^*\in \FFP_p$ which minimizes  
$$\inf_{\fp X }  \lambda_1 \AW_p^p( \fp X^1, \fp X) + \ldots + \lambda_k \AW_p^p( \fp X^k, \fp X).$$ 

\subsection{Applications and extensions}

As already noted above, the adapted Wasserstein distance improves over the classical weak topology / Wasserstein distance in that it guarantees stability of basic operations such as the Doob-decomposition and optimal stopping. Naturally we expect similar results  for other probabilistic problems with inherent time structure. 
In this line, we describe applications to stability of stochastic optimal control, utility maximization and pricing / hedging,  robust finance in the realm of American options, 
conditional McKean-Vlasov control, and weak optimal transport, see Sections \ref{ssec:optimal stopping} - \ref{ssec:Doob-decomposition} below. 
In view of applications it is  relevant that  adapted Wasserstein distance can be efficiently computed numerically as well as  estimated from given data; we comment on this in Section \ref{ssec:numerics}.  

While the focus of the present article lies on stochastic processes in finite discrete time, extensions  to more general cases are intriguing. 
In Appendix \ref{sec:infinite.time} we consider the case of infinite discrete time, i.e.\ the set $\FFP^{(\infty)}$ of processes $(X_t)_{t=1}^\infty$ whose paths lie in $\R^\infty$ (or a countable product of Polish spaces). 
We obtain results very similar to the finite discrete time case, mainly based on limiting arguments.\footnote{We thank an anonymous referee for pointing us to this direction.} 
A notable difference is that the path space $\R^\infty$ is not geodesic and hence 
$\FFP^{(\infty)}$ is not geodesic either. 

Concerning continuous time processes $(X_t)_{t\in [0,T]}$, it is known from stochastic analysis, that different applications require the use of different topologies / metrics on the path space. 
This fact appears even more noticeable  when also information is taken into account. 
In Appendix  \ref{sec:cont.time} we briefly present adapted topologies for continuous time stochastic processes that have been used in the literature or seem sensible. We describe how $\AWA$ needs to be altered to fit the respective choices 
and comment on some strengths and weaknesses of the emerging theories. 

 \vspace{-0mm}
 \begin{figure}
    \centering
    \includegraphics[page=1,width=0.9\textwidth]
        {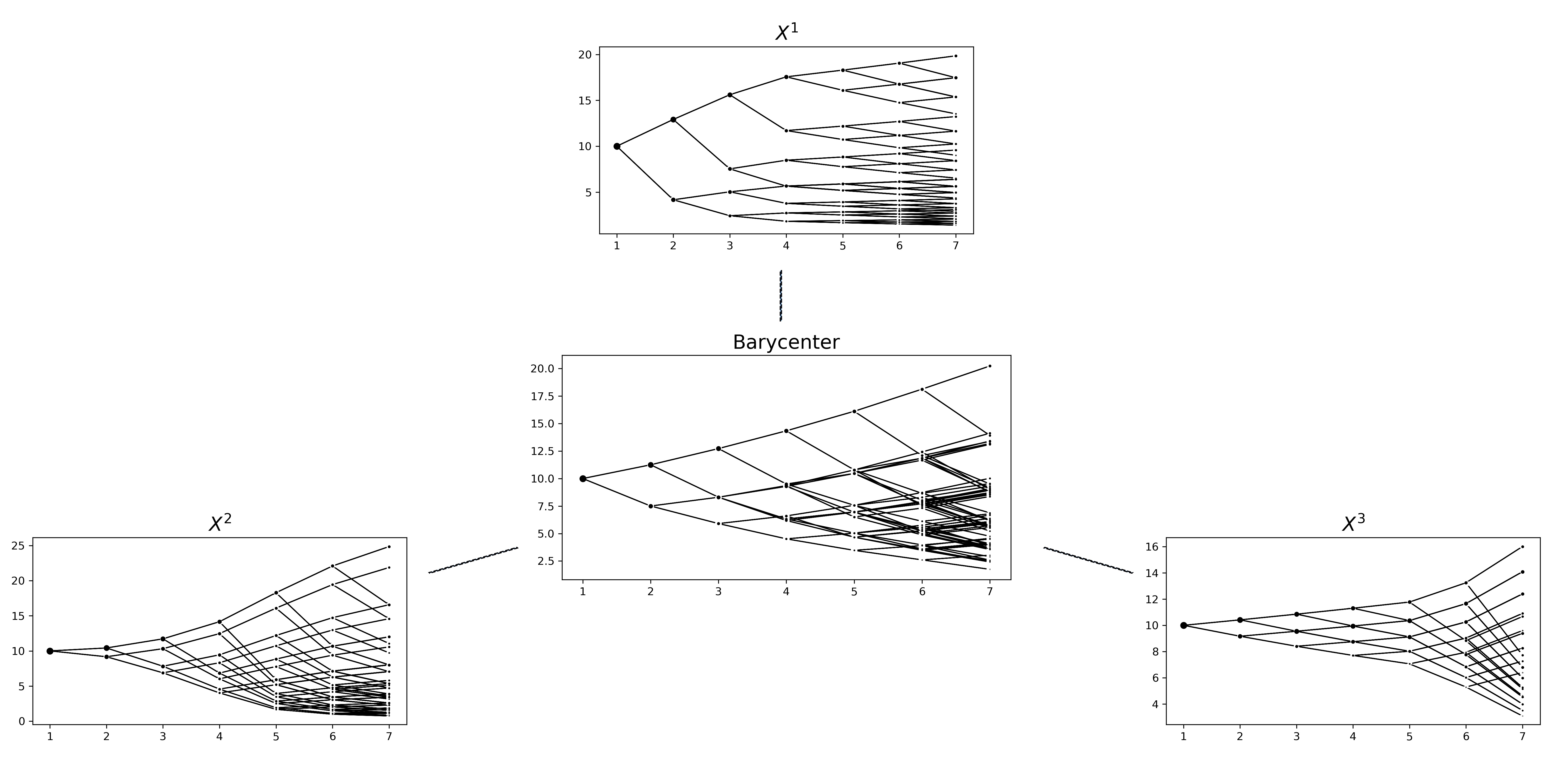} 
  \caption{Three CRR-type models as considered in finance and econometrics  and their barycenter (implemented by Stephan Eckstein).}
 \label{fig:barycenter}
 \end{figure}
 \vspace{-02mm}

\subsection{Remarks on related literature.}

Imposing a `causality' constraint on a transport plan between laws of processes  seems to go back to the Yamada--Watanabe criterion for stochastic differential equations \cite{YW} and is used under the name `compatibility' by Kurtz \cite{Ku07}.

A systematic treatment and use of  causality as an interesting property of abstract transport plans between filtered probability spaces and their associated optimal transport problems was initiated by  Lassalle \cite{La18} and Acciaio, Backhoff, and Zalashko \cite{AcBaZa20}. 

As noted above, different groups of authors have introduced similar `adapted' variants of the Wasserstein distance, this includes the works of 
 Vershik \cite{Ve70, Ve94}, 
R\"{u}schendorf \cite{Ru85},  Gigli \cite[Chapter 4]{Gi04} (see also  \cite[Section 12.4]{AmGiSa08}), Pflug and Pichler \cite{PfPi12}, Bion-Nadal and Talay \cite{BiTa19}, and Nielsen and Sun \cite{NiSu20}. Pflug and Pichler's nested distance has had particular impact in multistage programming, see \cite{PfPi14, PfPi15, GlPfPi17, KiPfPi20, PiSh21}  among others. 

In addition to these distances, 
extensions of the weak topology that account for the flow of information were introduced by Aldous in stochastic analysis \cite{Al81} (based on Knight's prediction process \cite{Kn75}), 
 Hoover and Keisler in mathematical logic \cite{HoKe84, Ho91} and Hellwig in economics \cite{He96}. Very recently, an approach using higher rank signatures was given by Bonner, Liu, and Oberhauser \cite{BoLiOb23}, in particular providing a metric for convergence in adapted distribution in the sense of Hoover--Keisler.

The idea to represent information (in the sense of filtrations) using conditional distributions  originated in the theory of dynamical systems and Vershik's program to classify filtrations whose time horizons starts at $-\infty$,  see e.g.\ \cite{Ve94}   and in particular the survey \cite{Ve17}. For a more probabilistic account of this line of research we refer to \cite{EmSc01}. 
Independently, Pflug \cite{Pf09}  introduced  this idea in stochastic optimization and defined the space of `nested distributions'.

Recently, there has been significant interest in adapted / causal  transport problems in discrete time or with a finite number of hierarchical levels. 
A goal of the present article is to provide the theoretical framework for these  emerging lines of research and we briefly indicate some of these directions:
In mathematical finance, the use  of weak adapted topologies was initiated in the context of game options  \cite{Do14}. Further contributions apply adapted transport and adapted  Wasserstein distances to questions of insider trading and enlargement of filtrations \cite{AcBaZa20},  stability of pricing / hedging and utility maximization  \cite{GlPfPi17, BaDoGu20, BaDoDo20, BaBaBeEd19a, AcBaPa22, BeJoMaPa21a, BeJoMaPa21b} and  interest rate uncertainty \cite{AcBePa20}.
Adapted transport is used in \cite{BaWi23} to study the sensitivity of multiperiod optimization problems and distributionally robust optimization problems. In \cite{Ha23} it is applied to time-dynamic matching problems  \cite{BaHa23a}, and in \cite{SaLeLiHoDaLy21, HoLeLiLySa23} for the computational resolution of optimal stopping and other filtration-dependent problems.
In \cite{ChLiMeWaWa23} a connection of adapted transport  to the Weisfeiler-Lehman distance is reveiled.
Machine learning algorithms based on adapted or hierarchical structures are studied in the context of image processing \cite{KaGuMaSi23}, text processing and hierarchical domain translation \cite{YuClChMiSo19, ElBeFa22}, causal graph learning \cite{AkGaKi23}, video prediction and generation \cite{XuLiMuAc20,XuAc21}, and universal approximation \cite{AcKrPa23}.
In \cite{ChEc23}, adapted transport is used as the starting point to develop a framework for more general causal dependence structures.

\subsection{Organization of the paper}
In Section \ref{ssec:notation} we introduce some important concepts and in particular the notions of (bi-) causality and adapted Wasserstein distance.

In Section \ref{sec:FP} we formally discuss the Wasserstein space of filtered processes $\FFP_p$ as a preparation  to establish Theorem \ref{MainTheorem} and Theorem \ref{WREP} subsequently. 
In Subsection \ref{ssec:canonical.space} we construct a canonical filtered space that supports for each equivalence class $\fp X \in \FFP_p$ a canonical representative.
Building on the foregoing subsections, we establish in Subsection \ref{ssec:isometry} an  isometric isomorphism between filtered processes, their canonical counterparts, and a classical Wasserstein space.

Section \ref{sec:AF.PP} links the weak  adapted topology to the concept of adapted functions and the prediction process by Hoover and Keisler.

Section \ref{sec:aspects} deals with topological and geometric aspects.
We prove a  compactness criterion, show that $\FFP_p$ is the completion of $\Pc_p(\R^N)$ and  prove that finite state Markov processes are dense in $\FFP_p$, prove that $\FFP_p$ is a geodesic space for $1 < p < \infty$, and show that martingales form a closed, geodesically convex subset of $\FFP_p$. 

In Section \ref{sec:applications} we discuss applications and  comment on numerical aspects related to  $\AWA$.

Section \ref{sec:two.processes} is concerned with an example that, among other things, shows that Aldous' extended weak topology fails to guarantee continuity of  optimal stopping problems.

Finally, in Appendix \ref{sec:block approximation}, we discuss a notion of `block approximation' of couplings, which is an auxiliary concept required  to prove the results in  Subsection \ref{ssec:isometry} for probability spaces that are not necessarily Polish. 
In Appendix \ref{sec:infinite.time} and \ref{sec:cont.time} we discuss extensions of the present setting to the case of stochastic processes index by infinite discrete time and continuous time, respectively.

\section{Notational conventions} 
\label{ssec:notation}

Throughout this article, we fix a time horizon $N\in\N$ and $1 \le p < \infty$.
For each time $1\leq t\leq N$, let $\X_t$ be a Polish space with a fixed compatible complete metric $d_{\X_t}$. 
If $\X_t=\R^d$, then $d_t(x,y)=|x-y|$ where $|\cdot|=\|\cdot\|_2$ is the Euclidean norm.
Given a finite family of sets $(A_u)_{u = 1}^{t}$ and $1\leq s\leq t$, we use the following abbreviation for its product
\[	A_{s:t} := A_s \times \ldots \times A_t.	\]
The same convention applies to vectors.
For  $s \le r \le t$, the projection onto the $r$-th coordinate of $A_{s:t}$ is denoted by $\proj_r \colon A_{s:t} \to A_r$.
Using this convention we are interested in stochastic processes taking values in the path space $\X := \X_{1:N}$. 
Processes on $\X$ are usually denoted by capital letters, i.e., $X = (X_t)_{t=1}^N$, whereas specific elements of the path space are denoted by lower case, i.e., $(x_t)_{t=1}^N \in \X_{1:N}$.

\vspace{0.5em}
\emph{Distances}: 
For a Polish space $\mathcal{A}$ with fixed compatible complete metric $d_\mathcal{A}$, we write $\Pc(\mathcal{A})$ for the set of Borel probability measures on $\mathcal{A}$, and $\Pc_p(\mathcal{A})$ for the subset whose elements integrate  $d^p_\mathcal{A}(\cdot,a_0)$ for some (and hence all) $a_0 \in \mathcal{A}$.
If $\mathcal{B}$ is another Polish space and  $\mu\in \Pc(\mathcal{A})$, $\nu \in \Pc(\mathcal{B})$, we write $\cpl(\mu,\nu)$ for the set of all couplings with marginals $\mu,\nu$, that is $\pi \in \cpl(\mu,\nu)$ if $\pi \in \Pc(\mathcal{A} \times \mathcal{B})$ 
and its first marginal equals $\mu$ and its second $\nu$.
We equip $\Pc(\mathcal{A})$ with the topology of weak convergence and $\Pc_p(\mathcal{A})$ with the ($p$-th order) Wasserstein distance $\mathcal{W}_{\Pc_p(\mathcal{A})}$, that is
\[
	\W_{\Pc_p(\mathcal{A})}^p(\mu,\nu):= \inf_{\pi \in \cpl(\mu,\nu)} \int d_\mathcal{A}^p(a,\hat a) \, \pi(da,d\hat a).
\]
This renders $\Pc(\mathcal{A})$ and $\Pc_p(\mathcal{A})$ Polish spaces.
Note that for a bounded metric, the weak convergence topology and the one induced by $\W_p$ coincide.
Whenever clear from context, we will omit excessive subscripts and simply write $d$ and $\W_p$ for $d_{\mathcal{A}}$ and $\mathcal{W}_{\Pc_p(\mathcal{A})}$ respectively.

\vspace{0.5em}
\emph{Filtrations:} 
For a filtered process ${\fp X}=(\Omega^{\fp X}, \F^{\fp X}, \P^{\fp X}, (\F^{\fp X}_t)_{t=1}^N, (X_t)_{t=1}^N)$, we use the convention that $\F^{\fp X}_0:=\{\emptyset,\Omega^{\fp X} \}$.
It is important to note that since the processes start at time $t=1$, this convention is only notational and does not imply that the initial $\sigma$-algebra is trivial.
Frequently we consider multiple products of $\sigma$-algebras.
To prevent  notation getting out of hand, we write 
\[\F_{t,s}^{\fp X,\fp Y} := \F_t^\fp X \otimes \F_s^\fp Y\quad\text{for } 0\leq s,t\leq N\]
for two filtered processes $\fp X$ and $\fp Y$.
Moreover, we will often identify $\mathcal{F}^{\fp X,\fp Y}_{0,t}$ with $\F^{\fp Y}_t$; e.g.\ an $\F^{\fp X,\fp Y}_{0,t}$-measurable functions is naturally associated on $\Omega^{\fp X}\times\Omega^{\fp Y}$ with an $\F^{\fp Y}_t$-measurable function depending only on the second coordinate, and vice versa.
In a similar manner, for a function $f\colon A_t\to\Y$, we continue to write $f\colon A_{1:n}\to\Y$ for the function $f\circ\mathrm{pr}_t$.

\vspace{0.5em}
\emph{Couplings}:
In optimal transport couplings are the central tool for comparing probability measures.
For filtered processes this role is taken by bicausal couplings, i.e.\ couplings which respect the information structure of the underlying filtered probability spaces. 

\begin{definition}[Causal couplings]
\label{def:causal}
 	Let $\fp{X}$, $\fp{Y}$ be filtered processes.
 	A probability $\pi$ on $(\Omega^\fp X \times \Omega^\fp Y, \F^\fp X \otimes \F^\fp Y)$  is called \emph{coupling} between $\fp X$ and $\fp Y$ if its marginals are $\P^\fp X$ and $\P^\fp Y$.
	We call $\pi$ 
 	\begin{enumerate}[label=(\alph*)]
	\item
 	\emph{causal} (or causal from $\fp{X}$ to $\fp{Y}$) if, for every $1\leq t\leq N$, conditionally on $\F^{\fp{X},\fp{Y}}_{t,0}$ we have that $\F^{\fp{X},\fp{Y}}_{N,0}$ and $\F^{\fp{X},\fp{Y}}_{0,t}$ are independent,
 	\item
 	\emph{anticausal} (or causal from $\fp{Y}$ to $\fp{X}$) if, for every $1\leq t\leq N$, conditionally on $\F^{\fp{X},\fp{Y}}_{0,t}$ we have that $\F^{\fp{X},\fp{Y}}_{0,N}$ and $\F^{\fp{X},\fp{Y}}_{t,0}$ are independent,
	\item
 	\emph{bicausal} if it is both, causal and anticausal.
 	\end{enumerate}
 	We  write $\cpl(\fp X,\fp Y)$, $\cpla(\fp X,\fp Y)$ and $\cplba(\fp X, \fp Y)$ for the set of couplings, causal couplings, and bicausal couplings, respectively.
 \end{definition}
 
In case that the underlying spaces are path spaces equipped with canonical filtration / processes, these definitions correspond precisely to the classical definitions given in the literature, see \cite{PfPi12, PfPi14, BaBeLiZa16,   BaBeEdPi17, BaBaBeEd19b} among others. 
In the context of space with more general filtrations causality and causal transport are considered in \cite{La18, AcBaZa20}.

The following lemma provides useful characterizations of causality which we will frequently use throughout the article.

 \begin{lemma}[Causality]
 \label{lem:causal CI}
 	Let $\pi$ be a coupling between two filtered processes $\fp X$ and $\fp Y$.
 	Then the following are equivalent.
 	\begin{enumerate}[label=(\roman*)]
 		\item \label{it:causal CI}$\pi$ is causal (from $\fp X$ to $\fp Y$).
 		\item \label{it:causal CI1}
 		$\E_\pi[ U | \F^{\fp X,\fp Y}_{t,t} ]
 		= \E_\pi [ U | \F^{\fp X,\fp Y}_{t,0} ]$ for all $1\leq t \leq N$ and bounded $\F_N^\fp X$-mb.\ $U$.
 		\item \label{it:causal CI2}
 		$\E_\pi[ V | \F^{\fp X,\fp Y}_{N,0} ]
 		= \E_\pi[ V | \F^{\fp X,\fp Y}_{t,0} ]$
 		for all $1\leq t\leq N$ and bounded $\F_t^\fp Y$-mb.\ $V$.
 	\end{enumerate}
 	Moreover, $U$ in \ref{it:causal CI1} can be allowed to be $\F^{\fp X,\fp Y}_{t,N}$-measurable and $V$ in \ref{it:causal CI2} can be allowed to be $\F^{\fp X,\fp Y}_{t,t}$-measurable.
 \end{lemma}
 
	In words, \ref{it:causal CI1} says that given the past of $\fp X$, the past of $\fp Y$ does not provide additional information about the future of $\fp X$ and \ref{it:causal CI2} says that given the past of $\fp X$, the future of $\fp X$ does not provide additional information about the past of $\fp Y$.
  
 \begin{proof}[Proof of Lemma \ref{lem:causal CI}]
 	The equivalence between \ref{it:causal CI}-\ref{it:causal CI2} is a consequence of \cite[Proposition 5.6]{Ka97}.
 	The second statement follows from  a standard application of the  monotone class theorem.
 \end{proof}

The adapted Wasserstein distance between filtered processes $\fp X$ and $\fp Y$ taking values in $\X$ is then defined as 
\[\AW_p^p(\fp X,\fp Y) :=\inf_{\pi\in\cplba(\fp X, \fp Y) } \E_\pi[d_{\X,p}^p(X,Y)]
\quad\text{where } d_{\X,p}(x,y)=\Big(\sum_{t=1}^N d_{\X_t}^p(x_t,y_t) \Big)^{\frac{1}{p}}.\]
When clear from context, we write $d$ instead of $d_{\X,p}$.
Similarly, we write $\E[f(X)]$ instead of $\E_{\P^{\fp X}}[f(X)]$ etc.

\vspace{0.5em}
{\it Kernels and product measures:}
For two Polish spaces $\mathcal{A}$ and $\mathcal{B}$, the term kernel refers to a Borel-measurable mapping $k\colon\mathcal{A}\to\mathcal{P}_p(\mathcal{B})$.
For $\mu\in\mathcal{P}_p(\mathcal{A})$ and a kernel $k$, we write $\mu\otimes k\in\mathcal{P}_p(\mathcal{A}\times\mathcal{B})$ for the measure given by $\mu\otimes k(A\times B)=\int_A k^a(B)\,\mu(da)$.
If $\nu\in\mathcal{P}_b(\mathcal{B})$ we write $\mu\otimes\nu\in\mathcal{P}_p(\mathcal{A}\times\mathcal{B})$ for the product measure. 
For a measure $\mu\in\mathcal{P}(\A)$ and a Borel-measurable mapping $f\colon\A\to\B$, the push-forward of $\mu$ under $f$ is denoted by $f_\ast\mu$.

\section{The Wasserstein space of stochastic processes} \label{sec:FP}

\subsection{The canonical filtered space}
\label{ssec:canonical.space}

In order to prove Theorem \ref{MainTheorem} we introduce the canonical space of  filtered processes.
The classical canonical space of a stochastic process $X$ is the triplet consisting of path space, Borel-$\sigma$-algebra, and its induced law.
Clearly, this triplet is adequate if one is interested solely in trajectorial properties of the process.
However, the filtration is a major part of filtered processes $\fp X \in \FP$ and  therefore we need to capture the information contained in its filtration $\F^\fp X$ in a canonical way.
Thus we need to define a canonical space which is capable to carry besides the path properties also the relevant informational properties of $\fp X$.

 As an instructional example, consider two 1-step filtered processes $\fp X$ and $\fp Y$ taking values in $\X = \X_1 \times \X_2 = \{ 0 \} \times \{-1,1\}$.
 We write $\mathcal{G}_1$ for the trivial $\sigma$-algebra, $\mathcal{G}_2=\mathcal{B}$ for the Borel $\sigma$-algebra on $\X$, and $X = (X_t)_{t = 1}^2$ for the coordinate process on $\X$.
 Let $\P := \frac{1}{2}( \delta_{(0,-1)} + \delta_{(0,1)} )$, and define 
 \begin{align*}
	 \fp X &:= \left( \X , \mathcal{B}, \fp P, (\mathcal{G}_1,\mathcal{G}_2), X  \right)
	 \quad\text{and}\quad
	 \fp Y := \left( \X , \mathcal{B}, \P, (\mathcal{G}_2,\mathcal{G}_2),  X  \right).
\end{align*}
 Even though the laws of the paths of the two processes coincide, their probabilistic behavior is very different due to their different filtrations, specifically we have
 \begin{equation}
	 \Law(X_2 | \F_1^{\fp X})(\omega) = \frac{1}{2}(\delta_{-1} + \delta_1) 
	 \text{ whereas } 
	 \Law(X_2 | \F_1^{ \fp Y})(\omega) = \delta_{X_2(\omega)}.
 \end{equation}
 From this perspective, the product $\X_1 \times \Pc(\X_2)$ is adequate to capture $X_1$ and additionally the information on $X_2$ we can witness at time 1 based on the filtration, that is $(X_1, \Law(X_2 | \F_1^{\fp X}))$.

This observation leads to the definition of what we baptize the canonical filtered space, the information process, and the canonical filtered process  below:
 
 \begin{definition}[Canonical space]
	\label{def:canonical.space.Z}
	Fix $p \in [1,\infty)$. We iteratively define a sequence of nested spaces. 
	We write $(\Z_N,d_{\Z_N}):=(\X_N,d_{\X_N})$ and recursively for $t=N-1,\dots,1$ 
 \begin{equation} \label{def:Zt}
 	\Z_t := \Z_t^-\times\Z_t^+ :=\X_t\times\Pc_p(\Z_{t+1}) 
 \end{equation}
 with metric $d_{\Z_t}^p:= d_{\X_t}^p + \W_{p,\Pc(\Z_{t+1})}^p$.
 The elements of $\Z_t$ are denoted by $z_t = (z_t^-,z_t^+) \in \Z_t^- \times \Z_t^+$.
 The \emph{canonical filtered space} is given by the triplet
	\begin{equation} \label{eq:CFS}
		\left( \Z, \F^\Z, (\F^\Z_t)_{t=1}^N \right),
	\end{equation}
	 where $\Z_{1:N}$ is denoted by $\Z$, elements of $\Z$ by $z = z_{1:N}$, the Borel-$\sigma$-algebra on $\Z$ by $\F^\Z$, and $\sigma(z \mapsto z_{1:t})$ by $\F_t^\Z$.
	In the context of the canonical filtered space the map $Z^- \colon \Z \to \X_{1:N}$ denotes the evaluation map
	 \begin{equation}
		 \label{def:evaluation map}
		 Z^-(z) := (Z^-_t(z))_{t = 1}^N := (z_t^-)_{t = 1}^N.
	 \end{equation}
\end{definition}

The spaces introduced in Definition \ref{def:canonical.space.Z} are Polish as all operations involved in their definition preserve this property.
The next definition associates to a filtered process $\fp{X}$ its canonical counterpart -- the information process $\ip(\fp X)$ defined on $\Omega^{\fp X}$ and taking values in $\Z$.
We shall later see that the information process selects all information contained in the original filtration relevant for the process; hence its name.

\begin{definition}[The information process]
	\label{def:ip}
	To each $\fp X\in\FP$, we associate its \emph{information process} $\ip(\fp X)=(\ip_t(\fp X))_{t=1}^N$  defined by setting
		\begin{align*}
		\ip_N(\fp X)\colon\Omega^\fp X&\to\mathcal{Z}_{N},\quad 	\omega \mapsto X_N(\omega)
		\end{align*}		
		and, recursively  for $t=N-1,\dots,1$,
		\begin{align*}
		\ip_t(\fp X) =(\ip_t^-(\fp{X}), \ip_t^+(\fp{X}))  \colon\Omega^\fp X &\to \mathcal{Z}_t, \quad
		\omega \mapsto \left( X_t(\omega), \Law(\ip_{t+1}(\fp{X})|\F_t^\fp{X})(\omega) \right).
		\end{align*}	
\end{definition}

 Note that $\ip_t^-(\fp X)$ is $\Z_t^-$-valued and $\ip_t^+(\fp X)$ is $\Z_t^+$-valued.
 In particular this implies that the information process is well-defined: 
 as $(\Z_t)_{t=1}^N$ consists of Polish spaces, the conditional probabilities appearing in the recursive definition of $\ip(\fp X)$ above exist.
 
 The information processes will be an essential ingredient when we  define canonical representatives of filtered processes (see Definition \ref{def:CFP.associated.to.FP} below).
 The next lemma can be seen as a first justification of its name:
 
 \begin{lemma}[The information process is self-aware]
 \label{lem:ip.self.aware}
 	For every bounded, Borel (continuous) function $f\colon\Z\to\R$ and every $1\leq t\leq N$ there is a bounded, Borel (continuous) function $g\colon\Z_{1:t}\to\R$ such that
 	\[ \E[f(\ip(\fp{X}))  | \F_t^\fp{X}] = g(\ip_{1:t}(\fp{X})) \quad\text{for all } \fp X\in \FP.\]

	More generally, let $\mathcal A$ be a Polish space.
	For every Borel (continuous) function $f \colon \Z \to \mathcal A$ and every $1 \le t \le N$ there is a Borel (continuous) function $g \colon \Z_{1:t} \to \Pc(\mathcal A)$ such that
	\[
		\Law(f(\ip(\fp X)) | \F_t^\fp X ) = g(\ip_{1:t}(\fp X))\quad\text{for all }\fp X \in \FP.
	\]
 \end{lemma}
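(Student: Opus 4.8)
The plan is to prove both statements simultaneously by backward induction on $t$, since the first statement is the special case $\mathcal A = \R$ of the second (a bounded continuous $\R$-valued $f$ produces $g$ valued in $\Pc(\R)$, and one recovers the scalar conditional expectation by composing $g$ with the barycenter map $\Pc_p(\R)\to\R$, which is continuous on $\Pc_p$; to stay purely at the level of $\Pc(\mathcal A)$ one can equally phrase the first bullet as the push-forward of the second under the evaluation $\mathrm{ev}\colon \Pc(\R)\times\R$... — cleaner is just to treat the $\Pc(\mathcal A)$-valued statement and note the bounded-function version follows by integrating $f$ against the resulting kernel). The base case $t = N$ is immediate: $\ip_N(\fp X) = X_N = Z_N^-(\ip(\fp X))$ is $\F_N^\fp X$-measurable, and more generally $\ip(\fp X)$ itself is $\F_N^\fp X$-measurable since each $\ip_s^+(\fp X)$ is $\F_s^\fp X\subseteq\F_N^\fp X$-measurable; hence for $t=N$ one may take $g(z_{1:N}) := \delta_{f(z_{1:N})}$, which is Borel (continuous) in $z_{1:N}$ because $a\mapsto\delta_a$ is continuous and $z\mapsto (z, \text{(identity)})$ is the identity — in the display this reads $\Law(f(\ip(\fp X))\mid\F_N^\fp X) = \delta_{f(\ip_{1:N}(\fp X))}$.

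For the inductive step, assume the claim holds at time $t+1$ for all Polish targets. Fix $f\colon\Z\to\mathcal A$ Borel (continuous) and $t\le N-1$. By the tower property,
\[
	\Law\big(f(\ip(\fp X))\,\big|\,\F_t^\fp X\big)
	= \Law\Big(\,\text{(sample $\eta$ from } \Law(f(\ip(\fp X))\mid\F_{t+1}^\fp X)\text{)}\,\Big|\,\F_t^\fp X\Big),
\]
which I will make rigorous as follows. Apply the induction hypothesis at $t+1$ to obtain a Borel (continuous) $h\colon\Z_{1:t+1}\to\Pc(\mathcal A)$ with $\Law(f(\ip(\fp X))\mid\F_{t+1}^\fp X) = h(\ip_{1:t+1}(\fp X))$. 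Now $h$ is itself a Borel (continuous) function on $\Z_{1:t+1}\subseteq\Z_{1:t}\times\Z_{t+1}$, and by definition $\ip_t^+(\fp X) = \Law(\ip_{t+1}(\fp X)\mid\F_t^\fp X)$, while $\ip_{1:t}(\fp X)$ is $\F_t^\fp X$-measurable. Therefore, conditionally on $\F_t^\fp X$, the random variable $\ip_{1:t+1}(\fp X)$ has conditional law $\delta_{\ip_{1:t}(\fp X)}\otimes\ip_t^+(\fp X)$, and consequently the conditional law of $f(\ip(\fp X))$ given $\F_t^\fp X$ is the mixture $\int h(\ip_{1:t}(\fp X), z_{t+1})\,\ip_t^+(\fp X)(dz_{t+1})$. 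This is of the required form $g(\ip_{1:t}(\fp X))$ with
\[
	g(z_{1:t}) := \int_{\Z_{t+1}} h(z_{1:t}, z_{t+1})\; z_t^+(dz_{t+1}),
\]
where I recall $z_t^+\in\Z_t^+ = \Pc_p(\Z_{t+1})$ is the $(t)$-th plus-coordinate of $z_{1:t}$. It remains to check that $g$ is Borel (resp.\ continuous). Measurability is standard: $(z_{1:t},\mu)\mapsto\int h(z_{1:t},\cdot)\,d\mu$ is jointly measurable when $h$ is bounded measurable and $\mathcal A$ Polish — but here $h$ is $\Pc(\mathcal A)$-valued rather than scalar, so I phrase it as: the map $\Pc(\mathcal A)\ni\nu\mapsto\nu$ composed appropriately means $g(z_{1:t})$ is the barycenter (in $\Pc(\mathcal A)$, i.e.\ the "mean measure") of the push-forward of $z_t^+$ under $z_{t+1}\mapsto h(z_{1:t},z_{t+1})$; barycenters of measures on $\Pc(\mathcal A)$ depend measurably (continuously) on the underlying measure, and the inner map depends measurably (continuously) on $(z_{1:t}, z_{t+1})$, so the composition is measurable (continuous).

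The one genuine subtlety — and the main thing to be careful about — is the continuity half of the statement: one must verify that if $f$ is continuous then $g$ is continuous, which requires that $(z_{1:t},\mu)\mapsto\int h(z_{1:t},z_{t+1})\,\mu(dz_{t+1})$ is jointly continuous in $z_{1:t}$ and $\mu\in\Pc_p(\Z_{t+1})$ for continuous bounded-ish $h$; this is exactly where the choice of $\Pc_p$ (convergence including $p$-th moments) and the metric $d_{\Z_t}^p = d_{\X_t}^p + \W_{p,\Pc(\Z_{t+1})}^p$ matters, and it follows from the standard fact that $\mu\mapsto\int\phi\,d\mu$ is $\W_p$-continuous for continuous $\phi$ of at most $p$-th order growth together with a uniform-in-$z_{1:t}$ equicontinuity argument on compacts (or, for bounded $h$, directly from weak convergence). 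I would state this as an auxiliary observation — that the "kernel integration" map $\Pc(\Z_{1:t})$-valued construction is continuous — and cite it to the standard Wasserstein-continuity lemmas already invoked in Section \ref{ssec:notation}. With that in hand the induction closes and the lemma follows.
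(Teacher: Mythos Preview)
Your proof is correct and proceeds by the same underlying mechanism as the paper's---backward induction exploiting the defining identity $\ip_t^+(\fp X)=\Law(\ip_{t+1}(\fp X)\mid\F_t^\fp X)$ together with the tower property---but the paper organizes the argument differently. Rather than inducting directly on the statement of the lemma, the paper first introduces the \emph{unfold} operator $\unfold_{t+1}\colon\Pc_p(\Z_{t+1})\to\Pc_p(\Z_{t+1:N})$ and proves (Lemma~\ref{lem:unfoldprops.measures}) that $\Law(\ip(\fp X)\mid\F_t^\fp X)=\delta_{\ip_{1:t}(\fp X)}\otimes\unfold_{t+1}(\ip_t^+(\fp X))$ and that $\unfold_{t+1}$ is Lipschitz; the self-awareness lemma then follows in one stroke by pushing forward under $f$. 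Your direct induction avoids introducing this auxiliary object, which makes the argument for this single lemma slightly more self-contained, while the paper's factorization through $\unfold$ pays off elsewhere (the isometry Theorem~\ref{thm:isometry} and the embedding $\CFP_p\cong\Pc_p(\Z_1)$ both use $\unfold$ again). Your handling of the continuity step---joint continuity of $(z_{1:t},\mu)\mapsto\int h(z_{1:t},z_{t+1})\,\mu(dz_{t+1})$ for continuous $h$---is correct and matches what the paper gets from Lipschitz continuity of $\unfold$; it would be worth stating this cleanly as a separate lemma rather than leaving it as a parenthetical sketch.
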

 
 This lemma is an easy consequence of properties of the unfold operator introduced below, see Lemma \ref{lem:unfoldprops.measures}.
	Nevertheless, to boost the reader's intuition, we want to include a direct proof for the first statement in the notationally lighter case $N=2$:
 
 \begin{proof}[Sketch of proof for $N=2$] 
	For $t=2$ there is nothing to do, as $\ip(\fp{X})$ is $\F_2^\fp{X}$-measurable.
 	Let $t=1$. 
 	For simplicity, we assume first that $f$ is a product $f(z_1,z_2)=f_1(z_1)f_2(z_2)$ for suitable $f_1$ and $f_2$.
 	Then we can write
 	\begin{align*}
 	\E[f(\ip(\fp{X}))  | \F_1^\fp{X}]
 	&= f_1(\ip_1(\fp{X})) \E[ f_2(\ip_2(\fp{X})) | \F_1^\fp{X}] = f_1(\ip_1(\fp{X})) \int f_2(z_2)\, \Law(\ip_2(\fp{X}) | \F_1^\fp{X})(dz_2),
 	\end{align*}
 	where we used that $\ip_1(\fp{X})$ is $\F_1^\fp{X}$-measurable.
 	By definition $\ip_1^+(\fp{X})=\Law(\ip_2(\fp{X}) | \F_1^\fp{X})$, whence
 	\begin{align*}
 	\E[f(\ip(\fp{X}))  | \F_1^\fp{X}]
 	&= g (\ip_1(\fp{X})) \quad\text{for }  \quad
 	g(z_1)
 	:=f_1(z_1)\int f_2(z_2)\, z_1^+(dz_2).
 	\end{align*}
 	For general $f$ not necessarily of product form, a straightforward application of the monotone class theorem concludes the proof.
\end{proof}

	In what follows, we are often dealing with mappings between nested spaces of probability measures with  different algebraic structures.
	The unfold operator, introduced below, is an essential tool in reducing bookkeeping to a comprehensible level.

\begin{definition}[Unfold]
	\label{def:unfold}
		For every $1\leq t\leq N-1$, we define
		\begin{align} \label{eq:unfold1}
			\begin{split}
			\unfold_t &\colon \Pc_p(\Z_t) \to \Pc_p(\Z_{t:N}),\\
			\mu &\mapsto \mu(dz_t)\, z_t^+ (dz_{t+1}) \,z_{t+1}^+(dz_{t+2})\ldots \,z_{N-1}^+(dz_N),
			\end{split}
		\end{align}
		and call $\unfold_t $ the \emph{unfold} operator (at time $t$).
		For $t=N$, we define $\unfold_N$ to be the identity map.
	\end{definition}

	Recall the definition of $\ip(\fp X)$, see Definition \ref{def:ip}. 
	As $\ip_t^+(\fp{X})$ is a random variable taking values in $\Pc(\Z_{t+1})$, the unfold operator can be applied pointwise, i.e., we may consider $\unfold_{t+1}(\ip_t^+(\fp{X}))$.
	The following lemma explores properties of $\unfold_t$ in view of the information process $\ip(\fp X)$:

	\begin{lemma} \label{lem:unfoldprops.measures}
		For $1\leq t \leq N-1$ the following hold:
		\begin{enumerate}[label = (\roman*)]
			\item \label{it:unfold.recursive}
				For $\mu\in\Pc_p(\Z_t)$ we have
				\begin{equation} \label{eq:unfold1'}
						\unfold_t(\mu)(dz_{t:N}) =\mu(dz_t) \, \unfold_{t+1}(z_t^+)(dz_{t+1:N}).
				\end{equation}
			\item \label{it:unfoldprops2}
				For $\fp X \in \FP_p$ we have
				\begin{equation} \label{eq:ip unfold connection}
					\Law(\ip(\fp{X})|\F_t^{\fp{X}}) = \delta_{\ip_{1:t}(\fp{X})} \otimes\unfold_{t+1}(\ip_t^+ (\fp{X})).
				\end{equation}
			In other words, for all bounded Borel functions $f\colon \Z \to \R$ we have
			\begin{equation} \label{eq:ip unfold connection'}
				\E[f(\ip(\fp{X})) \mid \F_t^{\fp{X}}]
				= \int f (\ip_{1:t}(\fp{X}),z_{t+1:N} ) \, \unfold_{t+1}(\ip_t^+ (\fp{X})) (dz_{t+1:N}).
			\end{equation}		
			\item \label{it:unfoldprops3}
			$\unfold_t$ is Lipschitz continuous from $\Pc_p(\Z_{t})$ to $\Pc_p(\Z_{t:N})$.
		\end{enumerate}
	\end{lemma}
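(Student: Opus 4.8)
The plan is to prove the three items by backward induction on $t$, using \ref{it:unfold.recursive} as the workhorse for \ref{it:unfoldprops2} and \ref{it:unfoldprops3}. Item \ref{it:unfold.recursive} itself is essentially a matter of unwinding Definition \ref{def:unfold}: writing $\unfold_t(\mu)$ as the iterated kernel integral $\mu(dz_t)\,z_t^+(dz_{t+1})\cdots z_{N-1}^+(dz_N)$, the leading factor $\mu(dz_t)$ splits off, and the remaining string $z_t^+(dz_{t+1})\,z_{t+1}^+(dz_{t+2})\cdots z_{N-1}^+(dz_N)$ is by definition $\unfold_{t+1}(z_t^+)(dz_{t+1:N})$ — here one uses that $z_t^+\in\Pc_p(\Z_{t+1})$, so that $\unfold_{t+1}$ may indeed be applied to it. Borel measurability of $z_t\mapsto\unfold_{t+1}(z_t^+)$, needed for the right-hand side of \eqref{eq:unfold1'} to make sense, follows since compositions of Borel kernels are Borel.

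For \ref{it:unfoldprops2} I would induct backward, starting from $t=N$, where the assertion reads $\Law(\ip(\fp X)\mid\F_N^{\fp X})=\delta_{\ip(\fp X)}$ and holds because $\ip(\fp X)$ is $\F_N^{\fp X}$-measurable and $\unfold_N=\id$. For the step from $t+1$ to $t$, fix a bounded Borel $f\colon\Z\to\R$ and use the tower property to write $\E[f(\ip(\fp X))\mid\F_t^{\fp X}]=\E[\,\E[f(\ip(\fp X))\mid\F_{t+1}^{\fp X}]\mid\F_t^{\fp X}\,]$. By the induction hypothesis the inner conditional expectation equals $h(\ip_{1:t+1}(\fp X))$, where $h(z_{1:t+1}):=\int f(z_{1:t+1},z_{t+2:N})\,\unfold_{t+2}(z_{t+1}^+)(dz_{t+2:N})$ is Borel; note that $\ip_{t+1}^+(\fp X)$ is the ``$+$''-component of $\ip_{t+1}(\fp X)$, hence a function of $\ip_{1:t+1}(\fp X)$. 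Since $\ip_{1:t}(\fp X)$ is $\F_t^{\fp X}$-measurable and, by the very definition of $\ip_t^+(\fp X)$, the regular conditional law of $\ip_{t+1}(\fp X)$ given $\F_t^{\fp X}$ equals $\ip_t^+(\fp X)$, we get $\E[h(\ip_{1:t+1}(\fp X))\mid\F_t^{\fp X}]=\int h(\ip_{1:t}(\fp X),z_{t+1})\,\ip_t^+(\fp X)(dz_{t+1})$. Inserting the definition of $h$ and collapsing the two nested integrals via \ref{it:unfold.recursive} applied with $\mu=\ip_t^+(\fp X)$ yields exactly $\int f(\ip_{1:t}(\fp X),z_{t+1:N})\,\unfold_{t+1}(\ip_t^+(\fp X))(dz_{t+1:N})$, which is \eqref{eq:ip unfold connection'}; since $f$ was arbitrary, \eqref{eq:ip unfold connection} follows.

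For \ref{it:unfoldprops3} I would again induct backward, the base case $t=N$ being trivial since $\unfold_N=\id$. Assuming $\unfold_{t+1}$ is $L_{t+1}$-Lipschitz, take $\mu,\nu\in\Pc_p(\Z_t)$, fix a $\W_p$-optimal $\gamma\in\cpl(\mu,\nu)$, and for $\gamma$-a.e.\ $(z_t,\hat z_t)$ a measurably selected $\W_p$-optimal coupling $\beta_{z_t,\hat z_t}$ between $\unfold_{t+1}(z_t^+)$ and $\unfold_{t+1}(\hat z_t^+)$. By \ref{it:unfold.recursive}, the glued measure $\Gamma(dz_{t:N},d\hat z_{t:N}):=\gamma(dz_t,d\hat z_t)\,\beta_{z_t,\hat z_t}(dz_{t+1:N},d\hat z_{t+1:N})$ is a coupling of $\unfold_t(\mu)$ and $\unfold_t(\nu)$. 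Using $d_{\Z_{t:N}}^p=d_{\Z_t}^p+d_{\Z_{t+1:N}}^p$ together with $d_{\Z_t}^p(z_t,\hat z_t)=d_{\X_t}^p(z_t^-,\hat z_t^-)+\W_p^p(z_t^+,\hat z_t^+)\ge\W_p^p(z_t^+,\hat z_t^+)$, integrating $d_{\Z_{t:N}}^p$ against $\Gamma$, and invoking optimality of $\gamma$, optimality of $\beta_{z_t,\hat z_t}$, and the induction hypothesis in turn, one bounds $\W_p^p(\unfold_t(\mu),\unfold_t(\nu))$ by $\W_p^p(\mu,\nu)+L_{t+1}^p\,\W_p^p(\mu,\nu)$, so $\unfold_t$ is $(1+L_{t+1}^p)^{1/p}$-Lipschitz.

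The only genuinely delicate point is the measurable selection of optimal couplings used in \ref{it:unfoldprops3}; this is standard (measurability of optimal transport as a function of the marginals), and if one prefers to avoid it one can work instead with measurably constructed $\varepsilon$-optimal couplings and let $\varepsilon\downarrow 0$, using lower semicontinuity of $\W_p$. Everything else is routine bookkeeping with conditional laws, kernels, and product measures.
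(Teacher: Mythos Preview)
Your proof is correct and follows essentially the same strategy as the paper: item \ref{it:unfold.recursive} by unwinding the definition, item \ref{it:unfoldprops2} by backward induction via the tower property and the defining relation $\ip_t^+(\fp X)=\Law(\ip_{t+1}(\fp X)\mid\F_t^{\fp X})$, and item \ref{it:unfoldprops3} by backward induction using a glued coupling built from an optimal $\gamma\in\cpl(\mu,\nu)$ and fiberwise optimal couplings. The only cosmetic differences are that the paper starts the induction for \ref{it:unfoldprops2} at $t=N-1$ (avoiding your slightly informal extension to $t=N$, where $\unfold_{N+1}$ is undefined) and factors the coupling estimate in \ref{it:unfoldprops3} through a separate lemma (your argument inlines exactly that lemma and recovers the same constant $(1+L_{t+1}^p)^{1/p}$).
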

			
	\begin{proof}
		\begin{enumerate}[label=(\roman*),wide]
		\item Equation \eqref{eq:unfold1'} is another way of expressing \eqref{eq:unfold1}.
		\item The statements in \eqref{eq:ip unfold connection} and \eqref{eq:ip unfold connection'} are clearly equivalent and we shall therefore only prove the latter one via a backward induction.
		
		Since there is nothing to do for $t=N-1$, we assume that the statement is true for $2\leq t \leq N-1$.
		Let $g\colon\Z_{1:t}\to\R$ be the Borel function defined as
		\begin{equation}
			\label{eq:definition of g unfoldprops}
			g(z_{1:t}):=\int f(z_{1:t},z_{t+1:N})\, \unfold_{t+1}(z_t^+)(dz_{t+1:N}),
		\end{equation}
		whereby the inductive hypothesis now reads  $\E[f(\ip(\fp{X})) | \F_t^{\fp{X}}] =g(\ip_{1:t}(\fp{X}))$.
   
		Then $\ip_{t-1}^+(\fp{X}) = \Law(\ip_t(\fp{X})|\F_{t-1}^\fp{X})$ and the tower property implies
		\begin{align} \nonumber
			\E[f(\fp{X})|\F^\fp{X}_{t-1}] &=\E[ \E[f(\fp{X})|\F^\fp{X}_t] |\F^\fp{X}_{t-1}] = \E[g(\ip_{1:t}(\fp{X})) |\F^\fp{X}_{t-1}]
		\\ \label{eq:something.124}
			&= \int g(\ip_{1:t-1}(\fp{X}),z_t)\,\ip_{t-1}^+(\fp{X})(dz_t).
		\end{align}
		Recalling \eqref{eq:unfold1'} and \eqref{eq:definition of g unfoldprops}, we conclude that the last term in \eqref{eq:something.124} equals
		\begin{align*}
	&	\int\left(\int  f (\ip_{1:t-1}(\fp{X}),z_t,z_{t+1:N} ) \, \left( \unfold_{t+1}(z_t^+ \right) (dz_{t+1:N}) \right) \,\ip_{t-1}^+(\fp{X})(dz_t)\\
		&=\int  f (\ip_{1:t-1}(\fp{X}),z_{t:N} ) \, \unfold_{t}(\ip_{t-1}^+(\fp{X})\big) (dz_{t:N}),
		\end{align*}
		which completes the proof of \ref{it:unfoldprops2}. 
   
		\item The assertion is shown via a backward induction over $t$. 
		
		For $t=N$ the unfold operator is given as the identity map, which is in particular Lipschitz continuous.
		Assume that the claim is true for some $2\leq t+1\leq N$.
		By \eqref{eq:unfold1'} we can write $\unfold_t(\mu)=\mu\otimes k_t$, where $k_t\colon \Z_t\to\Pc_p(\Z_{t+1:N})$. 
		This map is explicitly given by $k_t(z_t) := \unfold_{t+1}(z_t^+)$.
		An application of Lemma \ref{lem:lipschitz kernel} below implies that $\unfold_t$ is again Lipschitz continuous with a new constant.
		\qedhere
		\end{enumerate}
	\end{proof}

\begin{lemma}
	\label{lem:lipschitz kernel}
		Let $\mathcal{A}$ and $\mathcal{B}$ be  Polish spaces and let $k\colon \mathcal{A} \to \Pc_p(\mathcal{B})$ be  $L$-Lipschitz.
		Then the map
		\begin{align*}
		\Pc_p(\mathcal{A}) &\to \Pc_p(\mathcal{A}\times  \mathcal{B}), \quad  \mu \mapsto \mu \otimes k 
		\end{align*}
		is $(1+L^p)^{1/p}$-Lipschitz.
	\end{lemma}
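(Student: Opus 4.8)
The plan is to prove Lemma \ref{lem:lipschitz kernel} by a direct coupling argument. Let $\mu, \nu \in \Pc_p(\mathcal{A})$ and fix an optimal coupling $\gamma \in \cpl(\mu,\nu)$ for $\W_{p,\Pc_p(\mathcal A)}$, so that $\int d_{\mathcal A}^p(a,\hat a)\,\gamma(da,d\hat a) = \W_{p,\Pc_p(\mathcal A)}^p(\mu,\nu)$ (such an optimizer exists for the $p$-Wasserstein distance on a Polish space). The idea is to lift $\gamma$ to a coupling of $\mu\otimes k$ and $\nu\otimes k$ on $(\mathcal A\times\mathcal B)\times(\mathcal A\times\mathcal B)$ by gluing: for each pair $(a,\hat a)$ we have $k^a, k^{\hat a} \in \Pc_p(\mathcal B)$, and we may choose (measurably in $(a,\hat a)$) an optimal coupling $q^{a,\hat a} \in \cpl(k^a, k^{\hat a})$ realizing $\W_{p,\Pc_p(\mathcal B)}^p(k^a, k^{\hat a})$; measurable selection of optimal couplings is standard (e.g.\ via a measurable selection theorem applied to the set-valued map $(a,\hat a)\mapsto$ optimal couplings, or by noting the barycentric/Knothe-type explicit constructions are measurable). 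Then define $\pi := \gamma(da,d\hat a)\, q^{a,\hat a}(db,d\hat b) \in \Pc((\mathcal A\times\mathcal B)^2)$.

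Next I would check that $\pi$ is indeed a coupling of $\mu\otimes k$ and $\nu\otimes k$: the first marginal is $\int_{\hat a,\hat b} \pi = \mu(da)\,k^a(db)$ since $\gamma$ has first marginal $\mu$ and $q^{a,\hat a}$ has first marginal $k^a$; symmetrically for the second. Hence $\pi$ is admissible, and
\[
	\W_{p,\Pc_p(\mathcal A\times\mathcal B)}^p(\mu\otimes k, \nu\otimes k) \le \int \big( d_{\mathcal A}^p(a,\hat a) + d_{\mathcal B}^p(b,\hat b) \big)\, q^{a,\hat a}(db,d\hat b)\,\gamma(da,d\hat a),
\]
using the definition $d_{\mathcal A\times\mathcal B}^p = d_{\mathcal A}^p + d_{\mathcal B}^p$ on the product (this is the convention in Definition \ref{def:canonical.space.Z} for the $\Z_t$; I will assume the same product metric convention here). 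The inner integral splits: the $d_{\mathcal A}^p$ term integrates to $\W_{p,\Pc_p(\mathcal A)}^p(\mu,\nu)$, and the $d_{\mathcal B}^p$ term gives $\int \W_{p,\Pc_p(\mathcal B)}^p(k^a,k^{\hat a})\,\gamma(da,d\hat a) \le \int L^p d_{\mathcal A}^p(a,\hat a)\,\gamma(da,d\hat a) = L^p\,\W_{p,\Pc_p(\mathcal A)}^p(\mu,\nu)$ by the $L$-Lipschitz assumption on $k$. Combining, $\W_{p,\Pc_p(\mathcal A\times\mathcal B)}^p(\mu\otimes k,\nu\otimes k) \le (1 + L^p)\,\W_{p,\Pc_p(\mathcal A)}^p(\mu,\nu)$, which is exactly the claimed $(1+L^p)^{1/p}$-Lipschitz bound after taking $p$-th roots. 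I should also record the trivial point that $\mu\otimes k \in \Pc_p(\mathcal A\times\mathcal B)$ whenever $\mu\in\Pc_p(\mathcal A)$, so the statement is well-posed: $\int (d_{\mathcal A}^p(a,a_0) + d_{\mathcal B}^p(b,b_0))\,(\mu\otimes k)(da,db) = \int d_{\mathcal A}^p(a,a_0)\,\mu(da) + \int d_{\mathcal B}^p(b,b_0)\,k^a(db)\,\mu(da)$, and the second summand is finite because $a\mapsto \int d_{\mathcal B}^p(b,b_0)\,k^a(db) = \W_{p}^p(k^a,\delta_{b_0})$ grows at most like $(\W_p(k^a,k^{a_0}) + \W_p(k^{a_0},\delta_{b_0}))^p \lesssim L^p d_{\mathcal A}^p(a,a_0) + \text{const}$, which is $\mu$-integrable.

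The only genuine subtlety — and the step I would be most careful about — is the measurability of the selection $(a,\hat a)\mapsto q^{a,\hat a}$ of optimal couplings, needed so that $\pi$ is a well-defined Borel probability measure. The cleanest route is to invoke a standard measurable selection result: the map $(a,\hat a)\mapsto \cpl(k^a,k^{\hat a})$ has closed graph (weak convergence of marginals is preserved), the optimal transport cost is jointly lower semicontinuous, and $\Pc_p(\mathcal B)$-valued optimal couplings form a nonempty compact set for each $(a,\hat a)$, so the Kuratowski–Ryll-Nardzewski or von Neumann–Aumann selection theorem yields a universally (hence, after modification, Borel) measurable selector; alternatively one can avoid selection entirely by working with a measurable disintegration of an optimal coupling of $\mu\otimes k$ and $\nu\otimes k$ constructed abstractly, but the selection argument is the most transparent. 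Everything else is routine, and I would present it briefly. Since this lemma is only used to push the Lipschitz property through one step of the unfold recursion in Lemma \ref{lem:unfoldprops.measures}\ref{it:unfoldprops3}, no sharper constant is needed.
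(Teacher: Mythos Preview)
Your proposal is correct and follows essentially the same approach as the paper: pick an optimal coupling of $\mu$ and $\nu$, glue on a measurably selected family of optimal couplings $q^{a,\hat a}\in\cpl(k^a,k^{\hat a})$, and bound the transport cost by $(1+L^p)\W_p^p(\mu,\nu)$. The paper invokes the Jankov--von Neumann theorem to obtain a universally measurable selector, whereas you cite Kuratowski--Ryll-Nardzewski / von Neumann--Aumann, but this is the only cosmetic difference.
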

	\begin{proof}
		A short computation shows that there are $a_0\in\mathcal{A}$, $b_0\in\mathcal{B}$ and a constant $c$ such that $\int d^p(b,b_0)\,k^a(db)\leq  c(1+d^p(a,a_0))$ for all $a\in\mathcal{A}$; in particular $\mu \otimes k \in\Pc_p(\mathcal{A}\times  \mathcal{B})$ is well-defined for every $\mu\in\Pc_p(\mathcal{A})$.		
		
		Let $\mu,\hat{\mu}\in\Pc_p(\mathcal{A})$ and denote by $\pi$ the $\mathcal{W}_p$-optimal coupling between them.
		For every pair $(a,\hat a) \in \mathcal A \times \mathcal A$, let $\gamma^{a,\hat a} \in \cpl(k^a,k^{\hat a})$ be optimal for $\W_p(k^a,k^{\hat a})$.
		Using the Jankov-von Neumann theorem \cite[Theorem 18.1]{Ke95}, standard arguments show that $a,\hat{a}\mapsto \gamma^{a,\hat{a}}$ can be chosen universally measurably.
		Then
		\[ \Pi(da,db,d\hat{a},d\hat{b}):=\pi(da,d\hat{a}) \gamma^{a,\hat a}(db,d\hat b)\]
		defines a coupling between $\mu\otimes k$ and $\hat{\mu}\otimes k$.
		Thus we can estimate
		\begin{align*}
			\mathcal{W}_p^p(\mu \otimes k, \hat{\mu} \otimes k )
			&\leq \int d^p(a,\hat{a}) + d^p(b,\hat b) \,\Pi(da,db,d\hat{a},d\hat{b})\\
			&= \int d^p(a,\hat{a}) + \mathcal{W}_p^p(k^a,k^{\hat{a}}) \,\pi(da,d\hat{a})\leq (1+L^p) \mathcal{W}_p^p(\mu,\hat{\mu}),
		\end{align*}
		where the last inequality holds by $L$-Lipschitz continuity of $k$.
	\end{proof}

	\begin{proof}[Proof of Lemma \ref{lem:ip.self.aware}]
		Let $f \colon \Z \to \mathcal A$ be bounded and Borel measurable (continuous), and $1 \le t \le N - 1$.
		As $\unfold_{t+1}$ is continuous by Lemma \ref{lem:unfoldprops.measures} \ref{it:unfoldprops3}, we obtain continuity of
		\[
			F \colon \Z_{1:t} \to \Pc_p(\Z),\quad z_{1:t} \mapsto \delta_{z_{1:t}} \otimes \unfold_{t+1}(z_t^+).
		\]
		
		Define the map $G \colon \Z_{1:t} \to \Pc(\mathcal A)$ as the push-forward $G(z_{1:t}) := f_\ast F(z_{1:t})$.
		Obviously, when $f$ is continuous, $G$ is also continuous as the composition of continuous functions.
		By Lemma \ref{lem:unfoldprops.measures} \ref{it:unfoldprops2}, we obtain
		\[
			\Law \left( f(\ip(\fp X)) | \F_t^\fp X\right) = f_\ast \Law\left(\ip(\fp X) | \F^\fp X_t \right) = f_\ast F(\ip_{1:t}(\fp X)) = G(\ip_{t+1}(\fp X)).
		\]
		We have thus proved the second assertion.

		To obtain the first assertion, let $\mathcal A = \R$.
		Define $g\colon \Z_{1:t} \to \R$ by
		$
			g(z_{1:t}) := \int a \, G(z_{1:t})(da),
		$
		which is well-defined since $f$ is bounded with values in a compact set, say, $K\subset \R$.
		Hence, we can view $G$ as a function mapping into $\Pc(K)$.
		By the first part of the proof, we obtain
		\[
			\E\left[ f(\ip(\fp X)) | \F^\fp X_t \right]	 = \int a \, G(\ip_{t+1}(\fp X))(da) = g(\ip_{t+1}(\fp X)).
		\]
		The map $p \mapsto \int a \, p(da)$ is continuous on $\Pc(K)$ and we conclude that $g$ is continuous if $f$ is.
	\end{proof}

	Equipped with the unfold operator we define our real object of interest:

	\begin{definition}[Canonical filtered processes]
		We call $\fp X \in \FP$ a \emph{canonical filtered process}, in symbols $\fp X \in \CFP$, if
		\begin{equation} \label{eq:CFP}
			\fp X = \left( \Z, \F^\Z, \unfold_1(\bar{\mu}), (\F_t^\Z)_{t = 1}^N, Z^- \right),
		\end{equation}
		where $(\Z,\F^\Z,(\F_t^{\Z})_{t = 1}^N)$ is the canonical filtered space, see \eqref{eq:CFS},  $Z^-$ is the evaluation map \eqref{def:evaluation map}, and $\bar{\mu} \in \Pc(\Z_1)$. 
		As usual we write $\CFP_p$ for the subset of processes whose laws have finite $p$-th moment.
	\end{definition}

	Using the concept of information process we can associate to an arbitrary  filtered process  a unique element in $\CFP$:
	\begin{definition}[Associated canonical filtered processes]
		\label{def:CFP.associated.to.FP}
		Let $\fp X\in\FP_p$, and let $\overline{\fp X}\in\CFP_p$ be given by \eqref{eq:CFP} with $\bar{\mu}:=\Law(\ip_1 (\fp X))$, or, according to Lemma \ref{lem:unfoldprops.measures}, equivalently
		\begin{equation}
			\label{eq:aCFP}
			\overline{\fp X} = \left( \Z, \F^\Z, \Law(\ip(\fp X)), (\F^\Z_t)_{t = 1}^N, Z^- \right).
		\end{equation}
		We call $\overline{\fp X}$ the \emph{canonical filtered process associated to} $\fp X$.
	\end{definition}

	We want to stress at this point that, as stated in \eqref{eq:aCFP}, all information of $\Law(\ip(\fp X)) \in \Pc(\Z)$ is already contained in $\Law(\ip_1(\fp X)) \in \Pc(\Z_1)$ by Lemma \ref{lem:unfoldprops.measures}.
	The relations between $\CFP_p$ and $\Pc_p(\Z_1)$ become apparent in Theorem \ref{thm:isometry} below, and the relation of filtered processes to their canonical counterparts becomes apparent through Lemma \ref{lem:process to canonical}.

\begin{lemma} 
	\label{lem:process to canonical}
	Let $\fp X, \fp Y \in \FP_p$, and let $\overline{\fp X},\overline{\fp Y} \in \CFP_p$ be their associated canonical processes.
	The following hold.
	\begin{enumerate}[label=(\roman*)]
		\item \label{it:process to canonical1}
		$(\id, \ip(\fp X))_\ast \P^\fp X \in \cplba(\fp X,\overline{\fp X})$.
		\item \label{it:process to canonical2}
		If $\pi \in \cpla(\fp X,\fp Y)$, then $(\ip(\fp X),\ip(\fp Y))_\ast \pi \in \cpla(\overline{\fp X}, \overline{\fp Y})$.
		\item \label{it:process to canonical3}
		If $\pi \in \cplba(\fp X,\fp Y)$, then $(\ip(\fp X),\ip(\fp Y))_\ast \pi \in \cplba(\overline{\fp X},\overline{\fp Y})$.
	\end{enumerate}
\end{lemma}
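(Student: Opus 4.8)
The three statements build on one another, and the core observation is that the map $\omega \mapsto \ip(\fp X)(\omega)$ is, for each $t$, measurable with respect to $\F_t^\fp X$ when restricted to its first $t$ coordinates $\ip_{1:t}(\fp X)$ — indeed $\ip_{1:t}(\fp X)$ is $\F_t^\fp X$-measurable by construction in Definition \ref{def:ip}. The plan for \ref{it:process to canonical1} is first to check the marginal condition: the first marginal of $(\id,\ip(\fp X))_\ast\P^\fp X$ is $\P^\fp X$ trivially, and the second is $\Law(\ip(\fp X)) = \unfold_1(\Law(\ip_1(\fp X)))$ by Lemma \ref{lem:unfoldprops.measures}\ref{it:unfoldprops2}, which is exactly the law of $\overline{\fp X}$ by \eqref{eq:aCFP}. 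For bicausality I would use the characterization in Lemma \ref{lem:causal CI}. Writing $\pi := (\id,\ip(\fp X))_\ast \P^\fp X$, causality from $\fp X$ to $\overline{\fp X}$ amounts to showing $\E_\pi[U \mid \F_{t,t}^{\fp X,\overline{\fp X}}] = \E_\pi[U \mid \F_{t,0}^{\fp X,\overline{\fp X}}]$ for bounded $\F_N^\fp X$-measurable $U$ on the first coordinate; but under $\pi$ the second coordinate's first $t$ entries are a deterministic function of the first coordinate's $\F_t^\fp X$-information, so conditioning on $\F_{t,t}^{\fp X,\overline{\fp X}}$ adds nothing beyond $\F_{t,0}^{\fp X,\overline{\fp X}}$ — this is immediate. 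Anticausality (causality from $\overline{\fp X}$ to $\fp X$) is the substantive direction: for bounded $\F_t^{\overline{\fp X}}$-measurable $V = v(Z^-_{1:t}, (\text{kernels}))= v(\ip_{1:t}(\fp X))$ I must show $\E_\pi[V \mid \F_{N,0}^{\fp X,\overline{\fp X}}] = \E_\pi[V \mid \F_{t,0}^{\fp X,\overline{\fp X}}]$, i.e.\ $\E[v(\ip_{1:t}(\fp X)) \mid \F_N^\fp X] = \E[v(\ip_{1:t}(\fp X)) \mid \F_t^\fp X]$, which holds because $v(\ip_{1:t}(\fp X))$ is already $\F_t^\fp X$-measurable.

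For \ref{it:process to canonical2}, let $\pi \in \cpla(\fp X,\fp Y)$ and set $\bar\pi := (\ip(\fp X),\ip(\fp Y))_\ast\pi$. The marginals of $\bar\pi$ are $\Law(\ip(\fp X))$ and $\Law(\ip(\fp Y))$, hence the laws of $\overline{\fp X}$ and $\overline{\fp Y}$, so $\bar\pi$ is a coupling. To verify causality from $\overline{\fp X}$ to $\overline{\fp Y}$ I would again invoke Lemma \ref{lem:causal CI}\ref{it:causal CI2}: it suffices to show, for bounded $\F_t^{\overline{\fp Y}}$-measurable $\bar V$, that $\E_{\bar\pi}[\bar V \mid \F_{N,0}^{\overline{\fp X},\overline{\fp Y}}] = \E_{\bar\pi}[\bar V \mid \F_{t,0}^{\overline{\fp X},\overline{\fp Y}}]$. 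Pulling back through the pushforward, $\bar V$ corresponds to $V := \bar V(\ip_{1:t}(\fp Y))$ on $\Omega^\fp X \times \Omega^\fp Y$, and since $\ip_{1:t}(\fp Y)$ depends only on the second coordinate and is $\F_t^\fp Y$-measurable, $V$ is $\F_{0,t}^{\fp X,\fp Y}$-measurable. The $\sigma$-algebra $(\ip(\fp X),\ip(\fp Y))^{-1}\big(\F_{N,0}^{\overline{\fp X},\overline{\fp Y}}\big)$ is contained in $\F_{N,0}^{\fp X,\fp Y}$ and $(\ip(\fp X),\ip(\fp Y))^{-1}\big(\F_{t,0}^{\overline{\fp X},\overline{\fp Y}}\big)$ equals $\sigma(\ip_{1:t}(\fp X))\subseteq\F_{t,0}^{\fp X,\fp Y}$; so by causality of $\pi$ from $\fp X$ to $\fp Y$ (Lemma \ref{lem:causal CI}\ref{it:causal CI1} with $U = V$, which is allowed to be $\F_{t,N}^{\fp X,\fp Y}$-measurable, in particular $\F_{0,t}^{\fp X,\fp Y}$-measurable), $\E_\pi[V \mid \F_{N,0}^{\fp X,\fp Y}] = \E_\pi[V \mid \F_{t,0}^{\fp X,\fp Y}]$ is $\sigma(\ip_{1:t}(\fp X))$-measurable. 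Pushing this identity forward and using that $\sigma(\ip_{1:t}(\fp X))$ is exactly the pullback of $\F_{t,0}^{\overline{\fp X},\overline{\fp Y}}$ yields the claim. The point that needs care is that the conditional expectation $\E_\pi[V\mid \F_{N,0}^{\fp X,\fp Y}]$, which a priori is only $\F_N^\fp X$-measurable, is in fact a function of $\ip_{1:t}(\fp X)$ — this is precisely where causality of $\pi$ enters and where Lemma \ref{lem:ip.self.aware} / the self-awareness of the information process does the work of identifying that function.

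Finally, \ref{it:process to canonical3} follows by applying \ref{it:process to canonical2} to $\pi$ (which is causal from $\fp X$ to $\fp Y$) and to the reflected coupling (which, as $\pi$ is anticausal, is causal from $\fp Y$ to $\fp X$), noting that reflection commutes with the pushforward by $(\ip(\fp X),\ip(\fp Y))$; combining the two gives bicausality of $\bar\pi$. The main obstacle I anticipate is item \ref{it:process to canonical2}, specifically the bookkeeping showing that a conditional expectation computed against the full $\F_{N,0}^{\fp X,\fp Y}$-algebra collapses to a $\sigma(\ip_{1:t}(\fp X))$-measurable quantity; once the correspondence between $\F_t^{\overline{\fp X}}$-measurable functions on $\Z$ and $\sigma(\ip_{1:t}(\fp X))$-measurable functions on $\Omega^\fp X$ is set up cleanly (via $Z^- \circ \ip(\fp X) = X$ and the recursive structure of $\ip$), the causality characterizations in Lemma \ref{lem:causal CI} make each verification short. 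The remaining steps — marginal checks and the reflection argument — are routine.
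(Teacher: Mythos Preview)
Your argument for \ref{it:process to canonical3} via symmetry is fine, and the marginal checks are correct. But there is a genuine gap in \ref{it:process to canonical1}, and a related false claim in \ref{it:process to canonical2}.

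In \ref{it:process to canonical1}, the condition you write for ``anticausality (causality from $\overline{\fp X}$ to $\fp X$)'' is in fact just a second characterization of causality \emph{from $\fp X$ to $\overline{\fp X}$}: the identity $\E_\pi[V \mid \F_{N,0}^{\fp X,\overline{\fp X}}] = \E_\pi[V \mid \F_{t,0}^{\fp X,\overline{\fp X}}]$ for $V$ bounded $\F_t^{\overline{\fp X}}$-measurable is exactly item \ref{it:causal CI2} of Lemma \ref{lem:causal CI} with $\fp X$ in the first slot and $\overline{\fp X}$ in the second. You have therefore verified causality twice and never treated the anticausal direction. After swapping roles, the correct condition reads, e.g., $\E_\gamma[U \mid \F_{t,t}^{\fp X,\overline{\fp X}}] = \E_\gamma[U \mid \F_{0,t}^{\fp X,\overline{\fp X}}]$ for bounded $\F_N^{\overline{\fp X}}$-measurable $U$, i.e.\ $\E[U(\ip(\fp X)) \mid \F_t^\fp X]$ must be $\sigma(\ip_{1:t}(\fp X))$-measurable. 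This is precisely Lemma \ref{lem:ip.self.aware}, and \emph{that} is the substantive step in \ref{it:process to canonical1}; the paper invokes self-awareness exactly here.

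In \ref{it:process to canonical2} you correctly flag self-awareness as the key, but the specific assertion you make is false: for a causal $\pi$ and $V = \bar V(\ip_{1:t}(\fp Y))$, the random variable $\E_\pi[V \mid \F_{t,0}^{\fp X,\fp Y}]$ is in general only $\F_t^\fp X$-measurable, \emph{not} $\sigma(\ip_{1:t}(\fp X))$-measurable (take $\fp X,\fp Y$ with constant paths but filtrations containing a nontrivially correlated pair of coin flips). What is true --- and what the paper uses, working instead via characterization \ref{it:causal CI1} applied to $U(\ip(\fp X))$ --- is that after a further tower step onto $\sigma(\ip(\fp X))$ the projection does collapse to $\sigma(\ip_{1:t}(\fp X))$, because self-awareness is equivalent to the conditional independence of $\F_t^\fp X$ and $\sigma(\ip(\fp X))$ given $\sigma(\ip_{1:t}(\fp X))$. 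Your route through \ref{it:causal CI2} can therefore be repaired, but only by inserting this additional step; as written the argument has a gap.
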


\begin{proof}
	To show \ref{it:process to canonical1}, we write $\gamma := (\id, \ip(\fp X))_\ast \P^\fp X$ and first check causality of $\gamma$ using \ref{it:causal CI2} of the characterization of causality given in Lemma \ref{lem:causal CI}. 
	To that end, let $V\colon \Z \to \R$ be bounded and $\F_t^\Z$-measurable.
	From the definition of $\gamma$ we see that $\gamma$-almost surely $V = V(\ip(\fp X))$.
	Thus $V(\ip(\fp X))$ is $\F_t^\fp X$-measurable and causality of $\gamma$ from $\fp X$ to $\overline{\fp X}$ follows from 
	\begin{align*}
		\E_\gamma\left[ V \middle| \F_{N,0}^{\fp X,\Z} \right] &= \E_\gamma\left[ V(\ip(\fp X)) \middle| \F_{N,0}^{\fp X,\Z} \right] = V(\ip(\fp X))
	= \E_\gamma\left[ V(\ip(\fp X)) \middle| \F_{t,0}^{\fp X,\Z} \right] = \E_\gamma\left[ V \middle| \F_{t,0}^{\fp X,\Z} \right].
	\end{align*}
	
	To see causality of $\gamma$ from $\overline{\fp X}$ to $\fp X$, we will again use Lemma \ref{lem:causal CI}, this time item \ref{it:causal CI1}.
	Let $U \colon \Z \to \R$ be bounded and $\F_N^\Z$-measurable.
	Again, due to the structure of $\gamma$ it is readily verified that $\gamma$-almost surely $U = U(\ip(\fp X))$ and
	\begin{align}
		\label{eq:idipcpl1}
		\E_\gamma\left[ U \middle| \F_{t,t}^{\fp X,\Z}\right] 
		&= \E_\gamma \left[ U \middle| \F_{t,0}^{\fp X,\Z} \right],
	\\	\label{eq:idipcpl2}
	\E_\gamma\left[ U \middle| \F_{0,t}^{\fp X,\Z}\right] &= \E_\gamma \left[ U \middle| \ip_{1:t}(\fp X) \right].
	\end{align}
	By the self-awareness property of the information process, see Lemma \ref{lem:ip.self.aware}, \eqref{eq:idipcpl1} and \eqref{eq:idipcpl2} we find
	\begin{align*}
		\E_\gamma\left[ U \middle| \F_{t,t}^{\fp X,\Z} \right] &= \E_\gamma\left[ U(\ip(\fp X)) \middle| \F_{t,0}^{\fp X,\Z} \right] = \E_\gamma\left[ U(\ip(\fp X)) \middle| \ip_{1:t}(\fp X) \right]
	\\	&= \E_\gamma\left[ U(\ip(\fp X)) \middle| \F_{0,t}^{\fp X,\Z} \right] = \E_\gamma\left[ U \middle| \F_{0,t}^{\fp X,\Z} \right],
	\end{align*}
	which completes the proof of item \ref{it:process to canonical1}.

	To verify \ref{it:process to canonical2}, we write $\overline{\pi} := (\ip(\fp X),\ip(\fp Y))_\ast \pi$, $\eta := (\id,\ip(\fp X),\ip(\fp Y))_\ast \pi$, and let $U \colon \Z \to \R$ be as above.
	By Lemma \ref{lem:causal CI} and Lemma \ref{lem:ip.self.aware}, we have $\eta$-almost surely
	\begin{equation}
		\label{eq:idipcpl3}
		\E_{{\pi}} \left[U(\ip(\fp X)) \middle| \F_{t,t}^{\fp X,\fp Y}\right] = \E_\pi \left[  U(\ip(\fp X)) \middle| \F_{t,0}^{\fp X,\fp Y} \right] =  \E_\pi \left[  U(\ip(\fp X)) \middle| \ip_{1:t}(\fp X) \right].
	\end{equation}
	Using \eqref{eq:idipcpl3} yields $\eta$-almost surely
	\[
		\E_{\overline{\pi}} \left[ U \middle| \F_{t,0}^{\Z,\Z} \right] = \E_{\pi} \left[ U(\ip(\fp X)) \middle| \F_{t,t}^{\fp X,\fp Y} \right].
	\]
	We conclude by the tower property $\eta$-almost surely
	\begin{align*}
		\E_{\overline{\pi}} \left[ U \middle| \F_{t,0}^{\Z,\Z} \right] 
		&= \E_\pi \left[ \E_\pi \left[ U(\ip(\fp X)) \middle| \F_{t,t}^{\fp X,\fp Y} \right] \middle| \ip_{1:t}(\fp X), \ip_{1:t}(\fp Y) \right]
	\\	
	&= \E_\pi\left[ U(\ip(\fp X)) \middle| \ip_{1:t}(\fp X), \ip_{1:t}(\fp Y)  \right] 
	= \E_{\overline{\pi}} \left[ U \middle| \F_{t,t}^{\Z,\Z} \right],
	\end{align*}
	which completes the proof of \ref{it:process to canonical2}.

	Finally, for symmetry reasons \ref{it:process to canonical2} implies \ref{it:process to canonical3}.
\end{proof}

\subsection{The isometry} \label{ssec:isometry}

Based on the preparatory work from the preceding subsection, we are able to establish Theorem \ref{thm:isometry}.
From this, we derive that $\AW_p$ naturally induces a complete metric on the factor space $\FFP_p$, and that $\FFP_p$ is isometrically isomorphic to the (classical) Wasserstein space $(\Z_1,\W_p)$, thereby establishing Theorem \ref{WREP}.

\begin{theorem} \label{thm:isometry}
	Let $\fp X,\fp Y \in \FP_p$ and let $\overline{\fp X},\overline{\fp Y} \in \CFP_p$ be the associated canonical processes.
	Then  \begin{equation} \label{eq:isometry}
		\AW_p\left( \fp X, \fp Y \right) 
		= \AW_p\left( \cfp X, \cfp Y\right) 
		= \W_p\left(\Law( \ip_1(\fp X)), \Law( \ip_1(\fp Y) )\right).
	\end{equation}
	In particular, $\AW_p$ is a pseudo-metric on $\FP_p$ and the embedding $\cfp X \mapsto \Law(\ip_1(\cfp X))$ is an isometric isomorphism of $\CFP_p$ and $\Pc_p(\Z_1)$.
\end{theorem}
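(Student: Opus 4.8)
The plan is to establish the chain of equalities in \eqref{eq:isometry} by proving two inequalities in each link, using the transfer results of Lemma \ref{lem:process to canonical} together with the unfold machinery of Lemma \ref{lem:unfoldprops.measures}. I would first handle the identity $\AW_p(\cfp X,\cfp Y)=\W_p(\ip_1(\fp X),\ip_1(\fp Y))$, since this is the purely ``canonical'' statement. Recall that for the canonical processes the law of the whole path is $\unfold_1(\bar\mu)=\unfold_1(\Law(\ip_1(\fp X)))$, and the filtration is the coordinate filtration $\F^\Z_t=\sigma(z\mapsto z_{1:t})$. The key structural fact is that a bicausal coupling of two canonical processes is, up to the unfold operator, \emph{exactly} an arbitrary coupling of $\Law(\ip_1(\fp X))$ and $\Law(\ip_1(\fp Y))$ on $\Z_1\times\Z_1$: given any $\lambda\in\cpl(\ip_1(\fp X),\ip_1(\fp Y))$, one builds a coupling of the full paths by disintegrating and applying unfold simultaneously along the two coordinates (i.e.\ the bicausal coupling $\lambda(dz_1,d\hat z_1)\,z_1^+(dz_2)\,\hat z_1^+(d\hat z_2)\cdots$), and conversely any bicausal coupling of the canonical processes induces such a $\lambda$ on the first coordinates; bicausality forces the later coordinates to be coupled in the ``diagonal'' unfold way, so no mass is lost. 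Combined with the fact that $d^p_{\Z_1}=d^p_{\X_1}+\W^p_{p,\Pc(\Z_2)}$ telescopes into $\sum_t d^p_{\X_t}(z^-_t,\hat z^-_t)=d^p(Z^-(z),Z^-(\hat z))$ under unfold, the transport cost of the path-coupling equals the transport cost of $\lambda$ on $\Z_1$, and taking infima gives the claimed equality.

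Next I would prove $\AW_p(\fp X,\fp Y)=\AW_p(\cfp X,\cfp Y)$. For ``$\ge$'': given $\pi\in\cplba(\fp X,\fp Y)$, push it forward by $(\ip(\fp X),\ip(\fp Y))$; by Lemma \ref{lem:process to canonical}\ref{it:process to canonical3} the result lies in $\cplba(\cfp X,\cfp Y)$, and since $X_t=\ip^-_t(\fp X)=Z^-_t(\ip(\fp X))$ the cost is preserved, so $\AW_p(\cfp X,\cfp Y)\le\AW_p(\fp X,\fp Y)$. For ``$\le$'': this is where one chains through the original processes and their canonical copies. Using Lemma \ref{lem:process to canonical}\ref{it:process to canonical1}, $(\id,\ip(\fp X))_\ast\P^\fp X\in\cplba(\fp X,\cfp X)$ is a \emph{cost-zero} bicausal coupling (since $X=Z^-\circ\ip(\fp X)$ pointwise), and likewise on the $\fp Y$ side; one then glues a bicausal coupling $\bar\pi\in\cplba(\cfp X,\cfp Y)$ in the middle. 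The standard gluing lemma for transport plans, together with the fact that bicausality is preserved under composition of bicausal kernels (which one checks via the conditional-independence characterization in Lemma \ref{lem:causal CI}, using that the outer legs are graphs of measurable maps and therefore impose no genuine constraint), yields a bicausal coupling of $\fp X$ and $\fp Y$ whose cost equals that of $\bar\pi$; infimizing gives $\AW_p(\fp X,\fp Y)\le\AW_p(\cfp X,\cfp Y)$.

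Once \eqref{eq:isometry} is in hand, the ``in particular'' claims follow quickly. That $\AW_p$ is a pseudo-metric on $\FP_p$: symmetry and finiteness are clear, and the triangle inequality for $\AW_p$ on $\FP_p$ is inherited from the triangle inequality for $\W_p$ on $\Pc_p(\Z_1)$ via \eqref{eq:isometry} (alternatively one may invoke the standard gluing argument directly, but the isometry route is cleanest). Finally, the map $\cfp X\mapsto\Law(\ip_1(\cfp X))$ from $\CFP_p$ to $\Pc_p(\Z_1)$ is an isometry by the second equality in \eqref{eq:isometry}; it is surjective because every $\bar\mu\in\Pc_p(\Z_1)$ is realized by the canonical filtered process \eqref{eq:CFP} built from $\bar\mu$, and one checks $\ip_1$ of that process equals $\bar\mu$ using Lemma \ref{lem:unfoldprops.measures}\ref{it:unfoldprops2}; injectivity on $\CFP_p$ up to the $\AW_p$-null relation is immediate from isometry.

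The main obstacle I anticipate is the careful bookkeeping in the ``$\le$'' direction of $\AW_p(\fp X,\fp Y)\le\AW_p(\cfp X,\cfp Y)$, specifically verifying that gluing a bicausal coupling in the middle of two cost-zero (graph) bicausal couplings again produces a bicausal coupling between the outer spaces. This is the one place where the conditional-independence definition of (bi)causality has to be manipulated across a three-fold product $\Omega^\fp X\times\Z\times\Omega^\fp Y$ (or its reduction), and where the subtlety of non-Polish $\Omega^\fp X$ may force the use of the block-approximation tools the paper defers to the appendix. Everything else is either a direct application of the already-established lemmas or routine telescoping of the metric $d_{\Z_t}$ under the unfold operator.
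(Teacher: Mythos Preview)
Your treatment of the first equality $\AW_p(\fp X,\fp Y)=\AW_p(\cfp X,\cfp Y)$ is essentially the paper's: the ``$\ge$'' direction via Lemma~\ref{lem:process to canonical}\ref{it:process to canonical3} is identical, and your gluing construction for ``$\le$'' is precisely the paper's disintegration argument $\pi(d\omega,d\hat\omega)=\int\gamma_z(d\omega)\hat\gamma_{\hat z}(d\hat\omega)\,\bar\pi(dz,d\hat z)$, including the correct identification of the non-Polish obstruction that forces the block-approximation detour.

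However, your argument for the second equality contains a genuine error. You assert that a bicausal coupling of the canonical processes is, up to unfold, \emph{exactly} a coupling on $\Z_1\times\Z_1$, and that ``bicausality forces the later coordinates to be coupled in the diagonal unfold way'', which you write as $\lambda(dz_1,d\hat z_1)\,z_1^+(dz_2)\,\hat z_1^+(d\hat z_2)\cdots$. This is false: Lemma~\ref{lem:bicausal cfp rep} shows that a bicausal coupling has the form $\pi_1\otimes k_1\otimes\cdots\otimes k_{N-1}$ where each $k_t^{z_{1:t},\hat z_{1:t}}$ is an \emph{arbitrary} element of $\cpl(z_t^+,\hat z_t^+)$, not the product measure. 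More damagingly, your cost telescoping breaks for the product coupling you wrote down: since
\[
\int d^p_{\Z_{t+1}}(z_{t+1},\hat z_{t+1})\,z_t^+(dz_{t+1})\,\hat z_t^+(d\hat z_{t+1})\;\ge\;\W_p^p(z_t^+,\hat z_t^+)
\]
with strict inequality in general, the path cost of your product-unfold coupling is only $\ge\int d^p_{\Z_1}\,d\lambda$, not equal to it. The paper's fix is to take $k_t=k_t^\ast$ \emph{optimal} for $\W_p(z_t^+,\hat z_t^+)$; then the recursion $d^p_{\Z_t}(z_t,\hat z_t)=d^p(z_t^-,\hat z_t^-)+\int d^p_{\Z_{t+1}}\,dk_t^\ast$ telescopes exactly, and for arbitrary kernels one gets the inequality in \eqref{eq:ddp.optimal.kernel}, which furnishes both directions after optimizing over $\pi_1$.
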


\begin{proof}
	The first equality in \eqref{eq:isometry} is a direct consequence of Lemma \ref{lem:process to canonical} and Theorem \ref{thm:adapted block} in the Appendix.
	For the convenience of the reader, we present an alternative proof of the first equality under the assumption that the probability spaces of $\fp X$ and $\fp Y$ are Polish, thereby omitting the technical result in Theorem \ref{thm:adapted block}.

	By Lemma \ref{lem:process to canonical} \ref{it:process to canonical3} we find
	\[
		\AW_p(\fp X,\fp Y) \ge \AW_p(\cfp X, \cfp Y).
	\]
	To see the reverse inequality, let $\overline{\pi} \in \cplba(\cfp X,\cfp Y)$ and write
	\[
		\gamma := \left( \id, \ip(\fp X) \right)_\ast \P^\fp X \text{ and }\hat{\gamma} := \left( \id, \ip(\fp Y) \right)_\ast \P^\fp Y.
	\]
	These couplings are bicausal by Lemma \ref{lem:process to canonical} \ref{it:process to canonical1} and admit disintegrations $(\gamma_{z})_{z \in \Z}$ and $(\hat{\gamma}_{\hat{z}})_{\hat{z} \in \Z}$ since the considered probability spaces are Polish by assumption.
	Consider the probability
	\[
		\pi(d\omega,d\hat{\omega}) := \int \gamma_z(d\omega) \hat{\gamma}_{\hat{z}}(d\hat{\omega}) \, \overline{\pi}(dz,d\hat{z}).
	\]
	For symmetry reasons we will only show that $\pi$ is causal from $\fp X$ to $\fp Y$.
	By Lemma \ref{lem:causal CI} it suffices to show that for any bounded, $\F_t^\fp Y$-measurable $V$ we have
	\[
		\E_\pi\left[ V \middle| \F_{N,0}^{\fp X,\fp Y} \right]	
		= \E_\pi \left[ V \middle| \F_{t,0}^{\fp X,\fp Y} \right].
	\]
	As $\hat \gamma$ is bicausal, Lemma \ref{lem:causal CI} asserts that
	$
		\hat z \mapsto 	\int V(\hat \omega) \, \hat \gamma_{\hat z}(d\hat \omega)
	$
	is $\F_{t}^{\cfp Y}$-measurable. By the same reasoning, we obtain $\F_t^\fp X$-measurability of
	\[
		\omega \mapsto W(\omega) := \iiint V(\hat \omega) \, \hat \gamma_{\hat z}(d \hat \omega) \, \overline{\pi}_z(d\hat z) \, \gamma_{\omega}(dz),
	\]
	where $\gamma_\omega(dz) := \Law(\ip(\fp X) | \F_{N}^{\fp X})(\omega)$.
	Hence, by the definition of $\pi$ and the tower property we get
	\[
		\E_\pi \left[ V \middle| \F_{N,0}^{\fp X,\fp Y} \right] = W = \E_\pi\left[ V \middle| \F_{t,0}^{\fp X,\fp Y} \right].
	\]
	As $V$ was arbitrary, this yields  $\pi \in \cpla(\fp X,\fp Y)$ and by symmetry $\pi \in \cplba(\fp X, \fp Y)$.
	Moreover, we have $\E_\pi[d^p(X,Y)] = \E_{\overline\pi}[d^p(\overline{X},\overline{Y})]$ and conclude that $\AW_p(\fp X,\fp Y) = \AW_p(\cfp X,\cfp Y)$.

	It remains to show the second equality.
	Write $\mu := \Law(\ip_1(\fp X))$ and $\nu:=\Law(\ip_1(\fp Y))$.
	By Lemma \ref{lem:bicausal cfp rep}, we have that 
	\begin{align*} 
	\AW_p^p(\cfp X,\cfp Y)
	=\inf_{\pi_1 \in \cpl(\mu,\nu)} \inf_{(k_t)_{t=1}^{N-1}} \int \sum_{t=1}^N d^p(z_t^-,\hat z_t^-)  \, (\pi_1\otimes k_1 \otimes \ldots \otimes k_{N-1})(dz,d\hat z),
	\end{align*}
	where the second infimum is taken over all kernels 
	\begin{equation}
		\label{eq:kernel}
 		k_t \colon \Z_{1:t} \times \Z_{1:t} \to \Pc_p(\Z_{t+1} \times \Z_{t+1}) 
		\text{ with } 
		k_t^{z_{1:t},\hat z_{1:t}} \in \cpl(z_t^+,\hat z_t^+).
	\end{equation}
	Now, for every $1\leq t\leq N-1$, let $k^\ast_t$ be a kernel as in \eqref{eq:kernel} that is an optimal coupling (w.r.t.\ $\W_p$) between its marginals.
	Their existence follows from a standard measurable selection argument.
	Then, for every $1\leq t \leq N-1$, $z_{1:t},\hat z_{1:t}\in \Z_{1:t}$, and every kernel $k_t$ as in \eqref{eq:kernel}, we have that
	\begin{align}
	\label{eq:ddp.optimal.kernel}
	\begin{split}
	d^p(z_t, \hat z_t)
	&=d^p(z_t^-,\hat z_t^-) + \W_p^p(z_t^+, \hat z_t^+) \\
	&\leq d^p(z_t^-,\hat z_t^-) + \int d^p(z_{t+1},\hat z_{t+1})\, k_t^{z_{1:t},\hat z_{1:t}}(dz_{t+1},d\hat z_{t+1}) 
	\end{split}
	\end{align}
	with equality if $k_t=k_t^\ast$.
	In particular, for every $\pi_1\in\cpl(\mu,\nu)$, an iterative application of \eqref{eq:ddp.optimal.kernel} shows that
	\begin{align*}
	\int d^p(z_1, \hat z_1) \, \pi_1(dz_1,d\hat{z}_1)
	&\leq \int \sum_{t=1}^N d^p(z_t^-,\hat z_t^-) \, (\pi_1\otimes k_1 \otimes \ldots \otimes k_{N-1})(dz,d\hat z) 
	\end{align*}
	with equality if $k_t=k_t^\ast$ for every $1\leq t\leq N-1$.
	Optimizing over $\pi_1\in\cpl(\mu,\nu)$ yields the claim.
\end{proof}

\begin{definition}[Wasserstein space of stochastic processes]
	We call the quotient space 
	\[\FFP_p:=\FP_p /_{\AW_p} \]
	the Wasserstein space of stochastic processes.
	$\FFP_p$ is equipped with $\AW_p$ (which is by Theorem \ref{thm:isometry} well-defined on $\FFP_p$ independent of the choice of representative).
\end{definition}

Our canonical choice of a representative of $\fp X \in \FFP_p$ is the associated canonical process $\cfp X \in \CFP_p$.
From now on, whenever we use the probability space of (the equivalence class of filtered processes) $\fp X$, we refer to the filtered probability space provided by $\cfp X$ if not stated otherwise.

\begin{corollary}
	\label{cor:weak lsc}
	The map $(\fp X, \fp Y) \mapsto \AW_p(\fp X, \fp Y)$ is lower semicontinuous w.r.t.\ the weak adapted topology\footnote{The weak adapted topology relates to the adapted Wasserstein distance the same way the weak topology of measures relates to the Wasserstein distance: The weak adapted topology is induced by $\AW_p$ when, for each $1 \le t \le N$, the metric $d_{\X_t}$ is replaced by the bounded metric $d_{\X_t} \wedge 1$.} on $\FFP_p \times \FFP_p$.
\end{corollary}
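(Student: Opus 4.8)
The plan is to reduce lower semicontinuity of $\AW_p$ to lower semicontinuity of the classical Wasserstein distance $\W_p$ via the isometry established in Theorem \ref{thm:isometry}. Recall that, writing $\mathsf{ip}_1$ for the time-$1$ information process and letting $\Z_1^{\mathrm b}$ denote $\Z_1$ with the bounded metrics $d_{\X_t}\wedge 1$ built into the recursion, the embedding $\fp X \mapsto \Law(\ip_1(\fp X)) \in \Pc_p(\Z_1^{\mathrm b})$ is an isometric isomorphism onto $(\Pc_p(\Z_1^{\mathrm b}), \W_p)$. Since the metrics are bounded, on $\Pc(\Z_1^{\mathrm b})$ the $\W_p$-topology coincides with the weak topology (as noted in Section \ref{ssec:notation}), and $\W_p$ is weakly lower semicontinuous on $\Pc(\Z_1^{\mathrm b}) \times \Pc(\Z_1^{\mathrm b})$ by the standard lower semicontinuity of optimal transport costs for a lower semicontinuous (here continuous) cost function; see e.g.\ \cite{Vi09}. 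So the corollary follows once we know that the weak adapted topology on $\FFP_p$ is exactly the topology pulled back from the weak topology on $\Pc(\Z_1^{\mathrm b})$ under $\fp X \mapsto \Law(\ip_1(\fp X))$.

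First I would make precise the definition of the weak adapted topology as given in the footnote: it is the topology induced by the pseudometric $\fp X \mapsto \AW_p^{\mathrm b}(\fp X, \cdot)$ obtained by replacing each $d_{\X_t}$ with $d_{\X_t}\wedge 1$. Applying Theorem \ref{thm:isometry} with these bounded metrics gives $\AW_p^{\mathrm b}(\fp X,\fp Y) = \W_p(\ip_1(\fp X), \ip_1(\fp Y))$ computed in $\Pc_p(\Z_1^{\mathrm b})$, where now $\Pc_p(\Z_1^{\mathrm b}) = \Pc(\Z_1^{\mathrm b})$ because of boundedness. Hence the weak adapted topology on $\FFP_p$ is literally the metric topology of $\W_p$ on $\Pc(\Z_1^{\mathrm b})$, transported through the isometry; in particular $\fp X_n \to \fp X$ in the weak adapted topology if and only if $\Law(\ip_1(\fp X_n)) \to \Law(\ip_1(\fp X))$ weakly.

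With that identification in hand, the proof concludes: given $(\fp X_n, \fp Y_n) \to (\fp X, \fp Y)$ in the weak adapted topology, set $\mu_n := \Law(\ip_1(\fp X_n))$, $\nu_n := \Law(\ip_1(\fp Y_n))$, $\mu := \Law(\ip_1(\fp X))$, $\nu := \Law(\ip_1(\fp Y))$, so that $\mu_n \to \mu$ and $\nu_n \to \nu$ weakly in $\Pc(\Z_1^{\mathrm b})$. By Theorem \ref{thm:isometry}, $\AW_p(\fp X_n,\fp Y_n) = \W_p(\mu_n,\nu_n)$ and $\AW_p(\fp X,\fp Y) = \W_p(\mu,\nu)$, both computed with the bounded cost $d_{\Z_1^{\mathrm b}}^p$, and weak lower semicontinuity of $\W_p$ gives $\liminf_n \W_p(\mu_n,\nu_n) \ge \W_p(\mu,\nu)$, which is the claim. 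I expect the only genuine point requiring care — the main obstacle, though a mild one — to be the bookkeeping that Theorem \ref{thm:isometry} applies verbatim when all base metrics $d_{\X_t}$ are replaced by $d_{\X_t}\wedge 1$ (the construction of $\Z$, $\ip$, the unfold operator, and Lemma \ref{lem:process to canonical} are all insensitive to this change, since they never use more than that the spaces are Polish and the metrics complete and compatible), together with the observation that in the bounded regime $\W_p$-convergence and weak convergence coincide on $\Pc(\Z_1^{\mathrm b})$ so that the pulled-back topology really is the weak adapted topology and not something finer.
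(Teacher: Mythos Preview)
Your approach mirrors the paper's: reduce to classical Wasserstein lower semicontinuity via the isometry of Theorem~\ref{thm:isometry}. The identification of the weak adapted topology with weak convergence on $\Pc(\Z_1^{\mathrm b})$ is correct and well argued.

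However, there is a genuine error in your concluding paragraph. You write that Theorem~\ref{thm:isometry} gives $\AW_p(\fp X,\fp Y)=\W_p(\mu,\nu)$ ``computed with the bounded cost $d_{\Z_1^{\mathrm b}}^p$''. This is false: the $\AW_p$ in the statement is built from the \emph{original} metrics $d_{\X_t}$, and Theorem~\ref{thm:isometry} identifies it with $\W_p$ for the \emph{unbounded} cost $d_{\Z_1}^p$, not $d_{\Z_1^{\mathrm b}}^p$. If your claim were correct, $\AW_p$ would coincide with $\AW_p^{\mathrm b}$ and hence be continuous (not merely lower semicontinuous) in the weak adapted topology---which fails whenever $p$-th moments escape.

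The fix is short but not vacuous: one must show that $(\mu,\nu)\mapsto \W_p^{d_{\Z_1}}(\mu,\nu)$ is l.s.c.\ for the weak topology on $\Pc(\Z_1^{\mathrm b})$. Since the topologies on $\Z_1$ and on $\Z_1^{\mathrm b}$ differ (Wasserstein versus weak on the measure components), this amounts to checking that $d_{\Z_1}$ is l.s.c.\ on $\Z_1^{\mathrm b}\times\Z_1^{\mathrm b}$. Proceed by backward induction: $d_{\X_N}$ is continuous; assuming $d_{\Z_{t+1}}$ is l.s.c.\ on $\Z_{t+1}^{\mathrm b}$, lower semicontinuity of optimal transport costs for nonnegative l.s.c.\ cost functions (e.g.\ \cite[Theorem~4.1]{Vi09}) gives that $\W_p^{d_{\Z_{t+1}}}$ is weakly l.s.c.\ on $\Pc(\Z_{t+1}^{\mathrm b})$, whence $d_{\Z_t}^p=d_{\X_t}^p+(\W_p^{d_{\Z_{t+1}}})^p$ is l.s.c.\ on $\Z_t^{\mathrm b}$. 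One more application of the same transport lemma then yields the desired l.s.c.\ of $\W_p^{d_{\Z_1}}$ on $\Pc(\Z_1^{\mathrm b})$, and hence of $\AW_p$ on $\FFP_p$. This iterated use of the classical result is what the paper's one-line proof (``combine \cite[Corollary~6.11]{Vi09} with Theorem~\ref{thm:isometry}'') is tacitly invoking.
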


\begin{proof}
	The result follows from combining \cite[Corollary 6.11]{Vi09}, that is the `non-adapted' analogon of Corollary \ref{cor:weak lsc} from the classical OT theory, with Theorem \ref{thm:isometry}.
\end{proof}

\section{Adapted functions and the prediction process}
\label{sec:AF.PP}

This section relates the adapted Wasserstein distance to the existing concepts of prediction processes and adapted functions introduced by Knight \cite{Kn75},  Aldous \cite{Al81}, and Hoover and Keisler \cite{HoKe84}. 
The main result of this section, Theorem \ref{thm:information.process.AF} below, shows that all concepts induce the same relation on  filtered processes.

Before recalling the definition of adapted functions from \cite{Ho87} (see also \cite{HoKe84}), let us say that, intuitively, an adapted function is an operation that takes a filtered processes as argument and returns a random variable defined on the underlying probability space of this filtered process.
Simple examples of adapted functions are  $\fp X\mapsto \sin(X_1)$ and $\fp X\mapsto \E[\min\{\exp(X_2X_4), 1\}|\F^{\fp X}_2]$.

\begin{definition}[Adapted functions]
\label{def:AF}
We call $f$ an \emph{adapted function
-- we write $f\in \AF$ -- if it can be built using the following three operations:}
 	\begin{enumerate}[label=(AF\arabic*)]
 	\item \label{it:AF1} If $\Phi\colon \mathcal{X} \to\mathbb{R}$ is continuous bounded, then $\Phi\in \AF$; we set $\Phi(\fp X):=\Phi(X)$.
 	\item \label{it:AF2} If $m\in\mathbb{N}$, $f_1,\dots,f_m\in \AF$, and $\varphi \in C_b(\R^m)$, then $\varphi(f_1, \ldots, f_m)\in \AF$; we set  \linebreak $\varphi(f_1, \ldots, f_m)(\fp X):=\varphi(f_1(\fp X), \ldots, f_m(\fp X))$.
 	\item \label{it:AF3} If $1\leq t \leq N$ and $g\in\AF$, then  $(g|t)\in \AF$; we set $(g|t)(\fp X) := \E[g(\fp X) | \F^{\fp X}_t]$.
 	\end{enumerate}	
 	Further define the \emph{rank} of an adapted function inductively as follows: the rank of $\Phi$ is 0;  the rank of $\varphi(f_1,\dots,f_m) $ is the maximal rank of $f_1,\dots, f_m$; and the rank of  $(g|t)$ is the rank of $g$ plus 1.
 	
 	The set of all adapted functions of rank at most $n \in \N \cup \{0\}$ is denoted by $\AF[n]$.
\end{definition}

Moreover, we can naturally embed $\AF[n]$ into $\AF[n+1]$ by identifying $f\in\AF[n]$ with $(f|N)$, since $f(\fp X) = (f|N)(\fp X)$ for all $\fp X \in \FP_p$.
Consequently, we may assume without loss of generality in item \ref{it:AF2} that $f_1,\ldots,f_m$ all have the same rank.

Adapted functions were defined in \cite{HoKe84} in a continuous time setting.
The present discrete time setting permits to give the following, perhaps clearer, representation:
 
 \begin{lemma} \label{lem:rep.AF}
 	Let $f \in \AF$ and $n\in\N$.
 	Then
			$f \in \AF[n]$ if and only if  for every  $k=1,\dots,N$ there is $m_k\in\N$ and an $m_k$-dimensional vector $\vec{g}_k$ consisting of elements in $\AF[n-1]$, and there is $F\in C_b(\mathbb{R}^{\sum_{k=1}^N m_k})$ such that
			\begin{equation} \label{eq:rep.AF}
			f(\fp{X}) = F \big( \E[\vec{g}_1(\fp{X})|\F_1^\fp{X}],\dots,\E[\vec{g}_N(\fp{X})|\F_N^\fp{X}]\big)\quad \text{for all } \fp X \in \FP_p.
			\end{equation}
 \end{lemma}

 \begin{proof}
 	It turns out to be useful to keep track of the depth of an adapted function, a notion that we now introduce:
	Loosely speaking, for $f \in \AF[n]$, its depth is the number of times \ref{it:AF2} was applied to a \emph{base element} of the form $(g|t)$ with $g \in \AF[n-1]$.
 	The \emph{depth} (at rank $n$) of a `base element' $(g|t)$ with $g \in \AF[n-1]$ is defined as $0$, i.e.,
	\begin{equation} 
		\label{eq:depth of base}
		\depth((g|t)) := 0\quad \text{for all }g\in\AF[n-1] \text{ and }1 \le t \le N.
	\end{equation}
	Recursively, we assign to $\varphi(f_1, \ldots, f_m) = f \in \AF[n]$ with $\phi \in C_b(\R^m)$ and $f_i \in \AF[n]$, $i = 1,\ldots,m$, its depth
 	\begin{equation}
		 \label{eq:depth else}
		 \depth(f) := \max_{i = 1,\ldots,m} \depth(f_i) + 1.
	 \end{equation}
	Note that by the iterative construction of any formation $f \in \AF[n]$, $f$ is either a base element or of the form detailed in \ref{it:AF2}, and its depth is well-defined by \eqref{eq:depth of base} and \eqref{eq:depth else}.
		 
 	Let $f \in \AF[n]$.
	We begin the induction at depth $0$.
	Then $f$ has $\depth(f) = 0$ if and only if it is a base element, in which case \eqref{eq:rep.AF} holds true.
	Now assume that $k:=\depth(f)>0$ and that \eqref{eq:rep.AF} applies to all $g \in \AF[n]$ with $\depth(g)<k$.
	We write $f =\phi(f_1, \ldots, f_m)$ where $\phi \in C_b(\R^m)$ and all $f_i \in \AF[n]$ have depth less than $k$.\
	By the inductive hypothesis, for every $1 \le i \le m$ there are vectors $\vec{g}_1^i,\ldots,\vec{g}_N^i$ consisting of elements in $\AF[n-1]$, and $F^i \in C_b(\R^{m^i})$ such that
	\[
		f_i(\fp X) =  F^i\left( \E\left[ \vec{g}^i_1(\fp X) | \F^\fp X_1 \right], \ldots, \E\left[ \vec{g}^i_N(\fp X) | \F^\fp X_N \right] \right)\quad\text{for all }\fp X \in \FP_p.
	\]
 	Collecting and sorting all the terms of the vectors $\vec{g}^i_t$ for $1 \le t \le N$ gives
	\[
		\vec{g}_t := \vec{g}^{1:m}_t = (\vec{g}_t^1,\ldots, \vec{g}_t^m).
	\]
	Finally, let $\sigma$ be the permutation with the property
	\[
	 	\sigma(\vec{g}_1,\ldots,\vec{g}_N) = (\vec{g}^1,\ldots,\vec{g}^m),
	\]
	then $F = \phi \circ (F^1,\ldots,F^k) \circ \sigma$ together with $(\vec{g}_1,\ldots,\vec{g}_N)$ satisfies \eqref{eq:rep.AF}.
 \end{proof}

 \begin{definition}[Adapted distribution]\label{def:adapted.distribution}
 	Two filtered processes $\fp{X},\fp{Y}\in\FP_p$ have the same \emph{adapted distribution (of rank $n\geq 0$)} if $\E[f(\fp{X})]=\E[f(\fp{Y})]$ for every adapted function $f\in\AF$ (resp.\ $f \in \AF[n]$);
 	we write $\fp{X}\sim_\infty\fp{Y}$ (resp.\ $\fp{X}\sim_n\fp{Y}$).
 \end{definition}

 \begin{remark} \label{rem:AF.lipschitz.Borel}
 	In the definition of adapted functions, we started in \ref{it:AF1} with the base set of continuous and bounded functions from $\X$ to $\R$.
 	It is possible to vary this base set, without changing the induced equivalence relations $\sim_\infty$ and $\sim_n$, see Definition \ref{def:adapted.distribution}.
	One may replace \ref{it:AF1} with any of the following choices:
 	\begin{enumerate}[label=(AF1\alph*)]
 	\item \label{it:AF1a} if $\Phi\colon \mathcal{X} \to\mathbb{R}$ is bounded and Borel measurable, then $\Phi\in \AF$;
 	\item if $\Phi\colon \mathcal{X} \to\mathbb{R}$ is bounded and Lipschitz continuous, then $\Phi\in \AF$;
 	\item if $1\leq t\leq N$ and $\Phi\colon \mathcal{X}_t \to\mathbb{R}$ is bounded and continuous, then $\Phi\circ\proj_t\in \AF$.
 	\end{enumerate}
 	In a similar manner, we may consider in \ref{it:AF2} solely Lipschitz continuous / Borel measurable and bounded $\phi$, and still preserve the equivalence relations introduced in Definition \ref{def:adapted.distribution}.
	We shall prove this further down below.
 \end{remark}

	The purpose of the next example is twofold: 
	first, to show where adapted distributions and adapted functions naturally appear, and also to familiarize the reader with the latter.
 	
 \begin{example}[Martingales and optimal stopping]
 \label{ex:martingales.opt.stop}
 	Let $\fp X,\fp Y\in\FP$.
 	\begin{enumerate}[label=(\alph*),wide]
 	\item \label{it:ex.martingale}	If $\fp X$  is a martingale $\fp Y\sim_1\fp X$, then so is $\fp Y$ as already observed in \cite{Al81}.
 	Indeed, for $1\leq t\leq N$,
 	\[ f_t:=  | \proj_{t} - (\proj_{t+1}|t)| \]
	is an element of $\AF[1]$.\footnote{
 	For demonstrative purposes, we disregard that only bounded functions are allowed in the definition of adapted functions.
 	Indeed, this is only a technical issue and all terms are well-defined by standard approximation arguments.}
 	Its evaluation yields
 	\[ \E\left[\left|Y_t- \E[Y_{t+1}|\F^\fp Y_t]\right| \right]
 	= f_t(\fp Y) 
 	= f_t(\fp X)
 	=\E\left[\left|X_t- \E[X_{t+1}|\F^\fp{X}_t]\right| \right] = 0,\]
 	that is the martingale property of $\fp Y$.

	\item Another important property preserved by $\sim_1$ is Markovianity.
	This also holds for the important property of being `plain' defined in \eqref{eq:plain} below.
 	\item \label{it:ex.opt.stop}	Let $c\colon\mathcal{X}\times\{1,\dots,N\}\to\mathbb{R}$ be nonanticipative.\footnote{That is, $c_t(x)=c(x,t)$ depends only on $x_{1:t}$ when $1\leq t\leq N$.}
 	By the Snell-envelope theorem we have
 	\[ v_c(\fp X)
 	:=\inf_{\tau \text{ is } (\mathcal{F}^{\fp X}_t)_{t=1}^N\text{-stopping time}} \E[c_\tau(X)] 
 	= \E[S_1],\]
 	where $S_1$ is defined by backward induction starting with  $S_N:=c_N(X)$ and
 	\[ S_t:= c_t(X) \wedge \E[S_{t+1}|\F^\fp{X}_t] \quad \text{for }t=N-1,\ldots,1.\]
 	Thus, each $S_t$ equals the value of an adapted function of rank $N-t$, from where it follows that $\fp X\sim_{N-1}\fp Y$ implies $v_c(\fp X)=v_c(\fp Y)$.
 	\end{enumerate}

 	We will come back to \ref{it:ex.martingale} and \ref{it:ex.opt.stop} in Section \ref{sec:aspects}.
 \end{example}

 	Closely related to adapted functions is the \emph{prediction process}:
	\begin{definition}[Prediction process]\label{def:pred.process}
		For $\fp X \in \FP_p$
the \emph{first order prediction processes} is given by
		\begin{align*}
		\pp^1(\fp{X})\colon\Omega^\fp X &\to \mathcal{M}_1:=\Pc_p(\X)^N, \\
		\omega&\mapsto \left(\Law(X|\F_t^\fp{X})(\omega)\right)_{t=1}^N
		\end{align*}
		Iteratively, the \emph{$n$-th order prediction process} is  given by
		\begin{align*}
		\mathrm{pp}^{n}(\fp{X})\colon\Omega^\fp X&\to\mathcal{M}_{n}:=\mathcal{P}_p(\mathcal{M}_{n-1})^N,\\
		\omega &\mapsto\left(\Law( \pp^{n-1}(\fp{X}) |\F_t^\fp{X})(\omega)\right)_{t=1}^N.
		\end{align*}
		Finally, the \emph{prediction process} is defined as the $\bigtimes_{n\in\mathbb{N}} \mathcal{M}_n=:\mathcal M$-valued random variable
		\[\pp(\fp{X}):=(\pp^n(\fp{X}))_{n\in\mathbb{N}}.\]
	\end{definition}
		For convenience, we set the zero-th order prediction process $\pp^0(\fp{X}):=X$ and $\mathcal{M}_0:=\mathcal{X}$ so that the iterative scheme of Definition \ref{def:pred.process} is valid for $n\geq 0$.
	
\begin{lemma} \label{lem:pp is continuous function of ip}
		For every $n \in \N$ there is a continuous function $F^n \colon \Z \to \mathcal M_n$ such that
		\[
			\pp^n(\fp X) = F^n(\ip(\fp X))	\quad \text{for all } \fp X \in \FP_p.
		\]
\end{lemma}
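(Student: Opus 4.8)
The plan is to proceed by induction on $n$, closely mirroring the recursive definition of the prediction process and exploiting the self-awareness property of the information process (Lemma~\ref{lem:ip.self.aware}) together with continuity of the unfold operator (Lemma~\ref{lem:unfoldprops.measures}~\ref{it:unfoldprops3}). For the base case $n=0$, recall that $\pp^0(\fp X) = X$ and $\mathcal M_0 = \X$; since $\ip^-(\fp X) = X$, the function $F^0 := Z^-$ (the evaluation map from Definition~\ref{def:canonical.space.Z}) is continuous and satisfies $\pp^0(\fp X) = F^0(\ip(\fp X))$. For $n=1$, note that $\pp^1_t(\fp X) = \Law(X \mid \F_t^\fp X) = \Law(F^0(\ip(\fp X)) \mid \F_t^\fp X)$, and by the second (measure-valued) assertion of Lemma~\ref{lem:ip.self.aware} applied to the continuous map $f = F^0$, there is a continuous $g_t \colon \Z_{1:t} \to \Pc_p(\X)$ with $\Law(X \mid \F_t^\fp X) = g_t(\ip_{1:t}(\fp X))$; assembling these over $t = 1, \dots, N$ gives a continuous $F^1 \colon \Z \to \mathcal M_1$.

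For the inductive step, assume we have a continuous $F^n \colon \Z \to \mathcal M_n$ with $\pp^n(\fp X) = F^n(\ip(\fp X))$ for all $\fp X \in \FP_p$. By definition, $\pp^{n+1}_t(\fp X) = \Law(\pp^n(\fp X) \mid \F_t^\fp X) = \Law(F^n(\ip(\fp X)) \mid \F_t^\fp X)$. Applying the measure-valued part of Lemma~\ref{lem:ip.self.aware} to the continuous function $f = F^n \colon \Z \to \mathcal M_n$ (a Polish space, so the lemma applies) yields, for each $1 \le t \le N$, a continuous function $G^{n+1}_t \colon \Z_{1:t} \to \Pc_p(\mathcal M_n)$ such that $\Law(\pp^n(\fp X) \mid \F_t^\fp X) = G^{n+1}_t(\ip_{1:t}(\fp X))$. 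Setting $F^{n+1}(z) := (G^{n+1}_t(z_{1:t}))_{t=1}^N$ defines a continuous map $\Z \to \mathcal P_p(\mathcal M_n)^N = \mathcal M_{n+1}$ with $\pp^{n+1}(\fp X) = F^{n+1}(\ip(\fp X))$, completing the induction.

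The main subtlety — and the only place I expect to need some care — is the precise version of Lemma~\ref{lem:ip.self.aware} being invoked: the statement as given treats $f \colon \Z \to \mathcal A$ for a Polish space $\mathcal A$, and one must check that $\mathcal M_n$ is indeed Polish (which it is, being an iterated construction of Wasserstein spaces over Polish spaces, each step of which preserves Polishness — this was already observed for the spaces $\Z_t$). One should also be slightly attentive to the integrability bookkeeping: the prediction processes take values in $\Pc_p$-spaces rather than merely $\Pc$-spaces, but since Lemma~\ref{lem:ip.self.aware} is built on Lemma~\ref{lem:unfoldprops.measures}, whose unfold operator maps $\Pc_p(\Z_t) \to \Pc_p(\Z_{t:N})$ and is Lipschitz for the $p$-Wasserstein metrics, the $p$-moment conditions propagate correctly and $F^n$ genuinely lands in $\mathcal M_n$ (not just in the larger space of all Borel probabilities). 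Beyond this, the argument is a routine unwinding of definitions, so I would not belabor it.
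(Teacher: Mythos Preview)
Your proposal is correct and follows essentially the same route as the paper: induction on $n$ with base case $F^0 = Z^-$, and the inductive step invoking the measure-valued form of Lemma~\ref{lem:ip.self.aware} applied to $F^{n-1}$ to produce the coordinate maps $G_t^n$, then setting $F^n = (G_1^n,\dots,G_N^n)$. Your separate treatment of $n=1$ is redundant (it is just the first inductive step), and your remarks on Polishness of $\mathcal M_n$ and the $\Pc_p$-versus-$\Pc$ bookkeeping are more explicit than the paper's, but these are expository refinements rather than a different argument.
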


 \begin{proof}
	 Let $F^0 \colon \Z \to \mathcal M_0 = \X$ be the corresponding projection, that is $F^0(z) = z_{1:N}^-$.
	 Therefore, $F^0$ is continuous with $\pp^0(\fp X) = X = F^0(\ip(\fp X))$.
	 
	 Let $n \in \N$, and consider the inductive hypothesis that there is a continuous map $F^{n-1}\colon\Z\to\mathcal{M}_{n-1}$ such that
	 \begin{equation}\label{eq:pp inductive ass}
		 \pp^{n-1}(\fp X) = F^{n-1}(\ip(\fp X))\quad\text{for all } \fp X \in \FP_p.
	 \end{equation}
	 By definition of the $n$-th order prediction process and the hypothesis \eqref{eq:pp inductive ass}, we have
	 \[
	 	\pp^n(\fp X) = \left( \Law\left( F^{n-1}(\ip(\fp X)) | \F^{\fp X}_t \right) \right)_{t = 1}^N\quad \text{for all } \fp X \in \FP_p.
	 \]
	 By Lemma \ref{lem:ip.self.aware}, for every $1 \le t \le N$, there is a continuous map $G_t^n \colon \Z \to \Pc_p(\mathcal M_{n-1})$ such that
	 \[
	 	\Law\left( F^{n-1}(\ip(\fp X)) | \F^{\fp X}_t   \right) = G^n_t(\ip(\fp X))\quad\text{for all }\fp X \in \FP_p.
	 \]
	 The proof is completed be setting $F^n \colon \Z \to \mathcal M_n = \Pc_p(\mathcal M_{n-1})^N$ to be $F^n:=(G^n_1,\ldots,G^n_N)$.
 \end{proof}

 It is worth pointing out that $\mathrm{pp}^{n}$ contains `at least as much information' as its predecessor $\mathrm{pp}^{n-1}$.
 In fact, we shall later see in Proposition \ref{prop:process.different.rank} that for $n<N-1$, it contains strictly more information in general.

 \begin{lemma} \label{lem:ppn contains ppk}
	 Let $1\leq k\leq n$.
	 There is a 1-Lipschitz function $F \colon \mathcal M_n \to \mathcal M_k$ such that
	 \[
		F(\pp^n(\fp X)) = \pp^k(\fp X)\quad\text{for all }\fp X \in \FP_p.
	 \]
	 In particular, if $\fp X,\fp Y \in \FP_p$  are such that $\pp^n(\fp X)$ and $\pp^n(\fp Y)$ have the same distribution, then $\pp^k(\fp X)$ and $\pp^k(\fp Y)$ have the same distribution as well.
 \end{lemma}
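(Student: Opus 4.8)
The plan is to prove the slightly stronger statement that for every fixed $n$ and every $1\le k\le n$ there is a $1$-Lipschitz map $F=F^{n,k}\colon\mathcal M_n\to\mathcal M_k$ with $F(\pp^n(\fp X))=\pp^k(\fp X)$ for all $\fp X\in\FP_p$; the displayed ``in particular'' claim then follows at once, since a Borel map sends identically distributed random variables to identically distributed random variables. I would proceed by \emph{downward induction on $k$}, the content of the induction step being that $\pp^k(\fp X)$ is recovered from $\pp^{k+1}(\fp X)$ by one fixed $1$-Lipschitz map.

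First I record the elementary fact that $\pp^k(\fp X)$ is $\F_N^\fp X$-measurable for every $k$: this holds for $k=0$ because $\pp^0(\fp X)=X$ is $(\F_t^\fp X)_{t=1}^N$-adapted, and it propagates since a conditional law given $\F_t^\fp X\subseteq\F_N^\fp X$ is $\F_N^\fp X$-measurable. Consequently the $N$-th coordinate of $\pp^{k+1}(\fp X)$ satisfies $\Law(\pp^k(\fp X)\mid\F_N^\fp X)=\delta_{\pp^k(\fp X)}$ almost surely. So everything reduces to producing a $1$-Lipschitz retraction from $\Pc_p(\mathcal M_k)$ back to $\mathcal M_k$ that undoes $v\mapsto\delta_v$.

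The key step is the construction of this retraction $r\colon\Pc_p(\mathcal M_k)\to\mathcal M_k$, and here one exploits the iterated Wasserstein structure $\mathcal M_k=\Pc_p(\mathcal M_{k-1})^N$. Define $r(P):=(\mathrm{mix}((\proj_t)_\ast P))_{t=1}^N$, where $\proj_t\colon\mathcal M_k\to\Pc_p(\mathcal M_{k-1})$ is the $t$-th coordinate projection and $\mathrm{mix}\colon\Pc_p(\Pc_p(\mathcal M_{k-1}))\to\Pc_p(\mathcal M_{k-1})$ is the mean-measure map $Q\mapsto\int\nu\,Q(d\nu)$. Two estimates finish this step. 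First, $\mathrm{mix}$ is $1$-Lipschitz for $\W_p$: gluing a $\W_p$-optimal coupling $\Gamma$ of $Q,Q'$ with a measurably selected family of $\W_p$-optimal couplings $\gamma_{\nu,\nu'}\in\cpl(\nu,\nu')$ (exactly as in the proof of Lemma \ref{lem:lipschitz kernel}) yields a coupling of $\mathrm{mix}(Q)$ and $\mathrm{mix}(Q')$ of cost $\int\W_p^p(\nu,\nu')\,\Gamma(d\nu,d\nu')=\W_p^p(Q,Q')$. Second, for the product metric $d_{\mathcal M_k}^p=\sum_{t=1}^N\W_p^p$ one has $\sum_t\W_p^p((\proj_t)_\ast P,(\proj_t)_\ast P')\le\W_p^p(P,P')$, by pushing a $\W_p$-optimal coupling of $P,P'$ through each $\proj_t\times\proj_t$ and summing. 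Combining the two shows $r$ is $1$-Lipschitz, and $r(\delta_v)=v$ because $(\proj_t)_\ast\delta_v=\delta_{v_t}$ and $\mathrm{mix}(\delta_{v_t})=v_t$.

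Finally I assemble the induction: for $k=n$ take $F^{n,n}:=\id$; given a $1$-Lipschitz $F^{n,k+1}$ with $F^{n,k+1}(\pp^n(\fp X))=\pp^{k+1}(\fp X)$, set $F^{n,k}:=r\circ\proj_N\circ F^{n,k+1}$ with $\proj_N\colon\mathcal M_{k+1}=\Pc_p(\mathcal M_k)^N\to\Pc_p(\mathcal M_k)$ the ($1$-Lipschitz) coordinate projection. Then $F^{n,k}$ is $1$-Lipschitz as a composition, and $F^{n,k}(\pp^n(\fp X))=r(\proj_N(\pp^{k+1}(\fp X)))=r(\delta_{\pp^k(\fp X)})=\pp^k(\fp X)$ using the measurability remark. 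I expect the only delicate point to be the construction of $r$: making sure the measurable selection underlying the $\mathrm{mix}$ estimate is carried out as in Lemma \ref{lem:lipschitz kernel}, and keeping careful track that the metric on $\mathcal M_n$ is the $\ell_p$-sum of the coordinate Wasserstein metrics, so that the projections $\proj_t,\proj_N$ and the $N$-fold assembly inside $r$ are all non-expansive. Everything else is routine bookkeeping.
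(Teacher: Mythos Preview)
Your proof is correct and follows the same overall scaffolding as the paper: both proceed by composing one-step maps $\mathcal M_{l}\to\mathcal M_{l-1}$, each built from the observation that $\pp^{l}_N(\fp X)=\delta_{\pp^{l-1}(\fp X)}$, so that inverting the Dirac embedding on the last coordinate recovers the lower-order prediction process.

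The difference lies in how that inversion is carried out. The paper simply notes that $\iota_{l-1}^{-1}\circ\proj_N$ is $1$-Lipschitz on its domain and then \emph{extends} it to a $1$-Lipschitz map on all of $\mathcal M_l$. Your argument instead \emph{constructs} an explicit $1$-Lipschitz retraction $r\colon\Pc_p(\mathcal M_k)\to\mathcal M_k$ by pushing forward through the coordinate projections and applying the mean-measure map $\mathrm{mix}$. This is a genuine improvement in self-containment: it avoids any appeal to a Lipschitz extension theorem (which for general metric targets is not automatic), and the two estimates you give---$1$-Lipschitz continuity of $\mathrm{mix}$ via gluing optimal couplings, and the $\ell_p$-projection bound $\sum_t\W_p^p((\proj_t)_\ast P,(\proj_t)_\ast P')\le\W_p^p(P,P')$---are both correct and exactly the right ingredients. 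The paper's version is shorter to state but leans on an extension step that your construction makes unnecessary.
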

 \begin{proof}
	For $m \geq 0$ consider the isometric injections
	\begin{align*}
		\iota_{m} \colon \mathcal M_{m} &\to \Pc_p(\mathcal M_{m}),\qquad 
		p \mapsto \delta_p.
	\end{align*}
	For $m\geq 1$, since $\pp^{m}_N(\fp X)=\delta_{\pp^{m-1}(\fp X)}$, we may apply $\iota_{m-1}^{-1} \circ \proj_N$ to $\pp^m$, and obtain $ \pp^{m-1}(\fp X)=\iota_{m-1}^{-1}(\pp^m_N(\fp X))$ for all $\fp X \in \FP_p$.
	Moreover, $\iota_{m}^{-1}$ admits a 1-Lipschitz extension $I_m^{-1}\colon \Pc_p(\mathcal M_m) \to \mathcal M_m$ given by
	\[
		I_m^{-1}(P) := \left( \int p_1 \, P(dp), \ldots, \int p_N \, P(dp) \right),
	\]
	where we write $p = (p_1,\ldots,p_N) \in \mathcal M_m = \Pc_p(\mathcal M_{m-1})^N$.
	Indeed, $I_m^{-1}$ is $1$-Lipschitz as, for $P,Q \in \Pc_p(\mathcal M_m)$, we have by Jensen's inequality
	\begin{align*}
		\WA_{\Pc_p(\mathcal M_m)}^p\left( P, Q \right) &= \int \sum_{t = 1}^N \WA_{\Pc_p(\mathcal M_{m-1})}^p(p_t, q_t) \, \pi^\ast(dp, dq) \\
		&\ge \sum_{t = 1}^N \WA_{\Pc_p(\mathcal M_{m-1})}^p\left( \int p_t \, \pi^\ast (dp,dq), \int q_t \, \pi^\ast(dp,dq) \right) = d_{\mathcal M_m}^p\left( I_m^{-1}(P), I_m^{-1}(Q) \right),
	\end{align*}
	where $\pi^\ast$ is an $\WA_{\Pc_p(\mathcal M_m)}$-optimal coupling of $P$ and $Q$.
	We denote by $F^m \colon \mathcal M_m = \Pc_p(\mathcal M_{m-1})^N \to \mathcal M_{m-1}$ the composition $I_{m-1}^{-1} \circ \proj_N$.

	Finally, the mapping
	\[ F :=F^{k+1} \circ \ldots \circ F^{n} \colon \mathcal M_n \to \mathcal M_{k} \]
	 is 1-Lipschitz and satisfies
	 \begin{align*}
	 	F(\pp^n(\fp X)) 
	 	&= F^{k+1} \circ \ldots \circ F^{n-1}(\pp^{n-1}(\fp X)) = \ldots = \pp^k(\fp X)
	 \end{align*}
	 for all $\fp X \in \FP_p$.
	 This completes the proof.
 \end{proof}

 Let us remark that, when $n\geq 1$, the process $\mathrm{pp}^n(\fp{X})$ is a measure-valued martingale w.r.t.\ $(\F^\fp{X}_t)_{t=1}^N$ which is terminating at $\delta_{\mathrm{pp}^{n-1}(\fp X)}$.

 A version of the next lemma can be found in \cite{HoKe84},  though the proof is different due to differences in the definition of adapted functions (as multi-time stochastic processes).

 \begin{lemma}
 \label{lem:rank.adapted.functions.prediction.process.same}
 	Let $\fp{X},\fp{Y}\in\FP_p$ and let $n\in\N$. 
 	Then the following are equivalent:
 	\begin{enumerate}[label=(\roman*)]
 	\item \label{it:rank.adapted.functions.prediction.process.same1}
 	$\fp{X}\sim_n \fp{Y}$;
 	\item \label{it:rank.adapted.functions.prediction.process.same2} 
 	$\mathrm{pp}^n(\fp{X})$ and $\mathrm{pp}^n(\fp{Y})$ have the same distribution.
 	\end{enumerate}
 \end{lemma}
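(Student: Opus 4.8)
The plan is to prove the equivalence of $\fp X \sim_n \fp Y$ and equality in distribution of $\pp^n(\fp X)$ and $\pp^n(\fp Y)$ by induction on the rank $n$, exploiting the recursive structure of both objects.

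\textbf{Base case $n = 0$.} Here $\pp^0(\fp X) = X$ and $\AF[0]$ consists (by Lemma \ref{lem:rep.AF}\ref{it:rep.AF1}) precisely of the maps $\fp X \mapsto F(X)$ with $F \in C_b(\X)$. Thus $\fp X \sim_0 \fp Y$ means $\E[F(X)] = \E[F(Y)]$ for all $F \in C_b(\X)$, which by the portmanteau theorem is exactly $\Law(X) = \Law(Y)$, i.e.\ $\Law(\pp^0(\fp X)) = \Law(\pp^0(\fp Y))$.

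\textbf{Inductive step.} Assume the equivalence holds for rank $n-1$; we prove it for rank $n$. For the direction \ref{it:rank.adapted.functions.prediction.process.same2}$\Rightarrow$\ref{it:rank.adapted.functions.prediction.process.same1}, take $f \in \AF[n]$ and use the representation \eqref{eq:rep.AF}: there are vectors $\vec g_1,\dots,\vec g_N$ with entries in $\AF[n-1]$ and $F \in C_b(\R^m)$ with
\[
	f(\fp X) = F\big( \E[\vec g_1(\fp X)|\F_1^\fp X], \dots, \E[\vec g_N(\fp X)|\F_N^\fp X] \big).
\]
The key observation is that each inner conditional expectation is, up to a fixed continuous function, read off from the first-order prediction process applied to $\vec g_\bullet(\fp X)$; iterating, the whole random variable $f(\fp X)$ is a fixed bounded continuous functional of $\pp^{n-1}(\fp X)$ — more precisely, using the inductive hypothesis one shows there is $H \in C_b(\mathcal M_{n-1}^{\,?})$ (or rather a bounded continuous $H$ on $\mathcal M_{n-1}$, after collecting the finitely many $\AF[n-1]$-values, which by the inductive hypothesis are themselves continuous functions of $\pp^{n-1}$) such that $\E[f(\fp X)] = \E[H(\pp^{n-1}(\fp X))]$; but $\pp^{n-1}$ is a $1$-Lipschitz image of $\pp^n$ by Lemma \ref{lem:ppn contains ppk}, so equality of the laws of $\pp^n(\fp X),\pp^n(\fp Y)$ forces $\E[f(\fp X)] = \E[f(\fp Y)]$. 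For the converse \ref{it:rank.adapted.functions.prediction.process.same1}$\Rightarrow$\ref{it:rank.adapted.functions.prediction.process.same2}, I would show that for every $\Psi \in C_b(\mathcal M_n)$ there is an adapted function $f \in \AF[n]$ (or at least a uniform limit of such) with $\E[f(\fp X)] = \E[\Psi(\pp^n(\fp X))]$ for all $\fp X$; since $\mathcal M_n = \Pc_p(\mathcal M_{n-1})^N$ and, by the inductive hypothesis, bounded continuous functionals of $\pp^{n-1}(\fp X)$ are realized by $\AF[n-1]$, one builds such $\Psi$-realizing functions by taking products of continuous functionals of each coordinate $\Law(\pp^{n-1}(\fp X)|\F_t^\fp X)$ — each such coordinate functional being $\E[\varphi \circ (\text{value of an } \AF[n-1]\text{ function})\,|\,\F_t^\fp X]$, hence in $\AF[n]$ after composing with \ref{it:AF3} and \ref{it:AF2} — and closing under algebra operations and monotone/Stone--Weierstrass approximation; equality of $\E[f(\fp X)]$ and $\E[f(\fp Y)]$ then yields equality of $\int \Psi \, d\Law(\pp^n(\fp X))$ and $\int \Psi \, d\Law(\pp^n(\fp Y))$ for a separating class of $\Psi$, hence equality of the laws.

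\textbf{Main obstacle.} The delicate point is the $\Rightarrow$ direction: one must argue that the bounded continuous functions of $\pp^n(\fp X)$ arising from adapted functions of rank $n$ form a \emph{measure-determining} class on $\mathcal M_n$ (uniformly in $\fp X$), which requires carefully matching the recursive structure of $\mathcal M_n = \Pc_p(\mathcal M_{n-1})^N$ with the syntax of $\AF[n]$ — in particular handling the conditional-expectation-versus-conditional-law gap (conditional expectations of bounded functions of $\pp^{n-1}$ determine the conditional law of $\pp^{n-1}$, which is the relevant coordinate of $\pp^n$). A clean way to organize this is to first prove, via Lemma \ref{lem:pp is continuous function of ip} and Lemma \ref{lem:ip.self.aware}, that $\sigma$-algebra generated by $\{\fp X \mapsto \E[\Psi(\pp^{n-1}(\fp X))|\F_t^\fp X] : \Psi \in C_b(\mathcal M_{n-1}),\ 1 \le t \le N\}$-valued maps coincides with the $\sigma$-algebra generated by $\pp^n(\fp X)$, and then invoke a monotone class / functional-form argument to pass from these generating functionals to all of $C_b(\mathcal M_n)$; the approximation steps flagged in Remark \ref{rem:AF.lipschitz.Borel} (allowing Lipschitz or Borel base functions without changing $\sim_n$) are exactly what make these closure arguments go through without technical friction.
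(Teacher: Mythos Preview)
Your overall strategy is the same as the paper's --- induction on the rank, with the (ii)$\Rightarrow$(i) direction done by showing that values of $\AF[n]$-functions are continuous functionals of $\pp^n$, and the (i)$\Rightarrow$(ii) direction by exhibiting a point-separating algebra on $\mathcal M_n$ realized by $\AF[n]$. But there is a genuine gap in how you set up the induction.

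Your inductive hypothesis is the \emph{equivalence} $\fp X\sim_{n-1}\fp Y \Leftrightarrow \Law(\pp^{n-1}(\fp X))=\Law(\pp^{n-1}(\fp Y))$, which is a statement about equality of \emph{laws}. What the inductive step actually needs is the \emph{pointwise} statement that every $g\in\AF[n-1]$ satisfies $g(\fp X)=G(\pp^{n-1}(\fp X))$ for some fixed $G\in C_b(\mathcal M_{n-1})$. You invoke precisely this when you write ``which by the inductive hypothesis are themselves continuous functions of $\pp^{n-1}$'', but your hypothesis does not give it. Without this pointwise representation you cannot pass from $\E[g(\fp X)\mid\F_t^{\fp X}]$ to $\int G\,d\pp^n_t(\fp X)$, which is the mechanism that produces a functional of $\pp^n$. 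The paper avoids this by making the inductive claims the pointwise representations themselves (one claim for each direction), not the equivalence.

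A symptom of this gap is the sentence ``one shows there is $H\in C_b(\mathcal M_{n-1})$ \dots\ such that $\E[f(\fp X)]=\E[H(\pp^{n-1}(\fp X))]$'': if that were true for every $f\in\AF[n]$ it would say that $\sim_n$ is already determined by the law of $\pp^{n-1}$, hence by $\sim_{n-1}$, contradicting Proposition~\ref{prop:process.different.rank}. The correct target space here is $\mathcal M_n$, not $\mathcal M_{n-1}$; and reaching it requires exactly the pointwise inductive claim you are missing. Your ``main obstacle'' paragraph correctly identifies the (i)$\Rightarrow$(ii) direction as delicate, and the algebra/Stone--Weierstrass approach you sketch is right (the paper implements it via the explicit algebras $\mathcal S_n$ and \cite[Theorem~4.5]{EtKu09}); but the detour through $\ip$ and Lemmas~\ref{lem:pp is continuous function of ip}--\ref{lem:ip.self.aware} is unnecessary and does not repair the gap in the other direction.
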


 Before proving the lemma, we want to point out that the same proof with obvious modifications also works to obtain Remark \ref{rem:AF.lipschitz.Borel}.
 For example, replace at every instance `continuous' with `Borel-measurable' for \ref{it:AF1a}.

 \begin{proof}
 	We start with the easier direction that \ref{it:rank.adapted.functions.prediction.process.same2} implies \ref{it:rank.adapted.functions.prediction.process.same1}.
 	Clearly, it is sufficient to show:
	
	\emph{Claim}: For every $f\in\AF[n]$ there is $F\in C_b(\mathcal{M}_n)$ such that 
 	\begin{equation} \label{eq:rank.AF.pp.claim1}
		f(\fp X)=F(\pp^n(\fp X)) \quad\text{for all } \fp X \in\FP_p.
	 \end{equation}
 	
 	For $n=0$ the claim is trivially true as $\mathcal M_0 = \X$ and due to item \ref{it:AF1}.
 	Assume now that the claim is true for $n \in \N \cup \{0\}$, and let $f\in\AF[n+1]$.
 	Using Lemma \ref{lem:rep.AF}, we may represent $f$ as in \eqref{eq:rep.AF}.
 	Thus, it suffices to show \eqref{eq:rank.AF.pp.claim1} for $f=(g|t)$ where $g\in\AF[n]$ and $1\leq t\leq N$.
 	By the inductive hypothesis there is $G\in C_b(\mathcal{M}_{n})$ such that $g(\fp X)=G(\mathrm{pp}^{n}(\fp X))$ for all $\fp X \in\FP_p$.
	Therefore
	 	\begin{align*}
 		f(\fp{X}) 
 		&= \E[G(\mathrm{pp}^{n}(\fp{X}))|\F_t^\fp{X}] = \int G \,d \pp^{n+1}_t(\fp X)
 		= H(\mathrm{pp}^{n+1}(\fp{X}))
 	\end{align*}
 for all $\fp X \in \FP_p$, where $H \colon \mathcal{M}_{n+1}\to\R$, $p\mapsto \int G\,d p_t$ is continuous and bounded.
 This shows \eqref{eq:rank.AF.pp.claim1} and thus that \ref{it:rank.adapted.functions.prediction.process.same2} implies \ref{it:rank.adapted.functions.prediction.process.same1}.

 	We proceed to show that \ref{it:rank.adapted.functions.prediction.process.same1} implies \ref{it:rank.adapted.functions.prediction.process.same2}.
 	To that end, we interject two preliminary statements.
 	Define $\mathcal{S}_0:= C_b(\mathcal{M}_0)$ and inductively define $\mathcal{S}_{n}\subset C_b(\mathcal{M}_{n})$ as the set of all functions of the form
 	\begin{equation}\label{eq:form.function.Sn}
 			p \mapsto \psi \left(\int \vec{G}_1\,dp_1,\dots,\int \vec{G}_N\,dp_N\right),
 	\end{equation}
 	where, for every $1\leq t \leq N$, $\vec{G}_t$ is a vector of functions in $\mathcal{S}_{n-1}$ and $\psi\colon\mathbb{R}^m\to\mathbb{R}$ (with adequate $m$) is continuous and bounded.

	\emph{Claim}: $\mathcal{S}_n$ is an algebra which separates points in $\mathcal{M}_n$.

 	As usual, we proceed by induction.
	The claim follows trivially for $n = 0$, since $\mathcal S_0=C_b(\X)$.
 	Assume now that the claim is true for $n$.
 	Clearly, $\mathcal{S}_{n+1}$ is an algebra. 
 	To see the second part of the claim, namely that it separates points, let $p=(p_t)_{t=1}^N$ and $q=(q_t)_{t=1}^N$ be two distinct elements $\mathcal{M}_{n+1}$, that is, $p_{t_0} \neq q_{t_0}$ for some $t_0$.
 	By the inductive hypothesis $\mathcal{S}_{n}$ is an algebra which separates points in $\mathcal{M}_{n}$, therefore \cite[Theorem 4.5]{EtKu09} provides $G \in \mathcal S_{n}$ with
	\[ \int G\,dp_t\neq\int G\,dq_t, \]
 	whence, $\mathcal{S}_{n+1}$ separates points in $\mathcal M_{n+1}$.

	\emph{Claim}: For $F\in\mathcal{S}_n$ there is $f\in\AF[n]$ with
	\begin{equation} \label{eq:rank.AF.pp.claim3}
		F(\pp^n(\fp X))=f(\fp X) \quad\text{for all }\fp X \in \FP_p.
	\end{equation}

	Again, the assertion is trivial for $n=0$ as $\pp^0(\fp X) = X$.
	Assume that the claim holds for $n$, and let $F \in \mathcal{S}_{n+1}$ be represented by $\psi$ and $\vec{G}_1,\dots,\vec{G}_N$ as in \eqref{eq:form.function.Sn}.
 	By definition of the prediction process $\mathrm{pp}^n$ we have 
 	\[
		F(\pp^{n+1}(\fp{X})) = \psi\left(\E[\vec{G}_1(\mathrm{pp}^{n}(\fp{X}))|\F^\fp{X}_1],\dots,\E[\vec{G}_N(\mathrm{pp}^{n}(\fp{X}))|\F^\fp{X}_N]\right)
	 \]
	for all $\fp X \in \FP_p$.
 	By assumption there are vectors $\vec{g}_t$ of adapted functions in $\AF[n]$ with 
 	\[\vec{G}_t(\mathrm{pp}^{n}(\fp X))=\vec{g}_t(\fp X) \quad\text{for all }\fp X\in \FP.\]
 	Similarly as in the proof of Lemma \ref{lem:rep.AF} we collect all terms and obtain some $f\in\AF[n+1]$ with $F(\pp^{n+1}(\fp X)) = f(\fp X)$, which shows the claim.

 	With our two preliminary claims already established, we are ready to show that \ref{it:rank.adapted.functions.prediction.process.same1} implies \ref{it:rank.adapted.functions.prediction.process.same2}.
 	By \eqref{eq:rank.AF.pp.claim3} we have that 
	 \[
		\E\left[ F(\pp^n(\fp X)) \right] 
		= \E\left[ F(\pp^n(\fp Y)) \right]\quad\text{for all }F \in \mathcal S_n.
	 \]
 	As $\mathcal{S}_n$ is an algebra which separates points, it follows e.g.\ from  \cite[Theorem 4.5]{EtKu09} that $\pp^n(\fp X)$ and $\pp^n(\fp Y)$ have the same distribution.
 \end{proof}

 \begin{lemma} \label{lem:ip as continuous function of pp}
	 There is a 1-Lipschitz map $F \colon \mathcal M_{N-1} \to \Z$ such that
	 \[
		F(\pp^{N-1}(\fp X)) = \ip(\fp X) \quad \text{for all }\fp X \in \FP_p.	 
	 \]
 \end{lemma}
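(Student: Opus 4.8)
The plan is to obtain $F$ by a downward induction on $s\in\{N,N-1,\dots,1\}$, producing $1$-Lipschitz maps $\hat F_s\colon\mathcal M_{N-s}\to\Z_{s:N}$ with $\hat F_s(\pp^{N-s}(\fp X))=\ip_{s:N}(\fp X):=(\ip_r(\fp X))_{r=s}^N$ for all $\fp X\in\FP_p$; the case $s=1$ is the assertion, with $F=\hat F_1$. I would use three ingredients. (a) The $1$-Lipschitz maps $\Pi^{n\to m}\colon\mathcal M_n\to\mathcal M_m$ ($m\le n$) from Lemma~\ref{lem:ppn contains ppk}, which satisfy $\Pi^{n\to m}(\pp^n(\fp X))=\pp^m(\fp X)$ and, being built as iterated $1$-Lipschitz extensions of the maps $\iota^{-1}\circ\proj_N$, also obey the sharper estimate $d_{\mathcal M_m}^p(\Pi^{n\to m}p,\Pi^{n\to m}q)\le\W_p^p(\proj_Np,\proj_Nq)$ whenever $m<n$. (b) The elementary identity $\Law\big(\Phi(\pp^m(\fp X))\mid\F_t^\fp X\big)=\Phi_*\big((\pp^{m+1}(\fp X))_t\big)$ for Borel $\Phi$, immediate from the definition of $\pp^{m+1}$ and the behaviour of conditional laws under push-forward. (c) The conventions $d_\X^p=\sum_r d_{\X_r}^p$, $d_{\mathcal M_n}^p=\sum_{u=1}^N\W_p^p(\cdot_u,\cdot_u)$, $\W_p(\delta_a,\delta_b)=d(a,b)$, and the fact that push-forward along a $1$-Lipschitz map is $1$-Lipschitz for $\W_p$.

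For the base case $s=N$ one puts $\hat F_N:=\proj_N\colon\mathcal M_0=\X\to\X_N=\Z_N$; since $\pp^0(\fp X)=X$ this gives $\hat F_N(\pp^0(\fp X))=X_N=\ip_N(\fp X)$, and it is $1$-Lipschitz as $d_\X^p\ge d_{\X_N}^p$. For the inductive step, assume $\hat F_{r+1}$ is already built for all $r\ge s$ (so $1\le s\le N-1$), and recall $\Z_{s:N}=\prod_{r=s}^{N-1}\big(\X_r\times\Pc_p(\Z_{r+1})\big)\times\X_N$. Define $\hat F_s(p)\in\Z_{s:N}$ by declaring its $\X_r$-component ($s\le r\le N$) to be $\proj_r\big(\Pi^{N-s\to 0}(p)\big)$ and its $\Pc_p(\Z_{r+1})$-component ($s\le r\le N-1$) to be $\big(\proj_{r+1}\circ\hat F_{r+1}\big)_*\big(\proj_r(\Pi^{N-s\to N-r}(p))\big)$, where $\proj_{r+1}\colon\Z_{r+1:N}\to\Z_{r+1}$ selects the first coordinate. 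Correctness on the image is routine unwinding: the $\X_r$-component evaluates at $\pp^{N-s}(\fp X)$ to $\proj_r(\pp^0(\fp X))=X_r=\ip_r^-(\fp X)$, and the $\Pc_p(\Z_{r+1})$-component, using $\Pi^{N-s\to N-r}(\pp^{N-s}(\fp X))=\pp^{N-r}(\fp X)$, the definition $(\pp^{N-r}(\fp X))_r=\Law(\pp^{N-r-1}(\fp X)\mid\F_r^\fp X)$, ingredient (b), and the inductive hypothesis $\hat F_{r+1}(\pp^{N-r-1}(\fp X))=\ip_{r+1:N}(\fp X)$, evaluates to $\Law(\ip_{r+1}(\fp X)\mid\F_r^\fp X)=\ip_r^+(\fp X)$.

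The crux is that $\hat F_s$ is $1$-Lipschitz, which is where the bookkeeping must be done carefully. Fix $p,q\in\mathcal M_{N-s}$ and set $Q_j:=\Pi^{N-s\to N-s-j}(p)$, $\hat{Q}_j:=\Pi^{N-s\to N-s-j}(q)$ for $0\le j\le N-s$, so $Q_0=p$ and $Q_{N-s}=\Pi^{N-s\to 0}(p)\in\X$. Summing the factor metrics, and using $d_{\X_{s:N}}^p\le d_\X^p$ together with $1$-Lipschitzness of the push-forwards involved, one gets
\[ d_{\Z_{s:N}}^p\big(\hat F_s(p),\hat F_s(q)\big)\ \le\ d_\X^p(Q_{N-s},\hat{Q}_{N-s})+\sum_{j=0}^{N-1-s}\W_p^p\big(\proj_{s+j}Q_j,\proj_{s+j}\hat{Q}_j\big). \]
For each $0\le j\le N-1-s$ one has $s+j<N$, hence $d_{\mathcal M_{N-s-j}}^p(Q_j,\hat{Q}_j)\ge\W_p^p(\proj_{s+j}Q_j,\proj_{s+j}\hat{Q}_j)+\W_p^p(\proj_NQ_j,\proj_N\hat{Q}_j)$, and by the sharper estimate in (a), $\W_p^p(\proj_NQ_j,\proj_N\hat{Q}_j)\ge d_{\mathcal M_{N-s-j-1}}^p(Q_{j+1},\hat{Q}_{j+1})$. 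Applying these successively for $j=0,1,\dots,N-1-s$ telescopes to
\[ d_{\mathcal M_{N-s}}^p(p,q)\ \ge\ \sum_{j=0}^{N-1-s}\W_p^p\big(\proj_{s+j}Q_j,\proj_{s+j}\hat{Q}_j\big)+d_\X^p(Q_{N-s},\hat{Q}_{N-s}), \]
which is exactly the desired bound, completing the induction.

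I expect the main obstacle to be this Lipschitz constant. A naive recursion that at stage $s$ peels off $\ip_s(\fp X)$ and then re-derives the path coordinates $X_{s+1},\dots,X_N$ from within $\ip_{s+1:N}(\fp X)$ would run down the ``last-coordinate chain'' of $\pp^{N-s}(\fp X)$ repeatedly and would only yield a constant growing with $N$. The fix above is to reconstruct the \emph{entire} path $X=\pp^0(\fp X)$ in one stroke via $\Pi^{N-s\to 0}$, and to read off each conditional law $\ip_r^+(\fp X)$ from the $r$-th coordinate (not the $N$-th) at the appropriate depth, so that the $\ell^p$-budgets consumed at the various coordinates are pairwise disjoint; this is what forces the constant to be exactly $1$. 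The only other non-routine point, handled precisely as in the proof of Lemma~\ref{lem:ppn contains ppk}, is the $1$-Lipschitz extension of $\iota^{-1}\circ\proj_N$ — hence of the maps $\Pi^{n\to m}$ — from the set of points with Dirac last coordinate to all of $\mathcal M_n$.
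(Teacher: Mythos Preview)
Your proof is correct and follows essentially the same construction as the paper's: both recover $\ip_t(\fp X)$ from $\pp^{N-t}(\fp X)$ via the maps of Lemma~\ref{lem:ppn contains ppk}, and your $\hat F_1$ coincides with the paper's assembled map $F=(F^1,F^2\circ G^{N-2,N-1},\ldots,F^N\circ G^{0,N-1})$. The one place you go further is the Lipschitz bookkeeping---the paper verifies each $F^t\colon\mathcal M_{N-t}\to\Z_t$ is $1$-Lipschitz and then simply asserts the assembled $F$ ``has the desired properties'', whereas your telescoping argument (using the sharper estimate $d^p_{\mathcal M_m}(\Pi^{n\to m}p,\Pi^{n\to m}q)\le\W_p^p(\proj_Np,\proj_Nq)$, which requires choosing the extension in Lemma~\ref{lem:ppn contains ppk} to factor through $\proj_N$) makes explicit why the coordinate budgets are disjoint and the final constant is exactly $1$ rather than growing with $N$.
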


 \begin{proof}
	By Lemma \ref{lem:ppn contains ppk} there are 1-Lipschitz maps $G^{k,n} \colon \mathcal M_{n} \to \mathcal M_k$, $k < n$ with $G^{k,n} \circ \pp^{n} = \pp^k$.

	\emph{Claim}: For $1 \le t \le N$ there is a 1-Lipschitz map $F^t \colon \mathcal M_{N - t} \to \Z_{t}$ with
	\begin{equation}
		\label{eq:ppN-1 implies ip.induction}
		F^{t}(\pp^{N-t}(\fp X)) = \ip_{t}(\fp X)\quad\text{for all }\fp X \in \FP_p.
	\end{equation}

	Clearly, \eqref{eq:ppN-1 implies ip.induction} is satisfied when $t = N$.
	Indeed,  $\ip_N = \pp^0_N$ whereby $F^{N} := \proj_N$ fulfills \eqref{eq:ppN-1 implies ip.induction}.

	To establish \eqref{eq:ppN-1 implies ip.induction} for general $t$, we proceed by induction.
	Assuming that the claim holds true for $2 \le t \le N$, we find by the definition of the information process, see Definition \ref{def:interpolation process}, for $\fp X \in \FP_p$
	\begin{align*}
		\ip_{t - 1}(\fp X) 
		&= \left( X_{t - 1}, \Law\left( \ip_t(\fp X) | \F_{t - 1}^\fp X \right) \right) \\ 
		&= \left( \pp^0_{t-1}(\fp X), \Law\left( F^t(\pp^{N-t}(\fp X)) | \F^\fp X_{t - 1} \right) \right)  \\
		&= \left( \pp^0_{t - 1}(\fp X), (F^t)_\ast (\pp^{N - t + 1}_{t - 1}(\fp X)) \right) \\ 
		&= (\proj_{t - 1} \circ G^{0,N-t+1},\proj_{t - 1} \circ H^{t - 1}) \circ \pp^{N - t + 1}(\fp X),
	\end{align*}
	where $H^{t - 1} \colon \mathcal M_{N - t + 1} \to \Pc_p(\Z_t)$ is given by
	\[
		H^{t - 1}(p) := (F^t_\ast p_1,\ldots, F^t_\ast p_N).	
	\]
	Since $F^t$ is 1-Lipschitz  by assumption, the same holds true for $H^{t - 1}$.
	Therefore, $F^{t - 1} :=  (\proj_{t - 1} \circ G^{0,N-t+1},\proj_{t - 1} \circ H^{t - 1})$ is 1-Lipschitz and satisfies \eqref{eq:ppN-1 implies ip.induction}, which yields the claim.
	
	Finally, by the previously shown claim, the map 
	\[F := (F^1,F^2 \circ G^{N-2,N-1},\ldots, F^N \circ G^{0,N-1}) \]
	has the desired properties.
 \end{proof}
 

 \begin{theorem} \label{thm:information.process.AF}
 	Let $\fp X,\fp Y\in \FP_p$.
 	All of the following are equivalent:
 	\begin{enumerate}[label=(\roman*)]
 	\item \label{it:information.process.AF1a}
	 	$\fp{X} \sim_\infty \fp{Y}$.
 	\item \label{it:information.process.AF1b}
	 	$\fp{X} \sim_{N-1}\fp{Y}$.
	\item \label{it:information.process.AF2a}
	 	$\mathrm{pp}(\fp{X})$ and $\mathrm{pp}(\fp{Y})$ have the same distribution.
	\item \label{it:information.process.AF2b}
	 	$\pp^{N-1}(\fp{X})$ and $\pp^{N-1}(\fp{Y})$ have the same distribution.
 	\item \label{it:information.process.AF3a}
	 	$\ip(\fp{X})$ and $\ip(\fp{Y})$ have the same distribution.
	 \item \label{it:information.process.AF3anew}
	  	$\ip_1(\fp{X})$ and $\ip_1(\fp{Y})$ have the same distribution.
	\item \label{it:information.process.AF3b}
		$\AW_p(\fp X,\fp Y) = 0$.
 	\end{enumerate}
 \end{theorem}
 
	In Proposition \ref{prop:process.different.rank} we shall further prove that for every $1\leq n\leq N-1$, the relation $\sim_n$ strictly refines $\sim_{n-1}$: 
 there are $\fp X, \fp Y \in \FP_p$ with $\fp X \sim_{n-1} \fp Y$ but $\fp X\not\sim_{n} \fp Y$ (and especially $\AW_p(\fp X, \fp Y) > 0$).
	Importantly, these refinements are essential even for seemingly simple applications as we shall show in Theorem \ref{thm:stopping.rank}: Only the relation $\fp X \sim_{N-1} \fp Y$ guarantees that two processes $\fp X$ and $\fp Y$ have the same values for optimal stopping problems. 	

 \begin{proof}[Proof of Theorem \ref{thm:information.process.AF}]
	In a first step, note that \ref{it:information.process.AF1a} implies \ref{it:information.process.AF1b}; that \ref{it:information.process.AF2a} implies \ref{it:information.process.AF2b}; and that
	\ref{it:information.process.AF3a} implies \ref{it:information.process.AF3anew}.
	Further, Lemma \ref{lem:rank.adapted.functions.prediction.process.same} shows that \ref{it:information.process.AF1a} and \ref{it:information.process.AF2a} are equivalent and that \ref{it:information.process.AF1b} and \ref{it:information.process.AF2b} are equivalent.
	Theorem \ref{thm:isometry} shows that \ref{it:information.process.AF3a} and \ref{it:information.process.AF3b} are equivalent.
	Lemma \ref{lem:pp is continuous function of ip} shows that \ref{it:information.process.AF3a} implies \ref{it:information.process.AF2a}. 
	Finally, Lemma \ref{lem:ip as continuous function of pp} shows that \ref{it:information.process.AF2b} implies \ref{it:information.process.AF3a}.
	This concludes the proof.
 \end{proof}

\section{Topological and geometric properties of $\FFP_p$} \label{sec:aspects}

\subsection{Compactness in $\FFP_p$} \label{ssec:relcomp}

To develop a comprehensive understanding of a topology, 
it is essential to get a hold on compact sets.
For the weak topology, this is bestowed on us by Prokhorov's theorem which gives an easy to check tightness-criterion for relative compactness.
Theorem \ref{thm:rel.compact} implies that, perhaps surprisingly, the very same tightness-criterion also implies relative compactness for stochastic processes in $\FFP_p$

\begin{theorem}[Prokhorov's theorem] \label{thm:rel.compact}
	For a subset $\Pi \subseteq\FFP_p$, the following are equivalent.
	\begin{enumerate}[label=(\roman*)]
	\item $\Pi$ is relatively compact in $\FFP_p$.
	\item \label{it:prokhorov.2} $\{ \Law(X) \colon \fp X \in \Pi \}$ is relatively compact in $\Pc_p(\X)$.
\end{enumerate}	 
\end{theorem} 
	It is worthwhile to recall that condition \ref{it:prokhorov.2} is equivalent to tightness plus uniform integrability (see, e.g., \cite{Vi09}), that is,  for every $x_0\in\mathcal{X}$ and $\varepsilon>0$ there is a compact set $K\subset \mathcal{X}$ such that
	\[\sup_{\fp X\in \Pi} \E \left[ (1+d^p(x_0,X) ) 1_{\{X\notin K\}}\right]\leq \varepsilon .\]
As a consequence of the nested structure of $\Z_1$, the following \emph{intensity operator} plays an important role in the proof of Theorem \ref{thm:rel.compact}: 
for two Polish spaces $\mathcal{A}$ and $\mathcal{B}$ we define $\widehat{I} \colon \Pc_p(\mathcal{A}\times \Pc_p(\mathcal{B})) \to \Pc_p(\mathcal{A}\times \mathcal{B})$ via
	\begin{equation}\label{eq:def.intensity.hat}
		\int f(a,b) \, \widehat I(\pi)(da,db) = \iint f(a,b) \, p(db) \, \pi(da,dp)
	\end{equation}
for $f \in C_b(\mathcal{A}\times\mathcal{B})$.
	The intensity map $\widehat I$ closely relates relatively compact sets of its domain and its range in the sense of the subsequent lemma.

\begin{lemma}[c.f.\ Lemma 5.7 in \cite{BeJoMaPa21b}] \label{lem:rel.comp.intensity}
	Let $\mathcal{A}$ and $\mathcal{B}$ be two Polish spaces.
	For $\Pi \subseteq \Pc_p(\mathcal{A} \times \Pc_p(\mathcal{B}))$ are the following equivalent:
	\begin{enumerate}[label=(\roman*)]
		\item \label{it:rel.comp.intensity1} 
		$\Pi \subseteq \Pc_p(\mathcal{A} \times \Pc_p(\mathcal{B}))$ is relatively compact.
		\item \label{it:rel.comp.intensity2} 
		$\widehat I(\Pi)  \subseteq \Pc_p(\mathcal{A}\times \mathcal{B})$ is relatively compact.
	\end{enumerate}
\end{lemma}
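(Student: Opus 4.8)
The plan is to reduce everything to the standard characterisation (see e.g.\ \cite[Chapter~6]{Vi09}) that a subset of a $\Pc_p$-space is relatively compact precisely when it is tight and uniformly $p$-integrable, the latter being equivalent — by de la Vallée-Poussin — to a bound $\sup\int\psi\big(d^p(\cdot,s_0)\big)\,d\mu<\infty$ for a convex, increasing, superlinear $\psi\colon[0,\infty)\to[0,\infty)$. Fix base points $a_0\in\mathcal A$, $b_0\in\mathcal B$ and recall the metric conventions $d^p=d_{\mathcal A}^p+d_{\mathcal B}^p$ on $\mathcal A\times\mathcal B$ and $d^p=d_{\mathcal A}^p+\W_p^p$ on $\mathcal A\times\Pc_p(\mathcal B)$, as in Definition~\ref{def:canonical.space.Z}. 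First I would record two facts. \emph{(a) Moment identity:} $\int d^p\big((a,b),(a_0,b_0)\big)\,\widehat I(\pi)(da,db)=\int\big[d_{\mathcal A}^p(a,a_0)+\W_p^p(p,\delta_{b_0})\big]\,\pi(da,dp)$, immediate from the definition of $\widehat I$ and $\W_p^p(p,\delta_{b_0})=\int d_{\mathcal B}^p(b,b_0)\,p(db)$; in words, $\widehat I$ preserves the $p$-th moment around the base point, with $\delta_{b_0}$ in place of $b_0$. \emph{(b) Preservation of tightness:} if a family in $\Pc(\mathcal A\times\Pc_p(\mathcal B))$ is tight, so is its image under $\widehat I$ — project a compact $L\subseteq\mathcal A\times\Pc_p(\mathcal B)$ onto its factors $L_{\mathcal A}\subseteq\mathcal A$ and $L_{\Pc}\subseteq\Pc_p(\mathcal B)$, use that $L_{\Pc}$, being compact in $\Pc_p(\mathcal B)$, is tight, and estimate $\widehat I(\pi)(L_{\mathcal A}\times K)\ge\big(\inf_{p\in L_{\Pc}}p(K)\big)\,\pi(L)$ for a suitable compact $K\subseteq\mathcal B$.

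For \ref{it:rel.comp.intensity1}$\Rightarrow$\ref{it:rel.comp.intensity2} I would show that $\widehat I\colon\Pc_p(\mathcal A\times\Pc_p(\mathcal B))\to\Pc_p(\mathcal A\times\mathcal B)$ is continuous, after which the implication is automatic since a continuous image of a relatively compact set in a metric space is relatively compact. For continuity let $\pi_n\to\pi$ in $\Pc_p$; testing $\widehat I(\pi_n)$ against products $g\otimes h$ with $g\in C_b(\mathcal A)$, $h\in C_b(\mathcal B)$ reduces to the bounded continuous functional $(a,p)\mapsto g(a)\int h\,dp$ on $\mathcal A\times\Pc_p(\mathcal B)$, so the integrals converge; since $\{\pi_n\}$ is tight and hence, by (b), $\{\widehat I(\pi_n)\}$ is tight, every weak subsequential limit of $\widehat I(\pi_n)$ agrees with $\widehat I(\pi)$ on the measure-determining family of such products, whence $\widehat I(\pi_n)\to\widehat I(\pi)$ weakly, and the moment identity (a) together with $\pi_n\to\pi$ in $\Pc_p$ promotes weak convergence to $\Pc_p$-convergence.

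The reverse implication \ref{it:rel.comp.intensity2}$\Rightarrow$\ref{it:rel.comp.intensity1} is the heart of the matter. Writing $\pi_{\mathcal A}$ and $\pi_{\Pc}$ for the marginals of $\pi\in\Pi$ on $\mathcal A$ and on $\Pc_p(\mathcal B)$, it suffices to show that $\{\pi_{\mathcal A}\}$ is relatively compact in $\Pc_p(\mathcal A)$ and $\{\pi_{\Pc}\}$ in $\Pc_p(\Pc_p(\mathcal B))$: relatively compact marginals yield a tight joint family, and the $p$-th moment of $\pi$ splits additively into contributions of $\pi_{\mathcal A}$ and $\pi_{\Pc}$, whose uniform integrabilities are combined by routine truncation arguments. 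Now $\{\pi_{\mathcal A}\}$ is relatively compact as it consists of the $\mathcal A$-marginals of the relatively compact family $\widehat I(\Pi)$ and marginalisation is $\Pc_p$-continuous. For $\{\pi_{\Pc}\}$, observe that its image under the one-sided intensity operator $\widehat I_0\colon\Pc_p(\Pc_p(\mathcal B))\to\Pc_p(\mathcal B)$, $\int f\,d\widehat I_0(q)=\iint f\,dp\,q(dp)$, equals the $\mathcal B$-marginal of $\widehat I(\pi)$, so $\{\widehat I_0(\pi_{\Pc}):\pi\in\Pi\}$ is relatively compact in $\Pc_p(\mathcal B)$, and the implication reduces to the case $\mathcal A=\{\ast\}$: \emph{if $\{\widehat I_0(q):q\in Q\}$ is relatively compact in $\Pc_p(\mathcal B)$, then $Q$ is relatively compact in $\Pc_p(\Pc_p(\mathcal B))$.} In this core case I would argue tightness of $Q$ by choosing, from tightness and uniform $p$-integrability of $\{\widehat I_0(q)\}$, compacts $K_j\uparrow\mathcal B$ and radii $R_j\uparrow\infty$ with $\sup_q\int p(\mathcal B\setminus K_j)\,q(dp)\to0$ and $\sup_q\int\big(\int_{\{d_{\mathcal B}(b,b_0)>R_j\}}d_{\mathcal B}^p(b,b_0)\,p(db)\big)\,q(dp)\to0$, then applying Markov's inequality to these together with the uniform bound on $\int\W_p^p(p,\delta_{b_0})\,q(dp)$ to produce thresholds so that
\[
	\mathcal K=\Big\{p\in\Pc_p(\mathcal B):\ \W_p^p(p,\delta_{b_0})\le C,\ \ p(\mathcal B\setminus K_j)\le c_j,\ \ \int_{\{d_{\mathcal B}(b,b_0)>R_j\}}d_{\mathcal B}^p(b,b_0)\,p(db)\le c_j'\ \ (j\in\N)\Big\}
\]
obeys $\inf_{q\in Q}q(\mathcal K)\ge1-\varepsilon$; each defining functional being lower semicontinuous, $\mathcal K$ is closed, and the bounds make it tight, uniformly $p$-integrable and of bounded $p$-th moment, hence compact in $\Pc_p(\mathcal B)$. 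Uniform $p$-integrability of $Q$, i.e.\ of $p\mapsto\W_p^p(p,\delta_{b_0})=d^p(p,\delta_{b_0})$, would then come from de la Vallée-Poussin and Jensen: pick a convex increasing superlinear $\psi$ with $\sup_q\iint\psi\big(d_{\mathcal B}^p(b,b_0)\big)\,p(db)\,q(dp)<\infty$; Jensen gives $\psi\big(\W_p^p(p,\delta_{b_0})\big)\le\int\psi\big(d_{\mathcal B}^p(b,b_0)\big)\,p(db)$, so $\sup_q\int\psi\big(\W_p^p(p,\delta_{b_0})\big)\,q(dp)<\infty$, and the converse direction of de la Vallée-Poussin concludes.

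I expect the main obstacle to be precisely this reverse implication: the intensity operator collapses the measure-valued second coordinate, so one must recover control of the entire family of measures $p$ appearing in the disintegrations from information about their ``averages'' alone. The decisive point is that uniform $p$-integrability passes through $\widehat I_0$, which is exactly what the de la Vallée-Poussin plus Jensen argument secures; a secondary nuisance is that over a general Polish $\mathcal B$ tightness and uniform $p$-integrability cannot be arranged independently but must be realised simultaneously on the same exhausting compacts, which is why $\mathcal K$ has to be described by the explicit list of lower-semicontinuous constraints above rather than by a single moment bound.
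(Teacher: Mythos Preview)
The paper does not prove this lemma itself: the statement is labelled ``c.f.\ Lemma~5.7 in \cite{BeJoMaPa21b}'' and the very next proof environment in the text is already that of Theorem~\ref{thm:rel.compact}, so there is nothing in the paper to compare your argument against beyond the external citation. Your proof is correct and self-contained. The forward direction via continuity of $\widehat I$ is clean; the reverse direction --- reducing to the one-sided operator $\widehat I_0$, constructing the compact $\mathcal K\subseteq\Pc_p(\mathcal B)$ from tail estimates via Markov's inequality, and closing the uniform $p$-integrability with de~la~Vall\'ee--Poussin plus Jensen --- follows the standard template one would expect in the cited reference. Two small points are worth making explicit. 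First, the step ``relatively compact marginals give a relatively compact joint family'' does require a line of care, since the cross term $\int_{\{\W_p^p>R/2\}}d_{\mathcal A}^p\,d\pi$ is not controlled by either marginal's tail alone; one handles it by first truncating $d_{\mathcal A}^p$ at a level $M$ and then using the Markov bound $\pi(\{\W_p^p>R/2\})\le 2C/R$, or alternatively by passing to a weakly convergent subsequence and noting that the additive splitting of the $p$-th moment together with $\Pc_p$-relative compactness of each marginal forces convergence of moments. Second, closedness of your set $\mathcal K$ in $\Pc_p(\mathcal B)$ holds because the defining functionals are already lower semicontinuous for weak convergence (Portmanteau for $p\mapsto p(\mathcal B\setminus K_j)$, and integration of the nonnegative lsc integrand $1_{\{d_{\mathcal B}>R_j\}}d_{\mathcal B}^p$ for the truncated moment), which is stronger than what you need.
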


\begin{proof}[Proof of Theorem \ref{thm:rel.compact}]
	As we know by Theorem \ref{thm:isometry} that $\FFP_p$ is isometrically isomorphic to $\Pc_p(\Z_1)$, we obtain that $\Pi$ is relatively compact if and only if the set $\{ \Law(\ip_1(\fp X)) \colon \fp X \in \Pi \}$ is relatively compact.
	On the other hand, note that for $1 \leq t < N$, $\mathcal A := \X_{1:t}$, and $\mathcal B := \Pc_p(\Z_{t+1})$ we have by the nested definition of $\ip_t$ that for all $\fp X \in \FFP_p$
	\[
		\widehat I\left( \Law(X_{1:t-1},\ip_t(\fp X)) \right) = \Law(X_{1:t},\ip_{t+1}(\fp X)).
	\]
	Applying Lemma \ref{lem:rel.comp.intensity} yields equivalence of the following statements:
	\begin{itemize}
		\item $\{ \Law(X_{1:t-1},\ip_t(\fp X)) \colon \fp X \in \Pi \}$ is relatively compact;
		\item $\{ \Law(X_{1:t},\ip_t(\fp X)) \colon \fp X \in \Pi \}$ is relatively compact;
	\end{itemize}
	Hence, by applying this argument iteratively, we find that $\{ \Law(X) \colon \fp X \in \Pi \}$ is relatively compact if and only if $\{ \Law(\ip_1(\fp X)) \colon \fp X \in \Pi \}$ is relatively compact, which we wanted to show.
	The final assertion is a direct consequence of the classical Prokhorov's theorem and the characterization of Wasserstein convergence in $\Pc_p(\mathcal A \times \mathcal B)$, see \cite[Definition 5.8]{Vi09}.
\end{proof}

\subsection{Denseness of simple processes} \label{ssec:dense}
A canonical way of embedding $\Pc_p(\X)$ into $\FFP_p$ is the following:
we can associate to each law $\P \in \Pc_p(\X)$ the processes
\begin{equation}
	\label{eq:plain}
	\fp X \equiv \left( \X, \mathcal B(\X), \P, (\sigma(X_{1:t}))_{t = 1}^N,  X \right),
\end{equation}
where $X$ denotes the coordinate process on $\X$, and call this type of process \emph{plain}.
The set of all plain processes is denoted by $\Lambda_{\mathrm{plain}} \subseteq \FFP_p$.

\begin{proposition} \label{prop:plain}
	Let $(\Omega,\F,\Q)$ be an arbitrary probability space, let $Y \colon \Omega \to \X$ be a $\F$-measurable map such that $\Law(Y) \in \Pc_p(\X)$, and denote by $\fp X$ the plain process associated to $\Law(Y)$; i.e.\ $\fp X$ is given by \eqref{eq:plain} with $\P=\Law(Y)$.
	Then
	\begin{equation}
		\label{eq:plain rep}
		\tilde{\fp X} := \left( \Omega, \F, \Q, (\sigma(Y_{1:t}))_{t=1}^N, Y \right)
	\end{equation}
	satisfies  $\AW_p(\fp X,\tilde{\fp X}) = 0$.
	In particular, if $\hat{\fp X},\hat{\fp Y} \in \FFP_p$ are plain and $\Law(\hat X) = \Law(\hat Y)$, then $\hat{\fp X} = \hat{\fp Y}$.
\end{proposition}

\begin{proof}
	The coupling $(\textrm{id}_\Omega,Y)_\ast \Q$ is bicausal between $\tilde{\fp X}$ and $\fp X$ as well as $\tilde{\fp Y}$ and $\fp Y$. Thus $\AW_p(\fp X,\tilde{\fp X}) = 0 = \AW_p(\fp Y, \tilde{\fp Y})$.
\end{proof}

Among others, a purpose of this subsection is to show that the space of filtered processes $\FFP_p$ naturally appears as the completion of all plain processes $\Lambda_{\mathrm{plain}}$.

We call a probability space $(\Omega,\mathcal{F},\P)$  \emph{finite} if $\Omega$ consists of finitely many elements.

\begin{theorem}
	\label{thm:dense}
	If $\X$ has no isolated points, then the set
	\[
		\left\{ \fp X \in \FFP_p \colon \fp X \text{ is Markov and has a representative on a finite probability space} \right\}
	\]
	is dense in $\FFP_p$.
	In particular, the plain processes $\Lambda_{\mathrm{plain}}$ are a dense subset.
\end{theorem}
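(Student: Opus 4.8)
The plan is to move to the Wasserstein space $\Pc_p(\Z_1)$ via the isometry of Theorem~\ref{thm:isometry}, approximate there by measures with ``recursively finite'' support, and finally convert the resulting finite-state canonical process into a finite-state \emph{Markov} (indeed plain) process by an arbitrarily small perturbation of its values; this last step is where the no-isolated-points assumption enters.

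First I would introduce, for $t=N,\dots,1$, the class $\mathcal D_t\subseteq\Pc_p(\Z_t)$ of finitely supported measures $\mu$ such that $z_t^+\in\mathcal D_{t+1}$ for every $z_t\in\operatorname{supp}(\mu)$, with $\mathcal D_N$ the finitely supported measures on $\Z_N=\X_N$. A backward induction over $t$ shows that $\mathcal D_t$ is $\W_p$-dense in $\Pc_p(\Z_t)$: given $\mu\in\Pc_p(\Z_t)$, approximate it first by some finitely supported $\sum_i\lambda_i\delta_{(a_i,\rho_i)}$, and then, using the inductive hypothesis, replace each $\rho_i\in\Pc_p(\Z_{t+1})$ by a nearby $\rho_i'\in\mathcal D_{t+1}$; since $d_{\Z_t}^p=d_{\X_t}^p+\W_p^p$, the incurred error is $\big(\sum_i\lambda_i\W_p^p(\rho_i,\rho_i')\big)^{1/p}$. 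By Theorem~\ref{thm:isometry} it then suffices to approximate an arbitrary canonical process by the processes $\fp X^\mu:=(\Z,\F^\Z,(\F^\Z_t)_{t},\unfold_1(\mu),Z^-)$ with $\mu\in\mathcal D_1$. For such $\mu$ the measure $\unfold_1(\mu)$ is finitely supported, so $\fp X^\mu$ has a representative on a finite probability space; moreover one checks directly from Definitions~\ref{def:canonical.space.Z} and~\ref{def:ip} that $\ip_t(\fp X^\mu)$ is the coordinate map $z\mapsto z_t$, hence takes finitely many values, say in $V_t\subseteq\Z_t$.

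Now fix $\mu\in\mathcal D_1$ and $\varepsilon>0$. Since each $\X_t$ has no isolated point and $V_t$ is finite, I would pick an injective map $\phi_t\colon V_t\to\X_t$ with $d_{\X_t}(\phi_t(z_t),z_t^-)<\varepsilon$ on $V_t$ and set $\tilde X_t(z):=\phi_t(z_t)$. Injectivity of the $\phi_t$ gives $\sigma(\tilde X_{1:t})=\F^\Z_t$, so the process $\tilde{\fp X}:=(\Z,\F^\Z,(\F^\Z_t)_t,\unfold_1(\mu),\tilde X)$ lives on the same filtered probability space as $\fp X^\mu$; consequently the identity coupling is bicausal and the perturbation costs at most a fixed multiple of $\varepsilon$ in $\AW_p$. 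To see that $\tilde{\fp X}$ is Markov, I would invoke Lemma~\ref{lem:unfoldprops.measures}\ref{it:unfoldprops2} for $\fp X^\mu$: the marginal of $\Law(\ip(\fp X^\mu)\mid\F^\Z_t)$ in the $(t+1)$-th coordinate is $z_t^+$, whence $\Law(\tilde X_{t+1}\mid\F^\Z_t)=(\phi_{t+1})_\ast z_t^+$, a measurable function of $z_t$ and therefore $\sigma(\tilde X_t)$-measurable --- which is exactly the Markov property with respect to $(\F^\Z_t)=(\sigma(\tilde X_{1:t}))_t$. As the filtration of $\tilde{\fp X}$ is generated by $\tilde X$, Proposition~\ref{prop:plain} identifies $\tilde{\fp X}$ with the plain process of law $\Law(\tilde X)$, and restricting everything to $\operatorname{supp}(\unfold_1(\mu))$ produces a representative on a finite probability space. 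Combining the two approximation steps by a $\delta/2+\delta/2$ argument yields density of the finite-state Markov processes, and since each $\tilde{\fp X}$ constructed is plain, also of $\Lambda_{\mathrm{plain}}$.

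The routine part is the density of the $\mathcal D_t$; I expect the main work to be the last step. There one has to control at once the $\AW_p$-error of the perturbation --- which only becomes transparent after realizing that $\sigma(\tilde X_{1:t})=\F^\Z_t$, so that $\fp X^\mu$ and $\tilde{\fp X}$ share a filtered space --- and the Markov property of $\tilde{\fp X}$, which rests on the ``self-awareness'' of the information process, i.e.\ on $\Law(\ip(\fp X^\mu)\mid\F^\Z_t)$ depending on $\F^\Z_t$ only through $\ip_{1:t}$. The no-isolated-points hypothesis is used precisely to obtain the injective near-identity maps $\phi_t$, and it is genuinely needed: if some $\X_t$ with $t<N$ were a single point, an informative filtration at time $t$ could not be reproduced, even approximately, by any Markov process.
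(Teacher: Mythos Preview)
Your proof is correct and close in spirit to the paper's, but differs in the discretization step. The paper (via Proposition~\ref{prop:markov.dense}) treats $\Law(\ip(\fp X))$ as a measure on the \emph{path space} $\Z_{1:N}$ and approximates it in the adapted Wasserstein distance on that path space by nearest-point projections $\phi_t^m\colon\Z_t\to\Z_t$ (Lemma~\ref{lem:approximation of plain process}, adapted from~\cite{BeLa20}); the approximant $\fp Y$ is then assembled on the original space $\Omega^{\fp X}$ with the finite filtration generated by the discretized information process. You instead descend one level via the isometry of Theorem~\ref{thm:isometry} to $\Pc_p(\Z_1)$ and obtain density of the ``recursively finite'' class $\mathcal D_1$ by a self-contained backward induction, using only that finitely supported measures are $\W_p$-dense in a Polish Wasserstein space. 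Your route is more elementary and avoids the path-space $\AW_p$ lemma altogether, while the paper's is shorter once that lemma is available and has the side benefit of producing the approximant on the original probability space. For the Markov/plain step your perturbation argument via injective near-identities $\phi_t$ and the identification $\sigma(\tilde X_{1:t})=\F^\Z_t$ on the finite support of $\unfold_1(\mu)$ is spelled out explicitly; the paper only records the Markov conclusion in Proposition~\ref{prop:markov.dense} without detailing it, so your treatment of this step is in fact more complete.
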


This theorem follows from the next proposition.

\begin{proposition}
	\label{prop:markov.dense}
	Let $\fp X \in \FP_p$ and let $\varepsilon > 0$.
	Then there is 
	\begin{equation}
		\label{eq:approx FP}
		\fp Y := (\Omega^\fp X, \F^\fp X,\mathbb P^\fp X,(\F^\fp Y_t)_{t = 1}^N,Y)\in\FP_p,
	\end{equation}
	where for every $1\leq t\leq N$, $Y_t$ is a function of $\ip_t(\fp X)$ and $\mathcal F_t^{\fp Y}$ is a finite subset of $\F_t^\fp X$, such that
	\begin{equation}
		\AW_p(\fp X,\fp Y) < \varepsilon.
	\end{equation} 
	In particular, if $\X$ has no isolated points, then $\fp Y$ can be chosen Markovian.
\end{proposition}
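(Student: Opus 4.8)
The plan is to construct $\fp Y$ by a backward-in-time discretization of the information process $\ip(\fp X)$, exploiting the fact (Theorem \ref{thm:isometry}) that $\AW_p(\fp X, \fp Y)$ is controlled by the Wasserstein distance of the laws of the corresponding $\ip_1$'s, and that by Lemma \ref{lem:unfoldprops.measures} the whole law $\Law(\ip(\fp X))$ is recursively reconstructed from $\Law(\ip_1(\fp X))$ via the Lipschitz unfold kernels. First I would recall that $\ip_N(\fp X) = X_N$ takes values in the Polish space $\Z_N = \X_N$, and that, since $\fp X \in \FP_p$, the law of $\ip_N(\fp X)$ has finite $p$-th moment; hence there is a finite measurable partition of $\X_N$ and a finite set $\{a_N^1,\dots,a_N^{m_N}\} \subseteq \X_N$ such that the map $q_N \colon \X_N \to \X_N$ sending each cell to its representative point satisfies $\E[d_{\X_N}^p(X_N, q_N(X_N))] < \delta_N$ for a small $\delta_N$ to be fixed. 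Define $Y_N := q_N(\ip_N(\fp X)) = q_N(X_N)$; this is a function of $\ip_N(\fp X)$ taking finitely many values, and the generated $\sigma$-algebra $\sigma(Y_N)$ is a finite sub-$\sigma$-algebra of $\F^\fp X_N$.

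The inductive step proceeds backward: suppose that for times $t+1,\dots,N$ we have already produced finitely-valued random variables $Y_{t+1},\dots,Y_N$ on $\Omega^\fp X$, each a function of the corresponding $\ip_s(\fp X)$, together with finite $\sigma$-algebras $\F^\fp Y_s \subseteq \F^\fp X_s$ (with $\F^\fp Y_s \supseteq \sigma(Y_s)$ and $\F^\fp Y_s \subseteq \F^\fp Y_{s+1} \vee \dots$, so that $Y$ is $\F^\fp Y$-adapted), controlling the error at those levels. For time $t$, recall $\ip_t(\fp X) = (X_t, \Law(\ip_{t+1}(\fp X) \mid \F^\fp X_t)) \in \X_t \times \Pc_p(\Z_{t+1})$. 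Using Lusin/Prokhorov together with finiteness of moments, one chooses a finite partition of $\Z_t$ adapted to this product structure: a finite partition of $\X_t$ as before, and — crucially — a finite partition of (a compact-enough subset of) $\Pc_p(\Z_{t+1})$ into $\W_p$-small cells. Let $q_t \colon \Z_t \to \Z_t$ be the associated quantization map and set $\ip_t^\flat := q_t(\ip_t(\fp X))$, a finitely-valued random variable. Then $Y_t := \proj_{\X_t}(\ip_t^\flat)$ and $\F^\fp Y_t := \sigma(\ip_1^\flat,\dots,\ip_t^\flat) \cap \F^\fp X_t$, which is finite and contained in $\F^\fp X_t$. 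Since $q_t$ is a finitely-valued Borel map of $\ip_t(\fp X)$, $Y_t$ is a function of $\ip_t(\fp X)$ as required, and by construction $\F^\fp Y_t \subseteq \F^\fp Y_{t+1}$-compatibly with adaptedness. This yields the desired $\fp Y$ of the form \eqref{eq:approx FP}.

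To bound $\AW_p(\fp X, \fp Y)$, I would not work with the abstract $\ip_1$-distance directly but rather exhibit an explicit bicausal coupling: the identity-type coupling $(\id_{\Omega^\fp X}, \id_{\Omega^\fp X})_\ast \P^\fp X$ between $\fp X$ and $\fp Y$ — both processes live on $(\Omega^\fp X, \F^\fp X, \P^\fp X)$ and $\F^\fp Y_t \subseteq \F^\fp X_t$, so this coupling is causal from $\fp Y$ to $\fp X$; the reverse causality from $\fp X$ to $\fp Y$ holds because $X_t$ versus $Y_t = q_t(\ip_t(\fp X))_{\X_t}$ differ only through the Borel map $q_t$ applied to the $\F^\fp Y_t$-generating data, so conditioning on $\F^\fp Y_t$ does not reveal future $\F^\fp X$-information beyond what $\F^\fp X_t$ already does. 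Under this coupling, $\E[d^p(X,Y)] = \sum_{t=1}^N \E[d_{\X_t}^p(X_t, Y_t)] \le \sum_{t=1}^N \delta_t$, and choosing each $\delta_t < (\varepsilon/N)^p$ gives $\AW_p(\fp X, \fp Y) < \varepsilon$. I expect the main obstacle to be the measure-quantization step at the intermediate times: one must partition $\Pc_p(\Z_{t+1})$ into finitely many $\W_p$-small pieces while (a) staying inside a set carrying all but $\delta_t$ of the mass — which requires tightness of $\Law(\ip_t^+(\fp X))$, available from Theorem \ref{thm:rel.compact}/Lemma \ref{lem:rel.comp.intensity} or directly from $\fp X \in \FP_p$ — and (b) ensuring measurability of the resulting quantization maps and that the finiteness propagates correctly up the recursion (the number of cells at time $t$ may depend on those chosen at time $t+1$, but stays finite).

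For the final "in particular" Markov claim: when $\X$ has no isolated points, each $\X_t$ is a perfect Polish space, so every atom of the finite distribution of $(Y_1,\dots,Y_N)$ can be split — relabel the finitely many attained values by nearby distinct points and, on the path space of $Y$, replace $\sigma(Y_{1:t})$ by $\sigma(Y_t)$ alone to obtain a Markov representative; since perturbing the attained values by at most an extra $(\varepsilon/N)$ in each coordinate changes $\AW_p$ by at most $\varepsilon$ (triangle inequality, via the obvious bicausal identity coupling on the finite space), the Markovianized process $\fp Y'$ still satisfies $\AW_p(\fp X, \fp Y') < 2\varepsilon$, and rescaling $\varepsilon$ at the outset completes the argument.
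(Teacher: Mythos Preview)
Your overall strategy --- quantize the information process $\ip(\fp X)$ by finitely-valued maps $q_t$ and set $Y_t=\proj_{\X_t}(q_t(\ip_t(\fp X)))$, $\F^{\fp Y}_t=\sigma(q_1(\ip_1(\fp X)),\dots,q_t(\ip_t(\fp X)))$ --- is exactly the construction the paper uses (there the $q_t$ are nearest-point maps $\phi_t^m$ onto increasing finite sets). The gap is in your error estimate. You claim the diagonal coupling $(\id,\id)_\ast\P^{\fp X}$ is bicausal between $\fp X$ and $\fp Y$. Causality from $\fp X$ to $\fp Y$ does hold since $\F^{\fp Y}_t\subseteq\F^{\fp X}_t$, but the reverse direction requires, by Lemma~\ref{lem:causal CI}, that $\E[U\mid\F^{\fp X}_t]=\E[U\mid\F^{\fp Y}_t]$ for every bounded $\F^{\fp Y}_N$-measurable $U$. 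By self-awareness (Lemma~\ref{lem:ip.self.aware}), $\E[U\mid\F^{\fp X}_t]$ is a function of $\ip_t^+(\fp X)$ (through $\unfold_{t+1}$), not merely of its quantization $q_t(\ip_t(\fp X))$; whenever two values of $\ip_t(\fp X)$ with different $\ip_t^+$-components fall in the same $q_t$-cell, the conditional expectations disagree. A concrete two-period counterexample: $\Omega=[0,1]$, $\P=\mathrm{Leb}$, $X_1\equiv 0$, $X_2(\omega)=\omega$, $\F^{\fp X}_1=\sigma(1_{\{\omega\ge 1/2\}})$. If $q_1$ collapses the two atoms of $\ip_1(\fp X)$, then $\F^{\fp Y}_1$ is trivial, and for $U=q_2(X_2)=1_{\{\omega\ge 1/2\}}$ one gets $\E[U\mid\F^{\fp X}_1]=1_{\{\omega\ge 1/2\}}\neq 1/2=\E[U\mid\F^{\fp Y}_1]$. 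So the diagonal coupling is \emph{not} bicausal, and $\sum_t \E[d^p_{\X_t}(X_t,Y_t)]$ does not bound $\AW_p^p(\fp X,\fp Y)$. The real difficulty is precisely that coarsening the $\ip^+$-component changes the \emph{information structure}, and this cost is invisible in the path distance $d(X,Y)$.

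The paper sidesteps this by never comparing $\fp X$ and $\fp Y$ directly. Instead it introduces the process $\fp Z=(\Z,\F^{\Z},(\F^{\Z}_t)_t,\mu^m,Z^-)$ on the canonical space with $\mu^m:=(\phi^m_{1:N})_\ast\Law(\ip(\fp X))$, shows $\AW_p(\fp Y,\fp Z)=0$ via two explicit bicausal couplings, and then uses Theorem~\ref{thm:isometry} to write $\AW_p(\fp X,\fp Y)=\AW_p(\cfp X,\fp Z)\le\AW_p(\mu,\mu^m)$, where the last quantity is the \emph{adapted} Wasserstein distance of plain processes on the path space $\Z_{1:N}$ and is small by Lemma~\ref{lem:approximation of plain process}. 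In other words, the $\W_p$-size of the $\Pc_p(\Z_{t+1})$-cells --- which you introduce but then never use in your estimate --- is exactly what controls the error, through the $\AW_p$ distance on $\Z$ rather than on $\X$. Your construction is salvageable if you replace the diagonal-coupling estimate by this route.
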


The proof relies on the following result, which is essentially shown in \cite[Lemma 4.8]{BeLa20}.
Recall here that for $\mu,\nu \in\mathcal{P}_p(\X)$, the term $\AW_p(\mu,\nu)$ refers to $\AW_p(\fp X,\fp Y)$ where $\fp X$ and $\fp Y$ are plain processes, see \eqref{eq:plain}, distributed according to $\mu$ and $\nu$, respectively.
In a similar fashion, we will use here $\cplba(\mu,\nu)$ to denote the set of all bicausal couplings between the corresponding plain processes.

\begin{lemma}
	\label{lem:approximation of plain process}
	For each $1 \leq t \leq N$, let $(\mathcal{Y}^m_t)_{m \in \N}$ be an increasing family of finite subsets of Polish spaces $\Y_t$ such that
	$\cup_{m \in \N} \mathcal{Y}^m_t \text{ is dense in }\Y_t$
	and let $\phi_t^m\colon \Y_t \to \mathcal{Y}_t^m$ be the map which assigns each point its nearest point in $\mathcal{Y}_t^m$ (with ties broken arbitrarily but measurably).
	Then, for any $\mu \in \Pc_p(\Y_{1:N})$, we have
	\begin{equation}
		\label{approx for plain processes}
		\lim_{m \to \infty} \AW_p \left( \mu, (\phi_{1:N}^m)_\ast \mu \right) = 0.
	\end{equation}
\end{lemma}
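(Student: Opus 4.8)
The plan is to prove the convergence \eqref{approx for plain processes} by exhibiting, for each $m$, an explicit bicausal coupling between $\mu$ and $(\phi_{1:N}^m)_\ast\mu$ whose transport cost tends to $0$. The natural candidate is the \emph{graph coupling} induced by the map $\phi_{1:N}^m\colon\Y_{1:N}\to\mathcal Y_{1:N}^m$, namely $\pi^m := (\id,\phi_{1:N}^m)_\ast\mu \in \Pc_p(\Y_{1:N}\times\mathcal Y_{1:N}^m)$. First I would verify that $\pi^m$ is bicausal between the corresponding plain processes. Causality from the first to the second coordinate is immediate because each $\phi_t^m$ acts coordinatewise, so the second marginal path up to time $t$, namely $(\phi_1^m(y_1),\dots,\phi_t^m(y_t))$, is a deterministic function of $y_{1:t}$; hence conditioning on $\sigma(y_{1:t})$ already determines the relevant part of the second coordinate, giving the required conditional independence. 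Causality in the other direction is slightly less automatic since $\phi_t^m$ need not be injective, but one checks it directly via Lemma \ref{lem:causal CI}\ref{it:causal CI2}: for $V$ a bounded function of the first $t$ second-coordinate variables, $\E_{\pi^m}[V\mid \F^{\fp X,\fp Y}_{N,0}]$ is a function of $y_{1:N}$ that in fact depends only on $(\phi_1^m(y_1),\dots,\phi_t^m(y_t))$, hence only on $y_{1:t}$ through that map — but we need measurability with respect to $\sigma(y_{1:t})$, which holds since $\phi_{1:t}^m$ is $\sigma(y_{1:t})$-measurable. Actually the cleanest route is to observe that $\pi^m$ is a \emph{Monge} coupling transported along the coordinatewise map $\phi_{1:N}^m$, and such coupling is bicausal precisely because $\phi_{1:N}^m$ is adapted in the sense that $\phi_t^m$ depends only on the $t$-th coordinate; this is the content already used implicitly in \cite{BeLa20}, so I would cite it.

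Granting bicausality, the cost bound is straightforward:
\begin{align}
\AW_p^p\big(\mu,(\phi_{1:N}^m)_\ast\mu\big)
&\le \int d^p\big(y,\phi_{1:N}^m(y)\big)\,\mu(dy)
= \int \sum_{t=1}^N d_{\Y_t}^p\big(y_t,\phi_t^m(y_t)\big)\,\mu(dy).
\end{align}
Now for each fixed $t$, since $\cup_m \mathcal Y_t^m$ is dense in $\Y_t$ and $\phi_t^m$ assigns the nearest point, the distance $d_{\Y_t}(y_t,\phi_t^m(y_t))$ decreases to $0$ pointwise as $m\to\infty$; moreover $d_{\Y_t}(y_t,\phi_t^m(y_t)) \le d_{\Y_t}(y_t,q_t^0)$ for a fixed reference point $q_t^0\in\mathcal Y_t^{1}$ whenever $\mathcal Y_t^m \supseteq \mathcal Y_t^1$, so the integrand is dominated by $\big(\sum_t d_{\Y_t}(y_t,q_t^0)\big)^p$ up to a dimensional constant, which is $\mu$-integrable because $\mu\in\Pc_p(\Y_{1:N})$. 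Dominated convergence then gives that the right-hand side tends to $0$, completing the proof.

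The only genuine obstacle is the bicausality verification, in particular anticausality, since $\phi_t^m$ is many-to-one; but this is resolved by the adaptedness structure of the coordinatewise map, and I expect it to require only a short argument via Lemma \ref{lem:causal CI} or a direct reference to the construction in \cite{BeLa20}. Everything else is a routine application of dominated convergence. I would present the argument in this order: (1) define $\pi^m$ as the graph/Monge coupling along $\phi_{1:N}^m$; (2) check it is bicausal; (3) bound $\AW_p^p$ by the transport cost of $\pi^m$; (4) apply dominated convergence using density of $\cup_m\mathcal Y_t^m$ and the $p$-integrability of $\mu$.
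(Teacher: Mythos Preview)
Your approach has a genuine gap: the graph coupling $\pi^m=(\id,\phi^m_{1:N})_\ast\mu$ is causal from $\fp X$ to $\fp Y$, but in general it is \emph{not} anticausal, so it is not admissible for bounding $\AW_p$. Your ``anticausality'' check is in fact a second verification of causality: taking $V$ to be $\F_t^{\fp Y}$-measurable and conditioning on $\F^{\fp X,\fp Y}_{N,0}$ is exactly item \ref{it:causal CI2} of Lemma~\ref{lem:causal CI} for causality from $\fp X$ to $\fp Y$. The correct anticausality criterion requires, for $V$ that is $\F_t^{\fp X}$-measurable (a function of $y_{1:t}$), that $\E_{\pi^m}[V\mid \F^{\fp X,\fp Y}_{0,N}]=\E_{\pi^m}[V\mid \F^{\fp X,\fp Y}_{0,t}]$, i.e.\ that knowing $\phi^m_{t+1:N}(y_{t+1:N})$ gives no additional information about $y_{1:t}$ beyond $\phi^m_{1:t}(y_{1:t})$. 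This fails whenever $\mu$ has nontrivial dependence across time and the discretisations are not aligned. For a concrete instance take $N=2$, $\mu=\tfrac12\delta_{(0,0)}+\tfrac12\delta_{(1,1)}$, $\mathcal Y_1^m$ a coarse grid (so $\phi^m_1(0)=\phi^m_1(1)$) and $\mathcal Y_2^m\supseteq\{0,1\}$: then under $\pi^m$ the second-coordinate value $\phi^m_2(y_2)=y_2=y_1$ fully reveals $y_1$, whereas $\phi^m_1(y_1)$ does not, so $\E_{\pi^m}[y_1\mid\phi^m_1(y_1),\phi^m_2(y_2)]\neq\E_{\pi^m}[y_1\mid\phi^m_1(y_1)]$. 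The adaptedness of the coordinatewise map is simply not enough; a Monge coupling along an adapted map is automatically causal, not bicausal.

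The paper circumvents this by \emph{not} exhibiting a single bicausal coupling. Instead it invokes the equivalence (\cite[Lemma 1.4]{BaBaBeEd19b}) between $\AW_p$-convergence and weak adapted convergence together with convergence of $p$-th moments, treats the two pieces separately, and cites \cite[Lemma 4.8]{BeLa20} for the first piece (whose proof does construct genuinely bicausal approximants, not the naive graph coupling). Your dominated-convergence argument for the pointwise decay $d(y_t,\phi^m_t(y_t))\to 0$ with the bound $d(y_t,\phi^m_t(y_t))\le d(y_t,q_t^0)$ is correct and is precisely what the paper uses for the moment part; it just does not suffice on its own.
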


\begin{proof}
	For every $m$, set 
	\[\mu^m
	:=(\phi_{1:N}^m)_\ast \mu 
	= \big( x_{1:N} \mapsto (\phi_1^m(x_1),\ldots,\phi_N^m(x_N)) \big)_\ast \mu.\]
	By \cite[Lemma 1.4]{BaBaBeEd19b}, \eqref{approx for plain processes} is equivalent to 
	\begin{gather}
		\label{plain approx weak convergence}
		\inf_{\pi \in \cplba(\mu,\mu^m)} \int d(x,y) \wedge 1 \, \pi(dx,dy) \to 0 \text{ and}
	\\	\label{plain approx moments} \int d^p(x,x^0) \, \mu^m(dx) \to \int d^p(x,x^0) \, \mu(dx),
	\end{gather}
	as $m\to\infty$, where $x^0\in \Y_{1:N}$ is some arbitrary but fixed element.
	For convenience, we choose $x^0\in \Y_{1:N}^0$.
	
	If each $\Y_t$ were compact, \eqref{plain approx weak convergence} would follow directly from \cite[Lemma 4.8]{BeLa20}.
	However, compactness in \cite[Lemma 4.8]{BeLa20} was only used to additionally obtain the rate of convergence, and the same proof shows that in the present setting \eqref{plain approx weak convergence} holds true.
	As for \eqref{plain approx moments}, note that the reverse triangle inequality shows that
	\[ |d(\phi_{1:N}^m(x),x^0) - d(x,x^0)|
	\leq d(\phi_{1:N}^m(x),x) \]
	for every $x\in\Y_{1:N}$.
	Thus, $d(\phi_{1:N}^m(x),x)$ decreases to zero as $m\to\infty$ and is bounded by $d(x^0,x)$ due to the definition of $\phi^m$ and $x^0\in \Y_{1:N}^0$.
	Dominated convergence shows that  \eqref{plain approx moments} holds and completes the proof.
\end{proof}

 \begin{proof}[Proof of Proposition \ref{prop:markov.dense}]
	Consider $\mu := \Law(\ip(\fp X)) \in \Pc_p(\Z)$ and interpret $\Z = \Z_{1:N}$ as a path space. 
	By Corollary \ref{lem:approximation of plain process} there is a family of laws $(\mu^m)_{m \in \N}$ on $\Pc_p(\Z)$ with
	\[ \lim_{m \to \infty}\AW_p(\mu,\mu^m) = 0 \]
	and for every $m \in \N$, $\mu^m$ is finitely supported.

	Fix $\varepsilon>0$ and $m \in \N$ such that $\AW_p(\mu,\mu^m) < \varepsilon$.
	Let $\phi_t^m \colon \Z_t \to \Z_t$, $1 \le t \le N$ be the family of maps introduced in Corollary \ref{lem:approximation of plain process}.
	We write $\overline{Y}_t := \proj_{\X_t} \circ \phi^m_{1:N}$.
	The maps $\phi_t^m (\ip_t(\fp X))$ and $Y_t := \overline{Y}_t(\ip_t(\fp X))$ are both $\F^\fp X_t$-measurable and
	\[
		\mathcal F_t^\fp Y := \sigma\left( \phi_s^m (\ip_s(\fp X)) \colon 1 \leq s \leq t \right),
	\]
	is a finite sub-$\sigma$-algebra of $\F_t^\fp X$.
	The filtered process $\fp Y$ is given as in \eqref{eq:approx FP} with process $Y$ and filtration $(\F_t^\fp Y)_{t = 1}^N$.
	By virtue of Lemma \ref{lem:causal CI} it is readily verified that the coupling $( \id_{\Omega^\fp X}, \phi^m_t(\ip_t(\fp X))_{t = 1}^N ))_\ast P^\fp X$ is bicausal between $\fp Y$ and
	\[
		\overline{\fp Y} := \left( \Z, \F^\Z, \mu, \left(  \sigma(\phi^m_{1:t})\right)_{t = 1}^N,  \overline{Y}_{1:N} \right),
	\]
	whence $\AW_p(\overline{\fp Y},\fp Y) = 0$.
	Similarly, the coupling $(\id_\Z, \phi^m_{1:N})_\ast \mu$ is bicausal between $\overline{\fp Y}$ and
	\[
		\fp Z := (\Z,\F^\Z,\mu^m,(\F_t^\Z)_{t=1}^N,Z) \text{ where }Z_t = \proj_{\X_t},
	\]
	thus, $\AW_p(\overline{\fp Y},\fp Z) = 0$ and $\AW_p(\fp Y,\fp Z) = 0$.
	By Theorem \ref{thm:isometry} we conclude
	\begin{align*}
		\AW_p(\fp X,\fp Y) &= \AW_p(\fp X,\fp Z) = \left( \inf_{\pi \in \cplba(\mu,\mu^m)} \int d^p(z^-,\hat z^-) \, \pi(dz,d\hat z) \right)^\frac{1}{p} \\
		&\le \left( \inf_{\pi \in \cplba(\mu,\mu^m)} \int d^p(z,\hat z) \, \pi(dz,d\hat z) \right)^\frac{1}{p} =  \AW_p(\mu,\mu^m) < \varepsilon. \qedhere
	\end{align*}
 \end{proof}
 
 \subsection{Martingales} 
\label{ssec:martingale}

Assume for this subsection that $\X_t=\R^d$ for each $1\leq t\leq N$.

\begin{proposition}[Martingales]
\label{cor:martingales}
	The set of all martingales
	\[ \mathrm{M}_p:= \{ \fp X\in \FFP_p : \fp X \text{ is a martingale} \}  \]
	is closed w.r.t.\ $\AW_p$.
\end{proposition}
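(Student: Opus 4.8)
The plan is to show that the martingale property is preserved under $\AW_p$-limits by expressing it via an adapted function whose value is continuous in $\AW_p$. Recall from Example \ref{ex:martingales.opt.stop}\ref{it:ex.martingale} that for each $1 \le t \le N-1$ the formation
\[
	f_t := \left( |\cdot - \cdot|, \proj_t, (\proj_{t+1}|t) \right)
\]
is an adapted function of rank $1$, and a filtered process $\fp X$ is a martingale if and only if $f_t(\fp X) = \E\left[ \left| X_t - \E[X_{t+1} \mid \F_t^{\fp X}] \right| \right] = 0$ for every $1 \le t \le N-1$. The catch is that $\proj_t$ and $\proj_{t+1}$ are unbounded on $\R^d$, so $f_t$ is not literally an adapted function in the sense of Definition \ref{def:AF}; I will handle this by truncation.

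**First I would** fix $\fp X^n \to \fp X$ in $\AW_p$ with each $\fp X^n$ a martingale, and fix $1 \le t \le N-1$. For $R > 0$ let $\chi_R \colon \R^d \to \R^d$ be a bounded $1$-Lipschitz truncation with $\chi_R(x) = x$ for $|x| \le R$ and $|\chi_R(x)| \le R+1$ everywhere, and consider the genuine rank-$1$ adapted function
\[
	f_t^R := \left( \psi_R, \chi_R \circ \proj_t, (\chi_R \circ \proj_{t+1}|t) \right),
\]
where $\psi_R(a,b) := (|a-b|) \wedge (4R)$ is bounded and Lipschitz, so that by Lemma \ref{lem:rank.adapted.functions.prediction.process.same} and Theorem \ref{thm:information.process.AF} (together with Lemma \ref{lem:ip as continuous function of pp} / Lemma \ref{lem:pp is continuous function of ip}) the map $\fp X \mapsto \E[f_t^R(\fp X)]$ factors through $\Law(\ip(\fp X))$ and, more to the point, is Lipschitz in $\AW_p$: indeed $f_t^R(\fp X) = \Psi_R(\pp^1(\fp X))$ for a bounded Lipschitz $\Psi_R$ on $\mathcal M_1 = \Pc_p(\X)^N$, and $\pp^1$ is a $1$-Lipschitz pushforward of $\ip$, hence $|\E[f_t^R(\fp X)] - \E[f_t^R(\fp Y)]| \le C_R \, \AW_1(\fp X,\fp Y) \le C_R\,\AW_p(\fp X,\fp Y)$. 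Consequently $\E[f_t^R(\fp X^n)] \to \E[f_t^R(\fp X)]$ as $n \to \infty$.

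**Next I would** control the martingale defect of $\fp X^n$ by $f_t^R$ up to an error that is uniform in $n$ and vanishes as $R \to \infty$. Since $\{\Law(X^n)\}_n \cup \{\Law(X)\}$ is $\AW_p$-convergent, hence $\WA_p$-relatively compact in $\Pc_p(\X)$ by Theorem \ref{thm:rel.compact}, the family $\{\Law(X^n)\}$ is uniformly $p$-integrable; in particular $\sup_n \E[|X_t^n| 1_{\{|X_t^n| > R\}}] \to 0$ and $\sup_n \E[|X_{t+1}^n| 1_{\{|X_{t+1}^n| > R\}}] \to 0$ as $R \to \infty$. Using the martingale property $X_t^n = \E[X_{t+1}^n \mid \F_t^{\fp X^n}]$, a direct estimate gives
\[
	\E[f_t^R(\fp X^n)] = \E\left[ \psi_R\!\left( \chi_R(X_t^n), \E[\chi_R(X_{t+1}^n)\mid \F_t^{\fp X^n}] \right) \right] \le \E\left[ \left| \chi_R(X_t^n) - \E[\chi_R(X_{t+1}^n)\mid \F_t^{\fp X^n}] \right| \right],
\]
and since $\chi_R(X_{t+1}^n)$ differs from $X_{t+1}^n$ only on $\{|X_{t+1}^n|>R\}$ (with a difference bounded by $|X_{t+1}^n|$), conditional Jensen and the tower property bound the right-hand side by $\E[|X_t^n - \chi_R(X_t^n)|] + \E[|X_{t+1}^n - \chi_R(X_{t+1}^n)|] \le \varepsilon_R$ where $\varepsilon_R := 2\sup_n\big(\E[|X_t^n|1_{\{|X_t^n|>R\}}] + \E[|X_{t+1}^n|1_{\{|X_{t+1}^n|>R\}}]\big) \to 0$. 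Passing $n\to\infty$ yields $\E[f_t^R(\fp X)] \le \varepsilon_R$ for every $R$, and letting $R\to\infty$, monotone/dominated convergence (note $f_t^R(\fp X) \uparrow \E[|X_t - \E[X_{t+1}\mid\F_t^\fp X]|]$ up to the uniformly integrable truncation, and the limiting quantity is finite since $\fp X \in \FFP_p$) gives $\E\big[ |X_t - \E[X_{t+1}\mid\F_t^{\fp X}]| \big] = 0$. As $t$ was arbitrary, $\fp X$ is a martingale.

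**The main obstacle** is not conceptual but bookkeeping: the martingale property is an \emph{unbounded} rank-$1$ adapted functional, so the continuity machinery of Section \ref{sec:AF.PP} applies only after truncation, and one must check that the truncation error is controlled uniformly in $n$ — which is exactly where Theorem \ref{thm:rel.compact} (uniform $p$-integrability of an $\AW_p$-convergent sequence) is used. A slightly cleaner alternative, which I would mention, is to work with the associated canonical processes $\cfp{X}^n$ on the common space $\Z$: there $\Law(\ip(\cfp X^n)) \to \Law(\ip(\cfp X))$ weakly with convergence of $p$-th moments, $X_t^n = Z_t^- \circ \ip$, and the martingale property reads $\int z_t^-\, z_t^+(dz_{t+1:N}) = \ldots = z_t^-$ in the pushforward, a closed condition under weak-plus-$p$-th-moment convergence by the same truncation argument; either route is fine and I would present the adapted-function version as the primary one.
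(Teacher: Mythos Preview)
Your approach is correct and corresponds to route (iii) in the paper's own list of possible proofs, namely working via Example \ref{ex:martingales.opt.stop} and the adapted-function machinery. The paper chooses instead the direct coupling argument (route (v)): for any bicausal $\pi\in\cplba(\fp X^n,\fp X)$, Lemma \ref{lem:causal CI} gives $\E[X_s\mid\F_t^{\fp X}]=\E_\pi[X_s\mid\F_{t,t}^{\fp X,\fp X^n}]$ and $X_t^n=\E_\pi[X_s^n\mid\F_{t,t}^{\fp X,\fp X^n}]$, whence Jensen's inequality yields $\E[|X_t-\E[X_s\mid\F_t^{\fp X}]|]\le\E_\pi[|X^n-X|]$, which is bounded by $\AW_1(\fp X^n,\fp X)$. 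This is shorter and sidesteps truncation and uniform integrability entirely; your route, on the other hand, makes explicit that martingaleness is a rank-1 adapted property and shows in detail how the general continuity machinery of Section \ref{sec:AF.PP} handles unbounded functionals.

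One minor point: you assert that $\fp X\mapsto\E[f_t^R(\fp X)]$ is \emph{Lipschitz} in $\AW_p$ because ``$\pp^1$ is a 1-Lipschitz pushforward of $\ip$'', but Lemma \ref{lem:pp is continuous function of ip} only guarantees that $F^1$ is \emph{continuous}, not Lipschitz. Fortunately continuity (together with boundedness of $f_t^R$) is all you need to conclude $\E[f_t^R(\fp X^n)]\to\E[f_t^R(\fp X)]$, since $\AW_p$-convergence implies weak convergence of $\Law(\ip(\fp X^n))$; so your argument goes through once you downgrade the claim.
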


There is a multitude of ways how to prove Proposition \ref{cor:martingales}:
\begin{enumerate}[label=(\roman*)]
	\item as a consequence of the continuity of Doob-decomposition (Proposition \ref{prop:doob} below), 
	\item as a consequence of the continuity of optimal stopping,
	\item as a consequence of Example \ref{ex:martingales.opt.stop},
	\item by characterizing martingales as those processes $\fp X \in \FFP_1$ for which $\Law(\ip_1(\fp X))$ is concentrated on a particular closed subset of $\Z_1$,
	\item or directly by coupling arguments.
\end{enumerate}

We will present the last variant.

\begin{proof}[Proof of Proposition \ref{cor:martingales}]
	Let $(\fp X^n)_{n \in \N}$ be a sequence in ${\rm M}_p$ converging to $\fp X$.
	Fix $n \in \N$, let $\pi\in\cplba(\fp X^n,\fp X)$ and $1\le t \le s \le N$.
	By Lemma \ref{lem:causal CI} and the martingale property of $\fp X^n$ we have 
	\begin{equation*}
		\E\left[X_{s} \middle| \mathcal{F}^\fp X_t\right] 
		= \E_\pi\left[X_{s} \middle|  \mathcal{F}^{\fp X,\fp X^n}_{t,t} \right] \text{ and } X^n_t 
		= \E_\pi\left[X_{s}^n \middle| \mathcal{F}^{\fp X,\fp X^n}_{t,t} \right].
	\end{equation*}
	Thus, letting $\Delta^n := (X_t^n-X_t) + (X_{s}^n-X_{s})$, Jensen's inequality yields that
	\begin{align*}
	\E\left[ \left|X_t-\E[X_s \middle| \mathcal{F}^{\fp X}_t]\right|\right] 
	&= \E_\pi\left[\left|X_t^n - \E_\pi\left[X_{s}^n + \Delta^n \middle| \mathcal{F}^{\fp X,\fp X^n}_{t,t}\right]\right|\right] \\
	&\leq 	\E_\pi[ |\Delta^n | ] \leq \E_\pi[ |X^n - X|].
	\end{align*}
	As $\pi\in\cplba(\fp X^n,\fp X)$ was arbitrary, $\E[|X_t - \E[X_{s} | \F_t^\fp X]]$ is dominated by $\AW_1(\fp X^n,\fp X)$ which is arbitrarily small for $n$ sufficiently large.
	Hence, $\E[|X_t-\E[X_{s} | \mathcal{F}^{\fp X}_t]|]=0$ showing that $\fp X$ is a martingale.
\end{proof}

\subsection{$(\FFP_p,\AW_p)$ is a geodesic space} 
\label{ssec:geodesic}

The purpose of this section is to show Theorem \ref{thm:geodesic}, and in particular that $(\FFP_p,\AW_p)$ is a geodesic space.
For concise notation, we shall assume throughout that $\X_t=\mathbb{R}^d$ for every $1\leq t\leq N$ (but see also Remark \ref{rem:geodesic.X.not.R}).

\begin{definition}[Constant speed geodesic] \label{def:constant speed geodesic}
	A family $(\fp Z^{u})_{u\in[0,1]}$ in $\FFP_p$ is said to be a \emph{constant speed geodesic} connecting $\fp X, \fp Y \in\FFP_p$ if 
	\begin{enumerate}[label=(\alph*)]
	\item \label{it:def geodesic1}
	$\fp Z^0 = \fp X$ and $\fp Z^1 = \fp Y$,
	\item \label{it:def geodesic2}
	$\AW_p(\fp Z^u, \fp Z^v)=|u-v|\AW_p(\fp X,\fp Y)$ for all $u,v\in[0,1]$.
	\end{enumerate}
\end{definition}

A tangible way of defining constant speed geodesics is -- in analogy to the classical $\W_p$-displacement interpolation -- by means of geodesics on the state space and optimal couplings.
To that end, recall that $\Z$ is the canonical space defined in Definition \ref{def:canonical.space.Z}.

\begin{definition}[Interpolation process]
 \label{def:interpolation process}
	Let $\fp X, \fp Y \in \FFP_p$ and let $\pi \in \cplba(\fp X, \fp Y)$.
	We call the family $(\fp Z^{\pi,u})_{u \in [0,1]}$ given by
	\begin{equation}
		\label{eq:interpolation process}
		\fp Z^{\pi,u} \
		:= \left( \Z \times \Z, \F^\Z \otimes \F^\Z, \pi,(\F_{t,t}^{\Z,\Z})_{t = 1}^N,  ((1-u)X_t+uY_t)_{t = 1}^N  \right)
	\end{equation}	
	the \emph{interpolation process} between $\fp X$ and $\fp Y$ (w.r.t.\ the coupling $\pi$).
\end{definition}

The following is the main result of this section.

\begin{theorem}[Filtered processes form a geodesic space]
\label{thm:geodesic}
	Let $p\in(1,\infty)$.
	\begin{enumerate}[label=(\roman*)]
	\item \label{it:geodesic.1}
	The space $(\FFP_p,\AW_p)$ is a geodesic space, that is, for every $\fp X,\fp Y\in \FFP_p$ there is a constant speed geodesic connecting them.
	\item \label{it:geodesic.2}
	A family $(\fp Z^u)_{u\in[0,1]}$ in $\FFP_p$ is a constant speed geodesic between $\fp X,\fp Y\in \FFP_p$ if and only if the family
	\[ (\gamma^u)_{u\in[0,1]}:= (\Law(\ip_1(\fp Z^u)))_{u\in[0,1]}
	\] is a $\W_p$-constant speed geodesic\footnote{
		A family $(\gamma^u)_{u\in[0,1]}$ in $\mathcal{P}_p(\Z_1)$ is said to be  a $\W_p$-constant speed geodesic connecting $\mu,\nu\in \mathcal{P}(\Z_1)$ if $\gamma^0=\mu$, $\gamma^1=\nu$, and $\W_p(\gamma^u,\gamma^v)=|u-v|\W_p(\mu,\nu)$ for every $u,v\in[0,1]$.}
		between $\Law(\ip_1(\fp X))$ and $\Law(\ip_1(\fp Y))$.
	\item \label{it:geodesic.3}
	If $\pi$ is an optimal bicausal coupling for $\AW_p(\fp X,\fp Y)$, then the interpolation process $(\fp Z^{\pi,u})_{u \in [0,1]}$ is a constant speed geodesic between $\fp X$ and $\fp Y$.
	\end{enumerate}
\end{theorem}

In a forthcoming paper, it will be shown that not only does the interpolation process constitute a geodesic, but actually \emph{all} geodesics can be described as interpolation processes, at least once one is willing to allow for external randomization, that is, extending the probability space by independent randomness.

\begin{proof}
	We start by proving \ref{it:geodesic.1} and \ref{it:geodesic.2}.
	As $\Z_N=\X_N$ is a geodesic space, it follows from \cite{Li07} that $\mathcal{P}_p(\X_N)$ is a geodesic space, too.
	Moreover, the product (endowed with the $l^p$-norm) of two geodesic spaces remains geodesic, hence $\Z_{N-1}=\X_{N-1}\times \mathcal{P}_p(\Z_N)$ is a geodesic space.
	Repeating this argument inductively shows that $\Pc_p(\Z_1)$ a geodesic space.
	Claim \ref{it:geodesic.1} and \ref{it:geodesic.2} now follow from the isometry between $\FFP_p$ and $\mathcal{P}_p(\Z_1)$ given in Theorem \ref{thm:isometry}.

	We now show \ref{it:geodesic.3}.
	Bicausality of $\pi$ and Lemma \ref{lem:causal CI} immediately show part \ref{it:def geodesic1} of Definition \ref{def:constant speed geodesic}, and it remains to deal with part \ref{it:def geodesic2}.
	To that end, note that the coupling $\Pi := \left( \id, \id \right)_\ast \pi$ is bicausal between $\fp Z^{\pi,u}$ and $\fp Z^{\pi,v}$.
	Then, for every $u,v\in[0,1]$, as $Z^u-Z^v=(u-v)(X-Y)$, we compute
 	\begin{align}
		\label{eq:interpolation process is geodesic}
		\begin{split}
		\AW_p(\fp Z^{\pi,u},\fp Z^{\pi,v}) 
		&\leq \E_\Pi[ \| Z^u-Z^v\|_p^p]^{1/p}= |u-v| \E_\pi[ \|X-Y\|_p^p]	^{1/p} 
		= |u-v| \AW_p(\fp X,\fp Y),
		\end{split}
	\end{align}
	where the last equality holds by optimality of $\pi$.
	A straightforward application of the triangle inequality shows that
	there cannot be strict inequality in \eqref{eq:interpolation process is geodesic}, and whence the claim follows.
\end{proof}

The next example illustrates that even in case of geodesics between plain processes (c.f.\ \eqref{eq:plain}) it is necessary to consider general filtrations (instead of the  filtrations generated by the processes).

\begin{example}
\label{ex:plain.not.geodesic}
	Let $N = 2$.
	We consider two processes with paths in $\R^2$: $\fp X$ and $\fp Y$ are the plain process associated to the laws (on the path space $\R^2$)
	\[
		\mu := \frac{1}{2} \left( \delta_{(1,-2)} + \delta_{(-1,2)} \right) \text{ and } \nu := \frac{1}{2} \left( \delta_{(1,1)} + \delta_{(-1,-1)} \right),
	\]
	respectively.
	It is then easy to verify that there exists a unique constant speed geodesic and that it is given by the interpolation process of $\fp X$ and $\fp Y$ w.r.t.\ the unique $\AW_p$-optimal coupling $\pi \in \cplba(\fp X,\fp Y)$.
	This coupling sends the mass from $(1,-2)$ to $(-1,-1)$ and the mass from $(-1,2)$ to $(1,1)$.
	Therefore, at time $1/2$, $\fp Z^{\pi,1/2}$, is not a plain process, since
	\[
		\Law(\ip_1(\fp Z^{\pi,1/2})) = \frac{1}{2} \left( \delta_{(0,\delta_{3/2})} + \delta_{(0,\delta_{-3/2})} \right);
	\]
	or put differently, even though $Z^{\pi,1/2}_1 = 0$ we know at $t = 1$ already precisely where we will end up at $t = 2$, thus, $\fp Z^{\pi,1/2}$ is not plain.
\end{example}

\begin{theorem}
	For $p\in(1,\infty)$, the set  ${\rm M}_p$ (of martingales in $\FFP_p$) forms a closed, geodesically convex subset of $\FFP_p$. 
\end{theorem}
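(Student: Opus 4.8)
The plan is to reduce both assertions — closedness and geodesic convexity of the set of martingales $\mathcal M_p\subseteq\FFP_p$ — to statements that have essentially already been established. Closedness of $\mathcal M_p$ is precisely Proposition \ref{cor:martingales}, so there is nothing new to do there; we simply cite it. For geodesic convexity, the strategy is to work through the isometry of Theorem \ref{thm:isometry} and Theorem \ref{thm:geodesic}(ii), (iii), and to observe that the interpolation process of two martingales along an optimal bicausal coupling is again a martingale. Concretely, let $\fp X,\fp Y\in\mathcal M_p$, let $\pi\in\cplba(\fp X,\fp Y)$ be an $\AW_p$-optimal coupling, and consider the interpolation process $(\fp Z^{\pi,u})_{u\in[0,1]}$ from Definition \ref{def:interpolation process}. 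By Theorem \ref{thm:geodesic}(iii) this is a constant speed geodesic between $\fp X$ and $\fp Y$, so it suffices to show that each $\fp Z^{\pi,u}$ is a martingale.

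The key step is the martingale property of $\fp Z^{\pi,u}$. Recall $\fp Z^{\pi,u}$ lives on $(\Z\times\Z,\F^\Z\otimes\F^\Z,(\F^{\Z,\Z}_{t,t})_{t=1}^N,\pi)$ with process $((1-u)X_t+uY_t)_{t=1}^N$. Fix $1\le t\le s\le N$. Since $\pi$ is bicausal, Lemma \ref{lem:causal CI} gives
\[
	\E_\pi\bigl[ X_s \mid \F^{\Z,\Z}_{t,t}\bigr] = \E_\pi\bigl[ X_s \mid \F^{\Z,\Z}_{t,0}\bigr] = X_t,
\]
where the last equality is the martingale property of $\fp X$ (read on its canonical representative $\cfp X$); symmetrically $\E_\pi[Y_s\mid\F^{\Z,\Z}_{t,t}]=Y_t$ using anticausality and the martingale property of $\fp Y$. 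Taking the convex combination,
\[
	\E_\pi\bigl[ (1-u)X_s+uY_s \mid \F^{\Z,\Z}_{t,t}\bigr] = (1-u)X_t+uY_t,
\]
which is exactly the martingale property of $\fp Z^{\pi,u}$ with respect to its own filtration $(\F^{\Z,\Z}_{t,t})_{t=1}^N$. Hence $\fp Z^{\pi,u}\in\mathcal M_p$ for every $u$, and since $(\fp Z^{\pi,u})_u$ is a geodesic between the two arbitrary martingales $\fp X,\fp Y$, the set $\mathcal M_p$ is geodesically convex.

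The only subtlety — and the part most likely to need a careful sentence — is that geodesic convexity should be checked along \emph{every} geodesic between two martingales, not merely along the one produced by an optimal bicausal coupling; but this is not an issue here because we only need the existence of \emph{some} geodesic staying inside $\mathcal M_p$ for the standard notion of geodesic convexity (a set $C$ is geodesically convex if any two points of $C$ are joined by a geodesic lying in $C$). Alternatively, one may note via Theorem \ref{thm:geodesic}(ii) that geodesics correspond to $\W_p$-geodesics of $\Law(\ip_1(\cdot))$ in $\Pc_p(\Z_1)$, and that $\mathcal M_p$ corresponds under the isometry to the set of measures concentrated on a closed, convex (hence geodesically convex in the Euclidean-linear sense on each fibre) subset of $\Z_1$, which is displacement convex by the same fibrewise argument used in the proof of Theorem \ref{thm:geodesic}; either route closes the argument. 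I expect writing the reduction cleanly — making precise that the filtration of $\fp Z^{\pi,u}$ is $(\F^{\Z,\Z}_{t,t})_{t=1}^N$ and that Lemma \ref{lem:causal CI} applies in the stated direction — to be the main (modest) obstacle; the probabilistic content is entirely routine given the machinery already assembled.
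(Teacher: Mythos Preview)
Your proposal is correct and follows essentially the same route as the paper: cite Proposition~\ref{cor:martingales} for closedness, then use bicausality via Lemma~\ref{lem:causal CI} to split $\E_\pi[(1-u)X_s+uY_s\mid\F^{\Z,\Z}_{t,t}]$ into the two marginal conditional expectations and apply the martingale property of $\fp X$ and $\fp Y$ separately. The only difference is in handling the ``which geodesics'' issue: the paper invokes a forthcoming result that all geodesics are interpolation processes (up to external randomization) to obtain the strong form of geodesic convexity, whereas you observe that the standard notion only requires \emph{some} connecting geodesic to lie in $\mathcal M_p$, which the interpolation process along an optimal $\pi$ already provides---both resolutions are acceptable.
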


 \vspace{-20mm}
 \begin{figure}[H]
    \centering
    \includegraphics[page=1,width=0.6\textwidth]
        {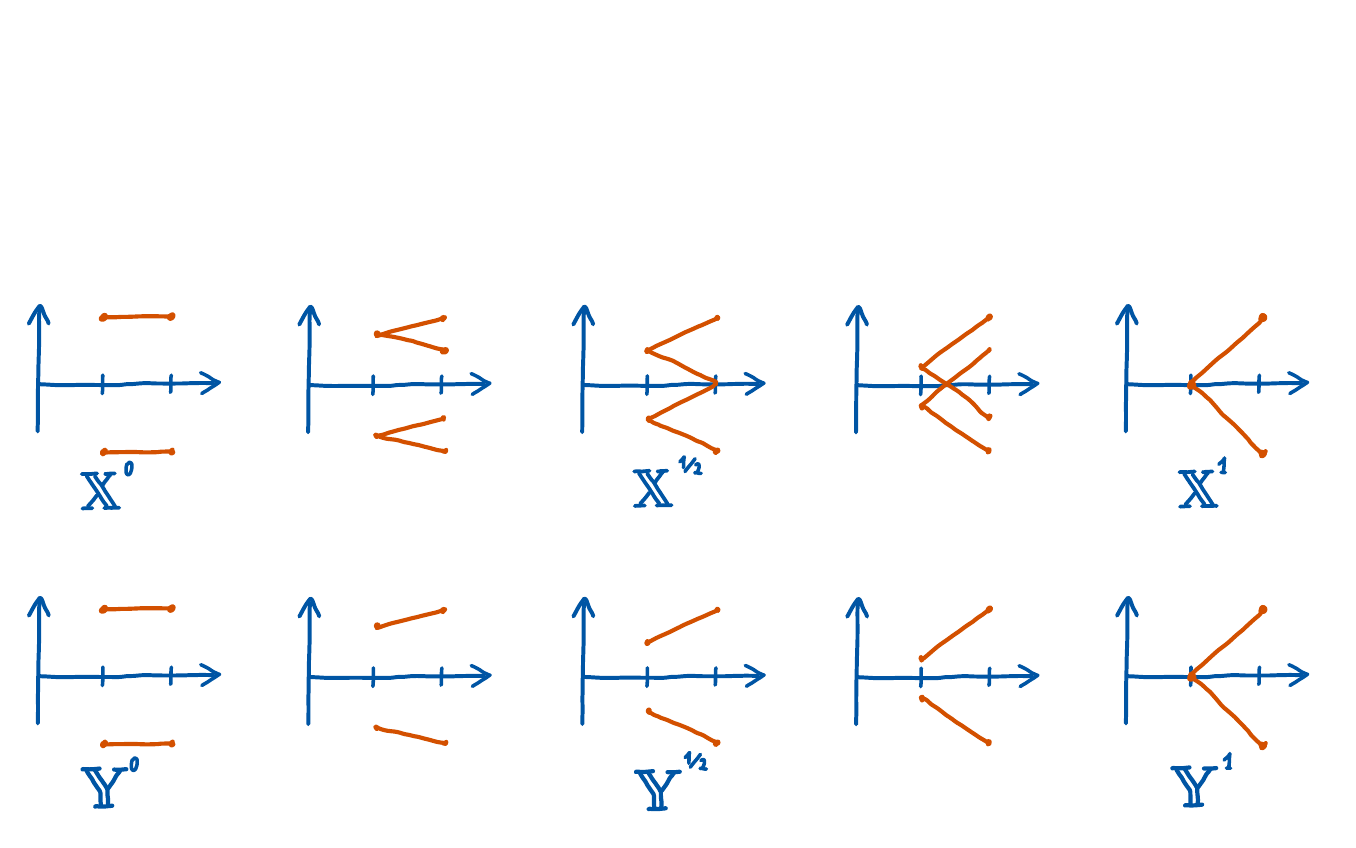} 
  \caption{Comparison of the adapted Wasserstein interpolation $(\fp X^u)_{u\in [0,1]}$ and a classical Wasserstein interpolation $(\fp Y^u)_{u\in [0,1]}$ where the geodesic between martingales does not consist of martingales.}
 \label{fig:usual.wasserstein}
 \end{figure}
 \vspace{-02mm}

	For ease of exposition, we will only check that the set of martingales is geodesically convex when restricting to geodesics given by interpolation processes.
	Knowing that all geodesics can be characterized this way modulo an external randomization, this assumption can in fact be made without loss of generality.
	Alternatively, a proof via our description of geodesic processes as geodesics on $\mathcal{P}_p(\Z_1)$ in Theorem \ref{thm:geodesic} is possible too, but less informative, and therefore left to the ambitious reader.

\begin{proof}[Simplified proof]
	By Proposition \ref{cor:martingales}, it remains to show that ${\rm M}_p$ is geodesically convex. 
	To that end, let $\fp X,\fp Y\in {\rm M}_p$  and let $(\fp Z^u)_{u\in[0,1]}$ be a constant speed geodesic connecting them, which, as already explained, is assumed to be given as the interpolation processes w.r.t.\ some $\pi\in\cplba(\fp X,\fp Y)$.	
	
	We need to show that, for given fixed $u\in[0,1]$, the processes $\fp Z^u$ is a martingale, too.
	To that end, let $1\leq s\leq t\leq N$ and write
	\begin{align*}
	\E\left[Z^u_t \middle| \mathcal{F}^{\fp Z}_s\right]
	&=\E_\pi\left[(1-u)X_t+uY_t \middle| \mathcal{F}^{\fp X,\fp Y}_{s,s}\right]=(1-u)\E\left[X_t \middle|\mathcal{F}^{\fp X}_{s}\right]+u \E\left[Y_t \middle|\mathcal{F}^{\fp Y}_{s}\right]
	= Z_s^u
	\end{align*}
	where we use bicausality of $\pi$ in the form of Lemma \ref{lem:causal CI}, and the martingale property of both $\fp X$ and $\fp Y$.
	Hence $\fp Z^u$ is a martingale which completes the proof.
\end{proof}

\begin{remark}
\label{rem:geodesic.X.not.R}
	We chose $\X_t=\mathbb{R}^d$ in this section to lighten notation.
	However, Theorem \ref{thm:geodesic} remains valid if all of the $\X_t$'s are geodesic space, with the obvious modifications, such as replacing $(1-u)X_t+uY_t$ in Definition \ref{def:interpolation process} by (appropriately measurable selections of) geodesics between $X_t$ and $Y_t$.
\end{remark}

\section{Continuity w.r.t.\ $\mathcal{AW}_p$ and applications}
\label{sec:applications}

We have argued in the introduction that the weak adapted distribution governs `all' probabilistic aspects of a stochastic process.
In this section we highlight this claim, and further show that several (optimization) problems involving stochastic processes \emph{continuously} depend on the adapted distribution.
And moreover, that \emph{quantitative} estimates w.r.t.\ $\AW_p$ are possible.
Throughout this section we assume that  $\mathcal{X}_t=\R^d$ for every $1\leq t\leq N$.

\subsection{Optimal stopping} \label{ssec:optimal stopping}
Fix a non-anticipative function $c\colon (\R^d)^N\times\{1,\dots,N\}\to\mathbb{R}$ and set
\begin{align}
\label{eq:def.opt.stop}
v_c(\fp X):=\inf_{\tau\in\mathrm{ST}(\fp{X})} \E[c_\tau(X)] 
\end{align}
for $\fp X\in \FP$, where $\mathrm{ST}(\fp{X}) $ denotes the set of all $(\F_t^\fp{X})_{t=1}^N$-stopping times taking values in $\{1,\dots,N\}$.
In Example \ref{ex:martingales.opt.stop} we have already seen that $v_c$ is well-defined on $\FFP_p$ in the sense that the value of $v_c$ does not depend on the choice of representative.
 
 \begin{proposition}[Optimal stopping]
 \label{lem:stopping.lipschitz}
	Let $\fp X, \fp Y\in\FFP$  such that $c_{1:N}(X)$ and $c_{1:N}(Y)$ are integrable.
	Then we have
 	\begin{align*}
 	v_c(\fp X) - v_c(\fp Y)
 	&\leq \inf_{\pi\in\cpla(\fp{X},\fp{Y})}  \E_\pi\Big[ \max_{1\leq t\leq N} |c_t(X)-c_t(Y)| \Big] .
 	\end{align*}
 In particular,  the following hold.
 \begin{enumerate}[label=(\roman*)]
 \item If $c_t$ is continuous bounded for every $t$, then $\fp X\mapsto v_c(\fp X)$ is continuous on $\FFP$ w.r.t.\ the weak adapted topology.
 \item If $c_t$ is continuous and satisfies that $|c_t(\cdot)|\leq \alpha( 1 + |\cdot|^p)$ for every $t$ and some $\alpha>0$, then $\fp X\mapsto v_c(\fp X)$ is continuous on $\FFP_p$ w.r.t.\ $\mathcal{AW}_p$.
 \item If $c_t$ is Lipschitz  for every $t$, then $\fp X\mapsto v_c(\fp X)$ is $\AW_1$-Lipschitz  on $\FFP_1$.
 \end{enumerate}
 \end{proposition}

 In Theorem \ref{thm:stopping.rank} we will construct an example showing that the whole adapted distribution is required to control optimal stopping problems.
 Note that the non-quantitative version of Proposition \ref{lem:stopping.lipschitz} (i.e.\ continuity of optimal stopping for continuous $c$ that satisfies an adequate growth condition) already follows from Example \ref{ex:martingales.opt.stop}.
 
 \begin{proof}[Proof of Proposition \ref{lem:stopping.lipschitz}] 
 	The `in particular' statement follows by symmetry from the first statement, so we shall only prove the first one.
 	Its proof is similar to \cite{BaBaBeEd19b} and is included here for the convenience of the reader.
 	Let $\varepsilon>0$ and $\tau^\ast\in\mathrm{ST}(\fp{Y})$ be a stopping time such that $\E[c_{\tau^\ast}(Y)]\leq v_c(\fp Y) + \varepsilon$.
 	Further let $\pi\in\cpla(\fp{X},\fp{Y})$ and, for every $u\in[0,1]$, define
 	\[ \sigma_u:= \min\left\{ t\in\{1,\dots,N\} : \pi\left(\tau^\ast \leq t \middle| \mathcal{F}^{\fp X,\fp Y}_{N,0}\right)\geq u\right\}. \]
 	By causality, c.f.\ Lemma \ref{lem:causal CI}, we have $\sigma_u\in\mathrm{ST}(\fp{X})$, hence
 	\begin{align*}
		v_c(\fp X)&\leq \inf_{u \in [0,1]} \E_\pi\left[ c_{\sigma_u}(X) \right] \le \int_{[0,1]} \E_\pi[c_{\sigma_u}(X)]\,du \\
		&=\sum_{t=0}^N \int_{[0,1]} \E_\pi\left[  c_t(X) 1_{\pi\left(\tau^\ast\leq t \middle|\mathcal{F}^{\fp X,\fp Y}_{N,0}\right)\geq u> \pi\left(\tau^\ast\leq t-1|\mathcal{F}^{\fp X,\fp Y}_{N,0}\right)}\right] \, du = \E_\pi[c_{\tau^\ast}(X)].
 	\end{align*}
	In conclusion we obtain
 	\[
		v_c(\fp X) - v_c(\fp Y)	\leq \E_\pi\left[c_{\tau^\ast}(X)-c_{\tau^\ast}(Y)\right] + \varepsilon \leq \E_\pi \left[ \max_{1\leq t\leq N} |c_t(X)-c_t(Y)| \right] + \varepsilon,
 	\]
 	which, as $\varepsilon > 0$ and $\pi \in \cpla(\fp X,\fp Y)$ were arbitrary, yields the assertion.
 \end{proof}

\subsection{American options and robust pricing}

Working in the setup of e.g.\ \cite{BeHePe12}, $X_t\in \R_+$ stands for the (discounted) price of a financial asset at a  time $t\in \{1, \ldots, N\}$ and  $x_1 = X_1\in \R_+$ denotes the current price. 
One assumes that there exists  a family of \emph{European}  derivatives, described by a (continuous and linearly bounded) family of functions $\phi_i:(\R_+)^N\to \R, i\in I$  which are liquidly traded in the market, meaning that the respective prices $p_i, i\in I$ are specified from externally given data. 
The  set of all \emph{calibrated models}\  consists of all martingales with mean $x_1$ which correctly reproduce the prices given by the market, i.e.\ in mathematical terms
\begin{align}\label{eq:CaMo}
\mathrm{M}_I:=\{\fp X\in \mathrm{M}_1 : X_1= x_1, \E [\phi_i (X)]= p_i, i\in I\}.
\end{align}
A common assumption is  
\begin{align}\label{eq:CC} \tag{\text{CC}}
\text{$\{\phi_i\colon i \in I\}$ contains  all  call options written on $X_N$ and ${\rm M}_I\neq\emptyset$.}
\end{align} 
Going back to a famous observation of Breeden-Litzenberger \cite{BrLi78},
this implies  the following basic  fundamental fact:
\begin{proposition}\label{basic:MOT}
Under assumption \eqref{eq:CC} the set $\{\Law(\fp X): \fp X\in  \mathrm{M}_I \}$ is   $\W_1$-compact.
\end{proposition}
A direct consequence is that for a further (continuous, linearly bounded) European derivative  $\Phi:(\R_+)^N\to \R, i\in I$ the set of possible arbitrage free prices consists of the  interval 
$\big[\inf_{\fp X\in  \mathrm{M}_I}  \E[\Phi(X)],$ $  \sup_{\fp X\in  \mathrm{M}_I}  \E[\Phi(X)]\big]$, where the endpoints are attained for `extremal models'. 
Proposition \ref{basic:MOT} as well as various extensions of it play a crucial role for the duality theory as well as the characterization of extremal models in robust finance, see \cite{BeNuTo16, BeCoHu14, GuOb19, ChKiPrSo20} among many others.

A particular limitation of Proposition \ref{basic:MOT} is that it allows only to consider derivatives with a European payoff structure, but neglects derivatives with an American exercise structure, where the buyer may choose when to exercise and the buyers price in  a model $\fp X$ equals
\begin{align}
\label{eq:AmEx} 
\sup \{ \E[ \Phi(X) ] : \tau\in {\rm ST}(\fp X)\}.
\end{align}
Apart from a few important exceptions (see \cite{BaZh19, HoNe17, AkDeObTa19}) the robust finance literature is focused on the case of European derivatives.
The simple reason is that going beyond the standard European case requires an adequate topology on processes with a \emph{non trivial} filtration, which was hitherto unavailable.
As derivatives with American exercise structure are \emph{more} common than European derivatives it is highly desirable to extend the existing theory to this case.
 As a consequence of our results we obtain.

\begin{proposition}
Assume that $\{\phi_i: i\in I\}$ is a family of derivatives with linearly bounded continuous payoffs and European or American exercise structures. 
Under assumption \eqref{eq:CC}, the set $\{\fp X: \fp X\in  \mathrm{M}_I \}$  is $\AW_1$-compact.
Moreover,  for a European or American  derivative with continuous  linearly bounded payoff $\Phi:(\R_+)^N\to \R$, the lower/upper pricing bounds for the derivative $\Phi$ are attained.
\end{proposition}

\begin{proof}
This is a direct consequence of Prokhorov's theorem in our setting (Theorem \ref{thm:rel.compact}),  Proposition \ref{eq:AmEx}, and the continuity of optimal stopping (Proposition \ref{lem:stopping.lipschitz}).
\end{proof} 

\subsection{Utility maximization}

Let $ U\colon \R\to\R$ be an increasing concave (utility) function, and denote by $U'$ the left-continuous version of the derivative.
Denote by $\mathcal{H}(\fp X)$  the set of all $(\F^{\fp X}_t)_{t = 1}^N$-predictable processes $H$ that are bounded by 1, and by $(H\cdot X)_t:=\sum_{s=1}^{t-1} H_{s+1}(X_{s+1}-X_{s})$ the discrete-time stochastic integral of $H$ w.r.t.\ $X$.
For $C\colon (\R^d)^N\to \R$ and  $\fp X\in\FFP_p$, denote by $u(\fp X)$  the value of the utility maximization problem  with random endowment $C$, that is,
\[ u(\fp X) :=\sup_{H\in\mathcal{H}(\fp X)} \E[ U( C(X) + (H\cdot X)_N ) ].\]
In \cite[Theorem 1.8]{BaBaBeEd19a} it is shown that $u(\fp X)$ depends continuously on $\fp X$ (w.r.t.\ $\mathcal{AW}_p$) when restricting to plain processes (i.e.\ to $\fp X$ whose filtration is generated only by their paths).
In the context of utility maximization, however, it is of central importance to also understand the effect that changes of the information/filtration of $\fp X$ have to $u(\fp X)$, and the following result extends \cite[Theorem 1.8]{BaBaBeEd19a} to the general setting.

\begin{theorem}
\label{thm:utility.max}
	Let $C\colon\R^N\to\R$ be Lipschitz continuous and assume that there exists $\alpha$ such that $U'(\cdot)\leq \alpha(1+|\cdot|^{p-1})$.
	Then, for every $R>0$ there is a constant $K$ (depending only on $R$, $\alpha$ and the Lipschitz constant of $C$) such that  
	\[  \left| u(\fp X) - u (\fp Y) \right|
	\leq K \cdot \mathcal{AW}_p(\fp X,\fp Y)\]
	for every  $\fp X,\fp Y\in\FFP_p$ with $\mathcal{AW}_p(\fp X, 0),  \mathcal{AW}_p(\fp Y, 0) \leq R$.
\end{theorem}
\begin{proof}
	For simplicity, we assume that $C=0$ and focus on $p=1$ (i.e.\ $U$ is Lipschitz); the modifications needed for the general case are minimal and follow e.g.\ as detailed in \cite{BaBaBeEd19a}.
	Let $H^\ast\in\mathcal{H}(\fp X)$ be (almost) optimal for $u(\fp X)$ and let $\pi\in\cplbc(\fp X,\fp Y)$ be (almost) optimal for $\mathcal{AW}_1(\fp X,\fp Y)$.
	Define $ G_t:= \E_\pi[ H_t^\ast |\mathcal{F}^{\fp X, \fp Y}_{0,N}]$ for every $t$.
	Clearly $G$ is bounded by 1, and by bicausality of $\pi$, $G_t$ is $\F^{\fp Y}_t$-measurable (see Lemma \ref{lem:causal CI});  hence $G\in\mathcal{H}(\fp Y)$.
	It follows that
	\[ u(\fp Y)
\geq \E[U((G\cdot Y)_N) ]
 =  \E_\pi[ U( \E_\pi[  (H^\ast \cdot Y)_N  \mid| \F^{\fp X,\fp Y}_{0,N} ] )  ].
\] 
	By concavity of $U$ and Jensen's inequality, followed by Lipschitz continuity of $U$,
	\begin{align*}
	u(\fp Y)
	\geq \E_\pi\left[ U\left(   (H^\ast \cdot Y)_N  \right)  \right]
	&=\E_\pi\left[ U\left(   (H^\ast \cdot X)_N + (H^\ast\cdot (Y-X))_N  \right)  \right]
	\\
	&\geq \E_\pi\left[ U\left(   (H^\ast \cdot X)_N \right)  \right] -  L \E_\pi\left[ |(H^\ast \cdot (Y-X))_N | \right]
	\end{align*}
	where $L$ is the Lipschitz constant of $U$.
	It remains to note that $|(H^\ast\cdot (Y-X))_N |\leq 2\sum_{t=1}^N |X_t-Y_t|$, hence $u(\fp Y)\geq u(\fp X) - 2L\mathcal{AW}_1(\fp X,\fp Y)$.
	Reversing the roles of $\fp X$ and $\fp Y$ proves the claim. 
\end{proof}

\begin{remark}
	As explained in \cite[Section 3.3]{BaBaBeEd19a}, in the context of optimization problems involving stochastic integrals and (semi-)martingales, it is more natural to define $\AW_p$ with a different cost function instead of $\E[d^p(X,Y)]$, namely  with $\E[ |\langle M^{\fp X} - M^{\fp Y} \rangle|^{p/2} + |A^{\fp X} - A^{\fp Y}|_{\rm var}^p]$ where $\fp X = M^{\fp X} + A^{ \fp X}$ denotes the Doob-decomposition, $\langle\cdot\rangle$ the quadratic variation, and $|\cdot|_{\rm var}$ the first variation norm.
	Clearly, this modification of $\AW_p$ is also possible (and reasonable) in the  present setting.
\end{remark}

\subsection{Stochastic control}

Optimal stopping and utility maximization are basic stochastic control  problems involving  processes and we have shown in Proposition \ref{lem:stopping.lipschitz} and Theorem \ref{thm:utility.max} that their values are continuous w.r.t.\ the adapted Wasserstein distance.
As it happens, this is the general principle for  stochastic control problems.

For example, if $J\colon (\R^d)^N\times \mathbb{R}^{N-1}\times (\mathbb{R}^d)^{N} \to \mathbb{R}$ is convex in the second and third argument, a similar reasoning as used for the proof of Theorem \ref{thm:utility.max} shows that under suitable continuity and growth assumptions on $J$,
	\[\fp X\mapsto \inf_{H\in\mathcal{H}(\fp X)} \E\Big[ J\Big(X,((H\cdot X)_t)_{t=2}^N, H \Big)\Big], \]
	is (locally Lipschitz) continuous w.r.t.\ $\AW_p$.
	
	We refer to \cite{BaBaBeEd19a} for a more elaborate analysis of such problems in a mathematical  finance context.
	In particular, using the results of the present paper, the results of \cite{BaBaBeEd19a} for the stability of superhedging, risk based headging and utility indifference pricing (which are formulated only for processes endowed with their raw filtration) extend to processes with arbitrary filtrations.
	
\subsection{Conditional McKean-Vlasov control}

	Denote by $\mathcal{A}(\fp X)$ the set of all $(\F^{\fp X}_t)_{t = 1}^N$-adapted processes that are bounded by 1 and let $\fp B$ be a discrete-time Brownian motion\footnote{That is, $B_1=0$, and for every $t$ the increments $(B_{t+1}-B_t)$ are standard normal and independent of $\mathcal{F}^{\fp B}_t$.}.
	For $\alpha\in\mathcal{A}(\fp B)$ consider the controlled process $\fp X^{\fp B,\alpha}$, defined recursively via $X^{\fp B,\alpha}_1=0$ and 
	\begin{align*}
	X^{\fp B, \alpha}_{t+1}
	:=G_{t+1}\left( X^{\fp B, \alpha}_t,\alpha_t,\mathcal{L}(X_{t}^{\fp B,\alpha} ), B_{t+1}-B_t\right), 	
	\end{align*}
	where $G_{t+1}\colon\R^d\times\R^d\times\mathcal{P}_p(\R^d)\times\R^d\to [0,1]$ is some fixed continuous function prescribing the dynamics of $X^{\fp B,\alpha}$.
	Further let $J\colon (\R^d)^T\times (\R^d)^T\times \mathcal{P}_p((\R^d)^T)\to [0,1]$ be continuous and consider the McKean-Vlasov control problem in its weak formulation:
	\begin{align}
	\label{eq:MCV.control}
	 \inf_{\fp B \text{ is Bronwian motion and }\alpha\in\mathcal{A}(\fp B)} \E\left[ J\left(X^{\fp B,\alpha}, \alpha, \mathcal{L}(X^{\fp B,\alpha}|\mathcal{F}^{\fp B}_t)_{t=1}^N \right) \right];
	 \end{align}
	we refer to \cite{PhWe16} for more background on problems of the type \eqref{eq:MCV.control}.
	Based on Prokhorov's theorem for filtered processes, it is straightforward to show  that a  solution to  \eqref{eq:MCV.control} exists:
	
	\begin{proposition}
	The infimum over all Brownian motions $\fp B$ and controls $\alpha\in\mathcal{A}(\fp B)$ in \eqref{eq:MCV.control} is attained.
	\end{proposition}
	\begin{proof}[Sketch of proof]
	Let $(\fp B^k,\alpha^k)_k$ be a minimizing sequence for \eqref{eq:MCV.control} (in particular, $\alpha^k\in\mathcal{A}(\fp B^k)$), and set $	\fp Y^k := ( \Omega^{\fp B^k}, \F^{\fp B^k}, \P^{\fp B^k}, (\F_t^{\fp B^k})_{t = 1}^N, (B^k,\alpha^k))$.
	By Theorem \ref{thm:rel.compact}, the set $\{\fp Y^k : k\in\N\}$ is relatively compact, hence  there exists 
	\[
		\fp Y = \left( \Omega, \F,\P,(\F_t)_{t = 1}^N, ( B, \alpha) \right)
	\in\FFP_p\]
	 such that (potentially after passing to a subsequence) $\AWA_p(\fp Y^k,\fp Y)\to 0$.
	 It is straightforward to verify that $\fp B =( \Omega, \F,\P,(\F_t)_{t = 1}^N, B)$ is a Brownian motion and that $\alpha\in\mathcal{A}(\fp B)$.
	 Further, since each $G_t$ is continuous,
	\[ 
		(X^{\fp B^k, \alpha^{k}},{\alpha^{k}},(\Law(X^{\fp B^k, \alpha^k} | \F_t^{\fp B^k}))_{t = 1}^N) 
		\to (X^{\fp B, \alpha},  \alpha, (\Law(X^{\fp B, \alpha} | \F_t))_{t = 1}^N) \quad \text{in distribution},
	\]
	as $k\to \infty$.
	The claim now readily follows form continuity of $J$.
	\end{proof}

\subsection{Weak optimal transport}

 Motivated by applications to functional inequalities, Gozlan et.\ al.\ \cite{GoRoSaTe14} introduced the  \emph{weak optimal transport problem}, which extends  classical transport  to `non-linear'  costs $c\colon\R^d\times\mathcal{P}(\R^d)\to[0,\infty)$. Given marginal distributions $\mu_1, \mu_2$ on $\R^d$, the task is  \begin{align}\label{eq:WOT}\tag{\text{WOT}}  \text{minimize} \quad  \E[ c(X_1,\mathcal{L}(X_2|X_1))] \quad \text{over couplings $(X_1,X_2)$ s.t.\ $\Law(X_1)=\mu_1$, $\Law(X_2)=\mu_2$}.
\end{align}
 Weak transport   preserves enough structure from the classical case  to allow for a useful theory while being sufficiently general to  capture many  problems that lie outside the scope of  transport theory, see \cite{BaPa20} for an overview. Cornerstone results in weak optimal transport (existence of optimizers, duality, geometric characterization of optimal couplings,  stability in the data) were shown in the generality of classical transport  only in \cite{BaBePa19, AcBePa20, BeJoMaPa21b}, relying on  two-period results of the current article and the two period version of Theorem \ref{thm:compact}. In analogy to classical transport, the key idea is to relax \eqref{eq:WOT}
 and consider 
 \begin{align}\label{eq:RWOT}
\inf \big\{ \E[ c(X_1,\mathcal{L}(X_2|\F_1))] : \text{$\fp X\in \FP$ satisfies $\Law(X_1)= \mu_1$, $\Law(X_2)=\mu_2$} \big\}.
 \end{align}
 By Theorem \ref{thm:compact} this is an optimization over a compact set which of course admits minimizers under the usual assumption of lower semi-continuity. Indeed \eqref{eq:RWOT} can be viewed as a classical linear optimization problem over probabilities on $\Z_1$.
 
In several  weak transport problems it is natural to consider not just two marginal constraints, but arbitrarily many (relaxed martingale transport \cite{GuOb19}, robust pricing of VIX-futures \cite{GuMeNu17},  model-independence in fixed income markets following \cite{AcBePa20}, multi-marginal Skorokhod embedding  \cite{CoObTo15, BeCoHu16}). In view of this, we propose the $N$-marginal weak transport problem
\begin{align}\label{eqRWOT}
\inf \big\{ \E[ c(\ip (\fp X))] : \text{ $\fp X\in \FP$ satisfies   $\Law(X_1)=\mu_1$,\ldots ,$\Law(X_N)=\mu_N$} \big\}.
 \end{align}
 As above this is an optimization problem over a compact set, which corresponds to a linear optimization problem of probabilities on $\Z_1$, susceptible to classical convex analysis.

\subsection{$\AW_p$-barycenters}

The $\AW_p$-barycenter of two stochastic processes $\fp Z^0, \fp Z^1 \in \FFP_p$ is the minimizer of
\[
	\inf_{\fp X \in \FFP_p} \frac{1}{2} \left( \AW_p^p(\fp Z^0, \fp X) + \AW_p^p(\fp Z^1,\fp X) \right).
\]
Clearly, this minimization problem is attained by the connecting constant speed geodesic at time $1/2$ (which exist thanks to Theorem \ref{thm:geodesic}).
More generally, one can ask whether a distribution on the stochastic processes $\gamma \in \Pc_p(\FFP_p)$ has an $\AW_p$-barycenter $\fp X^\ast \in \FFP_p$, that is, a minimizer of
\begin{equation}
	\label{eq:def barycenter}
	\inf_{\fp X \in \FFP_p} \int \AW_p^p(\fp Z, \fp X) \, \gamma(d\fp Z).
\end{equation}

\begin{theorem}
	\label{cor:barycenter attainment}
	Let $\gamma \in \Pc_p(\FFP_p)$.
	Then there exists $\fp X^\ast \in \FFP_p$ minimizing \eqref{eq:def barycenter}.
\end{theorem}
\begin{proof}
	Corollary \ref{cor:weak lsc} implies that  $(\fp X, \fp Z) \mapsto \AW_p(\fp X,\fp Z)$ is  lower semicontinuity w.r.t.\ the weak adapted topology.
	Hence, it follows that 
	\[
		F(\fp X) := \int \AW_p^p(\fp Z,\fp X) \, \gamma(d\fp Z),	
	\]
	is lower semicontinuity on $\FFP_p$ w.r.t.\ the weak adapted topology.
	Since $F(\fp X) < \infty$ for $\fp X \in \FFP$ if and only if $\fp X \in \FFP_p$, it suffices to show relative compactness of the sublevel set
	\begin{equation}
		\label{eq:sublevel set}
		\Big\{ \fp X \in \FFP \colon F(\fp X) \le \inf_{\fp Y \in \FFP} F(\fp Y) + \varepsilon \Big\},
	\end{equation}
	where $\varepsilon > 0$, in the weak adapted topology.
	Note that the moments of $\fp X \in \FFP_p$ can be controlled as follows:
	\begin{equation} \label{eq:moment control}
	\sum_{t=1}^N\E \left[ |X_t|^p \right] 
		= \AW_p^p(\fp X, 0) \le 2^{p-1}\! \int\! \AW_p^p(\fp Z, \fp X) \!+\! \AW_p^p(0, \fp Z) \, \gamma(dz) = 2^{p-1} \left( F(\fp X) \!+\! F(0) \right).
	\end{equation}
	By Theorem \ref{thm:rel.compact}, relative compactness of the set \eqref{eq:sublevel set} is equivalent to tightness of the laws.
	Tightness of \eqref{eq:sublevel set} follows from standard arguments since the $p$-moments are uniformly bounded by \eqref{eq:moment control} and $\{ x \in \R^d \colon |x| \le K \}$ is compact for $K > 0$.
\end{proof}

\subsection{The Doob-decomposition} \label{ssec:Doob-decomposition}
	Our final example deals with continuity of the Doob-decomposition.
	Recall that the Doob-decomposition ${\fp D}^{\fp X}$ of a filtered process $\fp X$ is given by
	\[ {\fp D}^{\fp X}:= (\Omega^{\fp X}, \mathcal{F}^{\fp X}, (\mathcal{F}^{\fp X}_t)_{t=1}^N, P^{\fp X},  (M_t,A_t)_{t = 1}^N), \]
 where $M+A=X$ is the unique decomposition of $X$ such that $A_1=0$,  $M=M^{\fp X}$ is a martingale and $A=A^{\fp X}$ is  $(\F^{\fp X}_t)_{t=1}^N$-predictable. (Of course, $X$ is a sub-martingale if and only if the process $A$ is increasing.)

\begin{proposition}
\label{prop:doob}
	The following chain of inequalities holds\footnote{The processes $(M,A)$ takes values in $\mathbb{R}^d\times\mathbb{R}^d$ which we endow with the norm $|(x,y)|_p^p :=|x|^p + |y|^p$.}
	\begin{equation} \label{eq:doob inequality}
		2^\frac{1 - p}{p} \AW_p(\fp X,\fp Y) \leq \AW_p( {\fp D}^{\fp X},{\fp D}^{\fp Y}) \leq c \cdot \AW_p(\fp X,\fp Y)
	\end{equation}
	for all $\fp X, \fp Y\in \FFP_p$, where $c = c(p,N)$ is a constant depending only on $p$ and $N$.
\end{proposition}

	Recall that the predictable process $A^\fp X$ of the Doob-decomposition of $\fp X\in \FFP_p$ is given by
	\begin{align*}
		A^{\fp X}_t:=\sum_{s=1}^{t-1} \E[X_s-X_{s-1}|\F^{\fp X}_s]
	\end{align*}
	for $2\leq t\leq N$ and $M^{\fp X}:= X-A^{\fp X}$.
	Thus, $A_t$ can be viewed as an adapted function of rank 1, that is $A_t\in{\AF}[1]$ and similarly $M_t\in{\AF}[1]$. 
	Building upon this observation, it is not hard to deduce $\AW_p$-continuity of $\fp X\mapsto \fp D^{\fp X}$.

\begin{proof}[Proof of Proposition \ref{prop:doob}]
	Fix $\fp X,\fp Y\in \FFP_p$ and note that, as the filtration of a filtered process and its Doob-decomposition coincide by definition, we have that
	\[ \cplba(\fp X,\fp Y)=\cplba(\fp D^{\fp X},\fp D^{\fp Y}).\]
	The first inequality in \eqref{eq:doob inequality} is immediate from Jensen's inequality.
	The triangle inequality together with Jensen's inequality show 
	\begin{equation} \label{eq:doob.1}
		|(M^{\fp X} - M^\fp Y,A^{\fp X} - A^\fp Y)|_p^p \leq 2^{p-1} |X- Y|^p + (2^{p-1}+1) |A^\fp X - A^\fp Y|^p.
	\end{equation}
	By definition of $A^\fp X$ and $A^\fp Y$ we have for $2 \le t \le N$
	\[ 
		A^{\fp X}_t - A^{\fp Y}_t 
		= \sum_{s=2}^{t} \E_\pi\left[X_s-X_{s-1}-(Y_s-Y_{s-1}) \middle|\F^{\fp X,\fp Y}_{s,s}\right].
	\]
	Again, Jensen's inequality implies 
	\begin{align} \nonumber
		\E_\pi[ |A^{\fp X}  - A^{\fp Y}|^p]^\frac{1}{p} &\leq \Big( \sum_{t=2}^N t^{p-1} \sum_{s=2}^{t} \E_\pi[ |X_s-X_{s-1}-(Y_s-Y_{s-1})|^p] \Big)^\frac{1}{p} \\  \label{eq:doob.2}
		&\leq \big( N 2^p N^{p-1} \E_\pi[ |X - Y|^p] \big)^\frac{1}{p}
	\end{align}
	for every $\pi\in \cplba(\fp X,\fp Y)$.
	In conclusion, suitably combining \eqref{eq:doob.1} and \eqref{eq:doob.2} leads to the second inequality in \eqref{eq:doob inequality}.
\end{proof}

\subsection{Numerical and statistical aspects}\label{ssec:numerics}
In view of applications, the numerical computation of the adapted Wasserstein distance is of crucial importance and has been recently studied in \cite{PiWe21, EcPa22, BaHa23}.
In fact, these papers consider more general adapted transport problems between  stochastic processes $\fp X$ and $\fp Y$ of the form
\begin{equation}
    \label{eq:adapted.transport.problem}
    \inf_{ \pi \in \cplbc(\fp X, \fp Y) } \mathbb E_\pi \left[ c(X,Y) \right],
\end{equation}
where $c$ is a continuous function.
In order to briefly explain the methodology, suppose  that the processes $\fp X$ and $\fp Y$ are both discrete, i.e.\ plain and only take finitely many values.
(We remark that for general processes $\fp X$ and $\fp Y$ that are not necessarily discrete, one may replace them first by discrete approximations e.g.\ as in  Proposition \ref{prop:markov.dense}).
In this case, solving \eqref{eq:adapted.transport.problem} is equivalent to solving a linear program (see e.g.\  \cite[Section 8]{PfPi12} and \cite[Section 3.4]{EcPa22} for more details).
The recursive structure of bicausal couplings allows to solve the linear program via a dynamic programming principle (DPP) involving one-step classical optimal transport problems (see \cite[Chapter 2.10.3]{PfPi14}).
In the current setting, the value functions $V_t \colon \Z_{1:t} \times \Z_{1:t} \to \R$ are recursively given by
\begin{align*}
	V_N(\hat z_{1:N}, \check z_{1:N} ) := c(\hat z_{1:N}^-, \check z_{1:N}^- ), \quad V_t(\hat z_{1:t}, \check z_{1:t} ) := \inf_{\pi \in \cpl(\hat z^+_t,\check z_t^+)} \E_\pi[V_{t + 1}[\hat z_{1:t}, \cdot, \check z_{1:t}^-, \cdot]],
\end{align*}
for $t = N-1,\ldots, 0$ with the convention that $\hat z_0^+ := \Law( \ip_1(\fp X))$ and $\check z_0^+ := \Law(\ip_1(\fp Y))$.
Then $V_0$ is precisely the optimal value \eqref{eq:adapted.transport.problem} and the values of $V_t$ can be computed efficiently e.g.\ via Sinkhorn's algorithm (see \cite{PiWe21}).
Based on the DPP formulation, a fitted value iteration method is proposed in \cite{BaHa23}.
There the value function of the DPP is iteratively empirically estimated and then approximated by neural networks that minimize an empirical loss.
Another recent approach builds on establishing a bicausal version of the Sinkhorn algorithm (see \cite{EcPa22}).
For comprehensive comparisons of these methods, we refer to \cite{EcPa22} and \cite{BaHa23}.
Furthermore, in \cite{XuLiMuAc20}, a minimax reformulation is proposed where the infimum is taken over all couplings and the supremum over an adequate class of functions penalizing the causality constraint. Both families of functions are then parametrized by neural networks and trained by an adversarial algorithm. In a continuous time framework, \cite{BiTa19} establishes a Hamilton-Jacobi-Bellman equation for the value function.

The estimation of processes w.r.t.\ $\mathcal{AW}_p$ from statistical data was studied in \cite{BaBaBeWi20} under the assumption of bounded values and later extended in \cite{AcHo22} (see also  \cite{PfPi16, GlPfPi17}).
Roughly put, while the classical empirical measure does not converge to its population counterpart in the weak adapted topology (nor do e.g.\ the values of the corresponding optimal stopping problems) one can  construct a \emph{modified empirical measure} (based on clustering ideas) which does converge in the weak adapted topology.
In fact, the rate of convergence of that estimator (w.r.t.\ $\AW_p$) is similar to the (optimal) rate of convergence of the  classical empirical measure (w.r.t.\ $\mathcal{W}_p$).

\section{An important example}
\label{sec:two.processes}

Fix $N\geq 2 $ and recall that, by Theorem \ref{thm:information.process.AF}, the relation $\sim_{N-1}$ is equal to the relation $\sim_\infty$, which again coincides with the relation induced by $\AW_p$.

A natural question left open in Section \ref{sec:AF.PP} and Section \ref{sec:aspects} is what rank of the adapted distribution is necessary to govern probabilistic properties of stochastic processes.
For instance, we have seen in Example \ref{ex:martingales.opt.stop} that the adapted distribution of rank 1 suffices for preserving the notion of being a martingale.
However, we will see that rank 1 equivalence is not sufficient in general.
Rather Theorem \ref{thm:information.process.AF} is sharp: $\sim_{N-2}$ does not imply $\sim_{N-1}$. 
In fact, we show that there exist filtered processes that are $\sim_{N-2}$ equivalent but lead to different values for an optimal stopping problem. 

\begin{theorem}
\label{thm:stopping.rank} Let $N\geq 2$. 
	There exist $\fp{X},\fp{Y}\in \FFP_p$ with $\fp X \sim_{N-2} \fp Y$  and a bounded, continuous non-anticipative function $c\colon\mathcal{X}\times\{1,\dots,N\}\to \mathbb{R}$ such that $v_c(\fp X) \neq v_c(\fp Y)$ (compare \eqref{eq:def.opt.stop}  for the definition of $v_c$).
\end{theorem}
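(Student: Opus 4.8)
The plan is to exhibit explicit \emph{finite-state} filtered processes $\fp X,\fp Y$ together with a cost $c$, and to verify the two assertions with the machinery already developed. By Theorem~\ref{thm:information.process.AF} and Lemma~\ref{lem:rank.adapted.functions.prediction.process.same}, checking $\fp X\sim_{N-2}\fp Y$ amounts to checking that $\pp^{N-2}(\fp X)$ and $\pp^{N-2}(\fp Y)$ have the same law, while the value $v_c$ is computed through the Snell-envelope backward recursion recalled in Example~\ref{ex:martingales.opt.stop}. It suffices to arrange $v_c(\fp X)\neq v_c(\fp Y)$: since $v_c=\E[S_1]$ with $S_1$ the value of an adapted function of rank $N-1$ (Example~\ref{ex:martingales.opt.stop}), this inequality automatically forces $\fp X\not\sim_{N-1}\fp Y$, so the same construction also yields the $n=N-1$ instance of Proposition~\ref{prop:process.different.rank}.

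I would first settle $N=2$, where $\sim_{N-2}=\sim_0$ is equality of the path law. Put $\Omega=\{\omega_-,\omega_+\}$ with the uniform measure, $X_1\equiv0$, $X_2(\omega_\pm)=\pm1$, and let $\fp X$ carry $\F_1^{\fp X}=\{\emptyset,\Omega\}$, $\F_2^{\fp X}=2^{\Omega}$, while $\fp Y$ carries the full filtration $\F_1^{\fp Y}=\F_2^{\fp Y}=2^{\Omega}$ on the \emph{same} path; then $\Law(X)=\Law(Y)$, so $\fp X\sim_0\fp Y$. Take $c_1\equiv0$ and $c_2(x_1,x_2):=-\bigl((x_2\wedge1)\vee(-1)\bigr)$, which is non-anticipative, bounded and continuous. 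As $\F_1^{\fp X}$ is trivial, the only $\fp X$-stopping times are $\tau\equiv1$ and $\tau\equiv2$, both of value $0$, so $v_c(\fp X)=0$; for $\fp Y$ the filtration resolves $X_2$ already at time $1$, so the admissible rule ``$\tau=1$ on $\{\omega_-\}$, $\tau=2$ on $\{\omega_+\}$'' has value $\tfrac12\bigl(0+(-1)\bigr)=-\tfrac12$, whence $v_c(\fp Y)\le-\tfrac12<0=v_c(\fp X)$.

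For general $N$, the plan is to build --- by induction on the time horizon --- processes $\fp X=\fp X^{(N)}$, $\fp Y=\fp Y^{(N)}$ whose distinguishing informational feature sits at rank \emph{exactly} $N-1$. The inductive step should adjoin a new initial time step at which both processes reveal the same finite family of conditional laws, after which they run (conditionally) copies of a horizon-$(N-1)$ witness $\fp A\sim_{N-3}\fp B$ with $v_{c'}(\fp A)\neq v_{c'}(\fp B)$; the two processes must, however, couple the data revealed at the new step to the internal randomness of the $(N-1)$-block in genuinely different ways, so that the discrepancy is invisible to any rank-$(N-2)$ prediction but resurfaces after one further conditioning layer. (A naive ``prepend an independent coin'' does \emph{not} work: such a discrepancy would already be seen by $\pp^1$, hence by $\pp^{N-2}$ as soon as $N\geq3$.) The equality $\pp^{N-2}(\fp X)\stackrel{d}{=}\pp^{N-2}(\fp Y)$ is then read off from the recursive definition $\pp^{n}_t=\Law(\pp^{n-1}\mid\F_t)$ and the inductive hypothesis, and for the cost one takes $c'$ with its clock delayed by one step and the option to stop for $0$ added at the new initial time; the Snell recursion of Example~\ref{ex:martingales.opt.stop} then expresses $v_c(\fp X)$ and $v_c(\fp Y)$ in terms of $v_{c'}$ on the conditional $(N-1)$-blocks plus an average over the revealed time-$1$ data, and one arranges that the separation $v_{c'}(\fp A)\neq v_{c'}(\fp B)$ survives this average. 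Continuity and boundedness of $c$ are free since the state spaces are finite (extend by a Tietze/McShane extension if some $\X_t$ is larger), and non-anticipativity is built in.

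The main obstacle is the design of the inductive step: one must place the asymmetry at rank \emph{precisely} $N-1$ --- low enough that $\pp^{N-2}(\fp X)$ and $\pp^{N-2}(\fp Y)$ genuinely coincide, which forces the data injected at the new step to be, in a suitable sense, orthogonal to the path and to all conditional laws of order $\le N-2$, yet not so degenerate that it is averaged away before the stopping rule can react to it. Reconciling these two requirements, and then carrying out the (routine but lengthy) bookkeeping for the prediction processes and the nested Snell envelopes in the finite model, is the technical heart of the argument.
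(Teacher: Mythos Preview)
Your $N=2$ base case is correct and is essentially the paper's own construction (only the specific cost differs).  The gap is the inductive step: you describe \emph{what} the step must achieve but do not provide a construction, and you explicitly flag this as ``the technical heart of the argument'' without carrying it out.  As written this is a plan, not a proof.

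More specifically, your dismissal of ``prepend an independent coin'' is premature.  The paper's inductive step \emph{does} prepend an independent coin $V$ at time~$1$, but the key idea is to let $V$ select between the two \emph{filtrations} rather than between two paths: one sets $X^{\mathrm n}=Y^{\mathrm n}=(0,X)$ and, for $t\geq 2$, the common filtration
\[
\F^{\fp X^{\mathrm n}}_t=\F^{\fp Y^{\mathrm n}}_t=\{(A\cap\{V=0\})\cup(B\cap\{V=1\}):A\in\F^{\fp X}_{t-1},\,B\in\F^{\fp Y}_{t-1}\},
\]
while at $t=1$ the process $\fp X^{\mathrm n}$ has the trivial $\sigma$-field and $\fp Y^{\mathrm n}$ has $\sigma(V)$.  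What makes this work is that the inductive hypothesis is the \emph{pointwise} identity $f(\fp X)=f(\fp Y)$ for all $f\in\AF[N-3]$ (Proposition~\ref{prop:process.different.rank}\ref{it:process.different.rank1}), not merely $\fp X\sim_{N-3}\fp Y$ in law; this pointwise equality is exactly what allows one to show $f(\fp X^{\mathrm n})=f(\fp X)1_{\{V=0\}}+f(\fp Y)1_{\{V=1\}}=f(\fp Y^{\mathrm n})$ for $f\in\AF[N-2]$ and hence $\fp X^{\mathrm n}\sim_{N-2}\fp Y^{\mathrm n}$.  Your proposed inductive hypothesis ``$\fp A\sim_{N-3}\fp B$ with $v_{c'}(\fp A)\neq v_{c'}(\fp B)$'' is too weak to run this argument, and without the filtration-mixing trick it is unclear how your vague ``couple the time-$1$ data to the internal randomness in genuinely different ways'' could be realised.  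The cost is then built as $c^{\mathrm n}_1=(v_c(\fp X)+v_c(\fp Y))/2$ and $c^{\mathrm n}_t=c_{t-1}$ for $t\geq 2$; the Snell recursion and a decomposition of stopping times along $V$ yield $v_{c^{\mathrm n}}(\fp X^{\mathrm n})=c_1^{\mathrm n}>v_{c^{\mathrm n}}(\fp Y^{\mathrm n})$.
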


In particular, using Proposition \ref{lem:stopping.lipschitz} and Theorem \ref{thm:information.process.AF}, this implies that $\fp X \sim_{N-1} \fp Y$.
More generally, we will show in the following that as long as $k\leq N-1$, the relation $\sim_k$ strictly refines the relation $\sim_{k-1}$.

\begin{proposition}
\label{prop:process.different.rank}
	Let $N\geq 2$ and let $1\leq k \leq N-1$.
	Then there are $\fp X, \fp Y \in \FFP_p$ with representatives defined on the same filtered probability space such that
	\begin{enumerate}[label=(\roman*)]
	\item \label{it:process.different.rank1} $f(\fp{X})=f(\fp{Y})$  for every $f\in\AF[k-1]$,
	\item \label{it:process.different.rank2} $\E[f(\fp{X})]\neq \E[f(\fp{Y})]$ for some  $f\in\AF[k]$.
	\end{enumerate}
\end{proposition}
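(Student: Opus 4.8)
The plan is to build, for each rank $k$, a pair of filtered processes that differ only in a "hidden randomization" that is revealed one time-step earlier in one process than the other, and whose effect on expectations is visible only to adapted functions of rank exactly $k$. The cleanest approach is to induct on $k$, or — equivalently and perhaps more transparently — to first solve the base case $k=1$ by hand with an explicit finite example, and then lift it to general $1 \le k \le N-1$ by "padding" with $k-1$ deterministic initial time steps that force any adapted function of rank $< k$ to collapse.

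For the base case $k=1$ (which requires $N \ge 2$), I would take the classical example distinguishing $\sim_0$ from $\sim_1$: on a common finite probability space carrying a fair coin $\xi$ and an independent fair coin $\eta$, let both processes have $X_1 = Y_1 = 0$ and terminal value $X_2 = Y_2 = \xi$, but equip $\fp X$ with the filtration $\F_1^\fp X = \sigma(\xi)$ (so the value is already known at time $1$) and $\fp Y$ with $\F_1^\fp Y = \sigma(\eta)$ (so at time $1$ nothing about $\xi$ is known). Then $f(\fp X) = f(\fp Y)$ for every $f \in \AF[0] = C_b(\X)$ since the path laws agree, giving \ref{it:process.different.rank1}. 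For \ref{it:process.different.rank2}, the rank-$1$ adapted function $f := \bigl(\,|\cdot|,\ (\proj_2 \mid 1)\,\bigr)$ evaluates to $\E[\,|\E[X_2 \mid \F_1^\fp X]|\,] = \E[|\xi|] = 1$ on $\fp X$ but to $\E[\,|\E[Y_2 \mid \F_1^\fp Y]|\,] = |\E[\xi]| = 0$ on $\fp Y$; alternatively one checks $\fp X$ is not a martingale while $\fp Y$ is, as in Example \ref{ex:martingales.opt.stop}\ref{it:ex.martingale}. (If one wants continuous rather than finitely-supported processes, replace $\pm 1$ by a continuous symmetric distribution; the same computation goes through.)

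For the inductive/padding step, given a rank-$1$ example $(\fp X, \fp Y)$ on two time steps as above, I would embed it into $N$ time steps by prefixing $k-1$ trivial steps: set $X'_t = Y'_t = 0$ and $\F_t' = \{\emptyset,\Omega\}$ for $1 \le t \le k-1$, then run the two-step example on times $k, k+1$, and continue constantly (or trivially) afterward. The point is a rank-bookkeeping lemma: any $f \in \AF[k-1]$ evaluated on such a process only ever conditions on the filtrations at times $\le k-1$ (which are trivial) before it has exhausted its rank budget, so its value reduces to a function of the path alone, and the paths of $\fp X'$ and $\fp Y'$ have the same law; hence $f(\fp X') = f(\fp Y')$. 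Meanwhile the rank-$k$ adapted function obtained by applying the rank-$1$ separating function "at time $k$" and then conditioning trivially still separates $\fp X'$ from $\fp Y'$ in expectation. This rank-accounting is where the real care is needed — one must phrase it using the representation of $\AF[k-1]$ from Lemma \ref{lem:rep.AF} and argue by induction on the depth that conditioning against a trivial $\sigma$-algebra is the identity, so that every occurrence of $(g \mid t)$ with $t \le k-1$ can be erased, leaving a rank-$0$ (i.e.\ path-space) function; the subtlety is that the conditionings in an element of $\AF[k-1]$ need not all be at the first time step, so one handles the general case by noting that any conditioning at a later time $t \ge k$ still leaves a function of rank $\le k-2$ inside, which by the inductive hypothesis already agrees on $\fp X'$ and $\fp Y'$, and conditional expectations of $\pi$-a.s.-equal quantities agree.

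The main obstacle I expect is precisely this rank-counting argument for \ref{it:process.different.rank1}: making rigorous the intuition that "an adapted function of rank $k-1$ cannot detect information that first becomes usable at the $k$-th conditioning" requires either a careful structural induction using Lemma \ref{lem:rep.AF} (tracking both rank and depth, as in its proof), or — cleaner — translating everything through the prediction process via Lemma \ref{lem:rank.adapted.functions.prediction.process.same} and showing $\pp^{k-1}(\fp X')$ and $\pp^{k-1}(\fp Y')$ have the same law, which one can do by checking inductively that $\pp^{j}(\fp X')$ and $\pp^{j}(\fp Y')$ agree for $j \le k-1$ because the first $k-1$ filtration steps are trivial and the two-step example only becomes $\pp$-detectable at level $1$ of \emph{its} clock, i.e.\ at overall level $k$. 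Everything else — the explicit optimal stopping functional $c$ for Theorem \ref{thm:stopping.rank}, which is just an indicator-type payoff at time $N$ composed with a stopping decision at time $N-1$ that exploits the one-step-earlier information — is then a routine specialization of the $k = N-1$ case together with Proposition \ref{lem:stopping.lipschitz}.
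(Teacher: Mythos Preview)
Your padding construction does not work, and the error is in the rank-bookkeeping step. You claim that ``any $f\in\AF[k-1]$ evaluated on such a process only ever conditions on the filtrations at times $\le k-1$'', but rank counts the \emph{nesting depth} of conditional expectations, not the \emph{time index} at which one conditions. A rank-one function is perfectly free to condition at time $k$. Concretely, take $k=2$ in your construction: $X'=Y'=(0,0,\xi)$ with $\F_1$ trivial for both, $\F_2^{\fp X'}=\sigma(\xi)$, $\F_2^{\fp Y'}=\sigma(\eta)$. The rank-one function $f:=\bigl(\,|\cdot|,\ (\proj_3\mid 2)\,\bigr)$ gives $f(\fp X')=|\E[\xi\mid\sigma(\xi)]|=|\xi|=1$ but $f(\fp Y')=|\E[\xi\mid\sigma(\eta)]|=0$, so $\fp X'\not\sim_1\fp Y'$; your construction still separates at rank one, not at rank $k$. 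Your attempted rescue---``conditional expectations of $\pi$-a.s.-equal quantities agree''---silently assumes $\F_t^{\fp X'}=\F_t^{\fp Y'}$, which fails precisely at $t=k$.

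The paper's inductive step is genuinely more subtle and cannot be replaced by padding. Given a pair $\fp X,\fp Y$ with $\fp X\sim_{N-3}\fp Y$ but $\fp X\not\sim_{N-2}\fp Y$, one introduces an independent fair coin $V$ and defines a \emph{mixture filtration} at times $t\ge 2$, namely $\F_t^{\fp X^{\mathrm n}}=\F_t^{\fp Y^{\mathrm n}}=\{(A\cap\{V=0\})\cup(B\cap\{V=1\}):A\in\mathcal G_t^{\fp X},\,B\in\mathcal G_t^{\fp Y}\}$, while at time $1$ one sets $\F_1^{\fp X^{\mathrm n}}$ trivial and $\F_1^{\fp Y^{\mathrm n}}=\sigma(V)$. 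The point is that the filtrations of $\fp X^{\mathrm n}$ and $\fp Y^{\mathrm n}$ \emph{coincide} for $t\ge 2$, so any conditioning at $t\ge 2$ genuinely acts the same way on both; one then shows inductively that $f(\fp X^{\mathrm n})=f(\fp X)1_{\{V=0\}}+f(\fp Y)1_{\{V=1\}}=f(\fp Y^{\mathrm n})$ for $f\in\AF[N-2]$, which uses the old $\sim_{N-3}$ hypothesis. The extra coin $V$ is what pushes the distinguishing rank up by one: detecting the difference now requires first conditioning on $V$ (which only $\fp Y^{\mathrm n}$ can do at time $1$) and \emph{then} running a rank-$(N-2)$ function to separate $\fp X$ from $\fp Y$. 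Simple time-shifting cannot reproduce this effect.
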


The processes $\fp X$ and $\fp Y$ will be constructed on a common probability space consisting of $2^{N-1}$ atoms, satisfy $X=Y$ and $X_t=Y_t=0$ for $t\leq N-1$, and only their respective filtrations will differ.

\subsection{Proof of Proposition \ref{prop:process.different.rank}}

This section is devoted to the proof of Proposition \ref{prop:process.different.rank} which will then be used to establish Theorem \ref{thm:stopping.rank}.
In fact, we shall first concentrate on Proposition \ref{prop:process.different.rank} with $k=N-1$, and later conclude for general $k$ via a simple argument.

The construction of $\fp X $ and $\fp{Y}$ is recursive, and we shall start with $N=2$.
Consider the probability space consisting of two elements, say $\{0,1\}$, with the uniform measure, and let $U$ be the identity map, that is, $\P(U = 1) = \P(U = 0)=1/2$.
Now define the filtered processes $\fp X$ and $\fp Y$ via
\begin{align*}
   X:=Y:=(0,U), \quad 
   \F_1^\fp{X}:=\{\emptyset,\Omega\} \text{ and } \F_1^\fp{Y}:=\F_2^\fp{Y}:=\F_2^\fp{X}:=\sigma(U)
\end{align*}
Then the following holds.

\begin{lemma}
	$\fp{X}$ and $\fp{Y}$ satisfy the claim in Proposition \ref{prop:process.different.rank} for $N=2$.
\end{lemma}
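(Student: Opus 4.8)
The plan is to verify the two items of Proposition \ref{prop:process.different.rank} separately, both by direct computation, using that $X=Y$ holds as an identity of random variables on $\Omega=\{0,1\}$ and that the \emph{only} difference between $\fp X$ and $\fp Y$ is the $\sigma$-algebra at time $1$ (trivial for $\fp X$, all of $\sigma(U)$ for $\fp Y$). For item \ref{it:process.different.rank1}: by Lemma \ref{lem:rep.AF} \ref{it:rep.AF1}, any $f\in\AF[0]$ has a representation through some $F\in C_b(\X)$, so that $f(\fp X)=F(X)$ and $f(\fp Y)=F(Y)$. Since $X$ and $Y$ are literally the same map $\Omega\to\X$, we get $f(\fp X)=f(\fp Y)$ pointwise on $\Omega$, hence in particular $\E[f(\fp X)]=\E[f(\fp Y)]$.

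For item \ref{it:process.different.rank2} the guiding idea is that a bare conditional expectation is washed out by taking expectations (as $\E[\E[\,\cdot\mid\F_1]]=\E[\,\cdot\,]$ and $X=Y$), so one must post-compose with a strictly convex function. I would take $\Phi\colon\X\to\R$, $\Phi(x_1,x_2):=x_2$, which is admissible under \ref{it:AF1} because $X_2=U$ takes values in the bounded set $\{0,1\}$ (if one insists on the letter of Definition \ref{def:AF}, replace $\Phi$ by a bounded continuous function agreeing with it on the range of $X_2$, or appeal to Remark \ref{rem:AF.lipschitz.Borel}), together with $\varphi(y):=\min(y^2,1)$, which is continuous, bounded, and strictly convex on $[0,1]$. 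Then $(\Phi|1)\in\AF$ by \ref{it:AF3} (with $g=\Phi\in\AF[0]$), and $f:=(\varphi,(\Phi|1))\in\AF$ by \ref{it:AF2}, and $f$ has rank $1$. Since $\F_1^{\fp X}$ is trivial while $U$ is $\F_1^{\fp Y}$-measurable,
\[
f(\fp X)=\varphi\bigl(\E[\Phi(X)\mid \F_1^{\fp X}]\bigr)=\varphi(\E[U])=\varphi(\tfrac12)=\tfrac14 \quad\text{(a constant)},\qquad f(\fp Y)=\varphi\bigl(\E[\Phi(Y)\mid\F_1^{\fp Y}]\bigr)=\varphi(U).
\]
Hence $\E[f(\fp X)]=\tfrac14$ and $\E[f(\fp Y)]=\tfrac12\varphi(0)+\tfrac12\varphi(1)=\tfrac12$, so $\E[f(\fp X)]\neq\E[f(\fp Y)]$; conceptually this is nothing but the strict Jensen inequality $\varphi(\tfrac12)<\tfrac12(\varphi(0)+\varphi(1))$.

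I do not expect any genuine obstacle in this lemma: the proof is a two-line computation. The only points deserving care are the boundedness requirement built into Definition \ref{def:AF} (handled by truncating $\varphi$, or by Remark \ref{rem:AF.lipschitz.Borel}) and the observation that the outer nonlinearity $\varphi$ is genuinely necessary — without it the conditional expectation would be invisible under $\E$ and the two values would coincide. This pair $(\fp X,\fp Y)$ will then serve as the base case $N=2$ of the recursion that produces the examples for general $N$ and $k$.
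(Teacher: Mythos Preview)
Your proof is correct and essentially identical to the paper's own argument: both use Lemma \ref{lem:rep.AF}\ref{it:rep.AF1} for item \ref{it:process.different.rank1}, and for item \ref{it:process.different.rank2} both exhibit the rank-$1$ adapted function $f=(\varphi,(g|1))$ with $\varphi(t)=t^2\wedge 1$ and $g$ a bounded continuous function equal to $x_2$ on the range of $X$, obtaining $\E[f(\fp X)]=\tfrac14\neq\tfrac12=\E[f(\fp Y)]$. Your discussion of the boundedness requirement and the need for a nonlinear outer function is apt and matches the paper's choices.
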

\begin{proof}
	Adapted functions of rank $0$ depend only on the process itself and not on the filtration, hence  $f(\fp{X})=f(\fp{Y})$ for all $f\in\AF[0]$.
	That is, part \ref{it:process.different.rank1} of Proposition \ref{prop:process.different.rank} is true.
	On the other hand, one computes
	\begin{align*}
	   \E\left[g(X_2)|\F_1^\fp{X}\right]=\E[g(U)],\quad \quad
	   \E\left[g(X_2)|\F_1^\fp{Y}\right]=g(U)
	\end{align*}
	for every bounded, measurable function $g\colon\mathbb{R}\to\mathbb{R}$.
	Now define $f\in\AF$ by
	\begin{align*} 
	f:=\varphi((g|1)), \quad
	\text{ where }
	\varphi=t\mapsto t^2\wedge 1 \text{ and }g:=x\mapsto (0 \vee x) \wedge 1
	\end{align*}
	so that $f$ has rank $1$.
	Then
	\begin{align*}
	f(\fp{X})=\E\left[X_2|\F_1^\fp{X}\right]^2=1/4, \quad \quad
	f(\fp{Y})=\E\left[X_2|\F_1^\fp{Y}\right]^2=X_2.
	\end{align*}
	As $\E[f(\fp{Y})]=1/2$, this proves the second part of Proposition \ref{prop:process.different.rank}.
\end{proof}

Now assume that, in a $(N-1)$-time step framework, we have constructed filtered processes $\fp{X}$ and $\fp{Y}$ which satisfy Proposition \ref{prop:process.different.rank} (where $N\geq 3$).
We shall construct new filtered processes $\fp{X}^\mathrm{n}$ and $\fp{Y}^\mathrm{n}$ in an $N$-time step framework for which the statement of the lemma remains true.
At this particular instance the superscript `$\mathrm{n}$' stands for `new'.
 
At this point, we enrich the filtered probability space of $\fp X$ and $\fp Y$ by an independent random variable $V$  (i.e.\ $V$ is independent of $\F_{N-1}^\fp{X}\vee\F_{N-1}^\fp{Y}$) in the following manner:
When $(\Omega,\F,\P)$  denotes the probability space of $\fp X$ and $\fp Y$, we consider the new probability space
\begin{equation} \label{eq:def prob space}
	\Big( \Omega^\mathrm{n} := \{0,1\} \times \Omega \,,\,
	\F^\mathrm{n}:= \mathcal B(\{0,1\}) \otimes \F \,,\,
	\P^\mathrm{n}	:= \frac{1}{2} \left( \delta_0 \otimes \P + \delta_1 \otimes \P \right) \Big).
\end{equation}
   We write $V \colon \Omega^\mathrm{n} \to \{0,1\}$ for the projection onto the first coordinate. 
   In other words, we can think of $V$ as a coin-flip which is independent of everything that was constructed so far.
   Further we view $(\mathcal{G}_t^\fp X)_{t = 1}^N:=(\F_{t-1}^\fp X)_{t  = 1}^{N-1}$ as a filtration on \eqref{eq:def prob space}; similarly for $(\mathcal{G}^{\fp Y}_t)_{t=1}^N$ (recall here that $\F_{0}^\fp X=\F_{0}^\fp Y$ are the trivial $\sigma$-algebras by convention).
   Then, by slight abuse of notation, we can view $\fp X$ and $\fp Y$ as processes defined on $\Omega^{\mathrm{n}}$, namely be identifying $\fp X$ with $(0,X)$ and filtration $(\mathcal{G}^{\fp X}_t)_{t=1}^N$ and similarly for $\fp Y$.
   By independence of $V$, this clearly does not affect any properties of the processes.
   Define the processes $\fp{X}^{\mathrm{n}}$ and $\fp{Y}^{\mathrm{n}}$ on the probability space \eqref{eq:def prob space} via 
\begin{align*}
   X^{\mathrm{n}}
   :=&Y^{\mathrm{n}}
   :=(0,X)\\
   \F^{\fp{X}^\mathrm{n}}_1
   :=&\{\emptyset,\Omega^{\mathrm{n}}\} 
   \text{ and } \F^{\fp{Y}^\mathrm{n}}_1:=\sigma(V),\\
   \F^{\fp{X}^\mathrm{n}}_t:=\F^{\fp{Y}^\mathrm{n}}_t
   :=&\left\{ (\{0\}\times A) \cup (\{1\}\times B): A\in\mathcal G_t^\fp{X}, B\in \mathcal G_t^\fp{Y} \right\} \\
   =&\left\{ \left(A\cap \{V = 0\} \right) \cup \left(B\cap \{V = 1\}\right) : A\in\mathcal G_t^\fp{X}, B\in \mathcal G_t^\fp{Y} \right\}
\end{align*}
for $2 \leq t\leq N$.
One can check that $(\F^{\fp{X}^\mathrm{n}}_t)_{t=1}^N$ and $(\F^{\fp{Y}^\mathrm{n}}_t)_{t=1}^N$ are indeed filtrations and that, for every $\P^\mathrm{n}$-integrable random variable $Z$ which is independent of $V$, we have that
\begin{align}
\label{eq:new.filtration.cond}
\begin{split}
\E\left[Z \middle| \F^{\fp{X}^\mathrm{n}}_t\right]
&=\E\left[Z \middle| \mathcal{G}^\fp{X}_t \right]1_{\{V = 0\}} + \E\left[Z \middle| \mathcal{G}^\fp{Y}_t \right]1_{\{V= 1\}} 
=\E\left[Z\middle|\F^{\fp{Y}^\mathrm{n}}_t\right]
\end{split}
\end{align}
for every $1 \le t \le N$.
We spare the elementary proof hereof.

\begin{lemma}
\label{lem:AF.AF.rank.N-3}
	For $0\leq k\leq N-3$ and $f \in \AF[k]$ we have
	\begin{equation}
	   \label{eq:rank.differnt.periods}
	   f(\fp{X}^\mathrm{n})=f(\fp{X})=f(\fp{Y})=f(\fp{Y}^\mathrm{n}) .
	 \end{equation}
	In particular, $\fp X^\mathrm{n} \sim_{N-3} \fp Y^\mathrm n$.
\end{lemma}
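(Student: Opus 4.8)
The plan is to prove \eqref{eq:rank.differnt.periods} by induction on the rank $k$ (running up to $N-3$) and then to read off $\fp X^{\mathrm{n}}\sim_{N-3}\fp Y^{\mathrm{n}}$ from Definition \ref{def:adapted.distribution}. For $k=0$, Lemma \ref{lem:rep.AF}\ref{it:rep.AF1} gives $f(\fp Z)=F(Z)$ for some $F\in C_b(\X)$, and since the processes underlying $\fp X^{\mathrm{n}}$, $\fp Y^{\mathrm{n}}$ and (on $\Omega^{\mathrm{n}}$) $\fp X$, $\fp Y$ are all equal to $(0,X)$ — using $X=Y$ from the $(N-1)$-step construction — every one of the four values equals $F(0,X)$.

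For the inductive step fix $1\le k\le N-3$, $f\in\AF[k]$, and assume the statement for rank $\le k-1$. Using Lemma \ref{lem:rep.AF}\ref{it:rep.AF2}, write $f(\fp Z)=F\big((\E[\vec g_t(\fp Z)\mid\F_t^{\fp Z}])_{t=1}^N\big)$ with $F$ continuous bounded and $\vec g_t$ vectors of $\AF[k-1]$-functions. The induction hypothesis (applicable since $k-1\le N-4$) identifies $\vec g_t(\fp X^{\mathrm{n}})=\vec g_t(\fp X)=\vec g_t(\fp Y)=\vec g_t(\fp Y^{\mathrm{n}})=:\vec h_t$. Moreover $\vec h_t$ is independent of the coin $V$: the process and filtration of $\fp X$ on $\Omega^{\mathrm{n}}$ are pulled back from $\Omega$ along the projection (with a leading $0$ prepended to the path), so any adapted function evaluated on $\fp X$ is measurable with respect to that projection, hence $V$-independent.

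The heart of the argument is the identity $\E[\vec h_t\mid\mathcal G_t^{\fp X}]=\E[\vec h_t\mid\mathcal G_t^{\fp Y}]$. To see it, translate the conditioning at time $s$ in the $N$-step framework into conditioning with respect to $\F_{s-1}^{\fp X}$ (conditioning at $s=1$ becoming a plain expectation, as $\F_0^{\fp X}$ is trivial) and discard the leading $0$; this turns $\E[\vec g_t(\fp X)\mid\mathcal G_t^{\fp X}]$ into $\tilde f_t(\fp X)$ for an $(N-1)$-step adapted function $\tilde f_t$ of rank $\le k$, and the same $\tilde f_t$ represents $\E[\vec g_t(\fp Y)\mid\mathcal G_t^{\fp Y}]=\tilde f_t(\fp Y)$. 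Since $k\le N-3$, we have $\tilde f_t\in\AF[N-3]$ in the $(N-1)$-step framework, so part \ref{it:process.different.rank1} of Proposition \ref{prop:process.different.rank} — the induction hypothesis in $N$ — gives $\tilde f_t(\fp X)=\tilde f_t(\fp Y)$, i.e.\ the claimed identity. Now apply \eqref{eq:new.filtration.cond} to $Z=\vec h_t$ (legitimate, $\vec h_t$ being integrable and $V$-independent):
\[
\E[\vec h_t\mid\F_t^{\fp X^{\mathrm{n}}}]
=\E[\vec h_t\mid\mathcal G_t^{\fp X}]\,1_{\{V=0\}}+\E[\vec h_t\mid\mathcal G_t^{\fp Y}]\,1_{\{V=1\}}
=\E[\vec h_t\mid\mathcal G_t^{\fp X}]
=\E[\vec h_t\mid\F_t^{\fp Y^{\mathrm{n}}}].
\]
Substituting these into the representation of $f$ shows that $f(\fp X^{\mathrm{n}})$, $f(\fp X)=F\big((\E[\vec h_t\mid\mathcal G_t^{\fp X}])_t\big)$, $f(\fp Y)=F\big((\E[\vec h_t\mid\mathcal G_t^{\fp Y}])_t\big)$ and $f(\fp Y^{\mathrm{n}})$ all coincide, i.e.\ \eqref{eq:rank.differnt.periods} holds; taking expectations and letting $f$ range over $\AF[N-3]$ yields $\fp X^{\mathrm{n}}\sim_{N-3}\fp Y^{\mathrm{n}}$.

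I expect the only real obstacle to be the bookkeeping between the $N$-step and the $(N-1)$-step frameworks — specifically, making precise that an $N$-step adapted function applied to the time-shifted process $\fp X$ is a genuine $(N-1)$-step adapted function of no larger rank, so that after the single conditioning $(\,\cdot\mid t)$ the rank is still at most $N-3$ and Proposition \ref{prop:process.different.rank}\ref{it:process.different.rank1} becomes available; this is exactly where the rank budget $k\le N-3$ is consumed. The auxiliary measurability fact (that $\vec h_t$ does not depend on $V$), which is what licenses \eqref{eq:new.filtration.cond}, and the base case are routine.
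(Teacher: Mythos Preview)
Your proof is correct and follows essentially the same approach as the paper: induction on the rank, reduction to a single conditioning via Lemma~\ref{lem:rep.AF}, the identity \eqref{eq:new.filtration.cond} to compute conditional expectations with respect to the new filtrations, and the $(N-1)$-step induction hypothesis (Proposition~\ref{prop:process.different.rank}\ref{it:process.different.rank1}) to identify $f(\fp X)=f(\fp Y)$. The only notable difference is that you are more explicit than the paper about the bookkeeping between the $N$-step and $(N-1)$-step frameworks---the paper simply asserts ``$f(\fp X)=f(\fp Y)$ since $\fp X$ and $\fp Y$ satisfy Proposition~\ref{prop:process.different.rank}'' without spelling out that an $N$-step adapted function applied to the time-shifted process is an $(N-1)$-step adapted function of no larger rank; your remark on this point is a genuine clarification.
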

\begin{proof}
   The proof is by induction over  $k$:
	for $k=0$, the statement is trivially true (recalling that $X^\mathrm{n}=(0,X)=Y^\mathrm{n}$).
	
	Now assume that it is true for $k-1$ (with $k\leq N-3$), and let $f\in\AF[k]$.
	First assume that $f=(g|t)$ is formed by \ref{it:AF3} only.
	By assumption we have
	\begin{equation} \label{eq:AF.rank.N-3.1}
	   g(\fp{X}^\mathrm{n})=g(\fp{X})=g(\fp{Y})=g(\fp{Y}^\mathrm{n}).
	\end{equation}
	Now use independence of $V$ and $\F_{N}^\fp{X}\vee\F_{N}^\fp{Y}$ and \eqref{eq:AF.rank.N-3.1} to compute
	\begin{align*}
		f(\fp{X}^\mathrm{n}) 
		= \E[g(\fp{X})|\F_t^{\fp{X}^\mathrm{n}}] 
		&=\E\left[ g(\fp X) \middle| \mathcal{G}_t^\fp X \right] 1_{\{V=0\}} + \E\left[ g(\fp Y) \middle| \mathcal{G}_t^\fp Y \right] 1_{\{V=1\}} \\
		&= f(\fp X) 1_{\{V=0\}} + f(\fp Y) 1_{\{V=1\}}  = f(\fp X) = f(\fp Y),
	\end{align*}
   where the last three equalities are due the fact that $\fp{X}$ and $\fp{Y}$ satisfy Proposition \ref{prop:process.different.rank}, that is, $f(\fp{X})=f(\fp{Y})$.
   The same computation is valid for $\fp Y^\mathrm{n}$, whence we have \eqref{eq:rank.differnt.periods} for this particular choice of $f$.
	Finally, by Lemma \ref{lem:rep.AF}, this extends to all $f\in\AF[k]$, not necessarily those formed solely by \ref{it:AF3}.
\end{proof}

\begin{lemma} \label{lem:identity for N-2}
   For $f \in \AF[N-2]$ we have
   \begin{equation}
	   \label{eq:identity for N-2}
	   f(\fp X^\mathrm n) = f(\fp X) 1_{\{V = 0\}} + f(\fp Y) 1_{\{V = 1\}} = f(\fp Y^\mathrm n).
   \end{equation}
\end{lemma}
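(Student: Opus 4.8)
Suppose $f\in\AF[N-2]$. The plan is to combine the representation of adapted functions from Lemma~\ref{lem:rep.AF} with the already established Lemma~\ref{lem:AF.AF.rank.N-3} and the conditional expectation identity \eqref{eq:new.filtration.cond}. By Lemma~\ref{lem:rep.AF}\,\ref{it:rep.AF2} there are vectors $\vec g_1,\dots,\vec g_N$ whose entries lie in $\AF[N-3]$ and a function $F\in C_b(\R^m)$ such that $f(\fp W)=F\big(\E[\vec g_1(\fp W)\mid\F_1^{\fp W}],\dots,\E[\vec g_N(\fp W)\mid\F_N^{\fp W}]\big)$ for every ($N$-step) filtered process $\fp W$. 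I would apply this identity to each of $\fp X^\mathrm{n}$, $\fp Y^\mathrm{n}$, and to $\fp X$, $\fp Y$ viewed, as in the construction preceding the lemma, as $N$-step processes on $\Omega^\mathrm{n}$ carrying the filtrations $(\mathcal G_t^\fp X)_{t=1}^N$ and $(\mathcal G_t^\fp Y)_{t=1}^N$ respectively.

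The first step is that, since every entry of $\vec g_t$ has rank at most $N-3$, Lemma~\ref{lem:AF.AF.rank.N-3} gives $\vec g_t(\fp X^\mathrm{n})=\vec g_t(\fp X)=\vec g_t(\fp Y)=\vec g_t(\fp Y^\mathrm{n})$; denote this common, bounded random variable by $\vec G_t$. Because $\vec G_t$ is built solely from $\fp X$ (equivalently from $\fp Y$), it is $\F_{N-1}^\fp X\vee\F_{N-1}^\fp Y$-measurable, hence independent of $V$, so \eqref{eq:new.filtration.cond} applies entrywise and yields
\[
	\E[\vec G_t\mid\F_t^{\fp X^\mathrm{n}}] = \E[\vec G_t\mid\mathcal G_t^\fp X]\,1_{\{V=0\}} + \E[\vec G_t\mid\mathcal G_t^\fp Y]\,1_{\{V=1\}} = \E[\vec G_t\mid\F_t^{\fp Y^\mathrm{n}}]
\]
for all $1\le t\le N$. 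Plugging this into the representation of $f$, and using that $1_{\{V=0\}}$ and $1_{\{V=1\}}$ are complementary indicators -- so that $F$ of a tuple of the form $(a_t1_{\{V=0\}}+b_t1_{\{V=1\}})_t$ splits as $F((a_t)_t)1_{\{V=0\}}+F((b_t)_t)1_{\{V=1\}}$ -- I obtain
\[
	f(\fp X^\mathrm{n}) = F\big((\E[\vec G_t\mid\mathcal G_t^\fp X])_t\big)1_{\{V=0\}} + F\big((\E[\vec G_t\mid\mathcal G_t^\fp Y])_t\big)1_{\{V=1\}}.
\]
Since $\vec g_t(\fp X)=\vec G_t=\vec g_t(\fp Y)$, the two summands are precisely $f(\fp X)1_{\{V=0\}}$ and $f(\fp Y)1_{\{V=1\}}$, which is the claimed middle expression; the right-hand equality in the displayed identity above forces $f(\fp X^\mathrm{n})=f(\fp Y^\mathrm{n})$, completing the proof.

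There is no genuine obstacle here -- the argument is essentially bookkeeping -- and the two points that require care are: (a) keeping track of which process each adapted function acts on, namely that the rank-$\le N-3$ pieces $\vec g_t$ take the \emph{same} value on all four processes by Lemma~\ref{lem:AF.AF.rank.N-3}, whereas the outer layer of conditional expectations at rank $N-2$ is exactly where the filtration $\F_t^{\fp X^\mathrm{n}}$ decomposes over $\{V=0\}$ and $\{V=1\}$; and (b) checking that the inner values $\vec G_t$ are independent of $V$, which is what licenses the use of \eqref{eq:new.filtration.cond}.
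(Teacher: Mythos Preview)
Your proof is correct and follows essentially the same route as the paper: both arguments use Lemma~\ref{lem:AF.AF.rank.N-3} to identify the rank-$(N-3)$ inner pieces across all four processes, then apply the conditional expectation identity~\eqref{eq:new.filtration.cond} to split the outer layer over $\{V=0\}$ and $\{V=1\}$. The paper treats only the base case $f=(g\mid t)$ with $g\in\AF[N-3]$ and leaves the extension to general $f\in\AF[N-2]$ implicit, whereas you spell this reduction out explicitly via Lemma~\ref{lem:rep.AF}\,\ref{it:rep.AF2} and the indicator-splitting of $F$; this is a cosmetic rather than a substantive difference.
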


\begin{proof}
	First, consider $g \in \AF[N-3]$, $1 \le t \le N$, $f = (g|t)$.
	We compute $f(\fp X^\mathrm n)$ by applying \eqref{eq:new.filtration.cond} and Lemma \ref{lem:AF.AF.rank.N-3}
	\begin{align*}
		f(\fp X^\mathrm n) 
		&= \E\left[ g(\fp X^n) \middle| \F_t^{\fp X^\mathrm n} \right] 
		= \E\left[ g(\fp X) \middle| \F_t^{\fp X^\mathrm n} \right] \\
		&= \E\left[ g(\fp X) \middle| \mathcal{G}_t^\fp X \right] 1_{\{V=0\}} + \E\left[ g(\fp Y) \middle| \mathcal{G}_t^\fp Y \right] 1_{\{V=1\}} 
		= f(\fp X) 1_{\{V=0\}} + f(\fp Y) 1_{\{V=1\}}.
	\end{align*}
	We conclude by noticing that the same computation holds true when replacing $\fp X^\mathrm n$ by $\fp Y^\mathrm n$.
\end{proof}

\begin{lemma}
	$\fp{X}^\mathrm{n}$ and $\fp{Y}^\mathrm{n}$ satisfy the claim in Proposition \ref{prop:process.different.rank} with $k = N-1$.
\end{lemma}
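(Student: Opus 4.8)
The plan is to verify both parts of Proposition \ref{prop:process.different.rank} for $\fp X^\mathrm n$ and $\fp Y^\mathrm n$ at rank $k = N-1$, building on the three preparatory lemmas already established.

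First I would establish part \ref{it:process.different.rank1}, i.e.\ that $f(\fp X^\mathrm n) = f(\fp Y^\mathrm n)$ for every $f \in \AF[N-2]$. This is precisely the content of Lemma \ref{lem:identity for N-2}: the displayed identity \eqref{eq:identity for N-2} already asserts that both $f(\fp X^\mathrm n)$ and $f(\fp Y^\mathrm n)$ equal $f(\fp X) 1_{\{V=0\}} + f(\fp Y) 1_{\{V=1\}}$, so in particular they agree. By Lemma \ref{lem:rep.AF}, a general $f \in \AF[N-2]$ is obtained from base elements of the form $(g|t)$ with $g \in \AF[N-3]$ via $C_b$-compositions, and since such compositions act pointwise on the common probability space $\Omega^\mathrm n$, the identity extends from base elements (where Lemma \ref{lem:identity for N-2} applies) to all of $\AF[N-2]$.

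Next I would establish part \ref{it:process.different.rank2}: exhibiting an $f \in \AF[N-1]$ with $\E[f(\fp X^\mathrm n)] \neq \E[f(\fp Y^\mathrm n)]$. By the inductive hypothesis, the $(N-1)$-time-step processes $\fp X, \fp Y$ satisfy Proposition \ref{prop:process.different.rank} at rank $N-2$, so there is $h \in \AF[N-2]$ (for the $(N-1)$-step framework) with $\E[h(\fp X)] \neq \E[h(\fp Y)]$; shifting time indices by one, $h$ corresponds to a rank-$(N-2)$ adapted function for the processes $(\mathcal G_t)$-framework on $\Omega^\mathrm n$. Now set $f := (h|1)$, which has rank $N-1$. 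Using \eqref{eq:new.filtration.cond} with $Z = h(\fp X^\mathrm n) = h(\fp Y^\mathrm n)$ (these agree by part \ref{it:process.different.rank1}, and $Z$ is independent of $V$ since $h$ only depends on coordinates built before the last time step), together with $\F_1^{\fp X^\mathrm n} = \{\emptyset,\Omega^\mathrm n\}$ and $\F_1^{\fp Y^\mathrm n} = \sigma(V)$, I would compute
\begin{align*}
	f(\fp X^\mathrm n) &= \E\left[ h(\fp X^\mathrm n) \middle| \F_1^{\fp X^\mathrm n} \right] = \E\left[ h(\fp X) 1_{\{V=0\}} + h(\fp Y) 1_{\{V=1\}} \right] = \tfrac{1}{2}\E[h(\fp X)] + \tfrac{1}{2}\E[h(\fp Y)],\\
	f(\fp Y^\mathrm n) &= \E\left[ h(\fp Y^\mathrm n) \middle| \F_1^{\fp Y^\mathrm n} \right] = \E[h(\fp X)] 1_{\{V=0\}} + \E[h(\fp Y)] 1_{\{V=1\}},
\end{align*}
where I use Lemma \ref{lem:identity for N-2} to rewrite $h(\fp X^\mathrm n)$ and $h(\fp Y^\mathrm n)$ in terms of $h(\fp X), h(\fp Y)$ and the independence of $V$ from $\mathcal G_1^\fp X \vee \mathcal G_1^\fp Y$. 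Taking expectations, $\E[f(\fp X^\mathrm n)] = \tfrac12(\E[h(\fp X)] + \E[h(\fp Y)])$ while $\E[f(\fp Y^\mathrm n)] = \tfrac12(\E[h(\fp X)] + \E[h(\fp Y)])$ as well — so this naive choice fails, and I must instead insert a nonlinearity: take $f := (\varphi, (h|1))$ for a suitable $\varphi \in C_b(\R)$. Then $f(\fp X^\mathrm n) = \varphi\!\left(\tfrac12\E[h(\fp X)] + \tfrac12\E[h(\fp Y)]\right)$ is a constant, whereas $f(\fp Y^\mathrm n) = \varphi(\E[h(\fp X)]) 1_{\{V=0\}} + \varphi(\E[h(\fp Y)]) 1_{\{V=1\}}$; choosing $\varphi$ strictly convex (and bounded, e.g.\ a truncated quadratic, using that $\E[h(\fp X)], \E[h(\fp Y)]$ lie in a bounded range since $h$ may be taken bounded) and invoking $\E[h(\fp X)] \neq \E[h(\fp Y)]$ gives strict Jensen, hence $\E[f(\fp Y^\mathrm n)] = \tfrac12\varphi(\E[h(\fp X)]) + \tfrac12\varphi(\E[h(\fp Y)]) > \varphi\!\left(\tfrac12\E[h(\fp X)] + \tfrac12\E[h(\fp Y)]\right) = \E[f(\fp X^\mathrm n)]$.

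The main obstacle is bookkeeping the time-index shift — making precise that an adapted function of rank $N-2$ for the $(N-1)$-step processes $\fp X, \fp Y$ transports to a rank-$(N-2)$ adapted function on $\Omega^\mathrm n$ compatible with the filtrations $(\mathcal G_t)$, and that all invocations of independence of $V$ are legitimate at each stage of the recursive construction. Once that identification is in place the argument is a short application of Jensen's inequality and the already-proved Lemmas \ref{lem:AF.AF.rank.N-3} and \ref{lem:identity for N-2}. Finally, to obtain Proposition \ref{prop:process.different.rank} for general $1 \le k \le N-1$ rather than just $k = N-1$, I would simply pad: run the construction in a $(k+1)$-step framework to get processes separated at rank $k$, then append $N-k-1$ further trivial time steps (constant zero increments with the terminal $\sigma$-algebra repeated), which changes neither $f(\fp X)$ for $f \in \AF[k-1]$ nor the separating value at rank $k$.
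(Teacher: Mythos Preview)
Your proposal is correct and follows essentially the same approach as the paper: part \ref{it:process.different.rank1} comes from Lemma \ref{lem:identity for N-2}, and for part \ref{it:process.different.rank2} both you and the paper take $f=(\varphi,(h|1))$ with $h\in\AF[N-2]$ separating $\fp X,\fp Y$ in expectation and $\varphi$ a truncated square, concluding via strict Jensen. One minor slip: your appeal to \eqref{eq:new.filtration.cond} with $Z=h(\fp X^\mathrm n)$ is not quite right since $h(\fp X^\mathrm n)=h(\fp X)1_{\{V=0\}}+h(\fp Y)1_{\{V=1\}}$ is not independent of $V$; but your actual computation only uses that $\F_1^{\fp X^\mathrm n}$ is trivial, $\F_1^{\fp Y^\mathrm n}=\sigma(V)$, and that $h(\fp X),h(\fp Y)$ individually are independent of $V$, which is correct.
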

\begin{proof}
   Part \ref{it:process.different.rank1} of Proposition \ref{prop:process.different.rank} is a consequence of Lemma \ref{lem:identity for N-2}.
	We proceed to prove part \ref{it:process.different.rank2} of Proposition \ref{prop:process.different.rank}, that is, we construct $f \in \AF[N-1]$ such that 
	\[ \E[f(\fp{X}^\mathrm{n})]\neq \E[f(\fp{Y}^\mathrm{n})].\]
   Since $\fp X \nsim_{N-2} \fp Y$, there is a function $g \in \AF[N-2]$ such that $\E[g(\fp X)] \neq \E[g(\fp Y)]$.
   Set $f:=\min\{(g|1)^2, 1\}\in\AF[N-1]$.
   Using identity \eqref{eq:identity for N-2} from Lemma \ref{lem:identity for N-2} and \eqref{eq:rank.differnt.periods}, we compute
	\begin{align*}
	   f(\fp X^\mathrm n) 
	   &= \E \left[ g(\fp X) 1_{\{V=0\}} + g(\fp Y) 1_{\{V=1\}} \right]^2, \\
	   f(\fp Y^\mathrm n) 
	   &= \E \left[ g(\fp X) 1_{\{V=0\}} + g(\fp Y) 1_{\{V=1\}} \middle| V \right]^2 \\
	   &= \left( \E[g(\fp X)] 1_{\{V=0\}} + \E[g(\fp Y)] 1_{\{V=1\}}\right)^2.
   \end{align*}
	Recalling that $\E[g(\fp X)] \neq \E[g(\fp Y)]$ this implies $\E[f(\fp X^\mathrm n)] < \E[f(\fp Y^\mathrm n)]$.
\end{proof}

At this point we have completed the proof of Proposition \ref{prop:process.different.rank} under the additional assumption that $k=N-1$.
For general $k\leq N-1$, construct $\fp X$ and $\fp Y$ for the $(k - 1)$-time step framework as above.
Recursively repeating the argument detailed below \eqref{eq:def prob space}, we can append $(N - k)$ trivial time steps to the processes $\fp X$ and $\fp Y$, and thereby obtain $(N-1)$-time step processes with the desired properties.
This proves Proposition \ref{prop:process.different.rank} for arbitrary $k\leq N-1$. 

\subsection{Proof of Theorem \ref{thm:stopping.rank}}

As already announced, we use the processes $\fp{X}$ and $\fp{Y}$ (and the notation specific to these processes) constructed for the proof of Proposition \ref{prop:process.different.rank}

The proof will be inductive, starting with $N=2$. Consider the cost function $c \colon \R^2 \times \{1,2\} \to \R$,
\begin{align*}
   c_1(x_1,x_2) & := 1/2 \quad\text{and}\quad
   c_2(x_1,x_2) := (0\vee x_2) \wedge 1 .
\end{align*}
The dynamic programming principle for the optimal stopping problem (also called `Snell envelope theorem') tells us that the optimal stopping values are
\[ v_c(\fp X) = \E\left[c_1(X)\wedge \E[c_2(X)|\F_1^{\fp X}] \right] = 1/2 \text{ and } v_c(\fp Y) = 1/4. \]
This proves the claim for $N=2$.

For the case of general $N$, recall that $\fp{X}^\mathrm{n}$ and $\fp{Y}^\mathrm{n}$ are the processes obtained through the $(N-1)$-step processes $\fp{X}$ and $\fp{Y}$ and an independent coin-flip $V$.

By the previous step, we assume that in a $(N-1)$-step framework we have constructed a cost function $c$ such that $v_c(\fp{X})>v_c(\fp Y)$.
Defining the new cost function $c^{\mathrm{n}}$ in the $N$-step framework by 
\begin{align*}
   c^\mathrm{n}_1(x_{1:N})	&:=  (v_c(\fp{X})+v_c(\fp{Y})) /2,\\
   c^\mathrm{n}_t(x_{1:N}) &:= c_{t-1}(x_{2:N}) \quad\text{ for } 2\leq t\leq N,
\end{align*} 
we claim that $v_{c^\mathrm{n}}(\fp{X}^\mathrm{n})>v_{c^\mathrm{n}}(\fp{Y}^\mathrm{n})$.

To that end, we once more rely on the dynamic programming principle to obtain
\begin{equation}
   \label{eq:opt.stop.dyn.prog.Y}
   v_{c^\mathrm{n}}(\fp{Y}^\mathrm{n})	
   =\E\Big[ c^\mathrm{n}_1 \wedge \inf_{ \tau \in\mathrm{ST}(\fp{Y}^\mathrm{n}),\, \tau\geq 2 } \E\left[c^\mathrm{n}_\tau(Y^\mathrm{n}) \middle|\F^{\fp{Y}^\mathrm{n}}_1 \right] \Big].
\end{equation}
Similarly, additionally using that $\F_1^{\fp{X}^\mathrm{n}}$ is trivial and $c^\mathrm{n}_1$ is deterministic, we get
\begin{equation}
   \label{eq:opt.stop.dyn.prog.X}
   v_{c^\mathrm{n}}(\fp{X}^\mathrm{n}) = c^\mathrm{n}_1 \wedge \inf_{ \tau\in\mathrm{ST}(\fp{X}^\mathrm{n}) ,\, \tau\geq 2 } \E[c^\mathrm{n}_\tau(X^\mathrm{n}) ].
\end{equation}

In order to relate the conditional stopping problems after time 2 appearing in \eqref{eq:opt.stop.dyn.prog.Y} and \eqref{eq:opt.stop.dyn.prog.X}, recall that we view $\fp X$ and $\fp Y$ as $(N-1)$-time step processes by appending a trivial initial time step; see the explanation right after \eqref{eq:def prob space}.
   The same convention is applied here to $c$ as well.

   In a first step we show the following decomposition of stopping times:
\begin{equation} \label{eq:stopping times decomposition}
   \mathrm{ST}(\fp Y^\mathrm n) 
   = \left\{ \alpha 1_{\{V=0\}} + \beta 1_{\{V=1\}} \colon \alpha \in \mathrm{ST}(\fp X), \beta \in \mathrm{ST}(\fp Y)\right\}.
\end{equation}
The right-hand side is clearly contained in the left-hand side.
For the reverse inclusion, pick $\tau \in \mathrm{ST}(\fp Y^\mathrm{n})$. 
By definition of $\F_t^{\fp Y^\mathrm n}$, there are sets $A_t\in \mathcal{G}_t^\fp X$ and $B_t\in\mathcal{G}_t^\fp Y$ such that
\[
	\{\tau = t\} = A_t \cap \{V = 0\} \cup B_t \cap \{V=1\}
\]
for every $1\leq t\leq N$.
   One can then check that 
\begin{align*}
	\alpha := \min \{ 1 \le t \le N \colon 1_{A_t} = 1 \},\quad 
	\beta := \min\{ 1 \le t \le N \colon 1_{B_t} = 1\}
\end{align*}
define stopping times in $\mathrm{ST}(\fp X)$ and $\mathrm{ST}(\fp Y)$, respectively, and that $\tau = \alpha 1_{\{V=0\}} + \beta 1_{\{V=1\}}$.
This shows \eqref{eq:stopping times decomposition}.

We are now ready to finish the proof.
Recalling that $\F_1^{\fp X^\mathrm n}$ is the trivial $\sigma$-algebra and that $\F_{t}^{\fp X^\mathrm n}= \F_t^{\fp Y^\mathrm n}$ for $2\leq t\leq N$, independence of $V$ and $\mathcal{G}_N^\fp X \vee \mathcal{G}_N^\fp Y$ and the decomposition of stopping times \eqref{eq:stopping times decomposition} shows that
\begin{align*}
   \inf_{\tau\in\mathrm{ST}(\fp{X}^\mathrm{n}),\,\tau\geq 2} \E\left[c^\mathrm{n}_\tau(X^\mathrm{n})\right]
   &=\inf_{(\alpha,\beta) \in \mathrm{ST}(\fp{X})\times\mathrm{ST}(\fp{Y})} \E\left[c_\alpha(X) 1_{\{V=0\}} + c_\beta(Y)1_{\{V=1\}}\right]\\
   &=\left(v_{c}(\fp{X})+v_c(\fp{Y})\right)/2 =c_1^\mathrm{n}.
\end{align*}
In a similar manner
\begin{align*}
\inf_{\tau\in\mathrm{ST}(\fp{Y}^\mathrm{n}),\,\tau\geq 2} \E\left[c^\mathrm{n}_\tau(X^\mathrm{n})\middle|\F^{\fp{Y}^\mathrm{n}}_1\right]
   &=\inf_{\alpha \in \mathrm{ST}(\fp{X}) } \E\left[c_{\alpha}(X)\right] 1_{\{V=0\}}+ \inf_{\beta\in \mathrm{ST}(\fp{Y})} \E\left[c_{\beta}(Y)\right] 1_{\{V=1\}}\\
   &=v_c(\fp{X})1_{\{V=0\}} + v_c(\fp{Y})1_{\{V=1\}}.
\end{align*}
As $v_c(\fp{Y})<c_1^\mathrm{n}<v_c(\fp{X})$, plugging the above equalities in \eqref{eq:opt.stop.dyn.prog.Y} and \eqref{eq:opt.stop.dyn.prog.X} readily shows $v_{c^\mathrm{n}}(\fp{Y}^\mathrm{n})<v_{c^\mathrm{n}}(\fp{X}^\mathrm{n})$ and thus completes the proof.

\vspace{1em}
\noindent
\textsc{Acknowledgements:}
Daniel Bartl is grateful for financial support through the Austrian Science Fund (FWF) projects ESP-31N and P34743N.
Mathias Beiglb\"ock  is grateful for financial support through the Austrian Science Fund (FWF) projects Y0782 and P35197. 

 
\bibliographystyle{abbrv} 


\appendix

\section{The adapted block approximation}\label{sec:block approximation} 

We have seen in Lemma \ref{lem:process to canonical} that bicausal couplings $\pi$ on arbitrary filtered probability spaces induce bicausal couplings in the canonical setting.
In order to establish that the association of $\fp X \in \FP_p$ with its canonical representative $\cfp X \in \CFP_p$ is an isometry w.r.t.\ $\AW_p$ (see Subsection \ref{ssec:isometry}), we have to find for $\overline{\pi} \in \cplba(\cfp X,\cfp Y)$ a similar coupling $\pi \in \cplba(\fp X, \fp Y)$.
For this reason, we introduce in this section what we call the \emph{adapted block approximation} in Proposition \ref{prop:adapted block} (for the canonical filtered setting).
The main result of this section is Theorem \ref{thm:adapted block}, which allows us then to pull-back block approximations of elements in $\cplba(\cfp X,\cfp Y)$ to $\cplba(\fp X,\fp Y)$.

We start with a characterization of bicausality in terms of kernels for canonical filtered processes.

\begin{lemma} \label{lem:bicausal cfp rep}
Let $\fp X, \fp Y \in \CFP_p$, let $\pi \in \Pc_p(\Z \times \Z)$, and set $\pi_1 := ( \proj_{\Z_1 \times \Z_1} )_\ast\pi$.
	Then $\pi \in \cplba(\fp X,\fp Y)$ if and only if 
		\[\pi_1\in \cpl(\Law(\ip_1(\fp X)),\Law(\ip_1(\fp Y)))\] 
		and, for $1 \le t \le N-1$, there are kernels
		\[k_t \colon \Z_{1:t} \times \Z_{1:t} \to \Pc_p(\Z_{t+1} \times \Z_{t+1}) \text{ with }k_t^{z_{1:t},\hat z_{1:t}} \in \cpl(z_t^+,\hat z_t^+)\]
		such that
		\begin{equation} \label{eq:bicausal cpl kernel rep}
			\pi =  \pi_1 \otimes k_1  \ldots  \otimes k_{N-1}.
		\end{equation}
\end{lemma}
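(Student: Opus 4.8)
The plan is to characterize bicausality of $\pi\in\Pc_p(\Z\times\Z)$ by successively disintegrating $\pi$ along the coordinates $\Z_1\times\Z_1,\Z_2\times\Z_2,\dots$ and checking what the causality conditions from Lemma \ref{lem:causal CI} impose on the disintegration kernels. Since the canonical filtered processes $\fp X,\fp Y\in\CFP_p$ carry the coordinate filtration $(\F^\Z_t)_{t=1}^N$ and the law $\unfold_1(\bar\mu)$ (resp.\ $\unfold_1(\bar\nu)$), the structure \eqref{eq:CFP} together with Lemma \ref{lem:unfoldprops.measures}\ref{it:unfoldprops2} tells us precisely what the marginals of $\pi$ must look like: $\P^{\fp X}=\Law(Z^-)=\unfold_1(\Law(\ip_1(\fp X)))$, and analogously for $\fp Y$. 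In particular $\proj_{\Z_t}\pi$ having the right marginal already forces $z_t^+$-structure; the real content is how $\pi$ itself factorizes.

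\textbf{Forward direction.} Suppose $\pi\in\cplba(\fp X,\fp Y)$. Write $\pi=\pi_1\otimes k_1\otimes\cdots\otimes k_{N-1}$ for the (existing, since all spaces are Polish) successive disintegration, where $k_t\colon\Z_{1:t}\times\Z_{1:t}\to\Pc(\Z_{t+1}\times\Z_{t+1})$. First, $\pi_1\in\cpl(\Law(\ip_1(\fp X)),\Law(\ip_1(\fp Y)))$ is immediate from the marginal condition and the identification of $\Law(\ip_1(\cdot))$ with the first coordinate of the canonical law. Next I apply causality from $\fp X$ to $\fp Y$ via Lemma \ref{lem:causal CI}\ref{it:causal CI2}: for bounded $\F^\Z_t$-measurable $V$ depending only on the second (hatted) block up to time $t$, $\E_\pi[V\mid\F^{\Z,\Z}_{N,0}]=\E_\pi[V\mid\F^{\Z,\Z}_{t,0}]$. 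Taking $V$ of the form $V(\hat z_{t+1})$ (allowed since $V$ may be $\F^{\Z,\Z}_{t,t}$-measurable by the last sentence of Lemma \ref{lem:causal CI}) and comparing with the disintegration shows that $\int V\,d(\proj_2 k_t^{z_{1:t},\hat z_{1:t}})$ must be $\F^{\Z,\Z}_{t,0}$-measurable, i.e.\ depend only on $z_{1:t}$; but it must also match the $\fp Y$-marginal kernel, which is the unfold kernel $z\mapsto \hat z_t^+$. Hence $\proj_2 k_t^{z_{1:t},\hat z_{1:t}}=\hat z_t^+$. Symmetrically, anticausality gives $\proj_1 k_t^{z_{1:t},\hat z_{1:t}}=z_t^+$. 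Together these say $k_t^{z_{1:t},\hat z_{1:t}}\in\cpl(z_t^+,\hat z_t^+)$, which is exactly \eqref{eq:bicausal cpl kernel rep}.

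\textbf{Converse direction.} Conversely, assume $\pi=\pi_1\otimes k_1\otimes\cdots\otimes k_{N-1}$ with $\pi_1\in\cpl(\Law(\ip_1(\fp X)),\Law(\ip_1(\fp Y)))$ and $k_t^{z_{1:t},\hat z_{1:t}}\in\cpl(z_t^+,\hat z_t^+)$. I first check the marginals: integrating out the hatted coordinates uses $\proj_1 k_t^{\cdot}=z_t^+$ iteratively, so that $\proj_1\pi=\pi_1^{(1)}\otimes(z\mapsto z_1^+)\otimes\cdots=\unfold_1(\Law(\ip_1(\fp X)))=\P^{\fp X}$, and symmetrically $\proj_2\pi=\P^{\fp Y}$. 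Then I verify causality from $\fp X$ to $\fp Y$ through Lemma \ref{lem:causal CI}\ref{it:causal CI2}: for $V$ bounded and $\F^\Z_t$-measurable in the hatted coordinate, $\E_\pi[V\mid\F^{\Z,\Z}_{t,0}]$ can be computed by integrating the kernels $k_t,\dots,k_{N-1}$ against $V$; because each $k_s^{z_{1:s},\hat z_{1:s}}$ for $s\ge t$ has second marginal $\hat z_s^+$ and because $V$ depends only on $\hat z_{1:t}$, one shows by backward induction over $s=N-1,\dots,t$ that the result does not depend on $z_{t+1:N}$, giving the required $\F^{\Z,\Z}_{t,0}$-measurability and hence causality. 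Anticausality follows by the symmetric argument, so $\pi\in\cplba(\fp X,\fp Y)$.

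\textbf{Main obstacle.} I expect the technical heart to be the bookkeeping in the forward direction: extracting from the single conditional-expectation identity of Lemma \ref{lem:causal CI} the statement that a disintegration kernel has the prescribed marginal, since this requires arguing that two disintegrations (the one coming from $\pi$'s marginal, and the one coming from the causality constraint) agree $\pi_1\otimes\cdots$-almost everywhere, and then stitching these almost-everywhere statements together over $1\le t\le N-1$ into a clean global factorization. Everything else is a routine monotone-class/Fubini manipulation on Polish spaces.
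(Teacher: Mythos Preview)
Your approach---successively disintegrate $\pi$ and read off the marginal constraints on the kernels from Lemma \ref{lem:causal CI}---is exactly the paper's.  The paper's proof is just the one-line observation that $k_t=\Law_\pi(z_{t+1},\hat z_{t+1}\mid\F^{\Z,\Z}_{t,t})$, and that by Lemma \ref{lem:causal CI}\ref{it:causal CI1} together with Lemma \ref{lem:unfoldprops.measures}\ref{it:unfoldprops2} one has $\Law_\pi(z_{t+1}\mid\F^{\Z,\Z}_{t,t})=\Law_{\P^{\fp X}}(z_{t+1}\mid\F^\Z_t)=z_t^+$ if and only if $\pi$ is causal from $\fp X$ to $\fp Y$, and symmetrically for anticausality.

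There is, however, a genuine slip in your forward direction.  You invoke item \ref{it:causal CI2} (causality from $\fp X$ to $\fp Y$) with $V=V(\hat z_{t+1})$, claiming this is permitted because $V$ may be $\F^{\Z,\Z}_{t,t}$-measurable.  But $\hat z_{t+1}$ is $\F^{\Z,\Z}_{0,t+1}$-measurable, not $\F^{\Z,\Z}_{t,t}$-measurable, so the extension clause does not apply.  Moreover, even granting the choice of $V$, item \ref{it:causal CI2} concerns $\E_\pi[V\mid\F^{\Z,\Z}_{N,0}]$, whereas $\int V\,d(\proj_2 k_t^{z_{1:t},\hat z_{1:t}})=\E_\pi[V(\hat z_{t+1})\mid\F^{\Z,\Z}_{t,t}]$; these are different conditional expectations.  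Finally, the attribution is reversed: it is \emph{anticausality} (causality from $\fp Y$ to $\fp X$) that pins down the second marginal $\proj_2 k_t=\hat z_t^+$, while causality from $\fp X$ to $\fp Y$ fixes the first marginal $\proj_1 k_t=z_t^+$.  Once you swap to item \ref{it:causal CI1} with $U=f(z_{t+1})$ for causality (and the symmetric choice for anticausality), the argument goes through in one line and the ``main obstacle'' you anticipate evaporates: both sides are literally the same disintegration kernel, so no almost-everywhere stitching is needed.
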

\begin{proof}
	Clearly any coupling can be represented by a family of measurable kernels $(k_t)_{t=1}^{N-1}$ as in \eqref{eq:bicausal cpl kernel rep} with 
	\[
		k_t = \Law_\pi \big( z_{t+1}, \hat z_{t+1} | \F_{t,t}^{\Z,\Z} \big).
	\]
	The only thing we need to show is that $k_t^{z_{1:t},\hat z_{1:t}} \in \cpl(z_t^+,\hat z_t^+)$ for every $1\leq t\leq N-1$ if and only if $\pi$ is bicausal.
	But this follows from Lemma \ref{lem:causal CI} and Lemma \ref{lem:unfoldprops.measures}.
	Indeed, by these lemmas, we have that $\pi$-almost surely
	\begin{align*}
		\Law_\pi \big(  z_{t+1} | \F_{t,t}^{\Z,\Z} \big) 
		&= \Law_{\Law(\ip(\fp X))} \left( z_{t+1} | \F_{t}^{\Z} \right) 
		= z_t^+, \\
		\Law_\pi \big(  \hat z_{t+1} | \F_{t,t}^{\Z,\Z} \big) 
		&= \Law_{\Law(\ip(\fp Y))} \left( \hat z_{t+1} | \F_{t}^{\Z} \right) 
		= \hat z_t^+
	\end{align*}
	for all $1 \le t \le N-1$, if and only if $\pi \in \cplba(\fp X,\fp Y)$.
	This completes the proof.
\end{proof}

\begin{lemma}
	\label{lem:adapted block}
	Let $1 \le t \le N-1$, let $\pi \in \Pc_p(\Z_{1:t} \times \Z_{1:t})$, and  let
	\[k \colon \Z_{1:t} \times \Z_{1:t} \to \Pc_p(\Z_{t+1} \times \Z_{t+1}) 
	\text{ with } k^{z_{1:t},\hat z_{1:t}} \in \cpl(z_t^+,\hat z_t^+).\]
	Further let $(\pi^n)_{n \in \N}$ in $\Pc_p(\Z_{1:t} \times \Z_{1:t})$ such that $\W_p(\pi,\pi^n)\to 0$.
	Then there are kernels
	\begin{align}
		\label{eq:adapted block lemma1}
		k^n \colon \Z_{1:t} \times \Z_{1:t} \to \Pc_p(\Z_{t+1} \times \Z_{t+1})
		\text{ with } k^{n,z_{1:t},\hat z_{1:t}}\in \cpl(z_t^+,\hat z_t^+)
	\end{align}
	such that 
	\begin{align}
		\label{eq:adapted block lemma2}
		\W_p(\pi\otimes k , \pi^n \otimes k^n) \to 0.
	\end{align}
\end{lemma}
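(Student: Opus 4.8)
The plan is to transport the given kernel $k$ along an almost-optimal coupling between $\pi$ and $\pi^n$ and then repair the marginal constraints by gluing in optimal couplings on $\Z_{t+1}$. Fix $n$. First I choose $\gamma^n \in \cpl(\pi,\pi^n)$ with $\int d^p \, d\gamma^n \le \W_p^p(\pi,\pi^n) + 1/n$ (no existence of optimizers needed), and disintegrate it with respect to its \emph{second} marginal, $\gamma^n(dw,dw') = \pi^n(dw')\,\gamma^n_{w'}(dw)$; here I write points of $\Z_{1:t}\times\Z_{1:t}$ as $w = (z_{1:t},\hat z_{1:t})$ on the $\pi$-side and $w' = (z'_{1:t},\hat z'_{1:t})$ on the $\pi^n$-side. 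Exactly as in the proof of Lemma \ref{lem:lipschitz kernel} (Jankov--von Neumann), I fix a universally measurable family $(\rho^{a,b})_{a,b}$ of $\W_p$-optimal couplings $\rho^{a,b}\in\cpl(a,b)$ on $\Z_{t+1}$.

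Next, for each pair $(w,w')$ I glue three probabilities on copies of $\Z_{t+1}$: $\rho^{z_t^+,\,z_t'^+}$ on coordinates $(z_{t+1},z'_{t+1})$, the given $k^w$ on $(z_{t+1},\hat z_{t+1})$, and $\rho^{\hat z_t^+,\,\hat z_t'^+}$ on $(\hat z_{t+1},\hat z'_{t+1})$. These share marginals consecutively along the chain $z'_{t+1}\!-\!z_{t+1}\!-\!\hat z_{t+1}\!-\!\hat z'_{t+1}$ (the common marginals being $z_t^+$ and $\hat z_t^+$), so, all spaces being Polish, gluing via disintegrations produces a probability $\Theta^{w,w'}$ on $\Z_{t+1}^4$, and these disintegrations may be chosen measurably in $(w,w')$. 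Let $\tilde k^{w,w'}$ be the $(z'_{t+1},\hat z'_{t+1})$-marginal of $\Theta^{w,w'}$; by construction $\tilde k^{w,w'}\in\cpl(z_t'^+,\hat z_t'^+)$. I then set
\[
	k^{n,w'} := \int \tilde k^{w,w'}\,\gamma^n_{w'}(dw).
\]
Since $z_t'^+$ and $\hat z_t'^+$ depend only on $w'$, each mixand and hence $k^{n,w'}$ lies in $\cpl(z_t'^+,\hat z_t'^+)$, and the $p$-th moments stay finite; thus $k^n$ is a kernel as in \eqref{eq:adapted block lemma1}.

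For the estimate, consider the probability $\Lambda^n(dw,dw',dz_{t+1},dz'_{t+1},d\hat z_{t+1},d\hat z'_{t+1}) := \gamma^n(dw,dw')\,\Theta^{w,w'}(dz_{t+1},dz'_{t+1},d\hat z_{t+1},d\hat z'_{t+1})$. Its $(w,z_{t+1},\hat z_{t+1})$-marginal is $\pi\otimes k$ and its $(w',z'_{t+1},\hat z'_{t+1})$-marginal is $\pi^n\otimes k^n$ (using $\int\tilde k^{w,w'}\gamma^n_{w'}(dw) = k^{n,w'}$), so $\Lambda^n$ induces a coupling of $\pi\otimes k$ and $\pi^n\otimes k^n$. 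Under $\Theta^{w,w'}$ the pair $(z_{t+1},z'_{t+1})$ is distributed as $\rho^{z_t^+,z_t'^+}$, contributing cost $\W_p^p(z_t^+,z_t'^+)$, and likewise for the hatted pair. Since $z_t\mapsto z_t^+$ is $1$-Lipschitz from $(\Z_t,d_{\Z_t})$ to $(\Pc_p(\Z_{t+1}),\W_p)$ by the definition $d_{\Z_t}^p = d_{\X_t}^p + \W_p^p$, these are bounded by $d_{\Z_t}^p(z_t,z_t')$ and $d_{\Z_t}^p(\hat z_t,\hat z_t')$, whence
\[
	\W_p^p(\pi\otimes k,\pi^n\otimes k^n) \;\le\; \int \Big( d^p(w,w') + \W_p^p(z_t^+,z_t'^+) + \W_p^p(\hat z_t^+,\hat z_t'^+) \Big)\,\gamma^n(dw,dw') \;\le\; 2\int d^p(w,w')\,\gamma^n \;\le\; 2\big(\W_p^p(\pi,\pi^n) + \tfrac1n\big) \to 0.
\]

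The estimate itself is thus a one-line Lipschitz bound; the only genuine work, and the point to handle carefully, is the measurable-selection bookkeeping — that the gluing and the involved disintegrations can be performed measurably in the parameter $(w,w')$, that $w'\mapsto\gamma^n_{w'}$ is measurable, and consequently that $w'\mapsto k^{n,w'}$ is a bona fide universally measurable $\Pc_p(\Z_{t+1}\times\Z_{t+1})$-valued kernel. All of this is standard (Jankov--von Neumann together with measurability of disintegrations on Polish spaces), but it is the part that requires attention rather than the transport estimate.
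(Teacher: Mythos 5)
Your proof is correct, and it takes a genuinely different route from the paper's. The paper defines the candidate kernel at $w'$ as a marginal-corrected version of the mixture $\tilde k^{n,w'}=\int k^{w}\,K^{n,w'}(dw)$, where $K^{n,w'}$ is the disintegration of an optimal coupling of $\pi$ and $\pi^n$; the resulting cost estimate then splits into a marginal-repair term -- bounded, exactly as in your argument, via the $1$-Lipschitz map $z_t\mapsto z_t^+$ coming from $d_{\Z_t}^p=d_{\X_t}^p+\W_p^p$ -- \emph{plus} the term $\iint\W_p^p(k^{w},k^{w''})\,K^{n,w'}(dw'')\,K^n(dw,dw')$, whose vanishing requires an external continuity result for measurable kernels along couplings concentrating on the diagonal (\cite[Lemma 2.7]{Ed19}). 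You avoid that term altogether: by gluing $k^w$ between the optimal couplings $\rho^{z_t^+,z_t'^+}$ and $\rho^{\hat z_t^+,\hat z_t'^+}$ you never need to compare $k$ at two different base points, and the entire transport cost is absorbed into $d^p(w,w')$ through the nested metric. This buys a more elementary argument (no appeal to \cite{Ed19}) and a clean quantitative bound $\W_p^p(\pi\otimes k,\pi^n\otimes k^n)\le 2\,\W_p^p(\pi,\pi^n)+2/n$, at the price of slightly heavier measurable-selection bookkeeping (jointly measurable disintegrations for the three-fold gluing, on top of the Jankov--von Neumann selection of optimal couplings that both proofs already use); as you note, that bookkeeping is standard on Polish spaces.
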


\begin{proof}
	In this proof we deal with the spaces $(\Z_{1:t}\times \Z_{1:t})\times (\Z_{1:t}\times \Z_{1:t})$ and $(\Z_{t+1} \times \Z_{t+1}) \times (\Z_{t+1} \times \Z_{t+1})$.
	For the sake of a clearer presentation we baptise the first product space by $\mathcal{A} \times \mathcal{B}$ and the second one by $\mathcal C \times \mathcal E$.
	We write $a = (\hat a,\check a) = (\hat a_{1:t}, \check a_{1:t})$ for elements in $\mathcal{A}$ (the space in the first bracket) and $\hat a_t = (\hat a_t^-,\check a_t^+)$ as well as $\check a_t = (\check a_t^-,\check a_t^+)$.
	Similar conventions apply to elements in the spaces $\mathcal{B}, \mathcal C , \mathcal E$.
	
	For every $n$, let $\Pi^n$ be an optimal coupling for $\W_p(\pi,\pi^n)$ and denote by $K^n$ its disintegration w.r.t.\ $\pi^n$, that is,
	\[ \Pi^n(da,db) = \pi^n(d b) \, K^{n,b}(da).\]
	
	In a first step, we define an auxiliary kernel $\tilde k^n \colon \mathcal B \to \Pc_p(\mathcal C)$, which is $\pi^n$-almost surely well-defined, for $b \in \mathcal B$ by
	\[
		\tilde k^{n,b}(\cdot)
		:= \int k^a(\cdot) \, K^{n,b}(da) \in \Pc_p(\mathcal C).
	\]
	In general, $\tilde k^{n,b}$ needs not to be an element of $\cpl(\hat b_t^+,\check b_t^+)$.
	To amend this, we pick measurable kernels $K^1 \colon \mathcal B \to \Pc_p(\mathcal C)$ and $K^2 \colon \mathcal B \to \Pc_p(\mathcal E)$ where
	\begin{align*}
		K^{1,b} &\text{ is an optimal coupling for }\W_p((\proj_1)_\ast \tilde k^{n,b},\hat b_{t}^+), \\
		K^{2,b} &\text{ is an optimal coupling for }\W_p(\check b_t^+, (\proj_2 )_\ast \tilde k^{n,b}).
	\end{align*}
	A disintegration of $K^{1,b}$ w.r.t.\ the first coordinate is denoted by $(K^{1,b,\hat c})_{\hat c \in \Z_{t+1}}$, and similarly a disintegration of $K^{2,b}$ w.r.t.\ the second coordinate is called $(K^{2,b,\check e})_{\check e \in \Z_{t+1}}$.
	Proceeding from this, we can define the kernel $k^n \colon \mathcal B \to \Pc_p(\mathcal C)$ with $k^{n,b} \in \cpl(\hat b_t^+,\check b_t^+)$
	\[
		k^{n,b}(\cdot) := \int K^{1,b,\hat c}(\cdot) \otimes K^{2,b,\check e}(\cdot) \, d\tilde k^{n,b}(d\hat c,d\check e).
	\]
	From here we get
	\begin{align} \nonumber
		\W_p^p(k^{n,b},\tilde k^{n,b}) &\le \iint d^p( \hat c, \check c ) + d^p(\hat e, \check e) \, dK^{1,b,\hat c} \otimes K^{2,b,\check e} \, \tilde k^{n,b}(d\hat c, d \check e)\\ 
		&= \int d^p(\hat c,\check c) \, dK^{1,b} + \int d^p(\hat e,\check e) \, dK^{2,b} 
		\label{eq:adapted block lemma kernel est1}
		= \W_p^p( \proj_1 \tilde k^{n,b}, \hat b_t^+ ) + \W_p^p ( \check b_t^+,  \proj_2 \tilde k^{n,b} ).
	\end{align}
	Due to Jensen's inequality we obtain
	\begin{align} \label{eq:adapted block lemma kernel est2}
	\begin{split}
		&\left( \int \W_p^p( \proj_1 \tilde k^{n,b}, \hat b_t^+) + \W_p^p ( \proj_2 \tilde k^{n,b}, \check b_t^+ ) \, \pi^n (db)\right)^\frac{1}{p} \\
		&\qquad \le \Big( \int \W_p^p(\hat a_t^+, \hat b_t^+  ) + \W_p^p ( \check a_t^+, \check b_t^+ ) \, K^n(da,db)\Big)^\frac{1}{p} \le \W_p(\pi^n,\pi).
		\end{split}
	\end{align}
	It remains to verify that the sequence $(k^n)_{n \in \N}$ satisfies \eqref{eq:adapted block lemma2}. We have by Minkowski's inequality
	\begin{align*}
		\W_p\left( \pi^n \otimes k^n, \pi \otimes k \right) \le \W_p\left( \pi^n, \pi \right) + \Big( \int \W_p^p(k^a,k^{n,b}) \, K^n(da,db) \Big)^\frac{1}{p}.
	\end{align*}
	Using \eqref{eq:adapted block lemma kernel est1}, \eqref{eq:adapted block lemma kernel est2}, and Minkowski's inequality we  bound the last term of the right-hand side by
	\begin{align*}
		&\Big( \int \W_p^p(k^a,\tilde k^{n,b}) \, K^n(da,db)\Big)^\frac{1}{p} + \Big( \int \W_p^p(\tilde k^{n,b},k^{n,b}) \, \pi^n(db) \Big)^\frac{1}{p} \\
		&\qquad\le \W_p(\pi^n,\pi) +\Big( \iint \W_p(k^a, k^{a'}) \, K^{n,b}(da') \, K^n(da,db) \Big)^\frac{1}{p}.
	\end{align*}
	The last term vanishes by \cite[Lemma 2.7]{Ed19}, since $\int K^{n,b}(\cdot) \otimes K^{n,b}(\cdot) \, \pi^n(db) \in \cpl(\pi,\pi)$ and
	\begin{align*}
	&\Big( \iint d^p(a,a') \, K^{n,b}(da') \, K^{n}(da,db) \Big)^\frac{1}{p} \\
	&\qquad	\le \Big( \iint \big( d(a,b) + d(b,a')\big)^p \, K^{n,b}(da') \, K^n(da,db)\Big)^\frac{1}{p} \le 2 \W_p(\pi^n,\pi).\qedhere
	\end{align*}
\end{proof}

\begin{proposition}[Adapted block approximation] \label{prop:adapted block}
	Let $\fp X, \fp Y \in \CFP_p$, let $\pi \in \cplba(\fp X, \fp Y)$, and let $\varepsilon>0$.
	Then, for every $1\leq t\leq N$,  there are countable partitions $P_t$ of $\Z_t$ and families of measurable functions $(w^{A,B}_t)_{(A,B) \in P_t \times P_t}$ mapping from $\Z_{1:t-1} \times \Z_{1:t-1}$ to $[0,1]$ with
	\begin{align}
	\label{eq:box approximation2}
	\begin{split}
	&\sum_{A \in P_t} w^{A,B}_t(z_{1:t-1},\hat z_{1:t-1}) = \hat z_{t-1}^+(B),\\
	&\sum_{B \in P_t} w^{A,B}_t(z_{1:t-1},\hat z_{1:t-1}) = z_{t-1}^+(A)
	\end{split}
	\end{align}
	such that the coupling $\pi^\varepsilon \in \cplba(\fp X,\fp Y)$
	\begin{align}
		\label{eq:block approximation}
	\begin{split}
		\pi^\varepsilon(dz,d\hat z) 
		&:= \prod_{s = 1}^{N} \sum_{(A_s,B_s) \in P_s \times P_s} w^{A_s,B_s}_s(z_{1:s-1},\hat z_{1:s-1}) 
		 z_{s-1}^+(dz_s | A_s) \hat z_{s - 1}^+(d\hat z_s | B_s),
		\end{split}
	\end{align}
	satisfies $\W_p(\pi,\pi^\varepsilon) < \varepsilon$, where $z_0^+ := \Law(\ip_1(\overline{\fp X}))$ and $z_0^+ := \Law (\ip_1(\overline{\fp Y}))$.
\end{proposition}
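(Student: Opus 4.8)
The plan is to prove Proposition \ref{prop:adapted block} by a backward induction on the time index $t$, reducing the construction at each step to a one-step ``block approximation'' of a probability measure on a Polish space, and then gluing these one-step approximations together using the kernel representation of bicausal couplings from Lemma \ref{lem:bicausal cfp rep}. The starting observation is that any $\pi \in \cplba(\fp X,\fp Y)$ admits, by Lemma \ref{lem:bicausal cfp rep}, a disintegration $\pi = \pi_1 \otimes k_1 \otimes \cdots \otimes k_{N-1}$ with $k_t^{z_{1:t},\hat z_{1:t}} \in \cpl(z_t^+,\hat z_t^+)$; the candidate $\pi^\varepsilon$ in \eqref{eq:block approximation} has exactly this form, where the kernel at time $s$ is
\[
	k_{s-1}^{\varepsilon, z_{1:s-1},\hat z_{1:s-1}}(dz_s,d\hat z_s) = \sum_{(A_s,B_s) \in P_s \times P_s} w_s^{A_s,B_s}(z_{1:s-1},\hat z_{1:s-1})\, z_{s-1}^+(dz_s|A_s)\,\hat z_{s-1}^+(d\hat z_s|B_s),
\]
and \eqref{eq:box approximation2} is precisely the requirement that this kernel has the correct marginals $z_{s-1}^+$ and $\hat z_{s-1}^+$, hence that $\pi^\varepsilon$ is again bicausal.

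The core one-step lemma I would isolate first is the following: given a Polish space $\mathcal S$, two measures $\mu,\nu \in \Pc_p(\mathcal S)$, a coupling $q \in \cpl(\mu,\nu)$, and $\delta > 0$, there is a countable partition $P$ of $\mathcal S$ into Borel sets of small diameter (outside a set of small $\mu$- and $\nu$-measure, using tightness to get finitely many bounded-diameter pieces plus a remainder) and weights $w^{A,B} \ge 0$ with $\sum_A w^{A,B} = \nu(B)$, $\sum_B w^{A,B} = \mu(A)$ — simply take $w^{A,B} := q(A \times B)$ — such that the ``blocked'' coupling $q' := \sum_{A,B} q(A\times B)\, \mu(\cdot|A) \otimes \nu(\cdot|B)$ satisfies $\W_p(q,q') < \delta$. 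This is standard: $q'$ is obtained from $q$ by replacing each conditional $q(\cdot|A\times B)$ with the product $\mu(\cdot|A)\otimes\nu(\cdot|B)$, and since both live on $A\times B$ whose diameter is controlled, the cost of rearranging is small; the remainder block is handled by uniform $p$-integrability / the $\Pc_p$ assumption. I would then promote this to a measurable family indexed by $(z_{1:s-1},\hat z_{1:s-1})$, which is where a measurable selection argument (Jankov--von Neumann, as already used in Lemma \ref{lem:lipschitz kernel}) enters to make $w_s^{A,B}(\cdot,\cdot)$ Borel; crucially one must use a single partition $P_s$ of $\Z_s$ that works simultaneously for all conditionals $z_{s-1}^+, \hat z_{s-1}^+$, which is arranged by refining a fixed countable base of small-diameter sets of $\Z_s$.

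The induction then proceeds backward: having replaced $k_{N-1}$ by a block kernel $k_{N-1}^\varepsilon$ controlling $\W_p$ between $\pi_1 \otimes k_1 \otimes \cdots \otimes k_{N-2} \otimes k_{N-1}$ and $\pi_1\otimes \cdots \otimes k_{N-2}\otimes k_{N-1}^\varepsilon$, I would invoke Lemma \ref{lem:adapted block} to transport this small perturbation through the earlier kernels: Lemma \ref{lem:adapted block} says precisely that if $\W_p(\sigma,\sigma^n)\to 0$ then one can adjust the next kernel $k$ to a kernel $k^n$ with the right marginals so that $\W_p(\sigma\otimes k,\sigma^n\otimes k^n)\to 0$. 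Applying this repeatedly from $t=N-1$ down to $t=1$, with error budget $\varepsilon/N$ at each stage (and using the triangle inequality for $\W_p$), yields a fully blocked coupling $\pi^\varepsilon$ of the form \eqref{eq:block approximation} with $\W_p(\pi,\pi^\varepsilon)<\varepsilon$; bicausality of $\pi^\varepsilon$ is automatic from \eqref{eq:box approximation2} via Lemma \ref{lem:bicausal cfp rep}. The main obstacle I anticipate is the bookkeeping in the measurable-selection step: ensuring that the partitions $P_s$ can be chosen countable and independent of the conditioning variables while the weights $w_s^{A,B}$ remain jointly measurable, and that the perturbations introduced when ``re-blocking'' a kernel via Lemma \ref{lem:adapted block} do not destroy the block structure already achieved at later times — this forces the backward ordering of the induction and careful tracking of which kernels are already in final form.
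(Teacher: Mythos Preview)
Your one-step blocking lemma and the identification of Lemma \ref{lem:adapted block} as the key tool are both correct, and your remark that \eqref{eq:box approximation2} is exactly the marginal condition guaranteeing bicausality via Lemma \ref{lem:bicausal cfp rep} is spot on. The gap is in the direction of the induction. Lemma \ref{lem:adapted block} lets you absorb a perturbation of the \emph{base} measure $\pi_{1:t}$ by adjusting the \emph{next} kernel $k_t$; it does not let you perturb a later kernel and propagate the effect backward. In your scheme, once $k_{N-1}$ has been replaced by a block kernel $k_{N-1}^\varepsilon$, blocking $k_{N-2}$ changes the base $\pi_{1:N-1}$ on which $k_{N-1}^\varepsilon$ sits. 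Since $k_{N-1}^\varepsilon$ is not Lipschitz in $(z_{1:N-1},\hat z_{1:N-1})$, you cannot control $\W_p(\pi_{1:N-1}\otimes k_{N-1}^\varepsilon,\tilde\pi_{1:N-1}\otimes k_{N-1}^\varepsilon)$ directly; invoking Lemma \ref{lem:adapted block} here would produce a \emph{new} kernel $\tilde k_{N-1}$ with the right marginals, but this $\tilde k_{N-1}$ has no reason to be of block form, so the structure you just built at time $N$ is lost. Your closing remark that ``this forces the backward ordering'' has it exactly reversed.

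The paper runs the induction \emph{forward}: first block $\pi_1$ to get $\pi_1^\delta$; then apply Lemma \ref{lem:adapted block} to obtain a (non-block) kernel $k_1$ with $\W_p(\pi_{1:2},\pi_1^\delta\otimes k_1)$ small; then block \emph{this} $k_1$ by setting $w_2^{A,B}(z_1,\hat z_1):=k_1^{z_1,\hat z_1}(A\times B)$ for a fine partition $P_2$, which costs at most twice the mesh size uniformly; and iterate. The point is that once $\pi_{1:t}^\varepsilon$ is fixed in block form it is never touched again---Lemma \ref{lem:adapted block} only manufactures the \emph{next} kernel on top of it, which is then blocked in turn. This also dissolves your measurable-selection worry: the weights are just $w_{t+1}^{A,B}:=k_t(A\times B)$ for the kernel $k_t$ handed to you by Lemma \ref{lem:adapted block}, so no extra selection argument is needed, and the partition $P_{t+1}$ is any fixed countable Borel partition of $\Z_{t+1}$ into sets of diameter at most $\varepsilon$.
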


	Here we used the notation $\nu(\,\cdot \, |A):=\nu(\,\cdot \, \cap A ) / \nu(A)$ if $\nu(A)>0$ with an arbitrary convention otherwise.
	
\begin{proof}
	By Lemma \ref{lem:bicausal cfp rep} a coupling of the form \eqref{eq:box approximation2} is bicausal between $\fp X$ and $\fp Y$.
	For  $1\leq t\leq N$, define
	\[ \pi_{1:t} := (\proj_{\Z_{1:t} \times \Z_{1:t}} )_\ast \pi.\]
	The proof uses an induction over $t$ and we claim the following:
	For given $t$ and for all $\varepsilon>0$ and $1\leq s\leq t$, there are partitions $P_s$ of $\Z_s$ and and mappings $w_s$ as in the statement of the proposition such that $\pi_{1:t}^\varepsilon$ satisfies $\W_p(\pi_{1:t},\pi_{1:t}^\varepsilon) < \varepsilon$.
	Here $\pi_{1:t}^\varepsilon$ is the adapted block approximation up to time $t$, i.e.\ defined as in \eqref{eq:block approximation} but with the product taken over $1\leq s\leq t$ instead of all $1\leq s\leq N$.

	We start with $t=1$.
	To that end, fix $\varepsilon$ and let $P_1$ be a countable partition of $\Z_t$ into measurable sets of diameter at most $\varepsilon$.
	Denote by 
	\[ w^{A,B}_1:= \pi_{1:1}(A\times B)\]
	for $(A,B) \in P_1 \times P_1$.
	Since the diameter of $A$ resp.\ $B$ is smaller than $\varepsilon$, we clearly have that $\W_p(\pi_{1:1},\pi_{1:1}^{\varepsilon})<2\varepsilon$,
	where $\pi_{1:1}^{\varepsilon}$ is the adapted block approximation up to time $t=1$.
	Further a straightforward calculation shows that $w_1$ satisfies  \eqref{eq:box approximation2}.

	Assuming that our induction claim is true for $1\leq t\leq N$, fix some $\varepsilon> 0$.
	By Lemma \ref{lem:adapted block} there is a measurable kernel 
	\[k_t \colon \Z_{1:t} \times \Z_{1:t} \to \Pc_p(\Z_{t+1} \times \Z_{t+1}) \text{ with }k_t^{z_{1:t},\hat{z}_{1:t}}\in\cpl(z_t^+,\hat{z}_t^+)\]
	and $\delta>0$ small enough such that 
	\[
		\W_p\left( \pi_{1:t+1},  \pi_{1:t}^{\delta} \otimes k_t \right) < \varepsilon.
	\]
	Now let $P_{t+1}$ be a countable partition of $\Z_{t+1}$ into measurable sets with diameter at most $\varepsilon$.
	For $(A,B) \in P_{t+1} \times P_{t+1}$ define 
	\[w^{A,B}_{t+1}(z_{1:t},\hat z_{1:t}) := k_t^{z_{1:t},\hat z_{1:t}}(A\times B)\]
	and set
	\begin{align*}
	\tilde k_t^{z_{1:t},\hat z_{1:t}} 
	:= \sum_{(A,B) \in P_{t+1} \times P_{t+1}} w^{A,B}_{t+1}(z_{1:t},\hat z_{1:t}) z_t^+\otimes \hat z_t^+(\,\cdot\,|A\times B).
	\end{align*}
	As the sets in $P_{t+1}$ have diameter at most $\varepsilon$, it follows that 
	\[\W_p(\tilde k_t^{z_{1:t},\hat z_{1:t}}, k_t^{z_{1:t},\hat z_{1:t}}) < 2\varepsilon\]
	for every $z_{1:t},\hat z_{1:t}\in\Z_{1:t}$.
	Further, recalling that $k_t^{z_{1:t},\hat{z}_{1:t}}\in\cpl(z_t^+,\hat{z}_t^+)$, it follows that $ w^{A,B}_{t+1}$ satisfies \eqref{eq:box approximation2}.
	Finally set 
	\[\pi_{1:t+1}^{\varepsilon} := \pi_{1:t}^{\delta} \otimes \tilde k_t .\]
	A straightforward calculation shows that $\W_p(\pi_{1:t+1},\pi_{1:t+1}^{\varepsilon}) < 3 \varepsilon$.
	It remains to note that $\pi_{1:t+1}^{\varepsilon}$ has the form as claimed in our induction statement, which completes the proof.
\end{proof}

\begin{theorem} \label{thm:adapted block}
	Let $\fp X, \fp Y \in \FP_p$, let $\overline{\fp X}, \overline{\fp Y}$ be their associated canonical processes, and let $\pi \in \cplba(\overline{\fp X}, \overline{\fp Y})$.
	Then, for every $\varepsilon > 0$, there is $\Pi^\varepsilon \in \cplba(\fp X, \fp Y)$ such that
	\begin{align}
		\W_p\left( \pi, (\ip(\fp X),\ip(\fp Y))_\ast \Pi^\varepsilon \right) < \varepsilon.
	\end{align}
\end{theorem}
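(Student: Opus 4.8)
The plan is to reduce Theorem~\ref{thm:adapted block} to the adapted block approximation of Proposition~\ref{prop:adapted block} by showing that block approximations of bicausal couplings on the \emph{canonical} space can be pulled back to bicausal couplings on the original filtered spaces. First I would fix $\pi \in \cplba(\cfp X, \cfp Y)$, invoke Proposition~\ref{prop:adapted block} to obtain, for given $\varepsilon > 0$, a block approximation $\pi^\varepsilon \in \cplba(\cfp X, \cfp Y)$ with $\W_p(\pi, \pi^\varepsilon) < \varepsilon$, together with the associated countable partitions $(P_t)_{t=1}^N$ and weight functions $(w_t^{A,B})$. Since $\W_p\big(\pi, (\ip(\fp X), \ip(\fp Y))_\ast \Pi^\varepsilon\big) \le \W_p(\pi, \pi^\varepsilon) + \W_p\big(\pi^\varepsilon, (\ip(\fp X), \ip(\fp Y))_\ast \Pi^\varepsilon\big)$, it suffices to construct $\Pi^\varepsilon \in \cplba(\fp X, \fp Y)$ with $(\ip(\fp X), \ip(\fp Y))_\ast \Pi^\varepsilon = \pi^\varepsilon$ exactly (or within another $\varepsilon$).

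The key point is that the specific product structure of $\pi^\varepsilon$ in \eqref{eq:block approximation} is ``constructive'' enough to be realized downstairs. I would build $\Pi^\varepsilon$ recursively in $t$. Recall that $\ip_t^-(\fp X) = X_t$ and $\ip_t^+(\fp X) = \Law(\ip_{t+1}(\fp X)\mid \F_t^\fp X)$; the partition sets $A \in P_t$ pull back to events $\{\ip_t(\fp X) \in A\} \in \F_t^\fp X$, and similarly $B \in P_t$ pulls back along $\ip_t(\fp Y)$. At stage $t$, conditionally on the information generated up to time $t-1$ on both sides (i.e.\ $\F_{t-1,t-1}^{\fp X,\fp Y}$), I would couple the ``next block'' by assigning, for each pair $(A,B) \in P_t \times P_t$, mass $w_t^{A,B}\big(\ip_{1:t-1}(\fp X), \ip_{1:t-1}(\fp Y)\big)$ and then, within the pair of blocks, coupling $\Law\big(\ip_t(\fp X)\mid \F_{t-1}^\fp X, \ip_t(\fp X)\in A\big)$ with $\Law\big(\ip_t(\fp Y)\mid \F_{t-1}^\fp Y, \ip_t(\fp Y)\in B\big)$ by an \emph{independent} product (mirroring $z_{t-1}^+(dz_t\mid A)\,\hat z_{t-1}^+(d\hat z_t\mid B)$ in \eqref{eq:block approximation}). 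Here I would use that the underlying probability spaces of the canonical processes are Polish so that regular conditional distributions exist, and — if necessary — enlarge $\Omega^\fp X \times \Omega^\fp Y$ by auxiliary independent uniform randomization to realize the required conditional independence (which does not change $\AW_p$ nor the pushforwards under $\ip$). The marginal constraints \eqref{eq:box approximation2} are exactly what guarantee that this recursive recipe produces a genuine coupling of $\P^\fp X$ and $\P^\fp Y$, and the block-product form guarantees bicausality via the kernel characterization in Lemma~\ref{lem:causal CI} / Lemma~\ref{lem:bicausal cfp rep}, since at each step the joint conditional law of the next coordinates depends only on the current ones.

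The main obstacle — and the step I would spend most care on — is verifying that the recursively defined $\Pi^\varepsilon$ is indeed \emph{bicausal} between $\fp X$ and $\fp Y$, not just a coupling with the correct $\ip$-pushforward. The subtlety is that $\F_t^\fp X$ may be strictly larger than $\sigma(\ip_{1:t}(\fp X))$: the block approximation only ``sees'' the information process, so I must check that conditioning on the full $\sigma$-algebra $\F_{t,t}^{\fp X,\fp Y}$ (rather than on $\sigma(\ip_{1:t}(\fp X), \ip_{1:t}(\fp Y))$) still yields the product-kernel structure. This is where the self-awareness of the information process (Lemma~\ref{lem:ip.self.aware}) and the tower-property arguments already used in the proof of Lemma~\ref{lem:process to canonical} come in: one shows that for $\F_t^\Z$-measurable test functions the relevant conditional expectations under $\Pi^\varepsilon$ collapse to functions of $\ip_{1:t}$, so the extra information in $\F_t^\fp X$ is irrelevant for checking causality. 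Once bicausality is established, the identity $(\ip(\fp X), \ip(\fp Y))_\ast \Pi^\varepsilon = \pi^\varepsilon$ follows by construction (or up to an arbitrarily small Wasserstein error coming from the measurable-selection/randomization step), and then the triangle inequality closes the argument. I would also remark that this is precisely the technical ingredient that Theorem~\ref{thm:isometry} cites in order to drop the Polishness assumption on $\Omega^\fp X, \Omega^\fp Y$, so the proof here should be stated for general filtered probability spaces, using the block structure (which only involves countable partitions and explicit conditional kernels) rather than disintegration of arbitrary couplings.
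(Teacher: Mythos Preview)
Your overall strategy matches the paper's: invoke Proposition~\ref{prop:adapted block} to replace $\pi$ by a block approximation $\pi^\varepsilon$, then lift $\pi^\varepsilon$ to a bicausal coupling $\Pi^\varepsilon$ on $\Omega^{\fp X}\times\Omega^{\fp Y}$ with $(\ip(\fp X),\ip(\fp Y))_\ast\Pi^\varepsilon=\pi^\varepsilon$. The gap is in the lift itself. Your recursive recipe specifies, at each stage $t$, the conditional law of $(\ip_t(\fp X),\ip_t(\fp Y))$ given $\F_{t-1,t-1}^{\fp X,\fp Y}$, but this does \emph{not} determine a measure on $\Omega^{\fp X}\times\Omega^{\fp Y}$: you still have to say how the full $\omega,\hat\omega$ are coupled, not just their $\ip$-images. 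Turning your description into an actual $\Pi^\varepsilon$ would require either regular conditional distributions of $\P^{\fp X}$ given $\F_{t-1}^{\fp X}$ (which need not exist on non-Polish $\Omega^{\fp X}$---precisely the situation this theorem is designed to handle) or the external randomization you mention. The latter is not admissible here: the statement asks for $\Pi^\varepsilon\in\cplba(\fp X,\fp Y)$ on the \emph{given} spaces, and arguing that enlarging the space ``does not change $\AW_p$'' is circular, since Theorem~\ref{thm:isometry} invokes the present theorem exactly to establish that invariance in the non-Polish case.

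The paper closes this gap with a one-line observation you nearly reach when you write ``independent product within blocks'': since inside each block $(A,B)$ the approximation $\pi^\varepsilon$ uses the product $z_{t-1}^+(\,\cdot\mid A)\otimes\hat z_{t-1}^+(\,\cdot\mid B)$, the whole of $\pi^\varepsilon$ has an explicit density with respect to $\Law(\ip(\fp X))\otimes\Law(\ip(\fp Y))$. Pulling this density back along $(\ip(\fp X),\ip(\fp Y))$ gives
\[
\frac{d\Pi^\varepsilon}{d(\P^{\fp X}\otimes\P^{\fp Y})}=\prod_{t=1}^N D_t,
\qquad
D_t=\sum_{(A,B)\in P_t\times P_t} 1_{A\times B}(\ip_t(\fp X),\ip_t(\fp Y))\,\frac{w_t^{A,B}(\ip_{1:t-1}(\fp X),\ip_{1:t-1}(\fp Y))}{\ip_{t-1}^+(\fp X)(A)\,\ip_{t-1}^+(\fp Y)(B)},
\]
which is well-defined on \emph{any} $\Omega^{\fp X}\times\Omega^{\fp Y}$ with no disintegration and no randomization. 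The marginal constraints~\eqref{eq:box approximation2} give $\Pi^\varepsilon\in\cpl(\fp X,\fp Y)$, and bicausality is checked via the concrete identity
\[
\E_{\P^{\fp X}\otimes\P^{\fp Y}}\!\big[D_{t+1}\,U\,\big|\,\F_{t,t}^{\fp X,\fp Y}\big]=\E_{\P^{\fp X}}\!\big[U\,\big|\,\F_t^{\fp X}\big]
\quad\text{for bounded }\F_N^{\fp X}\text{-measurable }U,
\]
which follows directly from~\eqref{eq:box approximation2} and the definition of $\ip_t^+(\fp X)$; iterating with the tower property yields Lemma~\ref{lem:causal CI}\ref{it:causal CI1}. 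This is the same ``extra information in $\F_t^{\fp X}$ is irrelevant'' phenomenon you anticipate, but made completely explicit by the density formulation rather than by appeal to Lemma~\ref{lem:ip.self.aware}.
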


\begin{proof}
	Let $\pi^\varepsilon$ be the adapted block approximation of $\pi$ given in  Proposition \ref{prop:adapted block}.
	Then we can express $\pi^\varepsilon$ as 
	\begin{align*}
		\pi^\varepsilon(dz,d\hat z) 
		&=  \prod_{t = 1}^N  \sum_{(A_t,B_t) \in P_t \times P_t} 1_{A_t \times B_t}(z_t,\hat z_t) \frac{w_{t}^{A_t,B_t}(z_{1:t-1},\hat z_{1:t-1})}{z_{t-1}^+(A_t) \hat z_{t-1}(B_t)}  \big( \mu\otimes\nu \big)(dz,d\hat z).
	\end{align*}
	With this representation of $\pi^\varepsilon$ in mind, by slight abuse of notation write $1_A=1_A(\ip_t(\fp X))$ for $A\in P_t$ and similarly for $\fp Y$, and define 
		\[ D_t:= \sum_{(A,B)\in P_t\times P_t} 1_{A\times B} \frac{ w^{A,B}_t(\ip_{1:t-1}(\fp X), \ip_{1:t-1}(\fp Y))}{\ip_{t-1}^+(\fp X)(A) \ip_{t-1}^+(\fp Y)(B)} \]
	for every $1\leq t\leq N$.
	Now let $\Pi^\varepsilon$ be the measure absolutely continuous w.r.t.\ $\P^\fp X \otimes \P^\fp Y$ with density	
	\[
		\frac{d\Pi^\varepsilon }{d\P^\fp X \otimes \P^\fp Y}
		:= \prod_{t = 1}^N D_t.
	\]
	In particular $\pi^\varepsilon=(\ip(\fp X), \ip(\fp Y))_\ast \Pi^\varepsilon$.
	Before proving the theorem, we interject the following claim.

	\emph{Auxiliary claim:} For every $1 \le t \le N-1$ and every $U$ that is $\F^\fp X_N$-measurable and bounded, we have that
	\[
	 \E_{\P^\fp X \otimes \P^\fp Y} \left[ D_{t+1} U  \middle| \F_{t,t}^{\fp X,\fp Y}  \right]	
	 = \E_{\P^\fp X} \left[ U \middle| \F_{t}^\fp X \right].
	\]
	To see that this claim is true, note that by definition of $D_{t+1}$ the left-hand side equals
	\begin{align*}
	& \sum_{(A,B) \in P_{t+1} \times P_{t+1}} \frac{ w^{A,B}_{t+1}(\ip_{1:t}(\fp X), \ip_{1:t}(\fp Y))}{\ip_{t}^+(\fp X)(A) \ip_{t}^+(\fp Y)(B)} \E_{\P^\fp X} \left[ U 1_{A } | \F_{t}^\fp X\right] \P( \ip_{t+1}(\fp Y) \in B|\F_{t}^\fp Y)
	\\	
	&\qquad= \sum_{A \in P_{t+1}} \E_{\P^\fp X} \left[ U 1_{A} | \F_{t}^\fp X\right] \sum_{B \in P_{t+1}} \frac{w_{t+1}^{A,B}(\ip_{1:t}(\fp X), \ip_{1:t}(\fp Y))}{\ip_{t}^+(\fp X)(A)}.
	\end{align*}
	The property  \eqref{eq:box approximation2} of Proposition \ref{prop:adapted block} of $w$ implies that the sum over $B\in P_{t+1}$ equals 1.
	Further, as $P_{t+1}$ is a partition, our claim follows.
	
	We are now ready to prove the theorem.
	In the first step note that $\Pi^\varepsilon$ is indeed a coupling of $\fp X$ and $\fp Y$, which follows from property \eqref{eq:box approximation2} of $w$.
	Further, for symmetry reasons, it suffices to show that $\Pi^\varepsilon$ is causal from $\fp X$ to $\fp Y$.
	By Lemma \ref{lem:causal CI}, the latter holds true if
	\[
		\E_{\Pi^\varepsilon} \left[ U V \right] 
		= \E_{\Pi^\varepsilon} \left[ V \E_{\P^\fp X} \left[ U | \F_t^\fp X \right] \right]
	\]
	for every $1 \le t \le N-1$ and every $U$ and $V$ that are bounded and $\F_N^\fp X$ and $\F_{t,t}^{\fp X,\fp Y}$-measurable, respectively.
	Now fix such $t$, $U$, and $V$.	
	
The definition of $\Pi^\varepsilon$, and iteratively applying the tower property and our auxiliary claim imply that 
	\begin{align*}
		\E_{\Pi^\varepsilon} [ U V ] 
		&= \E_{\P^\fp X \otimes \P^\fp Y} \left[  \prod_{s = 1}^{N-1} D_s  D_N U V  \right] \\
		&= \E_{\P^\fp X \otimes \P^\fp Y} \left[ \prod_{s = 1}^{N-1} D_s  V \E_{\P^\fp X \otimes \P^\fp Y} \left[ D_N U \middle| \F_{N-1,N-1}^{\fp X,\fp Y}  \right]\right] \\
		&= \E_{\P^\fp X \otimes \P^\fp Y} \left[ \prod_{s = 1}^{N-1} D_s  V \E_{\P^\fp X } \left[ U \middle| \F_{N-1}^{\fp X} \right] \right]
		=\ldots = \E_{\P^\fp X \otimes \P^\fp Y} \left[ \prod_{s=1}^t D_s V  \E_{\P^\fp X}\left[ U \middle| \F_{t}^\fp X \right] \right].
	\end{align*}
	Finally, note that the auxiliary claim (applied with $U=1$) also shows that, for every $1\leq s\leq N-1$, we have $\E_{\P^\fp X \otimes \P^\fp Y} \left[  D_{s+1} | \F_{s,s}^{\fp X,\fp Y} \right] = 1$.
	Hence, another application of the tower property gives 
	\[
		\E_{\Pi^\varepsilon} [ U V ] 
		=\E_{\P^\fp X \otimes \P^\fp Y} \left[ \prod_{s=1}^t D_s V  \E_{\P^\fp X}[ U | \F_{t}^\fp X ] \right]
		=\E_{\Pi^\varepsilon} [  V  \E_{\P^\fp X}[ U | \F_{t}^\fp X ] ].
	\]
	This concludes the proof.
\end{proof}

\section{Infinite discrete time}
\label{sec:infinite.time}

The main focus of this article is on stochastic processes in a finite discrete time framework. In this section, we consider the class of stochastic processes with discrete but infinite time horizon. In fact, many of our results  carry over to this  instance, by simple limit arguments. 

We consider the class $\FP^{(\infty)}$ of  processes of the form
$$\fp X = (\Omega, \F, \P, (\F_t)_{t=1}^\infty,  (X_t)_{t=1}^\infty),$$
where $X=(X_t)_{t=1}^\infty $ takes values\footnote{As above, the results and arguments of this section are  valid in the case of an arbitrary Polish state space.} in $\R^{\infty}$. To turn $\R^{\infty}$ into a Polish space, we equip it with the product topology and, more specifically, with the distance 
\[ d(x,y)=d_p(x,y):= \Big(\sum_{t= 1}^\infty \frac{1}{2^t} (|x_t-y_t|^p \wedge 1) \Big)^\frac{1}{p}.\]
In complete analogy with Definition \ref{def:causal} above we can then consider bicausality as well as   $\AWA_p^{(\infty)}$ on $\mathcal P_p (\R^{\infty})$ and 
$\FP^{(\infty)}_p$. 
As above we write $\FFP^{(\infty)}_p$ for the class obtained after identifying $\fp X, \fp Y$ with $\AWA_p(\fp X, \fp Y)=0$. 
Note that  the metric $d$ is bounded, consequently $\FP^{(\infty)}_p$ / $\FFP^{(\infty)}_p$ do not depend on the choice of $p$ and  $\mathcal P_p(\R^{\infty})$ carries the usual weak topology.
In analogy to the results stated in the introduction we then obtain: 
\begin{theorem}\label{MainTheoremInfiniteCase}
The following hold.
\begin{enumerate}[label=(\roman*)]
\item \label{it:mtinf.1} $\AWA_p^{(\infty)}$ is a metric on $\FFP^{(\infty)}_p$.
\item \label{it:mtinf.2} $(\FFP^{(\infty)}_p, \AWA^{(\infty)}_p)$ is the completion of $(\mathcal P_p(\R^\infty), \AWA_p^{(\infty)})$, where we identify laws in $\mathcal P_p(\R^\infty)$ with corresponding plain processes.
\item \label{it:mtinf.3} The set
	\[
		\left\{ \fp X \in \FFP^{(\infty)}_p \colon \fp X \text{ is Markov and has a representative on a finite probability space} \right\}
	\]
	is dense in $\FFP^{(\infty)}_p$.
\item \label{it:mtinf.4} If a sequence $(\fp X^n)_n \subset \FFP^{(\infty)}_p$ of martingales converges to $\fp X\in \FFP_p^{(\infty)}$ w.r.t.\ $\AW_p^{(\infty)}$ and $\{ X^n: n\in\N\}$ is uniformly integrable, then $\fp X$ is a martingale.
\item \label{it:mtinf.5} If a sequence $(\fp X^n)_n \subset \FFP^{(\infty)}_p$  converges to $\fp X\in \FFP_p^{(\infty)}$ w.r.t.\ $\AW_p^{(\infty)}$ and $\{ X^n: n\in\N\}$ is uniformly integrable, then the Doob-decomposition $\fp D^{\fp X^n}$ of $\fp X^n$ converge to the Doob-decomposition $\fp D^{\fp X}$ of $\fp X$ w.r.t.\ $\AW_p$.
\item \label{it:mtinf.6} If $G_t\colon\R^t\to \R$ is bounded,  continuous for each $t\in \N$, and  $\lim_{t\to \infty} \| G_t\|_\infty =0 $ then
\begin{align}
\label{OptStopN}
\sup\{ \E[ G_\tau (X_1, \ldots, X_\tau)] :  \tau \text{ is finite stopping time}\}
\end{align} 
 is continuous  in $\fp{X}$. 
\item \label{it:mtinf.7} (`Prohorov') A set $K\subseteq \FFP^{(\infty)}_p$ is precompact if and only if the respective set of laws in  $\mathcal P_p(\mathbb{R}^\infty)$ is precompact.
\end{enumerate}
\end{theorem}


To establish Theorem \ref{MainTheoremInfiniteCase} we need some  notations. 
We write $\FP^{(N)}_p $ for the set of $N$-step filtered processes,  $\AW_p^{(N)}$ for the adapted Wasserstein distance w.r.t.\ $d^{(N)}$ given by
$$  d^{(N)}(x,y)=d^{(N)}_p(x,y) = \Big( \sum_{t= 1}^N \frac{1}{2^t} (|x_t-y_t|^p \wedge 1) \Big)^{\frac{1}{p}},$$
and we write $\FFP^{(N)}_p$ for the space obtained after identifying equivalent processes. 
For $N\in \N$ we consider the function
$ r_N\colon \bigcup_{M\in\{N,N+1,\ldots,\infty\}} \FP^{(M)}_p\to  \FP^{(N)}_p$ given by
\[r_N \big( (\Omega, \F, \P, (\F_t)_{t=1}^M,  (X_t)_{t=1}^M)\big) 
= (\Omega, \F, \P, (\F_t)_{t=1}^N,  (X_t)_{t=1}^N).\]

The next two lemmas will allow us to derive Theorem  \ref{MainTheoremInfiniteCase} from the respective results in the finite time horizon case.

\begin{lemma}\label{SimilarMetric}
	For every $\fp X, \fp Y\in\FP_p^{(\infty)}$ and every $N\in\N$, we have
	\[ \AW_p^{(N)} (r_N(\fp X), r_N(\fp Y) )  
	\leq  \AW_p^{(\infty)} (\fp X, \fp Y ) 
	\leq   \AW_p^{(N)} (r_N(\fp X), r_N(\fp Y) ) + \frac 1 {2^N}.
	\]
	In particular $\AW_p^{(\infty)} (\fp X, \fp Y ) = \lim_{N\to \infty}  \AW_p^{(N)} (r_N(\fp X), r_N(\fp Y) ) $ for $\fp X, \fp Y \in \FP^{(\infty)}_p$.
\end{lemma}
\begin{proof} 
	The first inequality is trivial. 
	Similar to Theorem \ref{thm:isometry} the other inequality has a short proof under the additional assumption that  $\fp X, \fp Y$ are supported by Polish probability spaces: 
	Indeed, let $\pi \in \cplba(r_N(\fp X), r_N(\fp Y))$ and note that 
	\[\bar\pi :=(\ip(r_N(\fp X), \ip(r_N(\fp Y)))_\ast \pi \in \cplba(\overline{r_N(\fp X)}, \overline{r_N(\fp Y)}).\]
We then consider 
\[
		\gamma := \left( \id, \ip(r_N (\fp X)) \right)_\ast \P^\fp X 
		\quad\text{and}\quad
		 \hat{\gamma} := \left( \id, \ip(r_N(\fp Y)) \right)_\ast \P^\fp Y.\]
	As in the proof of Theorem \ref{thm:isometry},  these couplings  admit disintegrations $(\gamma_{z})_{z}$ and $(\hat{\gamma}_{\hat{z}})_{\hat{z}}$ w.r.t.\ the second variable  (since the considered probability spaces are Polish by assumption) and we may consider $		\Pi(d\omega,d\hat{\omega}) := \int \gamma_z(d\omega) \hat{\gamma}_{\hat{z}}(d\hat{\omega}) \, \overline{\pi}(dz,d\hat{z}).$ 
Arguing similar as in the proof of Theorem \ref{thm:isometry} we then obtain that $\Pi \in \cplbc(\fp X, \fp Y)$. 
Since $\pi$ was arbitrary, it is straightforward to verify that $\AW_p^{(\infty)} (\fp X, \fp Y ) 
	\leq   \AW_p^{(N)} (r_N(\fp X), r_N(\fp Y) ) + \frac 1 {2^N}$.

We now drop the assumption that the underlying probability spaces are Polish. 
Let\linebreak 	$\pi \in \cplbc(\overline{r_N(\fp X)}, \overline{r_N(\fp Y)})$. 
We first claim that for every $\varepsilon > 0$, there exists $\Pi^\varepsilon \in \cplbc(\fp X, \fp Y)$ such that
	\begin{equation}
		\label{eq:InfiniteBlockApprox}
		\W_p(\pi, (\ip(r_N(\fp X)), \ip(r_N(\fp Y)))_\ast \Pi^\epsilon) < \varepsilon.	
	\end{equation}
	Indeed, let $\Pi^\varepsilon \in \cplbc(r_N(\fp X), r_N(\fp Y))$ be the coupling constructed in (the proof of) Theorem \ref{thm:adapted block}; hence
	$
		\frac{d\Pi^\varepsilon}{d \P^\fp X \otimes \P^\fp Y} = \prod_{t = 1}^N D_t
	$
	for some $(\F^{\fp X,\fp Y}_{t,t})_{t = 1}^N$-adapted process $D = (D_t)_{t = 1}^N$.
	To see that $\Pi^\varepsilon \in \cplbc(\fp X,\fp Y)$, let $V$ be bounded and $\F^\fp X$-measurable.
	Recall the `chain rule' for conditional expectations:  if $\alpha$ and $\beta$ are two probability measures such that $\frac{d\alpha}{d\beta}=Z$, then $\E_\alpha[\, \cdot\,|\mathcal{H}]=\E_\beta[Z \cdot | \mathcal{H}]/ \E_\beta[Z|\mathcal{H}]$.
	Hence, 
	\begin{align*}
		\E_{\Pi^\varepsilon}[V | \F_{t,t}^{\fp X,\fp Y}] 
		&= \E_{\P^\fp X \otimes \P^\fp Y} \left[ \prod_{s = t+1}^N D_s  V | \F^{\fp X, \fp Y}_{t,t} \right]
		= 
		\E_{\P^\fp X \otimes \P^\fp Y} \left[ \prod_{s = t+1}^N D_s \E_{\P^\fp X \otimes \P^\fp Y}  \left[ V | \F_{N,N}^{\fp X, \fp Y} \right] | \F^{\fp X, \fp Y}_{t,t} \right]
	\end{align*}
	where the second inequality follows from the tower property.
	Moreover, $\E_{\P^\fp X \otimes \P^\fp Y}  [ V | \F_{N,N}^{\fp X, \fp Y} ]= \E_{\P^\fp X} [ V | \F_{N}^{\fp X} ]$ because $\P^\fp X \otimes \P^\fp Y \in \cplbc(\fp X, \fp Y)$; thus by the chain rule   and because $\Pi^\varepsilon \in \cplbc(r_N(\fp X), r_N(\fp Y))$,
	\[\E_{\P^\fp X \otimes \P^\fp Y} \left[ \prod_{s = 1}^N D_s  \E_{\P^\fp X}[ V | \F_{N}^\fp X ]| \F^{\fp X, \fp Y}_{t,t} \right] \\
		= \E_{\Pi^\varepsilon}[ \E_{\P^\fp X}[ V | \F_{N}^\fp X ]| \F_{t,t}^{\fp X,\fp Y}] 
		= \E_{\P^\fp X}[V | \F^\fp X_t]. \]
	Hence, $\Pi^\varepsilon$ is causal and thus, by symmetry, bicausal.

It follows that 
	\[
		\E_\pi[d_p^{(N)}(X,Y)^p]^\frac1p + \frac{1}{2^N} + \varepsilon \ge \E_{\Pi^\varepsilon}[d_p^{(N)}(X,Y)^p]^\frac1p + \frac{1}{2^N} \ge \E_{\Pi^\varepsilon}[d_p(X,Y)^p]^\frac1p,
	\]
	where we used \eqref{eq:InfiniteBlockApprox} for the first inequality.
	As $\varepsilon > 0$ and $\pi \in \cplbc(r_N(\fp X), r_N(\fp Y))$ were fixed but arbitrary, this concludes the proof.
\end{proof}

The following lemma is a version of the Kolmogorov extension theorem. \
\begin{lemma}\label{KolmogorovConsistency}
Let $\fp X ^{(N)}\in \FP^{(N)}_p, N\in \N$ be a sequence of filtered processes such that $r_N(\fp X ^{(N+1)})= \fp X ^{(N)}, N\in \N$.  Then there exists a stochastic process $\fp X\in \FP^{(\infty)}_p$ defined on a Polish probability space such that 
\[ \AW_p^{(N)}(r_N(\fp X), \fp X ^{(N)}) =0 \text{ for all } N\in \N.
\]
\end{lemma}
\begin{proof}
	For every $N \in \N$ and $1 \le k \le N$, we denote by $I^{k,N}$ the information processes of $r_k(\fp X^{(N)})$, that is, $(I^{k,N}_t)_{t=1}^k := (\ip_t( r_k(\fp X^{(N)}))_{t=1}^k$.
	Hence, by assumption, for every $N \in \N$ and $M\geq N$,
	\begin{equation}
		\label{eq:KolmogorovConsistency1}
	\Law\big( ( I_t^{k,N})_{1 \le t \le k \le N} \big) 	
	=\Law\big( ( I_t^{k,M} )_{ 1 \le t \le k \le N } \big).
	\end{equation}
	By the Kolmogorov extension theorem there exists a Polish probability space $(\Omega,\F,\P)$ supporting random variables $\{ J^k_t : k \in \N, 1 \le t \le k \}$ such that for all $N\geq k$,
	\begin{equation}
		\label{eq:KolmogorovConsistency}
		\Law\big( (I^{k,N}_t )_{1 \le t \le k \le N } \big)
		=\Law \big( (J^k_t )_{1 \le t \le k \le N} \big).
	\end{equation}
	Note that  (for $t<k$) $J^k_t$ has two components, $J^k_t=( J^{k,-}_t, J^{k,+}_t)$, and define the filtered process $\fp X$ as the tuplet
	\[
		\fp X := \left( \Omega,\F,\P, (\F_t)_{t=1}^\infty, X := (J_t^{t, -})_{t=1}^\infty \right)
	\] 
	where $\F_t := \sigma ( J^k_s : k \in \N, 1 \le s \le t\wedge k  )$.
	Note that $J^{k,-}_t = J^{t,-}_t$ $\P$-a.s.\ (for $t\leq k$).

	We claim that for every $1\leq t\leq N$ we have $\P$-a.s. 
	\[
		\ip_t(r_N(\fp X)) = J_t^N,
	\]
	which would yield the assertion of the lemma.

	We proceed to show this claim by backward induction:
	Indeed, when $t = N$ we have that $J_N^N = X_N = \ip_N(r_N(\fp X))$.
	Next, assume the claim to be true for $t+1$.
	Then we have $\P$-a.s.
	\begin{align*}
		\ip_t^-(r_N(\fp X)) = J_t^{N,-} \text{ and } \ip_t^+(r_N(\fp X)) = \Law_\P\left( J^N_{t + 1} | \F_t \right)
	\end{align*}
	So, it remains to show that $\Law_\P\left( J^N_{t + 1} | \F_t \right)$ coincides with $J_t^{N,+}$.
	To this end, define the $\sigma$-algebras
	\[
		\tilde \F^{N}_t := \sigma \left( I_s^{k,N} : 1 \le k \le N, 1 \le s \le k \wedge t \right).
	\]
	By Lemma \ref{lem:ip.self.aware}, for $t+1\leq k \leq N$, we have $\P^{\fp X^{(N)}}$-a.s.
	\begin{equation}
		\label{eq:KolmogorovConsistency3}
		I^{k,N,+}_t = \Law_{\P^{\fp X^{(N)}}} \left( I^{k,N}_{t + 1} | \F_t^{\fp X^{(N)}}\right) = \Law_{\P^{\fp X^{(N)}}} \left( I^{k,N}_{t + 1}  | \tilde \F_t^N \right).
	\end{equation}	
	Setting $\F^N_t=\sigma(J^k_s : 1 \le k \le N, 1\leq s\leq k \wedge t)$, we obtain from \eqref{eq:KolmogorovConsistency}  that
	\begin{equation}
		\label{eq:KolmogorovConsistency4}
		\Law\left(J_t^{k,+}, \Law_\P\left( J_{t + 1}^k | \F_t^N \right)\right)
		=\Law	\left(I^{k,N,+}_t, \Law_{\P^{\fp X^{(N)}}} \left( I^{k,N}_{t + 1}  | \tilde \F_t^N \right) \right).
	\end{equation}
	Let $M \ge N$ and set $k = N$.
	Then \eqref{eq:KolmogorovConsistency3} and \eqref{eq:KolmogorovConsistency4} yield $\P$-a.s.
	\begin{equation}
		\label{eq:KolmogorovConsistency5}
		\Law_\P\left( J^N_{t + 1} | \F_t^N \right) = J_t^{N,+} = \Law\left( J^N_{t + 1} | \F_t^M \right).
	\end{equation}
	As $M \ge N$ was arbitrary and $(\F_t^M)_{M=1}^\infty$ is increasing and $\lim_{M \to \infty} \F_t^M$ generates $\F_t$, we find that \eqref{eq:KolmogorovConsistency5} also holds true when replacing $\F^M_t$ by $\F_t$. This concludes the proof.
\end{proof}

\begin{proof}[Proof of Theorem \ref{MainTheoremInfiniteCase}]
\ref{it:mtinf.1}: This follows from Lemma \ref{SimilarMetric} and the fact that  $\AW_p^{(N)} $ is  a metric for each $N\in \N$.

\ref{it:mtinf.2}: To verify completeness, consider a Cauchy sequence $\fp X^m\in \FFP^{(\infty)}_p$, $m\in \N$. 
Passing to subsequences and using a diagonalization argument,  there exist an increasing sequence $(m_k)_{k\geq 1}$ and filtered processes $\fp Y^{(N)}\in \FFP^{(N)}_p$ such that for each $N\in \N$
$$r_N(\fp X^{m_k}) \to \fp Y^{(N)}.$$ 
By the consistency result (i.e.\ Lemma \ref{KolmogorovConsistency}) there is a process $\fp Y\in \FFP^{(\infty)}_p $ such that $\fp Y^{(N)}=r_N(\fp Y), N\in \N$ and then $\fp  X^{m_k} \to \fp Y$.

To see denseness of $\mathcal P_p(\R^\infty)$, we can (for instance) note that 
\begin{align}\label{InitialParts}
\bigcup_{N\in \N} \left\{
(\Omega, \F, \P, (\F_t)_{t=1}^\infty,  (X_t)_{t=1}^\infty) :
\begin{array}{l}
 (\Omega, \F, \P, (\F_t)_{t=1}^N,  (X_t)_{t=1}^N)\in \FFP^{(N)}_p,\\
  X_{s}=X_N, \F_s=\F_N, s\geq N
\end{array} 
\right\}
\end{align}
is dense; hence the claim follows from Theorem \ref{thm:dense}. 

\ref{it:mtinf.3}: This follows again from the denseness of the set in \eqref{InitialParts} together with Theorem \ref{thm:dense}.

\ref{it:mtinf.4} and \ref{it:mtinf.5}: First note that a function $\Psi\colon \FFP_p^{(\infty)}\to \FFP_p^{(\infty)} $ is continuous if and only if for every $N$, $\Psi\circ r_N$ is continuous.
Hence, \ref{it:mtinf.4} follows from Proposition \ref{lem:stopping.lipschitz} and \ref{it:mtinf.5} follows exactly as in the proof of Proposition \ref{prop:doob}.

\ref{it:mtinf.6}: This is a straightforward consequence of Lemma \ref{SimilarMetric}  and Theorem \ref{thm:OptStopIntro}

\ref{it:mtinf.7}: A set $K\subseteq \FFP^{(\infty)}_p$ is precompact if and only if all the sets $r_N[K], N\geq 1 $ are precompact; and a set $\tilde K \subseteq \mathcal{P}_p(\R^\infty)$ is precompact if and only if all the sets $r_N[\tilde K], N\geq 1 $ are precompact (where we interpret $r_N$ as a function on $\mathcal{P}_p(\R^\infty)$ by identifying again laws and processes). 
The result thus follows from Theorem \ref{CompactPre}.
\end{proof}

\section{Comments on the continuous time case}
\label{sec:cont.time}

Depending on the context and intended application, different authors have considered different `adapted' notions of equivalence and similarity  for stochastic processes. 
As we  discuss  below, these notions can be rephrased using ideas from adapted transport as considered above.
Therefore, we believe that the framework and results developed in the present paper provide a blueprint for the respective  theories in the continuous time case. 
 We  describe some natural `adapted' topologies / distances from the perspective of the present paper:

On the one end of the spectrum, the direct continuous time extension of the distance considered in the current paper is 
\[ \mathcal{AW}_p(\fp X,\fp Y) 
=\inf_{\pi\in\cplbc(\fp X,\fp Y)} \E_\pi[ d^p(X,Y)]^{\frac{1}{p}} \]
where $d$ is a distance on the paths. This or very closely related notions are considered (on path spaces) for instance in \cite{AcBaZa20,BaBaBeEd19a, BaBeHuKa20} and 
 `almost all' probabilistic operations (such as stochastic integration) are continuous w.r.t.\ this topology, see, e.g,\ \cite{BaBaBeEd19a}. We also note that results of the present paper concerning e.g.\ the representation of the completion based on filtrations as well as geodesic properties of the distance will carry over to this setting.
A disadvantage,  is that this topology is not separable. 
Moreover, there are not too many relatively compact sets and  scaled random walks do not converge to Brownian motion (no matter which distance $d$ one chooses on the paths), cf.\ the argument in \cite[p240]{Ho91}. 
This means that $\AW_p$ is well suited e.g.\ for analyzing the  sensitivity  of stochastic optimization problems w.r.t.\ the input process, but less suited to analyze the transition from discrete to continuous time, or to establish the existence of certain `extremal' processes. 

Bion-Nadal and Talay  \cite{BiTa19} single out a specific instance of $\AW_2$, where $d$ is induced by the $L^2$-distance and provide a Hamilton-Jacobi-Bellman equation for the calculation of  $\AW_2$. In particular, they obtain a numerically tractable version of an adapted Wasserstein distance. This line of research is continued by Backhoff-K\"allblad-Robinson \cite{BaKaRo22}.

A further variation of the distance $\AW_p$ for semi-martingales absolutely continuous w.r.t.\ to Wiener measure and with Cameron-Martin cost is considered by Lasalle \cite{La18} who uses it to provide a new interpretations  of transport-information inequalities  on the Wiener space.  F\"ollmer \cite{Fo22b, Fo22a} investigates yet another adapted Wasserstein distance for semi-martingales with drift dominated by quadratic variation and extends  Talagrand's inequality to measures which are not absolutely continuous w.r.t.\ Wiener measure.

On another end of the spectrum
 of weak adapted topologies lie the contributions initiated by Aldous  \cite{Al81} and Hoover-Keisler \cite{Ho91, HoKe84}.
Historically the first extension of the weak topology to take information into account was the extended weak topology as defined by 
Aldous \cite{Al81}. 
The idea is to identify the law of a process with the law of its prediction process and to then measure the distance of stochastic processes through the distance of the respective prediction processes. Hoover and Keisler  \cite{Ho91, HoKe84} build on this idea and construct an infinitely iterated prediction process which captures further properties of the underlying filtration of the stochastic process. In view of Theorem \ref{thm:stopping.rank}, this infinite iteration  is already  necessary if the goal is to capture the information required for optimal stopping problems. It would be possible to metrize the Hoover-Keisler topology through a Wasserstein-type  distance  if one modifies the bicausality condition (in spirit similar to the $J_1$ metric on c\`adl\`ag paths):
\[\inf_{\pi\in\cpl(\fp X,\fp Y)} \left( \E_\pi[ d^p(X,Y)]^{\frac{1}{p}} +\begin{array}{l}
\text{penalization how far $\pi$}\\
\text{is from beeing bicausal}
\end{array} \right). \]
Although the continuity of probabilistic operations w.r.t.\ this distance becomes more subtle, this distance has important advantages:
First, as already established by Hoover \cite{Ho91}, many discrete-time objects such as scaled random walks converge in this topology to their continuous time limit. 
Further, just as in our paper,  tightness of the laws of the processes guarantees relative compactness and thus there are many relatively compact sets.

We believe that the theory on adapted optimal transport developed in this paper forms the foundation for analyzing the continuous time theories from the (geometric and metric) perspective of optimal transport.
Although we expect it can provide significant new insights, exploring the details is beyond the scope of this paper, and we defer this aspect to future research.

\end{document}